\documentclass[a4paper, british]{amsart}

%
%
\usepackage{libertine}
\usepackage[libertine]{newtxmath}

%
%
%
%

\usepackage{babel}
\usepackage{enumitem}
\usepackage{hyperref}
\usepackage[utf8]{inputenc}
\usepackage{newunicodechar}
\usepackage{mathtools}
\usepackage{varioref}
\usepackage[arrow,curve,matrix]{xy}

\usepackage{colortbl}
\usepackage{graphicx}
\usepackage{tikz}

\usepackage{mathrsfs}

%
%

\definecolor{linkred}{rgb}{0.7,0.2,0.2}
\definecolor{linkblue}{rgb}{0,0.2,0.6}

\setcounter{tocdepth}{1}

\numberwithin{figure}{section}

\usepackage[hyperpageref]{backref}


\sloppy

\setdescription{labelindent=\parindent, leftmargin=2\parindent}
\setitemize[1]{labelindent=\parindent, leftmargin=2\parindent}
\setenumerate[1]{labelindent=0cm, leftmargin=*, widest=iiii}

%
%

\newunicodechar{א}{\ensuremath{\aleph}}
\newunicodechar{α}{\ensuremath{\alpha}}
\newunicodechar{β}{\ensuremath{\beta}}
\newunicodechar{χ}{\ensuremath{\chi}}
\newunicodechar{δ}{\ensuremath{\delta}}
\newunicodechar{ε}{\ensuremath{\varepsilon}}
\newunicodechar{Δ}{\ensuremath{\Delta}}
\newunicodechar{η}{\ensuremath{\eta}}
\newunicodechar{γ}{\ensuremath{\gamma}}
\newunicodechar{Γ}{\ensuremath{\Gamma}}
\newunicodechar{ι}{\ensuremath{\iota}}
\newunicodechar{κ}{\ensuremath{\kappa}}
\newunicodechar{λ}{\ensuremath{\lambda}}
\newunicodechar{Λ}{\ensuremath{\Lambda}}
\newunicodechar{ν}{\ensuremath{\nu}}
\newunicodechar{μ}{\ensuremath{\mu}}
\newunicodechar{ω}{\ensuremath{\omega}}
\newunicodechar{Ω}{\ensuremath{\Omega}}
\newunicodechar{π}{\ensuremath{\pi}}
\newunicodechar{Π}{\ensuremath{\Pi}}
\newunicodechar{φ}{\ensuremath{\phi}}
\newunicodechar{Φ}{\ensuremath{\Phi}}
\newunicodechar{ψ}{\ensuremath{\psi}}
\newunicodechar{Ψ}{\ensuremath{\Psi}}
\newunicodechar{ρ}{\ensuremath{\rho}}
\newunicodechar{σ}{\ensuremath{\sigma}}
\newunicodechar{Σ}{\ensuremath{\Sigma}}
\newunicodechar{τ}{\ensuremath{\tau}}
\newunicodechar{θ}{\ensuremath{\theta}}
\newunicodechar{Θ}{\ensuremath{\Theta}}
\newunicodechar{ξ}{\ensuremath{\xi}}
\newunicodechar{Ξ}{\ensuremath{\Xi}}
\newunicodechar{ζ}{\ensuremath{\zeta}}

\newunicodechar{ℓ}{\ensuremath{\ell}}
\newunicodechar{ï}{\"{\i}}

\newunicodechar{𝔸}{\ensuremath{\bA}}
\newunicodechar{𝔹}{\ensuremath{\bB}}
\newunicodechar{ℂ}{\ensuremath{\bC}}
\newunicodechar{𝔻}{\ensuremath{\bD}}
\newunicodechar{𝔼}{\ensuremath{\bE}}
\newunicodechar{𝔽}{\ensuremath{\bF}}
\newunicodechar{𝔾}{\ensuremath{\bG}}
\newunicodechar{ℕ}{\ensuremath{\bN}}
\newunicodechar{ℙ}{\ensuremath{\bP}}
\newunicodechar{ℚ}{\ensuremath{\bQ}}
\newunicodechar{ℝ}{\ensuremath{\bR}}
\newunicodechar{𝕏}{\ensuremath{\bX}}
\newunicodechar{ℤ}{\ensuremath{\bZ}}
\newunicodechar{𝒜}{\ensuremath{\sA}}
\newunicodechar{ℬ}{\ensuremath{\sB}}
\newunicodechar{𝒞}{\ensuremath{\sC}}
\newunicodechar{𝒟}{\ensuremath{\sD}}
\newunicodechar{ℰ}{\ensuremath{\sE}}
\newunicodechar{ℱ}{\ensuremath{\sF}}
\newunicodechar{𝒢}{\ensuremath{\sG}}
\newunicodechar{ℋ}{\ensuremath{\sH}}
\newunicodechar{𝒥}{\ensuremath{\sJ}}
\newunicodechar{ℒ}{\ensuremath{\sL}}
\newunicodechar{ℳ}{\ensuremath{\sM}}
\newunicodechar{𝒪}{\ensuremath{\sO}}
\newunicodechar{𝒬}{\ensuremath{\sQ}}
\newunicodechar{𝒮}{\ensuremath{\sS}}
\newunicodechar{𝒯}{\ensuremath{\sT}}
\newunicodechar{𝒲}{\ensuremath{\sW}}

\newunicodechar{∂}{\ensuremath{\partial}}
\newunicodechar{∇}{\ensuremath{\nabla}}

\newunicodechar{↺}{\ensuremath{\circlearrowleft}}
\newunicodechar{∞}{\ensuremath{\infty}}
\newunicodechar{⊕}{\ensuremath{\oplus}}
\newunicodechar{⊗}{\ensuremath{\otimes}}
\newunicodechar{•}{\ensuremath{\bullet}}
\newunicodechar{Λ}{\ensuremath{\wedge}}
\newunicodechar{↪}{\ensuremath{\into}}
\newunicodechar{→}{\ensuremath{\to}}
\newunicodechar{↦}{\ensuremath{\mapsto}}
\newunicodechar{⨯}{\ensuremath{\times}}
\newunicodechar{∪}{\ensuremath{\cup}}
\newunicodechar{∩}{\ensuremath{\cap}}
\newunicodechar{⊋}{\ensuremath{\supsetneq}}
\newunicodechar{⊇}{\ensuremath{\supseteq}}
\newunicodechar{⊃}{\ensuremath{\supset}}
\newunicodechar{⊊}{\ensuremath{\subsetneq}}
\newunicodechar{⊆}{\ensuremath{\subseteq}}
\newunicodechar{⊂}{\ensuremath{\subset}}
\newunicodechar{⊄}{\ensuremath{\not \subset}}
\newunicodechar{≥}{\ensuremath{\geq}}
\newunicodechar{≠}{\ensuremath{\neq}}
\newunicodechar{≫}{\ensuremath{\gg}}
\newunicodechar{≪}{\ensuremath{\ll}}

\newunicodechar{≤}{\ensuremath{\leq}}
\newunicodechar{∈}{\ensuremath{\in}}
\newunicodechar{∉}{\ensuremath{\not \in}}
\newunicodechar{∖}{\ensuremath{\setminus}}
\newunicodechar{◦}{\ensuremath{\circ}}
\newunicodechar{°}{\ensuremath{^\circ}}
\newunicodechar{…}{\ifmmode\mathellipsis\else\textellipsis\fi}
\newunicodechar{·}{\ensuremath{\cdot}}
\newunicodechar{⋯}{\ensuremath{\cdots}}
\newunicodechar{∅}{\ensuremath{\emptyset}}
\newunicodechar{⇒}{\ensuremath{\Rightarrow}}

\newunicodechar{⁰}{\ensuremath{^0}}
\newunicodechar{¹}{\ensuremath{^1}}
\newunicodechar{²}{\ensuremath{^2}}
\newunicodechar{³}{\ensuremath{^3}}
\newunicodechar{⁴}{\ensuremath{^4}}
\newunicodechar{⁵}{\ensuremath{^5}}
\newunicodechar{⁶}{\ensuremath{^6}}
\newunicodechar{⁷}{\ensuremath{^7}}
\newunicodechar{⁸}{\ensuremath{^8}}
\newunicodechar{⁹}{\ensuremath{^9}}
\newunicodechar{ⁱ}{\ensuremath{^i}}
\newunicodechar{⁺}{\ensuremath{^+}}

\newunicodechar{⌈}{\ensuremath{\lceil}}
\newunicodechar{⌉}{\ensuremath{\rceil}}
\newunicodechar{⌊}{\ensuremath{\lfloor}}
\newunicodechar{⌋}{\ensuremath{\rfloor}}

\newunicodechar{≅}{\ensuremath{\cong}}
\newunicodechar{⇔}{\ensuremath{\Leftrightarrow}}
\newunicodechar{∃}{\ensuremath{\exists}}
\newunicodechar{±}{\ensuremath{\pm}}

%
%

\DeclareFontFamily{OMS}{rsfs}{\skewchar\font'60}
\DeclareFontShape{OMS}{rsfs}{m}{n}{<-5>rsfs5 <5-7>rsfs7 <7->rsfs10 }{}
\DeclareSymbolFont{rsfs}{OMS}{rsfs}{m}{n}
\DeclareSymbolFontAlphabet{\scr}{rsfs}
\DeclareSymbolFontAlphabet{\scr}{rsfs}

\DeclareFontFamily{U}{mathx}{\hyphenchar\font45}
\DeclareFontShape{U}{mathx}{m}{n}{
      <5> <6> <7> <8> <9> <10>
      <10.95> <12> <14.4> <17.28> <20.74> <24.88>
      mathx10
      }{}
\DeclareSymbolFont{mathx}{U}{mathx}{m}{n}
\DeclareFontSubstitution{U}{mathx}{m}{n}
\DeclareMathAccent{\wcheck}{0}{mathx}{"71}

%
%

\DeclareMathOperator{\Aut}{Aut}

\DeclareMathOperator{\const}{const}

\DeclareMathOperator{\Hom}{Hom}
\DeclareMathOperator{\Id}{Id}

\DeclareMathOperator{\img}{img}
\DeclareMathOperator{\Pic}{Pic}

\DeclareMathOperator{\Ramification}{Ramification}
\DeclareMathOperator{\red}{red}
\DeclareMathOperator{\reg}{reg}

\DeclareMathOperator{\sing}{sing}

\DeclareMathOperator{\Sym}{Sym}
\DeclareMathOperator{\supp}{supp}
\DeclareMathOperator{\tor}{tor}

\newcommand{\sA}{\scr{A}}
\newcommand{\sB}{\scr{B}}
\newcommand{\sC}{\scr{C}}
\newcommand{\sD}{\scr{D}}
\newcommand{\sE}{\scr{E}}
\newcommand{\sF}{\scr{F}}
\newcommand{\sG}{\scr{G}}
\newcommand{\sH}{\scr{H}}
\newcommand{\sHom}{\scr{H}\negthinspace om}

\newcommand{\sJ}{\scr{J}}

\newcommand{\sL}{\scr{L}}
\newcommand{\sM}{\scr{M}}

\newcommand{\sO}{\scr{O}}

\newcommand{\sQ}{\scr{Q}}

\newcommand{\sS}{\scr{S}}
\newcommand{\sT}{\scr{T}}

\newcommand{\sW}{\scr{W}}


\newcommand{\cC}{\mathcal C}

\newcommand{\bA}{\mathbb{A}}
\newcommand{\bB}{\mathbb{B}}
\newcommand{\bC}{\mathbb{C}}
\newcommand{\bD}{\mathbb{D}}
\newcommand{\bE}{\mathbb{E}}
\newcommand{\bF}{\mathbb{F}}
\newcommand{\bG}{\mathbb{G}}

\newcommand{\bN}{\mathbb{N}}

\newcommand{\bP}{\mathbb{P}}
\newcommand{\bQ}{\mathbb{Q}}
\newcommand{\bR}{\mathbb{R}}

\newcommand{\bX}{\mathbb{X}}

\newcommand{\bZ}{\mathbb{Z}}


\theoremstyle{plain}
\newtheorem{thm}{Theorem}[section]

\newtheorem{conjecture}[thm]{Conjecture}
\newtheorem{cor}[thm]{Corollary}
\newtheorem{defn}[thm]{Definition}
\newtheorem{fact}[thm]{Fact}
\newtheorem{lem}[thm]{Lemma}

\newtheorem{problem}[thm]{Problem}
\newtheorem{prop}[thm]{Proposition}

\theoremstyle{remark}

\newtheorem{c-n-d}[thm]{Claim and Definition}
\newtheorem{consequence}[thm]{Consequence}
\newtheorem{construction}[thm]{Construction}

\newtheorem{example}[thm]{Example}

\newtheorem{notation}[thm]{Notation}
\newtheorem{obs}[thm]{Observation}
\newtheorem{rem}[thm]{Remark}
\newtheorem{question}[thm]{Question}
\newtheorem*{rem-nonumber}{Remark}
\newtheorem{setting}[thm]{Setting}
\newtheorem{warning}[thm]{Warning}

\numberwithin{equation}{thm}

\setlist[enumerate]{label=(\thethm.\arabic*), before={\setcounter{enumi}{\value{equation}}}, after={\setcounter{equation}{\value{enumi}}}}

\newcommand{\into}{\hookrightarrow}

\newcommand{\wtilde}{\widetilde}
\newcommand{\what}{\widehat}

%
%

\hyphenation{com-po-nents}
\hyphenation{pos-i-tive}
\hyphenation{Theo-rem}
\hyphenation{Vojta}

%
%

\newcommand\CounterStep{\addtocounter{thm}{1}\setcounter{equation}{0}}

\newcommand{\factor}[2]{\left. \raise 2pt\hbox{$#1$} \right/\hskip -2pt\raise -2pt\hbox{$#2$}}
%
%

\newcommand{\Publication}[1]{}

%
%
\newcommand{\subversionInfo}{}
\newcommand{\svnid}[1]{}
\newcommand{\approvals}[2][Approval]{}
\renewcommand{\phi}{\varphi}

\usepackage{imakeidx}
\makeindex[title=Index of Concepts]
\indexsetup{firstpagestyle=headings,headers={STEFAN KEBEKUS AND ERWAN ROUSSEAU}{$\cC$-PAIRS AND THEIR MORPHISMS}}

\usepackage{cancel}
\usepackage{tikz-cd}
\tikzset{commutative diagrams/arrow style=tikz}
\selectlanguage{british}

\author{Stefan Kebekus}
\address{Stefan Kebekus, Mathematisches Institut, Albert-Ludwigs-Universität Freiburg, Ernst-Zermelo-Straße 1, 79104 Freiburg im Breisgau, Germany}
\email{\href{mailto:stefan.kebekus@math.uni-freiburg.de}{stefan.kebekus@math.uni-freiburg.de}}
\urladdr{\url{https://cplx.vm.uni-freiburg.de}}

\author{Erwan Rousseau}
\address{Erwan Rousseau, Univ Brest, CNRS UMR 6205,	Laboratoire de Mathematiques de Bretagne Atlantique\\ F-29200 Brest, France}
\email{\href{mailto:erwan.rousseau@univ-brest.fr}{erwan.rousseau@univ-brest.fr}}
\urladdr{\href{http://eroussea.perso.math.cnrs.fr/}{http://eroussea.perso.math.cnrs.fr}}

\thanks{This work started during the visit of Erwam Rousseau to the Freiburg
Institute for Advanced Studies, supported by the European Unions Horizon 2020
research and innovation program under the Marie Sklodowska-Curie grant agreement
No 75434.  Rousseau thanks the Institute for providing an excellent working
environment.}

\keywords{$\cC$-pairs, morphisms of $\cC$-pairs, \foreignlanguage{french}{orbifoldes géométriques}, Campana constellations}

\subjclass[2020]{32C99, 32H99}

\title{$\cC$-pairs and their morphisms}
\date{\today}

\makeatletter
\hypersetup{
  pdfauthor={\authors},
  pdftitle={\@title},
  pdfsubject={\@subjclass},
  pdfkeywords={\@keywords},
  pdfstartview={Fit},
  pdfpagelayout={TwoColumnRight},
  pdfpagemode={UseOutlines},
  bookmarks,
  colorlinks,
  linkcolor=linkblue,
  citecolor=linkred,
  urlcolor=linkred
}
\makeatother

\DeclareMathOperator{\alb}{alb}
\DeclareMathOperator{\Alb}{Alb}
\DeclareMathOperator{\Branch}{Branch}
\DeclareMathOperator{\diff}{d}
\DeclareMathOperator{\Div}{Div}

\DeclareMathOperator{\Gal}{Galois}
\DeclareMathOperator{\lu}{lu}
\DeclareMathOperator{\mult}{mult}

\DeclareMathOperator{\orb}{orb}

\DeclareMathOperator{\trace}{trace}

\newcommand{\xrightarrowdbl}[2][]{%
  \xrightarrow[#1]{#2}\mathrel{\mkern-14mu}\rightarrow
}
\hyphenation{dif-fer-en-tials}
\hyphenation{lem-ma}
\hyphenation{mani-fold}
\hyphenation{multi-plicities}
\hyphenation{semi-abelian}
\hyphenation{semi-toric}
\hyphenation{semi-torus}

\theoremstyle{plain}

\theoremstyle{remark}

\newtheorem{conj}[thm]{Conjecture}

\newtheorem{reminder}[thm]{Reminder}

\begin{document}

\approvals[Approval for Abstract]{Erwan & yes \\ Stefan & yes}
\begin{abstract}
\selectlanguage{british}

This paper surveys Campana's theory of $\cC$-pairs (or ``geometric orbifolds'')
in the complex-analytic setting, to serve as a reference for future work.
Written with a view towards applications in hyperbolicity, rational points, and
entire curves, it introduces the fundamental definitions of $\cC$-pair-theory
systematically.  In particular, it establishes an appropriate notion of
``morphism'', which agrees with notions from the literature in the smooth case,
but it is better behaved in the singular setting and has functorial properties
that relate it to minimal model theory.

\end{abstract}

\maketitle
\tableofcontents
\clearpage

\phantomsection\addcontentsline{toc}{part}{Introduction}

%
%
\svnid{$Id: 01-intro.tex 851 2024-07-15 08:07:55Z rousseau $}
\selectlanguage{british}

\section{Introduction}
\subversionInfo

\subsection{$\cC$-pairs}
\approvals{Erwan & yes \\ Stefan & yes}

Following earlier work of Miyaoka and others, Campana introduced ``$\cC$-pairs''
or ``geometric orbifolds'' to complex-analytic geometry in his influential paper
\cite{Cam04}.  In simplest terms, $\cC$-pairs $(X, D)$ consist of a normal
variety $X$ and a $ℚ$-divisor with ``standard coefficients'',
\[
  D = \sum_i \frac{m_i-1}{m_i} · D_i, \quad\text{where all } m_i ∈ ℕ^{≥ 2} ∪ \{∞\}.
\]
Conceptually, $\cC$-pairs and their associated differentials interpolate between
two ``extreme'' geometries.
\begin{itemize}
  \item The geometry of $X$, which is governed by the sheaves $Ω^p_X$ of Kähler
  differentials.

  \item The geometry of $X ∖ D$, which is governed by the sheaves $Ω^p_X(\log
  D)$ of logarithmic differentials.
\end{itemize}
$\cC$-pairs appear in higher-dimensional birational geometry, where geometers
simplify line bundles by passing to branched covers and use boundary divisors
$D$ to keep track of ramification orders.  They appear in the study of entire
curves and rational points over function fields, where the boundary divisors $D$
encode tangency conditions that are different from (and in some settings more
natural than) the conditions imposed by root stacks.  In moduli theory,
geometers study fibrations and use boundary divisors $D$ as bookkeeping devices
for multiplicities of fibres.

\subsubsection*{Special pairs and pairs of general type}
\approvals{Erwan & yes \\ Stefan & yes}

Campana has seen that $\cC$-pairs are the natural objects that generalize the
dichotomy between rational/elliptic and higher-genus curves to higher
dimensions.  He attaches to every compact Kähler manifold $X$ a natural ``core
fibration'' that decomposes $X$ into the \emph{fibre space base}, which is a
$\cC$-pair of general type, and the \emph{fibres}, which are ``special'' in the
sense that they do not admit dominant morphisms to $\cC$-pairs of general type.
$\cC$-pairs of general type and special $\cC$-pairs differ in almost every
aspect of topology or arithmetic/metric/analytic geometry.  One aspect is
illustrated by the following conjecture.

\begin{conj}[Campana]\label{conj:1-1}
  ---
  \begin{itemize}
  \item Let $X$ be a projective manifold defined over a number field $k$.  Then,
    $X$ is special if and only if its rational points are potentially
    dense\footnote{potentially dense = there exists a finite field extension $k
    ⊆ k'$ such that $k'$-rational points are Zariski dense in $X$.}.

  \item Let $X$ be a complex projective manifold.  Then, $X$ is special if and
    only if $X$ admits a Zariski dense entire curve.
  \end{itemize}
\end{conj}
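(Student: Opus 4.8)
The statement is Campana's central conjecture, and a complete proof is beyond current technology; what I can offer is the program by which one hopes to reduce it to two ``extreme'' families of statements, together with an honest account of where the reduction breaks down. The organizing principle is the core fibration $c \colon X \to C(X)$ mentioned above: by construction the base $C(X)$, viewed as a $\cC$-pair, is of general type, while the fibres are special. The plan is therefore to prove each equivalence by establishing its two implications separately and, within each implication, to treat the ``general type'' world and the ``special'' world by entirely different techniques, gluing them along the core.

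First I would attack the ``only if'' directions, i.e. that a non-special $X$ has neither potentially dense rational points nor a Zariski-dense entire curve. Since $X$ non-special means that the core base $(C(X), \Delta)$ is a positive-dimensional $\cC$-pair of general type dominated by $X$, it suffices to know that a $\cC$-pair of general type has degenerate rational points (an orbifold Lang--Vojta statement) and carries no Zariski-dense entire curve (an orbifold Green--Griffiths--Lang statement), combined with a functoriality lemma ensuring that rational points and entire curves on $X$ push forward to orbifold-integral points and orbifold entire curves on the base. This functoriality is exactly the kind of statement that the notion of morphism developed in this paper is designed to supply; the arithmetic and analytic inputs, by contrast, are themselves major open conjectures, established only in low dimension or for semi-abelian targets.

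For the converse ``if'' directions one must produce many rational points, respectively a Zariski-dense entire curve, on a special $X$. Here the strategy is structural: special manifolds are assembled, through towers of fibrations with special fibres, out of the basic special pieces---rationally connected varieties and $\cC$-pairs with vanishing orbifold canonical class (the abelian and elliptic building blocks). One would show that density is preserved under the relevant fibration steps, using that rationally connected varieties have potentially dense points and that semi-abelian varieties have dense points after a finite field extension, and dually that entire curves exist and spread out in families. The delicate point is that the $\cC$-structure records fibre multiplicities, so every fibration here is an orbifold fibration, and one must produce points or curves that are integral with respect to the boundary, not merely on the coarse space.

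The main obstacle is therefore twofold and, to be candid, not presently surmountable in full generality. On the ``general type'' side one needs the complete orbifold versions of Lang--Vojta and Green--Griffiths--Lang, which are open even for smooth projective varieties of general type; the $\cC$-pair refinement only makes the hyperbolicity and integral-point analysis harder, because of the fractional boundary. On the ``special'' side the hard part will be to control how orbifold multiplicities obstruct lifting dense points through a fibration---precisely the phenomenon that led Campana to introduce multiplicities in the first place---and to verify that the known density results for the building blocks survive in the orbifold category. A realistic near-term goal, rather than the conjecture itself, is to prove both implications \emph{conditionally} on the orbifold Lang--Vojta and Green--Griffiths--Lang conjectures, thereby reducing Campana's conjecture to these two clean statements by means of the functorial core fibration.
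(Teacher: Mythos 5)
You were asked to prove a statement that the paper itself presents as an open conjecture: Conjecture~1.1 is Campana's vast generalization of Lang's conjectures, stated in the introduction as \emph{motivation} for the paper and its sequels, and no proof of it exists in the paper or anywhere else. There is therefore no proof of record to compare yours against, and your decision to submit a program with an honest accounting of its gaps, rather than a fake proof, is the only defensible move. Your program is, moreover, the standard one and consistent with how the paper frames the problem: the core fibration you invoke is exactly Campana's core map (cited in the paper for smooth pairs, \cite[Théo.~10.1]{MR2831280}), the paper's definition of ``special'' via the absence of Bogomolov sheaves (Definition~6.12) matches the fibration-theoretic one for snc pairs by \cite[Cor.~3.13]{MR2831280}, and the functoriality lemma you want --- that points and entire curves on $X$ push down to orbifold-integral points and orbifold entire curves on the core base --- is precisely the kind of statement the paper's notion of $\cC$-morphism is engineered to support (and which the announced sequels on the Albanese and on Bloch--Ochiai pursue in the large-irregularity case).

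Two gaps in your sketch deserve to be named beyond the ones you concede. First, your ``if'' direction silently uses a tower decomposition of special manifolds into orbifold fibrations with rationally connected or $κ = 0$ fibres; that decomposition is itself conditional on Campana's orbifold additivity conjecture ($C^{\orb}_{n,m}$), so even the \emph{conditional} reduction you propose as a realistic goal requires a third open input, not just orbifold Lang--Vojta and Green--Griffiths--Lang. Second, the gluing along the core in full generality runs into the singular-pair issues that this paper is written to address: the core is a priori only a meromorphic fibration, minimal-model operations introduce singular pairs, and the paper's own Section~15 (notably the problems on $\cC$-bimeromorphic maps and the core map for mildly singular pairs) records that the bimeromorphic theory needed to make your ``gluing'' step precise does not yet exist. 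As the paper stresses in its comparison with Campana's numerical definition of orbifold morphism (Section~14, e.g.\ Example~14.10), purely numerical multiplicity bookkeeping does not suffice in the singular setting --- which is exactly where your functoriality lemma would have to operate.
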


Conjecture~\ref{conj:1-1} is a vast generalization of Lang's famous conjectures.
It motivates the present paper and follow-up work on the ``Albanese of a
$\cC$-pair'' that is currently in preparation.

\subsection{Content of the paper}
\approvals{Erwan & yes \\ Stefan & yes}

This paper surveys Campana's theory of $\cC$-pairs (or ``geometric orbifolds'')
in the complex-analytic setting.  Aiming to serve as a reference for future
work, it introduces the fundamental definitions of $\cC$-pairs-theory
systematically.

\subsubsection*{Part I: Adapted tensors and differentials}
\approvals{Erwan & yes \\ Stefan & yes}

Following ideas of \cite{Miy08} and \cite{CP15}, the first part of the present
paper equips $\cC$-pairs with a (co)tangent bundle, with sheaves of
differential forms, and sheaves of higher-order tensors.  These objects have
been used in \cite{Miy08} to establish novel Chern class inequalities, and in
\cite{CP15} to construct natural foliations on base spaces of families of
canonically polarized manifolds.

Conceptually, the ``sheaves $Ω^p_{(X,D)}$ of $\cC$-pair-differentials''
should interpolate between the sheaves of Kähler-- and logarithmic differentials,
\[
  Ω^p_X ⊆ Ω^p_{(X,D)} ⊆ Ω^p_X(\log D),
\]
in the sense that sections of $Ω^p_{(X,D)}$ should be differentials on $X$,
with logarithmic poles of fractional pole order $\frac{m_i-1}{m_i}$ along the
component $D_i$ of $D$.  While ``differentials with fractional pole order'' do
not exist on $X$ in any meaningful way, they can be defined on suitable covers
of the space $X$.  Sections~\ref{sec:3} and \ref{sec:4} make these vague
concepts precise.  In order to obtain a theory with good universal properties,
these sections define ``sheaves of adapted reflexive differentials'' on
arbitrary covers of arbitrary $\cC$-pairs, including very singular ones.

Section~\ref{sec:5} shows that adapted reflexive differentials over $\cC$-pairs
with mild singularities have optimal pull-back properties, similar to the
pull-back properties of reflexive differentials on spaces with rational
singularities, as established in \cite{GKKP11, MR3084424, KS18}.  These results
relate $\cC$-pairs to minimal model theory and will be instrumental when we
define morphisms of $\cC$-pairs in the second part of this paper.

With all preparations at hand, Section~\ref{sec:6} discusses $\cC$-pair
analogues of several classic invariants, including the ``irregularity of
$\cC$-pair'' and a notion of ``$\cC$-Kodaira-Iitaka dimension for rank-one
sheaves of adapted tensors''.  The section extends the classical vanishing
theorem of Bogomolov-Sommese to $\cC$-pairs and defines the notion of ``special
$\cC$-pairs''.

\subsubsection*{Part II: Morphisms of $\cC$-pairs}
\approvals{Erwan & yes \\ Stefan & yes}

\CounterStep{}The second part of this paper defines and discusses ``morphisms of
$\cC$-pairs''.  The basic idea is simple: If $(X,D_X)$ and $(Y,D_Y)$ are
$\cC$-pairs and if $φ : X → Y$ is a morphism of analytic varieties, call $φ$ a
morphism of $\cC$-pairs if adapted reflexive differentials on $Y$ pull back to
adapted reflexive differentials on $X$.  Sections~\ref{sec:7} and \ref{sec:8}
make this idea precise.  Section~\ref{sec:9} establishes criteria to guarantee
that a given morphism of varieties is a morphism of $\cC$-pairs.  To illustrate
these concepts and highlight some features of our definition,
Section~\ref{sec:10} discusses several (non-)examples.  Sections~\ref{sec:11},
\ref{sec:12} and \ref{sec:13} establish functoriality properties, the existence
of categorical quotients, and relate morphisms of $\cC$-pairs to basic notions
of minimal model theory.

Section~\ref{sec:14} compares our notion ``morphism'' with other notions that
have appeared in the literature.  Among those, Campana's definition of an
\emph{orbifold morphisms} is perhaps the most prominent: if $(X, D_X)$ and $(Y,
D_Y)$ are $\cC$-pairs and if $φ : X → Y$ is holomorphic, then $φ$ is called
orbifold morphism if a purely numerical criterion holds, relating the
coefficients of boundary divisors $D_X$ and $D_Y$ with multiplicities of
pull-back divisors coming from $Y$.  To ensure that all quantities are
well-defined, Campana defines orbifold morphisms only in settings where
\begin{enumerate}
  \item\label{il:1-2-1} the space $Y$ is $ℚ$-factorial, and

  \item\label{il:1-2-2} the morphism $φ$ does not take its image inside the
  support of $D_Y$.
\end{enumerate}
For morphism between smooth pairs that satisfy \ref{il:1-2-2}, Campana's
definition coincides with ours, and is generally easier to check.  For singular
pairs, Campana's definition and ours do not coincide in general, even in cases
where all pairs are uniformizable.  The difference will be of great importance
for the applications to Conjecture~\ref{conj:1-1}, where singularities naturally
appear through minimal model theory and purely numerical data might not always
suffice to capture the conceptually right geometric picture.

Section~\ref{sec:15} gathers several open questions and mentions problems for
future research.

\subsection{Outlook}
\approvals{Erwan & yes \\ Stefan & yes}

This publication is the first in a series; two follow-up papers will likely
appear later this year.  Building on notions and fundamental results obtained
here, a second paper, \cite{orbialb2}, will introduce ``$\cC$-semitoric
varieties'' as analogues of Abelian varieties and (semi)tori used in the classic
complex geometry.  It follows Serre by defining the Albanese of a $\cC$-pair as
the universal map to a $\cC$-semitoric variety and shows that the Albanese
exists in relevant cases.

Aiming to establish hyperbolicity properties of $\cC$-pairs with large
irregularity, a third paper, \cite{orbialb3}, will extend the fundamental
theorem of Bloch-Ochiai to the context of $\cC$-pairs.  Building on works of
Kawamata, Ueno, and Noguchi, it recasts parabolic Nevanlinna theory as a
``Nevanlinna theory for $\cC$-pairs''.  The authors hope that this approach
might be of independent interest

\subsection{Acknowledgements}
\approvals{Erwan & yes \\ Stefan & yes}

The authors would like to thank Oliver Bräunling, Lukas Braun, Michel Brion,
Fabrizio Catanese, Johan Commelin, Andreas Demleitner, Zsolt Patakfalvi, and
Wolfgang Soergel for long discussions.  Pedro Núñez pointed us to several
mistakes in early versions of the paper.  Jörg Winkelmann patiently answered our
questions throughout the work on this project.

The work on this paper was carried out in part while Stefan Kebekus visited
Zsolt Patakfalvi at the EPFL.  He would like to thank Patakfalvi and his group
for hospitality and for many discussions.

%
%
\svnid{$Id: 02-notation.tex 852 2024-07-15 08:20:56Z kebekus $}
\selectlanguage{british}

\section{Notation and standard facts}
\subversionInfo
\approvals{Erwan & yes \\ Stefan & yes}

This paper works in the category of complex analytic spaces and is written with
a view towards a future \emph{$\cC$-Nevanlinna theory}.  All the material in
this paper will however work in the complex-algebraic setting, typically with
simpler definitions and proofs.  We expect that large parts of this paper will
work for algebraic varieties over perfect fields of arbitrary characteristic and
refer the reader to \cite{KPS22}, which discusses $\cC$-curves over global
function fields.

\subsection{Global conventions}
\approvals{Erwan & yes \\ Stefan & yes}

With very few exceptions, we follow the notation of the standard reference texts
\cite{CAS, DemaillyBook, MR3156076}.  An \emph{analytic variety}\index{analytic
variety} is a reduced, irreducible complex space.  For clarity, we refer to
holomorphic maps between analytic varieties as \emph{morphisms} and reserve the
word \emph{map} for meromorphic mappings.

\begin{defn}[Big and small sets]
  Let $X$ be an analytic variety.  An analytic subset $A ⊊ X$ is called
  \emph{small}\index{small set} if it has codimension two or more.  An open set
  $U ⊆ X$ is called \emph{big}\index{big set} if $X∖U$ is analytic and small.
\end{defn}

\subsection{Sheaves}
\approvals{Erwan & yes \\ Stefan & yes}
\label{sec:2-2}

The following notation will be used to speak about ``meromorphic differentials''
on normal spaces.  We refer the reader to
\cite[\href{https://stacks.math.columbia.edu/tag/01X1}{Tag
01X1}]{stacks-project} for a further discussion of meromorphic sections of
coherent sheaves.

\begin{notation}[Meromorphic functions]
  If $X$ is any normal analytic variety, write $ℳ_X$ for the sheaf of
  meromorphic functions.  If $ℰ$ is any reflexive, coherent sheaf on $X$, we
  call $ℰ⊗ℳ_X$ the \emph{sheaf of meromorphic sections in $ℰ$}\index{sheaf of
  meromorphic sections}\index{meromorphic section!of a reflexive sheaf}.
\end{notation}

\begin{notation}[Meromorphic section with prescribed location of poles]
  Let $X$ be a normal analytic variety and let $D ∈ \Div(X)$ be a Weil divisor.
  If $ℰ$ is any coherent, locally free sheaf on $X$, write $ℰ(*D) ⊂ ℰ⊗ℳ_X$ for
  the sheaf of meromorphic sections in $ℰ$ that are allowed to have poles along
  the support of $D$\index{meromorphic section!with prescribed location of
  poles}.
\end{notation}

\begin{notation}[Differentials with logarithmic poles]
  Let $X$ be a normal analytic variety and let $D ∈ ℚ\Div(X)$ be an effective
  Weil $ℚ$-divisor on $X$, with nc support.  For brevity, we will often write
  $Ω^p_X(\log D)$ to denote the sheaves of Kähler differentials with logarithmic
  poles along $\supp D$.  We will use the correct, but longer forms $Ω^p_X(\log
  \supp D)$ or $Ω^p_X(\log D_{\red})$ only if confusion is likely to arise.
\end{notation}

This paper frequently works with reflexive sheaves on normal analytic varieties.
The following notation will be used.

\begin{notation}[Reflexive hull and reflexive pull-back]\label{not:2-5}%
  If $ℰ$ is any coherent sheaf on a normal analytic variety $X$, we will
  frequently write $ℰ^{\vee\vee}$ for its double dual, which is a coherent,
  reflexive sheaf.  We refer to $ℰ^{\vee\vee}$ as the \emph{reflexive
  hull}\index{reflexive!hull} of $ℰ$.  If $n ∈ ℕ$ is any number, define the
  \emph{reflexive tensor power}\index{reflexive!tensor power} and
  \emph{reflexive symmetric power}\index{reflexive!symmetric power} of $ℰ$ as
  \[
    ℰ^{[⊗ n]} := \bigl( ℰ^{⊗ n} \bigr)^{\vee\vee} %
    \quad\text{and}\quad %
    \Sym^{[n]} ℰ := \bigl( \Sym^n ℰ \bigr)^{\vee\vee}.
  \]
  Given any Weil divisor $D ∈ \Div(X)$, define the \emph{reflexive twist of $ℰ$
  by $D$}\index{reflexive!twist} as
  \[
    ℰ(D) := \bigl( ℰ⊗𝒪_X(D) \bigr)^{\vee\vee}.
  \]
  If $φ : Y → X$ is any morphism from a normal analytic variety $Y$, we will
  often write $φ^{[*]} ℰ := (φ^* ℰ)^{\vee\vee}$ and refer to this sheaf as the
  \emph{reflexive pull-back}\index{reflexive!pull-back} of $ℰ$.
\end{notation}

\begin{notation}[Reflexive differentials]\label{not:2-6} %
  Let $X$ be a normal analytic variety and write $Ω^{[p]}_X := \bigl(Ω^p_X
  \bigr)^{\vee\vee}$.  Given a Weil $ℚ$-divisor $D ∈ ℚ\Div(X)$, write
  $Ω^{[p]}_X(\log D) := \bigl(Ω^p_X(\log D) \bigr)^{\vee\vee}$.  By minor abuse
  of language, we refer to sections in these sheaves as \emph{reflexive
  (logarithmic)
  differentials}\index{reflexive!differential}\index{reflexive!logarithmic
  differential}.
\end{notation}

\begin{rem}[Reflexive differentials and prolongations]
  In the setting of Notation~\ref{not:2-6}, if $X⁺ ⊂ X$ is the maximal open set
  where $(X, \supp D)$ is nc and if $ι: X⁺ → X$ denotes the inclusion map, then
  there exists a canonical isomorphism $Ω^{[p]}_X(\log D) ≅ ι_* Ω^p_{X⁺}(\log
  D)$.  We refer the reader to \cite[Thm.~1 and Prop.~7]{MR0212214} for details.
\end{rem}

\begin{reminder}[Reflexive sheaves of rank one, base loci and meromorphic maps]\label{remi:2-8}%
  If $ℱ$ is any rank-one, coherent reflexive sheaf on a normal analytic variety
  $X$, then $ℱ|_{X_{\reg}}$ is locally free.  If $V ⊆ H⁰\bigl( X,\, ℱ \bigr)$ is
  non-trivial and finite-dimensional, then the associated holomorphic
  \[
    X_{\reg} ∖ (\text{Base locus }V) → ℙ(V^\vee)
  \]
  extends to a meromorphic mapping $X \dasharrow ℙ(V^\vee)$.  For a proof,
  recall a fundamental result of Rossi, \cite[Thm.~3.5]{Rossi68} or
  \cite[Thm.~1.1]{GR70}: there exists a proper modification $π : \wtilde{X}
  \twoheadrightarrow X$ such that $π^* ℱ/\tor$ is locally free, hence
  invertible.
\end{reminder}

\subsection{Weil $ℚ$-divisors and pairs}
\label{sec:2-3}
\approvals{Erwan & yes \\ Stefan & yes}

Much of our discussion is centred around Weil $ℚ$-divisors on normal analytic
varieties.  The following standard language will be used.

\begin{notation}[$ℚ$-Cartier and locally $ℚ$-Cartier divisors]\label{not:2-9}%
  Let $X$ be a normal analytic variety and let $D ∈ ℚ\Div(X)$ be a Weil
  $ℚ$-divisor on $X$.
  \begin{enumerate}
    \item The divisor $D$ is called \emph{$ℚ$-Cartier}\index{Q-Cartier
    divisor@$ℚ$-Cartier divisor} if there exists a positive number $m ∈ ℕ⁺$ such
    that $m·D$ is integral and Cartier.

    \item The divisor $D$ is called \emph{locally $ℚ$-Cartier}\index{locally
    $ℚ$-Cartier divisor} if every point $x ∈ X$ has an open neighbourhood $U =
    U(x) ⊆ X$ such that $D|_U$ is $ℚ$-Cartier on $U$.
  \end{enumerate}
\end{notation}

\begin{notation}[$ℚ$-factorial and locally $ℚ$-Cartier varieties]\label{not:2-10}%
  Let $X$ be a normal analytic variety.
  \begin{enumerate}
    \item The variety $X$ is called \emph{$ℚ$-factorial}\index{Q-factorial
    variety@$ℚ$-factorial analytic variety} if every Weil $ℚ$-divisor on $X$ is
    $ℚ$-Cartier.

    \item The variety $X$ is called \emph{locally $ℚ$-factorial}\index{locally
    $ℚ$-factorial analytic variety} if there exists a basis of topology,
    $(U_α)_{α ∈ A}$, where all $U_•$ are $ℚ$-factorial analytic varieties.
  \end{enumerate}
\end{notation}

\begin{rem}[Local and global properties in analytic geometry]
  In contrast to algebraic geometry, the local properties in
  Notation~\ref{not:2-9} and \ref{not:2-10} do not imply the global ones.  It is
  not hard to construct a non-compact, normal analytic surface $S$ and a prime
  Weil divisor $D ∈ \Div(S)$ with the following properties.
  \begin{enumerate}
    \item The singular locus $S_{\sing} = \{ s_1, s_2, …\}$ is countable
    infinite, discrete, and contained in the support of $D$.  In particular, $S$
    has only isolated singularities.

    \item If $n ∈ ℕ⁺$ is any number, then $n·D$ is Cartier on the set $S_{\reg}
    ∪ \{s_n\}$, which is an open neighbourhood of the singular point $s_n$.  If
    $m ∈ ℕ⁺$ is larger than $n$, then $n·D$ is \emph{not} Cartier in any
    neighbourhood of the singular point $s_m$.
  \end{enumerate}
  In these examples, the divisor $D$ is locally $ℚ$-Cartier but not $ℚ$-Cartier.
  Similar examples exist for factoriality.
\end{rem}

\begin{notation}[Operations on Weil $ℚ$-divisors]
  Let $X$ be a normal analytic variety and let $D ∈ ℚ\Div(X)$ be a Weil
  $ℚ$-divisor on $X$.  Following standard notation, we denote the
  \emph{round-down}\index{round-down of divisor}, \emph{round-up}\index{round-up
  of divisor} and the \emph{fractional part}\index{fractional part of divisor}
  of $D$ by $⌊D⌋$, $⌈D⌉$, and $\{D\} := D - ⌊D⌋$, respectively.  We call $D$
  \emph{reduced}\index{reduced divisor} if all its coefficients are one, and
  write $D_{\red}$ for the reduced Weil divisor obtained by setting all
  non-vanishing coefficients of $D$ to one.
\end{notation}

\begin{defn}[Pair, log pair, nc pair]\label{def:2-13}%
  A \emph{pair}\index{pair} is a tuple $(X,D)$ consisting of a normal analytic
  variety $X$ and a Weil $ℚ$-Divisor $D$ on $X$ with coefficients in the
  interval $(0 … 1] ∩ ℚ$.
  \begin{itemize}
    \item Call $(X,D)$ a \emph{log pair}\index{logarithmic pair} if $D$ is
    reduced.

    \item Call $(X,D)$ a \emph{nc pair}\index{nc pair} if $X$ is smooth and $D$
    has normal crossing support.
  \end{itemize}
\end{defn}

\begin{notation}[Open part of a pair]\label{not:2-14}%
  Let $(X,D)$ be a pair.  The \emph{open part}\index{open part of a pair} is the
  pair $(X°,D°)$, where $X° := X ∖ \supp ⌊D⌋$ and $D° := D ∩ X°$.
\end{notation}

\begin{obs}[Open sets where pair is nc]\label{obs:2-15}%
  Let $(X,D)$ be a pair.  Since the property ``nc pair'' is local and open,
  there exists a maximal open subset $X⁺ ⊆ X$ where the pair is nc.  Observe
  that this subset is big.
\end{obs}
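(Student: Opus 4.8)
The plan is to define $X⁺$ as the union of all open subsets on which the pair is nc, and then to show that the complement $Z := X ∖ X⁺$ is analytic of codimension at least two. Existence of the maximal open set is the easy half: since being an nc pair is local and open, the union $X⁺$ of all opens $U$ with $(U, D|_U)$ nc is itself open, the pair is nc on it (each point lies in some such $U$, and nc is a local condition), and it is by construction the largest set with this property. So everything reduces to proving that $X⁺$ is big.

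First I would locate $Z$. A point lies in $X⁺$ exactly when $X$ is smooth and $\supp D$ is normal crossing near it, so
\[
  Z = X_{\sing} ∪ \{\, x ∈ X_{\reg} : \supp D \text{ is not nc at } x \,\}.
\]
Away from $\supp D$, or at a point where $\supp D$ is a smooth hypersurface, the pair is automatically nc once $X$ is smooth; hence the second set is contained in the singular locus $(\supp D)_{\sing}$ of the reduced divisor $\supp D$, giving the inclusion $Z ⊆ X_{\sing} ∪ (\supp D)_{\sing}$. The codimension bound is then immediate: normality of $X$ forces Serre's condition $R_1$, so $X_{\sing}$ has codimension $≥ 2$; and $\supp D = D_{\red}$ is reduced, hence generically smooth, so $(\supp D)_{\sing}$ is a proper analytic subset of the codimension-one set $\supp D$ and therefore also has codimension $≥ 2$ in $X$.

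The hard part will be to show that $Z$ is itself analytic, rather than merely contained in the analytic set $X_{\sing} ∪ (\supp D)_{\sing}$. On the manifold $X_{\reg}$ this is the classical statement that the locus where a reduced divisor fails to be normal crossing is closed analytic, which I would invoke as a standard fact (it is cut out locally by explicit conditions on a defining equation of $\supp D$). To pass from $X_{\reg}$ to $X$, I would write $Z = X_{\sing} ∪ \overline{Z ∩ X_{\reg}}$, where the closure is taken in $X$ and where one uses that $Z ∩ X_{\reg} = X_{\reg} ∖ X⁺$ is closed in $X_{\reg}$ with all boundary points lying in $X_{\sing}$; it then remains to argue that $\overline{Z ∩ X_{\reg}}$ is analytic in $X$, for instance by a local extension argument across the small analytic set $X_{\sing}$, or by working inside the ambient analytic set $(\supp D)_{\sing} ∪ X_{\sing}$. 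Once analyticity of $Z$ is secured, the codimension estimate above shows $Z$ is small, and hence $X⁺$ is big.
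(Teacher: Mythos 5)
Your overall plan is the right one and is exactly what the paper implicitly intends (the Observation is stated without proof): maximality of $X^+$ from locality and openness of the nc condition, the containment $Z := X \setminus X^+ \subseteq X_{\sing} \cup (\supp D)_{\sing}$, and the codimension bound via normality of $X$ and generic smoothness of the reduced divisor are all correct. The genuine gap sits precisely at the step you flag, and neither of your two suggested repairs closes it as stated. A Remmert--Stein type extension requires every irreducible component of the analytic set being closed up to have dimension \emph{strictly} larger than the dimension of the set one extends across; here $Z \cap X_{\reg}$ and $X_{\sing}$ can both have codimension exactly two -- indeed $Z \cap X_{\reg}$ may be a discrete set of points while $X_{\sing}$ is positive-dimensional -- so the hypothesis can simply fail. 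Your fallback of ``working inside the ambient analytic set'' rests on the principle that a closed set which contains $X_{\sing}$, lies inside an analytic set, and is analytic off $X_{\sing}$ must itself be analytic; that principle is false. In $\mathbb{C}^2$, the set $\{x=0\} \cup \bigl\{(1/n,0) : n \in \mathbb{N}^{\geq 1}\bigr\} \cup \{(0,0)\}$ is closed, contained in the analytic set $\{x\cdot y=0\}$, and analytic off the line $\{x=0\}$, but it is not analytic at the origin.

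Two ways to finish. First, note that for every use the paper makes of this Observation (restriction and extension of reflexive sheaves), it suffices that $X^+$ \emph{contains} the big open subset $X \setminus \bigl(X_{\sing} \cup (\supp D)_{\sing}\bigr)$, and your containment already delivers exactly that; so your argument proves everything that is actually needed, at the cost of weakening ``$X^+$ is big'' to ``$X^+$ contains a big open set''. Second, if you want bigness literally as defined (the complement analytic and small), you need the genuinely nontrivial fact that the non-nc locus is analytic; this is not a closure trick but a theorem. One way to see it: with $W := \supp D$ and $n : \widetilde{W} \to W$ the normalization, failure of nc at a point of $X_{\reg}$ is detected by the conditions that $\widetilde{W}$ be smooth over the point, that $\widetilde{W} \to X$ be immersive there (a Fitting-ideal condition on $\Omega_{\widetilde{W}/X}$), and that the branch tangent hyperplanes be linearly independent (a condition on fibred products of $\widetilde{W}$ over $X$); each failure locus is analytic and pushes forward to an analytic subset of $X$ along the finite map, and because this description is global it also controls the behaviour along $X_{\sing}$, where $Z$ contains $X_{\sing}$ outright. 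Merely invoking the standard fact on the manifold $X_{\reg}$, as your sketch does, leaves the closure across $X_{\sing}$ unaddressed -- and that is the one step your proposal does not supply.
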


\begin{defn}[Gorenstein conditions]\label{def:2-16}
  ---
  \begin{itemize}
    \item Call a pair $(X,D)$ \emph{Gorenstein}\index{Gorenstein pair} if the
    $ℚ$-divisor $D$ is integral and the sheaf
    \[
      \bigl(ω_X ⊗ 𝒪_X(D) \bigr)^{\vee\vee}
    \]
    is locally free.

    \item Call a pair $(X,D)$ \emph{$ℚ$-Gorenstein}\index{Q-Gorenstein
    pair@$ℚ$-Gorenstein pair} if there exists a number $m ∈ ℕ⁺$ such that the
    divisor $m·D$ is integral and the sheaf
    \[
      \bigl( ω^{⊗m}_X ⊗ 𝒪_X(m·D) \bigr)^{\vee\vee}
    \]
    is locally free.

    \item Call a pair $(X,D)$ \emph{locally $ℚ$-Gorenstein}\index{locally
    $ℚ$-Gorenstein pair} if there exists an open covering $X = ∪_i X_i$ such
    that the pairs $(X_i, D∩ X_i)$ are $ℚ$-Gorenstein.
  \end{itemize}
\end{defn}

\begin{rem}[Gorenstein conditions and the canonical divisor]
  For pairs $(X,D)$ where a canonical divisor $K_X$ exists\footnote{A canonical
  divisor exists if the sheaf $ω_X$ has a meromorphic section.  This is the case
  if the normal analytic variety $X$ is Stein or quasi-projective.  A compact
  complex manifold of algebraic dimension zero need not have a canonical
  divisor.}, the Gorenstein conditions of Definition~\ref{def:2-16} can be
  reformulated as asking that $K_X+D$ is Cartier, $ℚ$-Cartier or locally
  $ℚ$-Cartier, respectively.
\end{rem}

\subsection{Covers and $q$-morphisms}
\approvals{Erwan & yes \\ Stefan & yes}

Large parts of this paper are concerned with quasi-finite morphism between
normal varieties of equal dimension.  For brevity, the following notation will
be used.

\begin{defn}[$q$-morphisms, relative automorphisms]
  Quasi-finite morphisms be\-tween normal analytic varieties of equal dimension
  are called \emph{$q$-morphisms}\index{q-morphism@$q$-morphism}.  If $γ :
  \what{X} → X$ is a $q$-morphism, consider the \emph{relative automorphism
  group}\index{relative automorphism group}
  \[
    \Aut_{𝒪}\bigl(\what{X}/X\bigr) := \bigl\{ g ∈ \Aut_{𝒪}(X) \::\: γ◦ g = γ \bigr\}.
  \]
\end{defn}

\begin{reminder}[Openness]\label{remi:2-19}%
  Recall that $q$-morphisms are open, \cite[Sect.~3.2]{CAS}.  The relative
  automorphism group of a $q$-morphism is finite and acts holomorphically.
\end{reminder}

\begin{reminder}[Pull-back of Weil $ℚ$-divisors]\label{remi:2-20}%
  If $γ : \what{X} → X$ is a $q$-morphism, there exists a well-defined pull-back
  morphism for Weil divisors,
  \[
    γ^* : \Div(X) → \Div(\what{X}),
  \]
  that agrees over $X_{\reg}$ with the standard pull-back of Cartier divisors,
  respects linear equivalence and therefore induces a morphism between Weil
  divisor class groups.
\end{reminder}

\begin{defn}[Covers, Galois covers]\label{def:2-21}%
  Finite morphisms between normal analytic varieties of equal dimension are
  called \emph{covers}\index{cover}.  A cover $γ : X → Y$ is called
  \emph{Galois}\index{Galois cover} if it is isomorphic to the quotient morphism
  \[
    q : X → \factor{X}{\Aut_{𝒪}\bigl(X/Y\bigr)}.
  \]
\end{defn}

Covers are necessarily surjective.  We do \emph{not} require that Galois covers
are locally biholomorphic.  We refer the reader to \cite[Thm.~4]{MR0084174} for
quotients of analytic varieties.

\begin{notation}[Branch and ramification divisor]\label{not:2-22}%
  \index{branch divisor}\index{ramification divisor}If $γ : X \twoheadrightarrow
  Y$ is a cover, write $\Branch(γ) ∈ \Div(Y)$ for the reduced Weil divisor on
  $Y$ whose support equals the codimension-one part of the branch locus of $γ$.
  Analogously, write $\Ramification(γ) ∈ \Div(X)$ for the reduced Weil divisor
  on $X$ whose support equals the codimension-one part of the ramification locus
  of $γ$.
\end{notation}

The following lemma allows us to compare logarithmic differentials on the domain
and target of a cover.  The proof follows most easily from a local computation.
We refer the reader to \cite[Sect.~2.C]{GKK08} for details and for related
results.

\begin{lem}[Criterion for log poles]\label{lem:2-23}%
  Let $γ : X \twoheadrightarrow Y$ be a cover, let $D$ be a reduced Weil divisor
  on $Y$, and let $σ$ be a meromorphic differential form with poles along $D$.
  Assume that the pairs $(Y, D)$ and $(X, γ^*D)$ are both nc.  Then, the
  following statements are equivalent.
  \begin{enumerate}
    \item The form $σ ∈ H⁰\bigl( Y,\, Ω^p_Y(*D) \bigr)$ has logarithmic poles.

    \item The pull-back form $(\diff γ)σ ∈ H⁰\bigl( X,\, Ω^p_X(*γ^*D) \bigr)$
    has logarithmic poles.  \qed
  \end{enumerate}
\end{lem}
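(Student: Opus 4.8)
The plan is to reduce the equivalence to a local statement and then invoke Deligne's characterisation of logarithmic poles: a meromorphic form $τ$ on an nc pair has logarithmic poles along a reduced divisor $E$ if and only if both $τ$ and $\diff τ$ have poles of order at most one along $E$. Since having logarithmic poles is a local condition, both pairs are nc by hypothesis, and $γ$ is surjective, it suffices to fix a point $x ∈ X$, set $y := γ(x)$, and work in nc coordinates $(y_1, …, y_n)$ centred at $y$ with $\supp D = \{y_1 ⋯ y_k = 0\}$, together with nc coordinates $(x_1, …, x_n)$ centred at $x$ with $\supp γ^*D = \{x_1 ⋯ x_l = 0\}$. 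Away from $\supp D$ both forms are holomorphic and the statement is vacuous, so I may assume that $y$ lies on $\supp D$ and — since pole order is a generic invariant of each component — that $y$ is a general point of the component under consideration.

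First I would record the local shape of the cover. For $i ≤ k$ the function $γ^* y_i$ is holomorphic and vanishes only along $\supp γ^*D$, so $γ^* y_i = u_i·\prod_{j ≤ l} x_j^{e_{ij}}$ for some unit $u_i$ and exponents $e_{ij} ∈ ℕ$, while for $i > k$ it is a unit times a boundary-free coordinate. Logarithmic differentiation gives the identity
\[
  (\diff γ)\tfrac{\diff y_i}{y_i} = \diff\log(γ^* y_i) = \sum_{j ≤ l} e_{ij}·\tfrac{\diff x_j}{x_j} + \tfrac{\diff u_i}{u_i},
\]
whose final term is holomorphic. This settles the forward implication: if $σ$ has logarithmic poles it is locally a holomorphic combination of wedges of the generators $\frac{\diff y_i}{y_i}$ and $\diff y_j$, each of which pulls back to a logarithmic form by the identity, and pull-back respects wedge products and carries holomorphic coefficients to holomorphic functions. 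Hence $(\diff γ)σ$ is logarithmic.

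For the converse I would argue by pole orders, the step I expect to be the crux. Suppose $σ$ had a pole of order $a ≥ 2$ along a component $D_i$, and choose a component $E = \{x_j = 0\}$ of $γ^*D$ dominating $D_i$, with ramification index $e := e_{ij} ≥ 1$, so that $γ^* y_i$ vanishes to order $e$ along $E$. In the leading Laurent term of $σ$ transverse to $D_i$, a summand containing $\diff y_i$ pulls back — using $\diff(γ^* y_i) = e·u_i x_j^{e-1}\diff x_j + \cdots$ — to a form of pole order $e(a-1)+1 ≥ 2$ along $E$, whereas a summand free of $\diff y_i$ pulls back to pole order $ea ≥ 2$. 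In either case $(\diff γ)σ$ acquires a pole of order at least two, contradicting the hypothesis; thus $σ$ has pole order at most one. Applying the same estimate to $\diff σ$ — whose pull-back equals $\diff\bigl((\diff γ)σ\bigr)$, the exterior derivative of a logarithmic form and hence of pole order at most one — shows that $\diff σ$ too has pole order at most one. Deligne's criterion then yields that $σ$ is logarithmic.

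The genuine obstacle, to which I would devote the most care, is to rule out cancellation in the pull-back that might artificially lower the pole order. The two families of leading summands are distinguished by the presence or absence of $\diff x_j$, so they cannot cancel against one another; and within each family the leading coefficient is $γ^*$ of the generically nonzero leading coefficient of $σ$, multiplied by a nonzero numerical Jacobian factor. As $γ$ is dominant, $γ^*$ is injective on holomorphic germs, so this coefficient does not vanish and the computed order $e(a-1)+1$ is genuine. This non-vanishing of the leading residue along $E$ is what makes the pole-order count rigorous and completes the equivalence.
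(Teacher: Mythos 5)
Your proof is correct and is essentially the argument the paper has in mind: Lemma~\ref{lem:2-23} is stated without proof, deferring to \cite[Sect.~2.C]{GKK08}, where the same local computation is carried out --- reduction to general points of the branch components, the normal form $x_1 \mapsto x_1^e$ for the cover there, and the generator/pole-order count with the observation that the summands with and without $\diff x_1$ cannot cancel. Your packaging of the converse via Deligne's characterisation (logarithmic $\Leftrightarrow$ $\sigma$ and $\diff\sigma$ have at most first-order poles) is a clean way to make the reduction to general points legitimate, since pole order along a prime divisor is a codimension-one condition on the smooth space $Y$ and extends across the crossing locus by Hartogs.
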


\subsection{\texorpdfstring{$\cC$}{C}-pairs and adapted morphisms}
\label{sec:2-5}
\approvals{Erwan & yes \\ Stefan & yes}

The key notion of the present paper is the $\cC$-pair, also called
\emph{\foreignlanguage{french}{orbifolde géométrique}}\index{orbifolde
géométrique} by Campana \cite[Def.~2.1]{MR2831280} or \emph{Campana
constellation}\index{Campana constellation} by Abramovich
\cite[Lecture~2]{Abramovich}.  We recall the definition for the reader's
convenience.

\begin{defn}[\protect{$\cC$-pairs, see \cite[Sect.~2.1]{MR2831280}}]\label{def:2-24}%
  A $\cC$-pair\index{C-pair@$\cC$-pair} is a pair $(X, D)$ where the Weil
  $ℚ$-divisor $D$ is of the form
  \[
    D = \sum_i \frac{m_i-1}{m_i}·D_i,
  \]
  with $m_i ∈ ℕ^{≥ 2} ∪ \{∞\}$ and $\frac{∞-1}{∞} = 1$.
\end{defn}

\begin{notation}[$\cC$-multiplicity]
  In the setting of Definition~\ref{def:2-24}, if $H ⊂ X$ is any prime divisor,
  define the \emph{$\cC$-multiplicity of $D$ along
  $H$}\index{C-multiplicity@$\cC$-multiplicity} as
  \begin{align*}
    \mult_{\cC, H} D & := \left\{
    \begin{matrix}
      m_i & \text{if there exists an index $i$ with $H = D_i$} \\
      1 & \text{otherwise.}
    \end{matrix}
    \right.  \\
    \intertext{It will sometimes be convenient to consider the following Weil $ℚ$-divisor}
    D_{\orb} & := \sum_{i \:|\: m_i < ∞} \frac{1}{m_i}·D_i ∈ ℚ\Div(X).
  \end{align*}
\end{notation}

$\cC$-pairs come with a class of distinguished morphisms, called \emph{adapted
morphisms}.

\begin{defn}[\protect{Adapted morphism, compare \cite[Sect.~5.1]{MR3949026}}]\label{def:2-26}%
  Consider a $\cC$-pair $(X, D)$.  A $q$-morphism $γ : \what{X} → X$ is called
  \emph{adapted for $(X, D)$}\index{adapted!q-morphism@$q$-morphism} if $γ^*
  D_{\orb}$ is integral.  The morphism $γ$ is called \emph{strongly adapted for
  $(X, D)$}\index{strongly adapted morphism} if $γ^* D_{\orb}$ is reduced.
\end{defn}

The word ``adapted'' is not used uniformly in the literature.  Morphisms that we
call ``adapted'' are called ``subadapted'' in \cite{MR2860268} and other papers.

\begin{obs}[Composition]
  In the setup of Definition~\ref{def:2-26}, let
  \[
    \begin{tikzcd}
      \what{X}_1 \ar[r, "γ_1"] & \what{X}_2 \ar[r, "γ_2"] & X
    \end{tikzcd}
  \]
  be a sequence of $q$-morphisms.  If $γ_2$ is adapted for $(X,D)$, then so is
  $γ_2◦γ_1$.  \qed
\end{obs}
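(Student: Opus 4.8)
The plan is to reduce the claim to two facts about the Weil-divisor pull-back of Reminder~\ref{remi:2-20}, extended $ℚ$-linearly to $ℚ\Div$. Writing $γ := γ_2 ◦ γ_1$, I must show that $γ^* D_{\orb}$ is integral, knowing that $γ_2^* D_{\orb}$ is integral by the hypothesis that $γ_2$ is adapted (Definition~\ref{def:2-26}). Granting (i) functoriality of the pull-back, $(γ_2 ◦ γ_1)^* = γ_1^* ◦ γ_2^*$ as maps $ℚ\Div(X) → ℚ\Div(\what{X}_1)$, the conclusion is immediate: one has $γ^* D_{\orb} = γ_1^*\bigl(γ_2^* D_{\orb}\bigr)$, and since $γ_2^* D_{\orb}$ is an \emph{integral} Weil divisor on $\what{X}_2$ by hypothesis, its image under the group homomorphism $γ_1^* : \Div(\what{X}_2) → \Div(\what{X}_1)$ is again integral. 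In other words, once functoriality is available, the observation is a tautology, because $γ_1^*$ is by construction a homomorphism of the free abelian groups generated by prime divisors and hence carries integral divisors to integral divisors. So the entire content lies in establishing (i).

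To prove functoriality, I would compare the two pull-backs on a common big open subset. Set $U := X_{\reg}$ and, recursively, $U_2 := γ_2^{-1}(U) ∩ (\what{X}_2)_{\reg}$ and $U_1 := γ_1^{-1}(U_2) ∩ (\what{X}_1)_{\reg}$. Over these loci all spaces involved are smooth and the divisors in play are Cartier, so by the defining property in Reminder~\ref{remi:2-20} both $γ_1^* ◦ γ_2^*$ and $(γ_2 ◦ γ_1)^*$ restrict there to the classical Cartier pull-back, which is strictly functorial. Hence the Weil divisors $γ_1^*\bigl(γ_2^* D_{\orb}\bigr)$ and $(γ_2 ◦ γ_1)^* D_{\orb}$ agree on $U_1$. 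Since a Weil divisor on a normal variety is uniquely determined by its restriction to any big open subset, it remains only to check that $U_1$ is big in $\what{X}_1$; once this is done, the two divisors coincide everywhere and functoriality follows.

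The step requiring genuine care — and the one I expect to be the main obstacle — is verifying that $U_1$ is big, equivalently that its complementary analytic set is small. This rests on two points. First, the singular loci $X_{\sing}$, $(\what{X}_2)_{\sing}$, and $(\what{X}_1)_{\sing}$ are small by normality. Second, because $γ_1$ and $γ_2$ are $q$-morphisms, hence quasi-finite between varieties of equal dimension, the preimage of a small set is again small: a quasi-finite morphism does not raise dimension, so the condition ``codimension $≥ 2$'' is preserved, and preimages are open by Reminder~\ref{remi:2-19}. Combining these, the set $\what{X}_1 ∖ U_1$ is a finite union of small sets, so $U_1$ is big, as required. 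I would isolate the preimage-of-small-is-small statement as a short auxiliary remark, since it recurs elsewhere. Finally, I would note that the hypothesis cannot be strengthened to ``strongly adapted'': reducedness of $γ_2^* D_{\orb}$ need not survive the further pull-back by $γ_1$, which is precisely why the observation is stated for adapted, rather than strongly adapted, morphisms.
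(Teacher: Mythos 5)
Your proof is correct, and since the paper records this Observation without proof (it is stated with a qed-sign as homework for the reader), your route is exactly the intended standard verification: functoriality $(\gamma_2\circ\gamma_1)^* = \gamma_1^*\circ\gamma_2^*$ of the Weil pull-back of Reminder~\ref{remi:2-20}, checked on a big open set --- with your bigness argument sound, since $q$-morphisms have discrete fibres and therefore preimages of small sets are small --- combined with the fact that $\gamma_1^*$ carries integral divisors of $\Div\bigl(\what{X}_2\bigr)$ to integral divisors of $\Div\bigl(\what{X}_1\bigr)$. Your closing remark that the analogous statement for ``strongly adapted'' fails, because reducedness of $\gamma_2^* D_{\orb}$ need not survive the further pull-back by $\gamma_1$, is likewise correct and explains why the Observation is phrased for adapted morphisms only.
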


\subsubsection{Uniformization}
\approvals{Erwan & yes \\ Stefan & yes}

The following Sections~\ref{sec:3} and \ref{sec:4} introduce adapted (reflexive)
differentials, a class of differential forms that exist on the domain of a
$q$-morphism.  Uniformizations are adapted morphisms where these differentials
take a particularly simple form.  Example~\vref{ex:4-6} compares adapted
differentials with Kähler differentials and makes this vague statement precise.

\begin{defn}[Uniformization of a $\cC$-pair]\label{def:2-28}%
  Let $(X,D)$ be a $\cC$-pair.  A \emph{uniformization of
  $(X,D)$}\index{uniformization of a $\cC$-pair} is a strongly adapted cover $u
  : X_u \twoheadrightarrow X$ where $\Branch(u) ⊆ \supp D$ and $(X_u, \supp u^*
  ⌊D⌋)$ has normal crossings.
\end{defn}

\begin{example}[Neil's parabola and three lines through a common point]\label{ex:2-29}%
  The $\cC$-pairs
  \[
    \textstyle
    \bigl(ℂ²,\; \frac{1}{2}·\{x²=y³\} \bigr)
    \quad\text{and}\quad
    \bigl(ℂ²,\; \frac{2}{3}·\{x=0\} + \frac{2}{3}·\{y=0\} +\frac{1}{2}·\{x=y\} \bigr)
  \]
  are uniformizable.  The proof is part of the classification of orbifaces with
  smooth base, \cite{MR2306159}.  In each example, one computes that the
  orbifold fundamental group is finite.  The associated orbifold-universal
  branched covering is a locally simply connected, normal surface and hence
  smooth by Mumford's classic result \cite[Thm.~on p.~229]{MR0153682}.  We refer
  the reader to \cite[Sect.~3 and Thm.~3.2]{MR2306159} for precise statements, a
  full classification with more examples, and details.
\end{example}

\begin{rem}[Branch divisor and branch locus]
  Recall from Notation~\ref{not:2-22} that $\Branch(u)$ denotes the
  codimension-one part of the branch locus for the morphism $u$.  Uniformizations
  may branch over a codimension-two set that is not contained in $\supp D$.
\end{rem}

\begin{rem}[Uniformizations and $q$-morphisms]
  Let $(X,D)$ be a $\cC$-pair and let $γ : \what{X} → X$ be any $q$-morphism.
  Then, there exists a maximal open subset $X⁺ ⊆ X$ over which $γ$ is a
  uniformization.  The set $X⁺ ⊆ X$ is Zariski open, but not necessarily big.
\end{rem}

\begin{defn}[Uniformizable $\cC$-pairs]\label{def:2-32}
  A $\cC$-pair $(X,D)$ is \emph{uniformizable}\index{uniformizable $\cC$-pair}
  if there exists a uniformization.  It is \emph{locally
  uniformizable}\index{locally uniformizable $\cC$-pair} if every point of $X$
  has a uniformizable neighbourhood.
\end{defn}

\begin{rem}
  If a $\cC$-pair is nc, then it is locally uniformizable.  If $(X,D)$ is any
  $\cC$-pair, then there exists a maximal open subset $X^{\lu} ⊆ X$ over which
  $(X,D)$ is locally uniformizable.  The set $X^{\lu} ⊆ X$ is Zariski open and
  big.
\end{rem}

\begin{rem}[$ℚ$-factoriality of (locally) uniformizable pairs]\label{rem:2-34}
  Uniformizable pairs are $ℚ$-factorial.  Locally uniformizable pairs are
  locally $ℚ$-factorial.  Both statements follow \cite[Lem.~5.16]{KM98}, whose
  (short) proof applies without change in the analytic setting.
\end{rem}

\begin{rem}[Singularities of locally uniformizable pairs]
  If a $\cC$-pair $(X,D)$ is locally uniformizable, then the pair $(X,D)$ is
  klt, \cite[Prop.~5.13]{KM98}.  Recalling that klt singularities are rational,
  \cite[Thm.~5.22]{KM98} and \cite[Thm.~3.12]{FujinoMMP}\footnote{See also the
  vanishing theorems proven in \cite{FujinoVanishing}.}, it follows that $X$ has
  rational singularities.
\end{rem}

\subsubsection{Existence of adapted covers}
\approvals{Erwan & yes \\ Stefan & yes}

A standard computation shows that strongly adapted covers always exist
locally.

\begin{lem}[Strongly adapted covers exist locally]\label{lem:2-36}%
  Let $(X,D)$ be a $\cC$-pair as in Definition~\ref{def:2-24}.  Assume that one
  of the following holds.
  \begin{enumerate}
    \item The space $X$ is Stein and the divisor $D$ has only finitely many
    components.

    \item The space $X$ is projective.
  \end{enumerate}
  If $x ∈ X ∖ \supp \{D\}$ is any point, then there exists a strongly adapted
  Galois cover with cyclic group that is locally biholomorphic over $x$.
\end{lem}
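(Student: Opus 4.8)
The plan is to realise $γ$ as a cyclic branched cover of the classical type ``$z^N = s$''. Set $N$ to be a common multiple of the finitely many finite multiplicities occurring in $D_{\orb} = \sum_{m_i < ∞} \frac{1}{m_i}·D_i$, say $N := \operatorname{lcm}_i m_i$, and write $Δ := \sum_i \frac{N}{m_i}·D_i$, an integral effective Weil divisor supported on $\supp\{D\}$ and hence avoiding $x$. The heart of the construction is to produce a Weil divisor $M$ on $X$ together with a section $s ∈ H⁰\bigl(X,\, 𝒪_X(N·M)\bigr)$ whose associated divisor has the form
\[
  \operatorname{div}(s) = Δ + R,
\]
where $R ≥ 0$ is effective, contains none of the $D_i$, is reduced (or at least has one coefficient coprime to $N$), and avoids $x$. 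Granting such a pair $(M,s)$, I would let $\what{X}$ be the normalisation of the cyclic cover $\Spec_X\bigl(\bigoplus_{j=0}^{N-1} 𝒪_X(-j·M)\bigr)$, with algebra structure induced by the map $𝒪_X(-N·M) → 𝒪_X$ given by $s$, and let $γ : \what{X} → X$ be the natural morphism.

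The verification of the required properties is then the ``standard computation''. The group $μ_N$ acts on $\what{X}$ with quotient $X$, so $γ$ is a Galois cover with cyclic group. Over the generic point of each $D_i$ — a smooth point of $X$ — the cover reads locally as $z^N = (\text{unit})·t_i^{N/m_i}$, so the ramification index of $γ$ along every prime divisor lying over $D_i$ equals $N/\gcd\bigl(N,\, N/m_i\bigr) = m_i$. Consequently $γ^* D_{\orb} = \sum_i \frac{1}{m_i}·γ^* D_i$ is reduced (Reminder~\ref{remi:2-20}), that is, $γ$ is strongly adapted. Since $s(x) ≠ 0$, the section is a unit near $x$ and $γ$ is étale there, hence locally biholomorphic over $x$. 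Finally, the reduced component coming from $R$ forces the greatest common divisor of $N$ and the coefficients of $\operatorname{div}(s)$ to be $1$, which by the standard theory of cyclic covers guarantees that $\what{X}$ is connected, so that it is genuinely an (irreducible, normal) analytic variety and $γ$ a cover in the sense of Definition~\ref{def:2-21}.

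It remains to produce $(M,s)$, and this is where the two hypotheses enter and where the genuine difficulty lies: on a normal — possibly non-$ℚ$-factorial — variety the Weil divisor $Δ$ need not be $ℚ$-Cartier, let alone $N$-divisible in the class group, so one cannot simply take an $N$-th root of $𝒪_X(Δ)$. In the projective case I would fix a very ample divisor $H$ and invoke Serre's theorem: for $t ≫ 0$ the reflexive sheaf $𝒪_X(N·t·H - Δ)$ is globally generated, so a general section has reduced divisor $R$ that avoids $x$ and contains no $D_i$ (Bertini, characteristic zero); setting $M := t·H$ and pushing this section into $𝒪_X(N·t·H)$ along $𝒪_X(-Δ) \hookrightarrow 𝒪_X$ yields the desired $s$ with $\operatorname{div}(s) = Δ + R$. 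In the Stein case I would instead use Cartan's Theorems~A and~B together with $\Pic(X) ≅ H²(X,ℤ)$: coherent sheaves are globally generated and effective divisors represent every class, so one may choose an effective $R$ (avoiding $x$ and meeting the $D_i$ properly) whose class makes $[Δ + R]$ divisible by $N$ in the class group, and then take $s$ to be a defining section of $𝒪_X(N·M)$. The main obstacle throughout is precisely this $N$-divisibility over a singular base, arranged while keeping $R$ disjoint from $x$ and from $\supp D$; finiteness of the number of components (Stein case) and availability of ample bundles (projective case) are exactly the inputs that make it solvable.
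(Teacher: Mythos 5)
Your overall strategy --- a cyclic $N$-th root cover extracted from a section $s$ with $\operatorname{div}(s) = \Delta + R$ --- is exactly the mechanism of the paper's proof, and your projective branch is sound: with $M := t\cdot H$ Cartier, Serre's theorem and Bertini produce the required $s$, and this is essentially the construction in Lazarsfeld, Sect.~4.1.B, to which the paper simply defers for the projective case. Your ramification computation and the strong-adaptedness check are correct, and a small simplification is available for connectedness: with $N = \operatorname{lcm}_i m_i$ the gcd of $N$ and the coefficients $N/m_i$ of $\Delta$ is automatically $1$ (a prime $p$ dividing all $N/m_i$ would force every $m_i$ to divide $N/p$, contradicting the lcm), so you never need $R$ reduced.

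The Stein branch, however, contains a genuine gap. First, $\Pic(X) \cong H^2(X,\mathbb{Z})$ is a statement about line bundles, i.e.\ Cartier divisors; your $\Delta$ is merely a Weil divisor on a possibly non-$\mathbb{Q}$-factorial space, so the relevant group is the Weil divisor class group $\operatorname{Cl}(X)$, to which neither the Oka principle nor ``effective divisors represent every class'' applies without a separate argument. Second, and more seriously, your verification ``$s(x) \neq 0$, hence $\gamma$ is étale over $x$'' breaks down precisely in the situation the Lemma allows: $x$ may be a \emph{singular} point of $X$ (the hypothesis only excludes $x \in \supp\{D\}$; points of $\lfloor D\rfloor$ or other singular points are permitted). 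Where $s$ trivialises $\mathcal{O}_X(-N\cdot M)$, the cover $\Spec_X \bigl(\bigoplus_j \mathcal{O}_X(-j\cdot M)\bigr)$ is the index-one cover of $M$, which is branched over every point at which the local class of $M$ is nontrivial, regardless of $s$ being a unit: for the quadric cone $\{xy = z^2\}$ with $M$ a ruling line and $N = 2$ one gets $\mathbb{C}^2 \to \mathbb{C}^2/\{\pm 1\}$, two-to-one over the vertex. Your recipe ``choose $R$ so that $[\Delta + R]$ is divisible by $N$'' puts no constraint on the local class of $M$ at $x$, so local biholomorphy over $x$ can fail. The fix recovers the paper's own argument: apply Cartan's Theorem~A directly to the coherent divisorial sheaf $\mathcal{O}_X(-\Delta) \subseteq \mathcal{O}_X$ --- the paper does this component-wise, choosing $f_i \in H^0\bigl(X, \mathcal{O}_X(-D_i)\bigr)$ vanishing to order one along $D_i$, not along $D_j$ for $j \neq i$, and not at $x$, then setting $s := \prod f_i^{N/m_i}$. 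This forces $[\Delta + R] = 0$, i.e.\ $M = 0$, $s$ is a global holomorphic function, and $\what{X}$ is the normalisation of $\{y^N = s\} \subseteq X \times \mathbb{C}$, manifestly a disjoint union of sheets over a neighbourhood of $x$ since a unit in the local ring of a normal complex space has an $N$-th root. Finiteness of the set of components of $D$ is what keeps these genericity conditions finite in number, which is exactly where that hypothesis enters.
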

\begin{proof}
  We consider only the Stein setting and refer the reader to
  \cite[Sect.~4.1.B]{Laz04-I} for the projective setting.  The assumption that
  $X$ is Stein guarantees that there exist functions $f_i ∈ 𝒪_X(X)$ such that
  the following holds for every index $i$.
  \begin{itemize}
    \item The function $f_i$ vanishes along the Weil divisor $D_i$ to order one.

    \item The function $f_i$ does not vanish along any of the Weil divisors
    $D_j$, for $j ≠ i$.

    \item The function $f_i$ does not vanish at $x$.
  \end{itemize}
  We can then set $n := \operatorname{lcm} \{ m_i \::\: m_i < ∞\}$ and
  \[
    \what{X} := \text{normalisation of } \Bigl\{ (x,y) ∈ X⨯ 𝔸¹ \::\:
    y^n = \prod_{i \:|\: m_i < ∞} f_i^{n/m_i}(x) \Bigr\}.  \qedhere
  \]
\end{proof}

\begin{rem}
  In the proof of Lemma~\ref{lem:2-36}, if the components of $\{D\}$ are Cartier
  and linearly trivial, then we can find functions $f_i$ that vanish only at
  $D_i$, and the construction will yield a cover that is locally biholomorphic
  away from $\supp \{D\}$.
\end{rem}

\begin{rem}
  If $(X,D)$ is a $\cC$-pair where $X$ is compact but not projective, then it is
  not clear that an adapted cover exists.
\end{rem}

\subsubsection{$\cC$-pairs and root stacks}
\approvals{Erwan & yes \\ Stefan & yes}

Looking at the definition of an adapted morphism, the reader might wonder about
the relation of $\cC$-pairs and root stacks.  We argue that the two notions are
conceptually quite different.  For now, observe that Definitions~\ref{def:2-24}
and \ref{def:2-26} do not ask that $D$ or $D_i$ are Cartier.  For the
construction of root stack, the Cartier assumption is however essential.

\subsection{Weil divisorial sheaves}
\approvals{Erwan & yes \\ Stefan & yes}

Integral Weil divisors on normal spaces define \emph{Weil divisorial sheaves},
that is, coherent, reflexive sheaves of rank one.  Since we will later need to
discuss Weil divisorial sheaves on singular spaces, we briefly recall the
relevant definitions, facts and constructions.

\begin{notation}[Weil divisorial sheaves]
  Let $X$ be a normal analytic variety and let $D = \sum m_i·D_i$ be an
  effective Weil divisor on $X$.  Following standard notation, consider the
  associated Weil divisorial sheaf\index{Weil divisorial sheaf}
  \[
    𝒪_X(-D) = \Bigl( \bigotimes_i 𝒥_{D_i}^{⊗ m_i} \Bigr)^{\vee\vee}.
  \]
  By construction, this sheaf is reflexive of rank one, and comes with a natural
  embedding $𝒪_X(-D) ↪ 𝒪_X$.  We denote the quotient by $𝒪_D :=
  𝒪_X/𝒪_X(-D)$.
\end{notation}

\begin{rem}[Inclusions and projections]\label{rem:2-40} %
  Let $X$ be a normal analytic variety and let $D_1 ≤ D_2$ be two effective Weil
  divisors on $X$.  Then, $𝒪_X(-D_2) ⊆ 𝒪_X(-D_1)$, and so there exists a
  natural surjection $𝒪_{D_2} \twoheadrightarrow 𝒪_{D_1}$.
\end{rem}

In the setting of Reminder~\ref{remi:2-20}, where a meaningful pull-back of a
Weil divisor can be defined, the sheaf $𝒪_{\what{X}}(-γ^*D)$ associated to a
pull-back divisor is generally not equal to the pull-back sheaf $γ^*𝒪_X(-D)$;
note that the pull-back sheaf need not be reflexive or torsion free.  Still,
there exists a comparison morphism that becomes isomorphic if $D$ is Cartier.

\begin{rem}[Pull-back of divisors and sheaves]\label{rem:2-41} %
  Let $γ : \what{X} → X$ be a $q$-morphism and let $D ∈ \Div(X)$ be an effective
  Weil divisor on $X$.  By construction, there exists a canonical morphism
  \[
    γ^*𝒪_X(-D) → 𝒪_{\what{X}}\bigl(-γ^*D\bigr)
  \]
  that is isomorphic over the big open subset of $X$ where $D$ is Cartier.  If
  $D$ is Cartier on all of $X$, then $𝒪_X(-D)$ is invertible, the pull-back
  sequence reads
  \[
    0 → 𝒪_{\what{X}}(-γ^*D) → 𝒪_{\what{X}} → γ^*𝒪_D → 0,
  \]
  and is exact.  It follows that $γ^*𝒪_D = 𝒪_{γ^* D}$.
\end{rem}


\phantomsection\addcontentsline{toc}{part}{Adapted tensors and adapted reflexive tensors}

%
%
\svnid{$Id: 03-adaptedTensors.tex 946 2024-11-11 08:37:55Z kebekus $}
\selectlanguage{british}

\section{Adapted tensors}
\label{sec:3}
\subversionInfo
\approvals{Erwan & yes\\ Stefan & yes}

If $X$ is a compact Kähler manifold, the sheaves $Ω^p_X$ of holomorphic
differentials govern the geometry and topology of $X$.  If $D ∈ \Div(X)$ is a
smooth prime divisor, the sheaves $Ω^p_X(\log D)$ of logarithmic differentials
govern the geometry and topology of $X ∖ D$.

The sequence $(X, \frac{n-1}{n}·D)$ of $\cC$-pairs is meant to interpolate
between the compact manifold $X$ and the non-compact manifold $X ∖ D$.  Along
these lines, the \emph{sheaves of adapted differentials} are meant to
interpolate between the sheaves $Ω^p_X$ and the larger sheaf $Ω¹_X(\log D)$.
Conceptually, an adapted differential is a differential on $X$ with a
logarithmic pole of fractional pole order $\frac{n-1}{n}$ along $D$.
Differentials with fractional pole order do not exist on $X$ in any meaningful
way.  To make the vague concept precise, we work on covers, where adapted
differentials can be defined as differentials that ``locally look'' as if they
were the pull-back of differentials on $X$ that had poles of the appropriate
order.

\subsubsection*{Relation to the literature}
\approvals{Erwan & yes\\ Stefan & yes}

Adapted differentials have been discussed in the literature, but usually not in
a very systematic fashion.  We refer the reader to Campana's original papers
\cite{Cam04, MR2831280}, to the paper \cite{MR3949026} of Campana and Păun, and
to Miyaoka's classic \cite{Miy08} for details, applications and further
references.  The paper \cite[Sect.~6]{KPS22} discusses related definitions in
positive characteristic, the paper \cite[Sect.~3]{CKT21} spells out equivalent,
but perhaps more elementary-looking definitions.

Taking a somewhat different point of view, Pedro Núñez considered the algebraic
setting, where $X$ is a scheme, and constructed a presheaf of ``adapted
differentials'' on the category $\operatorname{Sch}/X$ that is a sheaf with
respect to the qfh-topology, \cite{Nun22}.  If everything is algebraic, the
constructions outlined in this section are compatible with those of Núñez.

\subsection{Sample computations}
\approvals{Erwan & yes \\ Stefan & yes}
\label{sec:3-1}

To prepare the reader for the somewhat technical discussion in
Definition~\vref{def:3-2}, we illustrate the main ideas in two simple cases
first.  The reader who is already familiar with ``adapted differentials'' might
want to skip this section.

\subsubsection{Sample computation in dimension one}
\approvals{Erwan & yes\\ Stefan & yes}
\label{sec:3-1-1}

Equip $𝔸¹$ with a coordinate $z$ and consider the simple case where
\[
  \textstyle(X,D) = \Bigl(𝔸¹,\, \frac{m-1}{m}·\{0\}\Bigr), \quad \text{for one number }m ∈ ℕ⁺.
\]
The sheaf $Ω¹_{(X,D)}$ of differential forms with logarithmic poles of order
$\frac{m_i-1}{m_i}$ should then ideally look as shown in Table~\ref{tab:1}.
\begin{table}
  \[
    \begin{matrix}
      Ω¹_X & ⊆ & Ω¹_{(X,D)} & ⊆ & Ω¹_X(\log D) \\[3mm]
      \bigl\langle dz \bigr\rangle_{𝒪_X} & ⊆ & \bigl\langle z^{-\frac{m-1}{m}}·dz \bigr\rangle_{𝒪_X} & ⊆ & \bigl\langle z^{-1}·dz \bigr\rangle_{𝒪_X}.
    \end{matrix}
  \]
  \caption{Hypothetical sheaves and generators on $X = 𝔸¹$}
  \label{tab:1}
\end{table}
The reader will immediately note that $z^{-\frac{m-1}{m}}$ cannot possibly exist
as a single-valued function on $X$, unless $m = 1$ or $m = ∞$.  To resolve the
problem, write $\what{X} := 𝔸¹$, choose a number $α ∈ ℕ⁺$ and consider the
cover
\[
  γ : \what{X} → X, \quad z ↦ z^{α·m}.
\]
Applying the formal rules of derivation, we find that
\[
  dγ\left(z^{-\frac{m-1}{m}}·dz\right) = α m·z^{α-1}·dz.
\]
Accordingly, as shown in Table~\ref{tab:2}, there exists a sheaf $Ω¹_{(X,D,γ)}$
of differentials on $\what{X}$ that looks as if it was the pull-back $γ^*
Ω¹_{(X,D)}$ of the hypothetical sheaf $Ω¹_{(X,D)}$ on $X$.
\begin{table}
  \[
    \begin{matrix}
      γ^* Ω¹_X & ⊆ & Ω¹_{(X,D,γ)} & ⊆ & γ^* Ω¹_X(\log D) \\[3mm]
      \bigl\langle z^{α m-1}·dz \bigr\rangle_{𝒪_{\what{X}}} & ⊆ & \bigl\langle z^{α-1}·dz \bigr\rangle_{𝒪_{\what{X}}} & ⊆ & \bigl\langle z^{-1}·dz \bigr\rangle_{𝒪_{\what{X}}}.
    \end{matrix}
  \]
  \caption{Sheaves and generators on $\what X = 𝔸¹$}
  \label{tab:2}
\end{table}
The inclusion $γ^* Ω¹_X ⊆ Ω¹_{(X,D,γ)}$ allows interpreting sections in
$Ω¹_{(X,D,γ)}$ as meromorphic sections of $γ^* Ω¹_X$ with a pole of order
$α·(m-1)$ along $\{ 0\}$.  Given that $γ^* D = α(m-1)·\{ 0\}$, we can write
$Ω¹_{(X,D,γ)}$ in a coordinate-free way as
\begin{equation}\label{eq:3-0-1}%
  Ω¹_{(X,D,γ)} = 𝒪_{\what{X}}(γ^* D) ⊗ γ^* Ω¹_X.
\end{equation}

\subsubsection{Sample computation in dimension two}
\approvals{Erwan & yes\\ Stefan & yes}
\label{sec:3-1-2}

Next, equip $𝔸²$ with coordinates $x, y$ and consider the pair
\[
  \textstyle (X,D) = \Bigl(𝔸², \frac{m-1}{m}·\{y=0\}\Bigr), \quad \text{for one number } m ∈ ℕ⁺.
\]
The sheaf $Ω¹_{(X,D)}$ of differential forms with logarithmic poles of order
$\frac{m_i-1}{m_i}$ should then ideally look as shown in Table~\ref{tab:3}.
\begin{table}
  \[
    \begin{matrix}
      Ω¹_X & ⊆ & Ω¹_{(X,D)} & ⊆ & Ω¹_X(\log D) \\[3mm]
      \bigl\langle dx,\, dy \bigr\rangle_{𝒪_X} & ⊆ & \bigl\langle dx,\, y^{-\frac{m-1}{m}}·dy \bigr\rangle_{𝒪_X} & ⊆ & \bigl\langle dx,\, y^{-1}·dy \bigr\rangle_{𝒪_X}.
    \end{matrix}
  \]
  \caption{Hypothetical sheaves and generators on $X = 𝔸²$}
  \label{tab:3}
\end{table}
As before, write $\what{X} := 𝔸²$, choose a number $α ∈ ℕ⁺$ and consider the
cover
\[
  γ : \what{X} → X, \quad (x,y) ↦ \bigl(x,y^{α·m}\bigr).
\]
Again, we find a sheaf $Ω¹_{(X,D,γ)}$ of differentials on $\what{X}$ that looks
as if it was the pull-back $γ^* Ω¹_{(X,D)}$ of the hypothetical sheaf
$Ω¹_{(X,D)} = \langle dx, y^{-\frac{m-1}{m}}·dy \rangle_{𝒪_X}$.
Table~\ref{tab:4} spells out the details.
\begin{table}
  \[
    \begin{matrix}
      γ^* Ω¹_X & ⊆ & Ω¹_{(X,D,γ)} & ⊆ & γ^* Ω¹_X(\log D) \\[3mm]
      \bigl\langle dx,\, y^{α m-1}·dy \bigr\rangle_{𝒪_{\what{X}}} & ⊆ & \bigl\langle dx,\, y^{α-1}·dy \bigr\rangle_{𝒪_{\what{X}}} & ⊆ & \bigl\langle dx,\, y^{-1}·dy \bigr\rangle_{𝒪_{\what{X}}}.
    \end{matrix}
  \]
  \caption{Sheaves and generators on $\what X = 𝔸²$}
  \label{tab:4}
\end{table}
In contrast to the one-dimension case, observe that Equation~\eqref{eq:3-0-1} no
longer describes $Ω¹_{(X,D,γ)}$ in the present setting.  In fact, it turns out
that
\[
  𝒪_{\what{X}}(γ^* D) ⊗ γ^* Ω¹_X = \bigl\langle y^{-α·(m-1)}·dx,\, y^{α-1}·dy \bigr\rangle_{𝒪_{\what{X}}}
\]
is a strict supersheaf of $Ω¹_{(X,D,γ)}$.  To this end, observe that sections in
$𝒪_{\what{X}}(γ^* D) ⊗ γ^* Ω¹_X$ are required to have the correct pole order,
but need not have logarithmic poles.  The following coordinate-free description
avoids that problem,
\begin{equation}\label{eq:3-0-2}
  Ω¹_{(X,D,γ)} = \underbrace{𝒪_{\what{X}}(γ^* D) ⊗ γ^* Ω¹_X}_{=: 𝒜_{1,1}} \:∩\: \underbrace{Ω¹_{\what{X}}(\log γ^* D)}_{=: ℬ_{1,1}}.
\end{equation}
Here, the intersection takes place in the sheaf $𝒪_{\what{X}}(*γ^*D)⊗\left(γ^*
Ω¹_X \right)$ that contains both $𝒜_{1,1}$ and $ℬ_{1,1}$.

\subsubsection{Further generalizations}
\approvals{Erwan & yes\\ Stefan & yes}

In principle, one would like to take \eqref{eq:3-0-2} as the definition of
$Ω¹_{(X,D,γ)}$.  There are, however, several further generalizations that we
would like to consider.
\begin{description}
  \item[Arbitrary $q$-morphisms] The morphisms $γ$ that we considered in
  Section~\ref{sec:3-1-1}--\ref{sec:3-1-2} were adapted covers of the pair
  $(X,D)$.  In practise, adapted covers do not always exist\footnote{A compact
  complex manifold of algebraic dimension zero need not have any global covers
  at all!}.  For technical reasons, we will need to define $Ω¹_{(X,D,γ)}$ for
  $q$-morphisms $γ$ that are not necessarily adapted.  There, we use the
  round-down of $γ^* D$ as the best approximation, replacing the sheaf
  $𝒜_{1,1}$ of \eqref{eq:3-0-2} by
  \[
    𝒜_{1,1} := 𝒪_{\what{X}}\bigl(⌊γ^* D⌋\bigr) ⊗ γ^* Ω¹_X.
  \]

  \item[Logarithmic boundary components]
  Sections~\ref{sec:3-1-1}--\ref{sec:3-1-2} considered pairs $(X,D)$ where $⌊D⌋
  = 0$.  In case where $D$ is reduced, the setting simplifies dramatically, as
  the ``sheaf $Ω^p_{(X,D)}$ of differential forms with logarithmic poles of
  order $\frac{∞-1}{∞}$'' is simply $Ω¹_X(\log D)$.  In general, where $D$ is
  allowed to have reduced and non-reduced components, we consider the fractional
  and integral part of $D$ separately, replacing the sheaf $𝒜_{1,1}$ of
  \eqref{eq:3-0-2} by
  \[
    𝒜_{1,1} := 𝒪_{\what{X}}\bigl(⌊γ^* \{D\}⌋\bigr) ⊗ γ^* Ω¹_X\bigl(\log ⌊D⌋\bigr).
  \]
  
  \item[Higher-order tensors] In addition to sections of $Ω¹_•$, we will also
  need to consider $p$-forms and more generally sections in symmetric powers of
  $Ω^p_•$.  Again, this forces us to generalize, replacing $𝒜_{1,1}$ and
  $ℬ_{1,1}$ by the sheaves $𝒜_{n,p}$ and $ℬ_{n,p}$ found in
  Definition~\ref{def:3-2} below.
\end{description}

\subsection{Definition and first examples}
\approvals{Erwan & yes\\ Stefan & yes}

Throughout the present Section~\ref{sec:3}, we will work in the following
setting.

\begin{setting}\label{set:3-1}%
  Let $(X,D)$ be a $\cC$-pair as in Definition~\ref{def:2-24} and let $γ :
  \what{X} → X$ be a $q$-morphism.  Assume that the pairs $(X, D)$ and
  $\bigl(\what{X}, γ^*D\bigr)$ are nc.
\end{setting}

In view of the sample computations in Section~\ref{sec:3-1}, we hope that the
following definition will not come as a surprise.

\begin{defn}[Adapted tensors]\label{def:3-2}%
  Assume Setting~\ref{set:3-1}.  Given numbers $n$, $p ∈ ℕ⁺$, consider the
  sheaves
  \begin{align*}
    𝒜_{n,p} & := 𝒪_{\what{X}}\bigl( ⌊n·γ^* \{D\}⌋ \bigr) ⊗ γ^* \Sym^n Ω^p_X \bigl(\log ⌊D⌋ \bigr) \\
    ℬ_{n,p} & := \Sym^n Ω^p_{\what{X}}(\log γ^* D).
  \end{align*}
  Observe that both are subsheaves of $𝒪_{\what{X}}\bigl(γ^* D \bigr) ⊗
  \left(γ^* \Sym^n Ω^p_X \right)$ and define the \emph{bundle of adapted
  $(n,p)$-tensors}\index{adapted!tensor} as the intersection
  \[
    \Sym^n_{\cC} Ω^p_{(X,D,γ)} := 𝒜_{n,p} \:∩\: ℬ_{n,p}.
  \]
  Collectively, we refer to $\Sym^•_{\cC} Ω^•_{(X,D,γ)}$ as the \emph{bundles of
  adapted tensors}.
\end{defn}

\begin{defn}[\protect{Adapted differentials, cf.~\cite[Sect.~5.2]{MR3949026}}]\label{def:3-3}%
  Assume Setting~\ref{set:3-1}.  Given a number $p ∈ ℕ⁺$, define the
  \emph{bundle of adapted $p$-forms}\index{adapted!p-form@$p$-form} as
  $Ω^p_{(X,D,γ)} := \Sym¹_{\cC} Ω^p_{(X,D,γ)}$.  Collectively, we refer to
  $Ω^•_{(X,D,γ)}$ as the \emph{bundles of adapted
  differentials}\index{adapted!differential}.  The sheaf $Ω¹_{(X,D,γ)}$ is
  called \emph{$\cC$-cotangent bundle}\index{C-cotangent bundle@$\cC$-cotangent
  bundle}.
\end{defn}

The remainder of Section~\ref{sec:3} lists properties of $\Sym^n_{\cC}
Ω^p_{(X,D,γ)}$ that will become relevant later.  The proofs are elementary but
tedious, and will typically involve a computation in local coordinates, or
numerical arguments of the form $⌊γ^* D⌋ ≥ γ^* ⌊D⌋$.  To keep the size of this
(already long) paper within reasonable limits, we refrain from giving full
proofs and formulate a sequence of statements euphemistically called
``observations'', that is, homework left for the reader.  The first observation
justifies the word ``bundle'' in Definitions~\ref{def:3-2} and \ref{def:3-3}.

\begin{obs}[Local freeness]\label{obs:3-4}%
  The sheaves $\Sym^n_{\cC} Ω^p_{(X,D,γ)}$ of Definition~\ref{def:3-2} are
  locally free.  \qed
\end{obs}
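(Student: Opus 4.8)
The plan is to verify local freeness directly, by producing an explicit local frame near an arbitrary point $\hat x ∈ \what X$; since local freeness is a local property, this suffices. Because $\what X$ is smooth (Setting~\ref{set:3-1}), both building blocks of the intersection are already locally free: $\sB_{n,p} = \Sym^n Ω^p_{\what X}(\log γ^*D)$ is a symmetric power of the locally free log-cotangent sheaf of the nc pair $(\what X, γ^*D)$, while $\sA_{n,p}$ is the tensor product of the line bundle $𝒪_{\what X}(⌊n·γ^*\{D\}⌋)$ with the pull-back of the locally free sheaf $\Sym^n Ω^p_X(\log ⌊D⌋)$. Each is a saturated subsheaf of the locally free sheaf $\sC := 𝒪_{\what X}(γ^*D) ⊗ γ^*\Sym^n Ω^p_X$, so the intersection $\sA_{n,p} ∩ \sB_{n,p}$ is at least coherent, torsion-free and saturated in $\sC$; the content of the observation is that it is in fact locally free.

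First I would fix nc-adapted coordinates on both sides. Write $d := \dim X = \dim \what X$. As $(X, \supp D)$ is nc, choose coordinates $x_1, …, x_d$ centred at $x = γ(\hat x)$ so that the components of $D$ through $x$ are the hyperplanes $\{x_i = 0\}$; as $(\what X, \supp γ^*D)$ is nc, choose coordinates $y_1, …, y_d$ centred at $\hat x$ so that $\supp γ^*D = \{y_1 ⋯ y_s = 0\}$. The key local reduction is that, for every boundary-defining coordinate $x_i$, the pull-back $γ^*x_i$ is a unit times a monomial $\prod_j y_j^{a_{ij}}$: its zero divisor is supported on $\supp γ^*D$, hence pure of codimension one with every component a coordinate hyperplane, so in the factorial local ring $𝒪_{\what X, \hat x}$ its only irreducible factors are the $y_j$. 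Consequently the pulled-back coframe $γ^*\tfrac{dx_i}{x_i} = \sum_j a_{ij}\tfrac{dy_j}{y_j} + \tfrac{du_i}{u_i}$, together with the holomorphic forms $γ^*dx_i$ in the remaining (pole-free) directions, is an explicit monomial combination of the intrinsic log frame $\tfrac{dy_1}{y_1}, …, \tfrac{dy_s}{y_s}, dy_{s+1}, …, dy_d$. Passing to $p$-forms and to $\Sym^n$, both $\sA_{n,p}$ and $\sB_{n,p}$ become \emph{monomial} $𝒪_{\what X}$-submodules of $\sC$: each is spanned by the frame tensors of $\sC$, scaled slot-by-slot by an explicit power of the $y_j$ read off from $⌊n·γ^*\{D\}⌋$ for $\sA_{n,p}$ and from the logarithmic pole condition for $\sB_{n,p}$.

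With both sheaves in monomial form, the intersection is computed slot-by-slot: for each frame tensor the two memberships impose two lower bounds on the exponent of each $y_j$, and membership in $\sA_{n,p} ∩ \sB_{n,p}$ requires the larger of the two, yielding a single monomial generator per frame tensor and hence a free module with an explicit frame. The main obstacle — and the reason the computation is tedious rather than formal — is that the pull-back frame and the intrinsic log frame do not coincide but are related by the exponent matrix $(a_{ij})$, essentially the Jacobian of $γ$ in these coordinates; one must track both descriptions simultaneously and check that, after this change of frame, the two monomial conditions remain diagonal in a common basis, so that their maximum is again realised by a genuine section and no torsion is introduced. This is exactly where numerical comparisons of the form $⌊γ^*D⌋ ≥ γ^*⌊D⌋$ (and the analogous inequalities for $\{D\}$) enter: they guarantee that the lower bound from $\sA_{n,p}$ is compatible with the integral log-pole constraint from $\sB_{n,p}$. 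Carrying out this bookkeeping for all multi-indices and all degree-$n$ monomials in the $γ^*dx_I$ produces the desired local frame and completes the proof.
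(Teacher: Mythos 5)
Your plan follows the route the paper itself hints at (the observation is left as homework, with the remark that the proof is ``a computation in local coordinates''), and your first reduction is correct: for every boundary coordinate $x_i$, factoriality of $𝒪_{\what{X},\hat x}$ gives $γ^* x_i = u_i·\prod_j y_j^{a_{ij}}$ with $u_i$ a unit. The genuine gap is the pivotal claim that $\sA_{n,p}$ and $\sB_{n,p}$ are \emph{simultaneously} monomial, i.e.\ that ``the two monomial conditions remain diagonal in a common basis'', so that the intersection is a slot-by-slot maximum of exponents. Setting~\ref{set:3-1} allows arbitrary $q$-morphisms, and these may ramify along divisors that do \emph{not} lie over $\supp D$; along such divisors the transition matrix between the pulled-back frame $\bigl\{γ^*(dx_i/x_i),\, γ^* dx_k\bigr\}$ and the intrinsic log frame $\bigl\{dy_j/y_j,\, dy_j\bigr\}$ degenerates, and its degeneracy locus is not a union of coordinate hyperplanes — note that already your terms $du_i/u_i$ and the forms $γ^* dx_k$ in the unbranched directions are holomorphic but in no way monomial. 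Concretely, take $X = \what{X}$ to be suitable neighbourhoods of $0 ∈ ℂ²$, let $D = \frac{1}{2}·\{x_1 = 0\}$ and $γ(y_1,y_2) = (y_1·y_2,\; y_1+y_2)$. Both pairs are nc and $γ$ is quasi-finite near the origin, but $γ$ ramifies along $\{y_1 = y_2\}$, with branch curve $\{x_2² = 4·x_1\} ⊄ \supp D$. Solving $\frac{dy_1}{y_1} = F·γ^*dx_1 + G·γ^*dx_2$ with $γ^*dx_1 = y_2\,dy_1 + y_1\,dy_2$ and $γ^*dx_2 = dy_1 + dy_2$ yields $F = \frac{1}{y_1(y_2-y_1)}$ and $G = \frac{-1}{y_2-y_1}$: the section $\frac{dy_1}{y_1}$ of $\sB_{1,1}$ has poles along $\{y_1 = y_2\}$ in the pull-back frame, so $\sB_{1,1}$ is not a monomial submodule there — in fact not even a subsheaf of $𝒪_{\what{X}}(*γ^*D) ⊗ γ^* Ω^1_X$, which also undercuts your framing of both sheaves as saturated subsheaves of $\sC$ (the intersection must be taken in the sheaf of meromorphic tensors, as in Observation~\ref{obs:3-9}). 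Symmetrically, $\sA_{1,1} = γ^* Ω^1_X$ is not monomial in the log frame: it contains $dy_1 + dy_2$ but neither summand. Since the relative position of the two lattices degenerates along $\{y_1 = y_2\}$, no common diagonalizing frame exists — simultaneous Smith normal form is an elementary-divisor phenomenon over discrete valuation rings and fails over this two-dimensional local ring — so the slotwise-maximum computation that was to produce your frame never gets off the ground.

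This is precisely where the paper's generality bites: when $γ$ is an adapted cover with $\Branch(γ) ⊆ \supp D$ (compare \ref{il:3-5-4} of Example~\ref{ex:3-5}), the transition matrix is a unit times a monomial matrix and your computation is the standard one from the literature; for arbitrary $q$-morphisms the extra ramification must be confronted head-on. The conclusion itself survives in the example above — here $⌊γ^*\{D\}⌋ = 0$, so $\sA_{1,1} = γ^*Ω^1_X ⊆ Ω^1_{\what{X}} ⊆ \sB_{1,1}$ and the intersection is the free sheaf $γ^*Ω^1_X$ — but your mechanism cannot detect this, because the $\sB$-membership condition is not slotwise in any frame in which $\sA$ is monomial; in general one must rule out cancellations between the frame coordinates $F$ and $G$, and your proposal offers no argument for that. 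For $(n,p) = (1,1)$ one can avoid frames altogether: by Miyaoka's description (Observation~\ref{obs:3-8}), $Ω^1_{(X,D,γ)}$ is the kernel of a surjection from the locally free sheaf $γ^* Ω^1_X(\log D)$ onto a direct sum of structure sheaves of effective Cartier divisors, which have depth $\dim \what{X} - 1$; the depth lemma together with Auslander--Buchsbaum then gives local freeness. For higher $n$ and $p$ no such two-term description is available, and that is exactly where a corrected local computation would have to do real work.
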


The following examples illustrate the construction, expanding
Definition~\ref{def:3-2} in a number of special cases.

\begin{example}[Special cases]\label{ex:3-5}%
  In Setting~\ref{set:3-1}, assume that numbers $n$, $p ∈ ℕ⁺$ are given.
  \begin{enumerate}
    \item If $p = \dim X$, then
    \[
      \Sym^n_{\cC} Ω^p_{(X,D,γ)} = \bigl(γ^* ω_X^{⊗ n}\bigr) ⊗ 𝒪_{\what{X}}\bigl(⌊n·γ^* D⌋\bigr).
    \]

    \item If $D = 0$, then $\Sym^n_{\cC} Ω^p_{(X,0,γ)} = γ^* \Sym^n Ω^p_X$.

    \item\label{il:3-5-3} If $γ = \Id_X$, then $Ω^p_{(X,D,\Id_X)} =
    Ω^p_X\bigl(\log ⌊D⌋\bigr)$.

    \item\label{il:3-5-4} If $γ$ is strongly adapted and $\Branch(γ) ⊆ \supp D$,
    then
    \[
      \Sym^n_{\cC} Ω^p_{(X,D,γ)} = \Sym^n Ω^p_{\what{X}}\bigl(\log γ^*⌊D⌋\bigr).
    \]
  \end{enumerate}
\end{example}

\begin{example}[Functions with adapted differential]\label{ex:3-6}%
  In the setting of Definition~\ref{def:3-2}, assume that $γ$ is a cover.  Given
  a function
  \begin{equation}\label{eq:3-6-1}
    \what{f} ∈ H⁰\Bigl( \what{X},\, 𝒪_{\what{X}}(- \Ramification γ) \Bigr),
  \end{equation}
  that vanishes along the ramification divisor\footnote{Recall from
  Notation~\ref{not:2-22} that the ramification divisor is reduced, so that
  \ref{eq:3-6-1} is a statement about the vanishing locus of $\what{f}$, but not
  about the order of vanishing.} of $γ$, an elementary computation in local
  coordinates shows that the following statements are equivalent.
  \begin{enumerate}
    \item\label{il:3-6-2} The Kähler differential of $\what{f}$ is adapted,
  
    \[
      d\what{f} ∈ H⁰\bigl( \what{X},\, Ω¹_{(X,D,γ)} \bigr) ⊆ H⁰\bigl( \what{X},\,
      Ω¹_{\what{X}} (\log γ^* D)\bigr).
    \]

    \item\label{il:3-6-3} The zero-divisor of the function $\what{f}$ satisfies
    the inequality
    \[
      \operatorname{div} \what{f}
      ≥ \sum_{Δ_{\what{X}} ⊆ \operatorname{div} \what{f}} \frac{\mult_{Δ_{\what{X}}} γ^* Δ_X}{\mult_{\cC, Δ_X} D}·Δ_{\what{X}},
    \]
    where $Δ_X := \bigl( γ_* Δ_{\what{X}} \bigr)_{\red}$ and $(\text{finite})/∞
    = 0$.
  \end{enumerate}
\end{example}

\begin{rem}
  Notice that \ref{il:3-6-2} says nothing about the vanishing order of
  $\what{f}$ along divisors $Δ_{\what{X}}$ that lie over the support of $⌊D⌋$.
  In a similar vein, \ref{il:3-6-3} says nothing about the vanishing order of
  $\what{f}$ along divisors $Δ_{\what{X}}$ outside the ramification locus.
\end{rem}

\subsection{Alternative description}
\approvals{Erwan & yes\\ Stefan & yes}

In his seminal paper \cite{Miy08}, Miyaoka describes the $\cC$-cotangent bundle
in terms of the classic residue sequence for logarithmic differentials,
\[
  0 → Ω¹_X(\log ⌊D⌋) → Ω¹_X(\log D)
  \xrightarrow{\text{residue}} \bigoplus_{i \:|\: m_i < ∞} 𝒪_{D_i}
  → 0.
\]
An elementary computation in local coordinates shows that Miyaoka's construction
agrees with Definition~\ref{def:3-2} above.

\begin{obs}[\protect{Miyaoka's description of the $\cC$-cotangent bundle, \cite[p.~412]{Miy08}}]\label{obs:3-8}%
  In Setting~\ref{set:3-1}, the $\cC$-cotangent bundle $Ω¹_{(X,D,γ)} ⊆ γ^*
  Ω¹_X(\log D)$ equals the kernel of the following composed morphism,
  \[
    γ^* Ω¹_X(\log D) %
    \xrightarrow{γ^*\text{(residue)}} %
    \bigoplus_{i \:|\: m_i < ∞} γ^* 𝒪_{D_i} %
    \overset{\text{Rem.~\ref{rem:2-41}}}{=} %
    \bigoplus_{i \:|\: m_i < ∞} 𝒪_{γ^*D_i} %
    \xrightarrowdbl{\text{Rem.~\ref{rem:2-40}}} %
    \bigoplus_{i \:|\: m_i < ∞} 𝒪_{\left⌈{\textstyle\frac{1}{m_i}}·γ^* D_i\right⌉}.
  \]
  This describes the $\cC$-cotangent bundle by means of the following exact
  sequence,
  \begin{equation}\label{eq:3-8-1}
    0 → Ω¹_{(X,D,γ)}
    → γ^* Ω¹_X (\log D) → \bigoplus_{i} 𝒪_{\left⌈{\textstyle\frac{1}{m_i}}·γ^* D_i\right⌉}
    → 0.
  \end{equation}
  As before, the notation in \eqref{eq:3-8-1} follows the convention that
  $\frac{1}{∞} = 0$.  \qed
\end{obs}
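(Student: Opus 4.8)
The statement contains two claims: that the displayed composition is surjective, so that \eqref{eq:3-8-1} is exact, and that its kernel is the $\cC$-cotangent bundle $Ω¹_{(X,D,γ)} = \sA_{1,1} \cap \sB_{1,1}$ of Definition~\ref{def:3-2}. Surjectivity is the formal part: the residue sequence recalled above is surjective, pull-back is right exact and identifies $γ^* \sO_{D_i} = \sO_{γ^* D_i}$ by Remark~\ref{rem:2-41}, and the comparison map $\sO_{γ^* D_i} \onto \sO_{⌈\frac{1}{m_i} γ^* D_i⌉}$ of Remark~\ref{rem:2-40} is onto because $⌈\frac{1}{m_i} γ^* D_i⌉ \leq γ^* D_i$; a composition of epimorphisms is an epimorphism. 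For the kernel, I would first observe that both $Ω¹_{(X,D,γ)}$ and $\ker$ are reflexive subsheaves of the locally free sheaf $γ^* Ω¹_X(\log D)$: the former by Observation~\ref{obs:3-4}, the latter because the target $\bigoplus_i \sO_{⌈\frac{1}{m_i} γ^* D_i⌉}$ is a direct sum of structure sheaves of effective divisors on the smooth space $\what{X}$, hence pure of codimension one, which forces $\ker$ to satisfy $S_2$. Since a reflexive sheaf equals the intersection of its localisations at the codimension-one points, it suffices to prove the equality $\ker = Ω¹_{(X,D,γ)}$ after localising at the generic point $η$ of each prime divisor $Δ \subseteq \what{X}$.

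The reduction to codimension one is what makes the computation manageable. Because $γ$ is a $q$-morphism it contracts no divisor, so $Δ$ dominates a prime divisor of $X$; as $(X,D)$ is nc, that divisor meets $\supp D$ in at most one component $D_i$, and therefore $Δ$ lies over at most one $D_i$. Consequently, only a single boundary component is visible at $η$, and—after splitting off the complementary directions, on which the residue vanishes—the situation is exactly the one-dimensional sample computation of Section~\ref{sec:3-1-1}, with the integer $c := \mult_{Δ} γ^* D_i = \ord_{Δ} γ^* x_i$ playing the role of the exponent there. Writing a local section of $γ^* Ω¹_X(\log D)$ in the pulled-back logarithmic coframe, $σ = a · γ^* \frac{dx_i}{x_i} + σ'$ with $σ'$ having no pole in the $Δ$-direction, the pulled-back residue sends $σ$ to the class of $a$ in $\sO_{⌈\frac{1}{m_i} γ^* D_i⌉, η} = \sO_{\what{X},η}/(t^{⌈c/m_i⌉})$, where $t$ is a local equation of $Δ$. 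Hence membership in $\ker$ at $η$ is the single condition $\ord_t a \geq ⌈c/m_i⌉$.

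It remains to compute $\sA_{1,1} \cap \sB_{1,1}$ at $η$ and to match it with this condition. If $m_i = ∞$ no residue is taken along $Δ$, while $\sA_{1,1}$ and $\sB_{1,1}$ both impose only a logarithmic pole there, so kernel and intersection visibly agree. If $m_i < ∞$, the twist by $⌊γ^* \{D\}⌋$ lets $\sA_{1,1}$ relax the vanishing order of $a$ from $c$ down to $c - ⌊\frac{(m_i-1)c}{m_i}⌋$, while $\sB_{1,1} = Ω¹_{\what{X}}(\log γ^* D)$ contributes the logarithmic normalisation that removes the spurious poles which the global twist would otherwise allow in the $σ'$-directions—this is precisely what forces the intersection back inside $γ^* Ω¹_X(\log D)$. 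The two prescriptions coincide by the elementary identity
\[
  c - \Bigl\lfloor \tfrac{(m_i-1)c}{m_i} \Bigr\rfloor = \Bigl\lceil \tfrac{c}{m_i} \Bigr\rceil ,
\]
valid for integers $c \geq 0$ and $m_i \geq 2$, and I expect this bookkeeping to be the main obstacle: one must keep the pole bound coming from the twist in $\sA_{1,1}$ and the genuine logarithmic structure of $\sB_{1,1}$ separate, verify that their intersection simultaneously produces the order $⌈c/m_i⌉$ along $Δ$ in the residue direction and kills the extraneous poles in every other direction, and thereby confirm that $\sA_{1,1} \cap \sB_{1,1}$ lands in $γ^* Ω¹_X(\log D)$ even though neither factor does. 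Granting the identity at the generic point of every prime divisor, the codimension-one criterion of the first paragraph upgrades it to the claimed equality of sheaves.
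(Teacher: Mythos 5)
Your proposal is correct and is precisely the argument the paper intends: Observation~\ref{obs:3-8} carries no written proof (the paper explicitly leaves such ``observations'' to the reader, hinting only at ``an elementary computation in local coordinates''), and your reduction to codimension one --- justified by reflexivity of both subsheaves, with purity of the Cohen--Macaulay sheaves $\mathcal{O}_{\lceil \frac{1}{m_i}\gamma^* D_i\rceil}$ guaranteeing that the kernel is normal --- followed by the one-boundary-component computation and the identity $c-\bigl\lfloor\frac{(m_i-1)c}{m_i}\bigr\rfloor=\bigl\lceil\frac{c}{m_i}\bigr\rceil$ is exactly that computation. The one step you flag without executing does go through routinely: writing $g_j:=\gamma^*x_j$, the forms $\{dg_j\ (j\neq i),\,dt\}$ are an $\mathcal{O}_{\what{X},\eta}$-basis of $\Omega^1_{\what{X},\eta}$ (separability of $\gamma|_{\Delta}$ plus Nakayama), so expanding $du/u=\sum_j\beta_j\,dg_j+\beta_t\,dt$ shows the $dg_j$-coefficient of $\sigma$ is $a_j+a\beta_j/u$ and the $dt/t$-coefficient is $ac+a\beta_t t/u$, whence $\ord_t a\ge\lceil c/m_i\rceil\ge 1$ together with membership in $\sB_{1,1}$ forces every $a_j$ to be regular, confirming simultaneously the residue bound and the containment of $\sA_{1,1}\cap\sB_{1,1}$ in $\gamma^*\Omega^1_X(\log D)$.
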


\subsection{Inclusions}
\label{sec:3-4}
\approvals{Erwan & yes\\ Stefan & yes}

The construction equips the bundles of adapted tensors with numerous inclusions
that we use throughout the paper.  The following observation summarizes the most
important ones for later reference.

\begin{obs}[Inclusions]\label{obs:3-9}%
  Assume Setting~\ref{set:3-1}.  Given numbers $n$, $p ∈ ℕ⁺$, there exist
  natural inclusions as follows,
  \[
    \begin{tikzcd}
      \Sym^n Ω^p_{\what{X}} \bigl(\log γ^*⌊D⌋ \bigr) \ar[r, hook] & \Sym^n Ω^p_{\what{X}} \bigl(\log γ^*D \bigr) \\
      & γ^* \Sym^n Ω^p_X(\log D) \ar[u, hook] \\
      \Sym^n_{\cC} Ω^p_{(X,D,γ)} \ar[r, equal] \ar[uu, hook, "ι_{n,p}"] & \Sym^n_{\cC} Ω^p_{(X,D,γ)} \ar[u, hook] \\
      \Sym^n Ω^p_{(X,D,γ)} \ar[r, equal] \ar[u, hook] & \Sym^n Ω^p_{(X,D,γ)} \ar[u, hook] \\
      γ^* \Sym^n Ω^p_X \ar[r, equal] \ar[u, hook] & γ^* \Sym^n Ω^p_X.  \ar[u, hook]
    \end{tikzcd}
  \]
  Note that all sheaves here are subsheaves of the quasi-coherent sheaf of
  meromorphic tensors on $\what{X}$, that is, $ℳ_{\what{X}} ⊗ \Sym^n
  Ω^p_{\what{X}}$.  \qed
\end{obs}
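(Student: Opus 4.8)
The plan is to regard every sheaf in the diagram as a coherent subsheaf of the single quasi-coherent sheaf $ℳ_{\what{X}}⊗\Sym^n Ω^p_{\what{X}}$, as the statement already records: the three logarithmic sheaves sit inside it by construction, the two pull-back sheaves $γ^*\Sym^n Ω^p_X(\log D)$ and $γ^*\Sym^n Ω^p_X$ embed through the map induced by $dγ$, which is injective because $γ$ is a dominant $q$-morphism between smooth varieties of equal dimension and hence $dγ$ is generically an isomorphism, and the adapted tensors embed by Definition~\ref{def:3-2}. With this common ambient sheaf fixed, each arrow becomes the assertion that one subsheaf is contained in another, and the two horizontal equalities are tautological. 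By Observation~\ref{obs:3-4} and the smoothness of $X$ and $\what{X}$ built into Setting~\ref{set:3-1}, all sheaves occurring are locally free, hence reflexive; since a reflexive sheaf is the push-forward of its restriction to any big open set, every containment may be tested over the complement of the codimension-two locus. This reduces each inclusion to the transverse behaviour at the generic point of a prime divisor $Δ$ of $γ^* D$, where $γ$ has the monomial normal form $γ^* w_1 = z_1^a·(\text{unit})$ and the situation is literally the one- and two-dimensional sample computations of Section~\ref{sec:3-1}.

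With this reduction in place, most arrows are formal. The top inclusion $\Sym^n Ω^p_{\what{X}}(\log γ^*⌊D⌋) ↪ \Sym^n Ω^p_{\what{X}}(\log γ^* D)$ follows from $γ^*⌊D⌋ ≤ γ^* D$, which only enlarges the admissible logarithmic poles. The inclusion $γ^*\Sym^n Ω^p_X(\log D) ↪ \Sym^n Ω^p_{\what{X}}(\log γ^* D)$ is the functoriality of logarithmic differentials under $dγ$ between nc pairs, that is, the content of Lemma~\ref{lem:2-23}. The bottom inclusion $γ^*\Sym^n Ω^p_X ↪ \Sym^n Ω^p_{(X,D,γ)}$ holds already for $n=1$, since $γ^* Ω^p_X ⊆ 𝒜_{1,p}$ (the twisting divisor is effective and $Ω^p_X ⊆ Ω^p_X(\log⌊D⌋)$) and $γ^* Ω^p_X ⊆ ℬ_{1,p}$ (pull-backs of holomorphic forms are holomorphic), after which one applies $\Sym^n$. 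The inclusion $\Sym^n Ω^p_{(X,D,γ)} ↪ \Sym^n_{\cC} Ω^p_{(X,D,γ)}$ is the one numerical point here: writing $Ω^p_{(X,D,γ)} = 𝒜_{1,p} ∩ ℬ_{1,p}$ as in Definition~\ref{def:3-3} and applying $\Sym^n$, one has $\Sym^n ℬ_{1,p} = ℬ_{n,p}$ and $\Sym^n 𝒜_{1,p} = 𝒪_{\what{X}}(n·⌊γ^*\{D\}⌋) ⊗ γ^*\Sym^n Ω^p_X(\log⌊D⌋)$, so the claim reduces to the superadditivity $n·⌊γ^*\{D\}⌋ ≤ ⌊n·γ^*\{D\}⌋$ of round-down. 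Finally $\Sym^n_{\cC} Ω^p_{(X,D,γ)} ⊆ 𝒜_{n,p} ⊆ γ^*\Sym^n Ω^p_X(\log D)$, the last step being the codimension-one comparison that the coefficient $⌊n·a·\tfrac{m_i-1}{m_i}⌋$ of $Δ$ in $⌊n·γ^*\{D\}⌋$ is strictly smaller than $n·a$, which is the amount of logarithmic pole that $γ^*$ of $\log D$ provides along $Δ$.

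The only inclusion carrying genuine content is $ι_{n,p}$, which asserts that an adapted tensor has logarithmic poles along $γ^*⌊D⌋$ only and is regular transverse to every component $Δ$ of $γ^* D$ lying over the fractional part $\{D\}$. Here the two defining conditions must be used together: membership in $ℬ_{n,p}$ alone allows a logarithmic pole of order up to $n$ along $Δ$, and membership in $𝒜_{n,p}$ alone allows a pole of order $⌊n·a·\tfrac{m_i-1}{m_i}⌋$, and neither bound forces regularity on its own. In the monomial frame at the generic point of $Δ$, the $𝒜_{n,p}$-sections that realise the largest transverse pole order are exactly those assembled from $γ^*\Sym^n Ω^p_X(\log⌊D⌋)$-generators that avoid the factor $γ^* dw_1$; these carry their pole on a differential that is \emph{not} of logarithmic type and therefore drop out on intersecting with $ℬ_{n,p}$, while the genuinely logarithmic sections are trimmed by the $𝒜_{n,p}$-bound. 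What survives is regular along $Δ$, exactly as in the passage to $Ω¹_{(X,D,γ)}$ recorded in Tables~\ref{tab:2} and~\ref{tab:4}. I expect this simultaneous bookkeeping of pole order and of the logarithmic shape of each monomial to be the main obstacle; in the cotangent case $n=1$ it can be bypassed by the residue description of Observation~\ref{obs:3-8}, whose kernel characterisation makes regularity along $Δ$ immediate.
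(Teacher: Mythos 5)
You are proving a statement for which the paper prints no proof at all---Observation~\ref{obs:3-9} is one of the results the authors explicitly leave as ``homework''---so the only benchmark is the intended elementary argument: reduction to codimension one plus monomial bookkeeping, which is exactly your strategy. Your scaffolding is sound and surely matches that intention: the common ambient $ℳ_{\what{X}} ⊗ \Sym^n Ω^p_{\what{X}}$, injectivity of the maps induced by $\diff γ$, the reduction to generic points of prime divisors via reflexivity and the monomial normal form $γ^* w_1 = z_1^a·(\text{unit})$, the top arrow from $γ^*⌊D⌋ ≤ γ^*D$, the bottom arrow, and the inclusion $\Sym^n Ω^p_{(X,D,γ)} ⊆ \Sym^n_{\cC} Ω^p_{(X,D,γ)}$ via $n·⌊γ^*\{D\}⌋ ≤ ⌊n·γ^*\{D\}⌋$ are all correct.

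Two of your codimension-one verifications fail, however. The milder one: the chain $\Sym^n_{\cC} Ω^p_{(X,D,γ)} ⊆ 𝒜_{n,p} ⊆ γ^* \Sym^n Ω^p_X(\log D)$ breaks at the second step, because the twist in $𝒜_{n,p}$ is applied uniformly to \emph{all} monomials. Already for $n=1$, $γ : (z,u) ↦ (z^2, u)$ and $D = \tfrac{1}{2}·\{w=0\}$, the section $z^{-1}\,du$ lies in $𝒜_{1,1} = 𝒪(Δ) ⊗ \langle z\,dz,\, du \rangle$ but not in $γ^* Ω^1_X(\log D) = \langle dz/z,\, du\rangle$: its pole sits on a monomial carrying no logarithmic factor. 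The arrow itself is true, but at the generic point of a divisor $Δ$ over $\supp D$ it must be deduced from $ℬ_{n,p}$---there $γ^*(dw_1/w_1) = a·dz_1/z_1$, so the pulled-back logarithmic frame generates the same module as the logarithmic frame upstairs and $\Sym^n_{\cC} Ω^p_{(X,D,γ)} ⊆ ℬ_{n,p}$ suffices---while $𝒜_{n,p}$ handles the divisors not lying over $\supp D$.

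The serious failure is the ``trimming'' claim establishing $ι_{n,p}$, and it cannot be repaired for $n ≥ 2$. At the generic point of $Δ$, the coefficient of the monomial carrying $k$ transverse differentials is allowed pole order up to $k$ by membership in $ℬ_{n,p}$ and up to $⌊n·a·\tfrac{m_i-1}{m_i}⌋ - k(a-1)$ by membership in $𝒜_{n,p}$, and for $n ≥ 2$ both bounds are simultaneously positive. Concretely, take $γ = \Id$, $(X,D) = \bigl(𝔸^1, \tfrac{1}{2}·\{0\}\bigr)$, $n = 2$: the tensor $z^{-1}·dz^{⊗ 2}$ lies in $𝒜_{2,1} = 𝒪(\{0\}) ⊗ ω^{⊗ 2}_{𝔸^1}$ and in $ℬ_{2,1} = ω^{⊗ 2}_{𝔸^1}(2·\{0\})$, hence in $\Sym^2_{\cC} Ω^1_{(X,D,\Id)}$, but not in $\Sym^2 Ω^1(\log γ^*⌊D⌋) = \Sym^2 Ω^1$. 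This is no accident of your argument: by the paper's own Example~\ref{ex:3-5}, first item, $\Sym^n_{\cC} Ω^{\dim X}_{(X,D,γ)} = \bigl(γ^* ω_X^{⊗ n}\bigr) ⊗ 𝒪(⌊n·γ^* D⌋)$, so the printed $ι_{n,\dim X}$ would force $⌊n·γ^* D⌋ ≤ n·γ^*⌊D⌋$, which fails along every fractional component once $n ≥ 2$. The fractional multi-poles are precisely what $\Sym^•_{\cC}$ is designed to carry (they drive the $\cC$-Kodaira-Iitaka dimension of Section~\ref{sec:6-2}). So $ι_{n,p}$ as drawn is provable only for $n = 1$---where your residue-sequence bypass via Observation~\ref{obs:3-8} is the right proof---while for $n ≥ 2$ the left column can only be aimed at $\Sym^n Ω^p_{\what{X}}(\log γ^* D)$, where it becomes the tautology $𝒜_{n,p} ∩ ℬ_{n,p} ⊆ ℬ_{n,p}$. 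In short: the ``simultaneous bookkeeping'' you flagged in your closing sentence as the main obstacle is a genuine obstruction to the statement as printed, not a technical hurdle your proof merely failed to clear.
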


\begin{obs}[Uniformization]\label{obs:3-10}%
  Assume that the morphism $γ : \what{X} → X$ of Setting~\ref{set:3-1} is an
  adapted cover and consider the morphisms
  \[
    ι_{•,•} : \Sym^•_{\cC} Ω^•_{(X,D,γ)} ↪ \Sym^•Ω^•_{\what{X}} \bigl(\log γ^*⌊D⌋ \bigr)
  \]
  of Observation~\ref{obs:3-9}.  Then, the following statements are equivalent.
  \begin{enumerate}
    \item\label{il:3-10-1} The morphism $γ$ is a uniformization.

    \item\label{il:3-10-2} There exist numbers $1 ≤ p ≤ \dim X$ and $1 ≤ n$ such
    that $ι_{n,p}$ is isomorphic.

    \item\label{il:3-10-3} For every pair of numbers $p, n ∈ ℕ⁺$, the inclusion
    $ι_{n,p}$ is isomorphic.  \qed
  \end{enumerate}
\end{obs}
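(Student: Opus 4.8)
The plan is to run the cycle \ref{il:3-10-1}⇒\ref{il:3-10-3}⇒\ref{il:3-10-2}⇒\ref{il:3-10-1}. The implication \ref{il:3-10-1}⇒\ref{il:3-10-3} is immediate from item~\ref{il:3-5-4} of Example~\ref{ex:3-5}: a uniformization is in particular strongly adapted with $\Branch(γ) ⊆ \supp D$, so that example identifies $\Sym^n_{\cC} Ω^p_{(X,D,γ)}$ with $\Sym^n Ω^p_{\what{X}}(\log γ^*⌊D⌋)$, which is exactly the codomain of $ι_{n,p}$; under this identification $ι_{n,p}$ is the identity and hence an isomorphism, for all $n$ and $p$. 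The implication \ref{il:3-10-3}⇒\ref{il:3-10-2} is trivial, but I note that the range $1 ≤ p ≤ \dim X$ in \ref{il:3-10-2} is genuinely needed, since for $p > \dim X$ both sheaves vanish and $ι_{n,p}$ would be an isomorphism for vacuous reasons.

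For the substantial implication \ref{il:3-10-2}⇒\ref{il:3-10-1} I would argue by contraposition: assuming the adapted cover $γ$ is \emph{not} a uniformization, I show that $ι_{n,p}$ fails to be surjective for every $n ≥ 1$ and every $1 ≤ p ≤ \dim X$. All sheaves in sight are locally free of the same rank by Observation~\ref{obs:3-4}, and $ι_{n,p}$ is injective by Observation~\ref{obs:3-9}; on the smooth variety $\what{X}$ such a map is an isomorphism if and only if it is surjective at the generic point of every prime divisor, so it suffices to exhibit one prime divisor $Δ ⊂ \what{X}$ where surjectivity fails. Since $γ$ is adapted but not a uniformization, it is either not strongly adapted or satisfies $\Branch(γ) \not\subseteq \supp D$, and in either case a suitable $Δ$ presents itself. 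Writing $Δ_X := (γ_* Δ)_{\red}$ and letting $a := \mult_Δ γ^* Δ_X$ be the ramification index along $Δ$: in the first case $Δ_X = D_i$ with $m_i < ∞$, and as adaptedness gives $m_i \mid a$ while strong adaptedness fails, $k := a/m_i ≥ 2$; in the second case $Δ_X \not\subseteq \supp D$, so $D$ vanishes near the generic point of $Δ_X$ and $a ≥ 2$.

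To test surjectivity at $Δ$ I localise at its generic point, where the local normal form of a finite map of smooth spaces along a ramification divisor permits the model $γ : (z, x_2, …, x_d) ↦ (z^a, x_2, …, x_d)$ with $Δ = \{z = 0\}$; write $w_1$ for the coordinate on $X$ with $γ^* w_1 = z^a$. As a witness I take the $p$-form $σ := dz \wedge dx_{j_2} \wedge \cdots \wedge dx_{j_p}$ with $2 ≤ j_2 < \cdots < j_p ≤ d$ (available precisely because $p ≤ \dim X$) and its $n$-th symmetric power $σ^n ∈ \Sym^n Ω^p_{\what{X}}$. As $Δ$ lies over no component of $⌊D⌋$, the codomain $\Sym^n Ω^p_{\what{X}}(\log γ^*⌊D⌋)$ is locally just $\Sym^n Ω^p_{\what{X}}$, so $σ^n$ is a local section of it. The decisive computation is that the only generator of $γ^* \Sym^n Ω^p_X$ contributing to the monomial $σ^n$ is $\bigl(γ^*(dw_1 \wedge dx_{j_2} \wedge \cdots)\bigr)^n = a^n z^{n(a-1)} σ^n$, so the $σ^n$-coefficient of every local section of $γ^* \Sym^n Ω^p_X$ vanishes to order at least $n(a-1)$. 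The coefficient of $⌊n·γ^*\{D\}⌋$ along $Δ$, which is the twist available in $𝒜_{n,p}$, equals $n(m_i-1)k$ in the first case and $0$ in the second. Hence $σ^n ∈ 𝒜_{n,p}$ would force $n(m_i-1)k ≥ n(a-1) = n(km_i-1)$, i.e.\ $k ≤ 1$, respectively $0 ≥ n(a-1)$, i.e.\ $a ≤ 1$ — both impossible. Therefore $σ^n \notin 𝒜_{n,p} ⊇ \Sym^n_{\cC} Ω^p_{(X,D,γ)}$, and $ι_{n,p}$ is not surjective at $Δ$.

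The main obstacle is exactly this local computation, and the device that makes it uniform in $(n,p)$ is the use of a ``pure'' witness $σ^n$ whose every tensor factor involves $dz$: for such a monomial the pull-back order $n(a-1)$ and the available boundary twist interact transparently, and the delicate mixed-term bookkeeping that complicates a direct comparison of $𝒜_{n,p} ∩ ℬ_{n,p}$ with its codomain is entirely bypassed (which is also why I prefer to deduce \ref{il:3-10-1}⇒\ref{il:3-10-3} from Example~\ref{ex:3-5} rather than by a direct intersection computation). With surjectivity failing at the single divisor $Δ$ for all admissible $(n,p)$, the contrapositive is established and the cycle closes.
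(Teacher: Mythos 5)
Your proof is correct, and since the paper deliberately leaves Observation~3.10 unproven (it is one of the ``observations'' explicitly delegated to the reader as a computation in local coordinates), your argument is precisely the intended one: the cycle via Example~3.5, item~4, for the easy implications, plus the contrapositive local computation in the normal form $(z,x_2,\dots,x_d)\mapsto(z^a,x_2,\dots,x_d)$ showing $\sigma^n\notin\mathcal{A}_{n,p}$ in both failure cases ($k\geq 2$ over a fractional component, respectively $a\geq 2$ over a branch divisor outside $\operatorname{supp} D$). The choice of the pure witness $\sigma^n$, whose only contributing pull-back generator is $\bigl(\gamma^*(dw_1\wedge dx_{j_2}\wedge\cdots)\bigr)^n = a^n z^{n(a-1)}\sigma^n$, cleanly handles all $(n,p)$ with $1\leq p\leq\dim X$ at once, and your remark that the range restriction on $p$ in item~(2) is genuinely needed is also correct.
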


\begin{rem}
  Observation~\ref{obs:3-10} does not hold without the assumption that $γ$ is
  adapted.  For a counterexample, observe that the identity map $γ := \Id_X$
  almost never uniformizes.  Yet, we have seen in Item~\ref{il:3-5-3} of
  Example~\ref{ex:3-5} that $Ω^p_{(X,D,\Id_X)} = Ω^p_X\bigl(\log ⌊D⌋\bigr)$ for
  every number $p$.
\end{rem}

\subsection{Operations}
\approvals{Erwan & yes\\ Stefan & yes}
\label{sec:3-5}

Among all meromorphic differential forms, logarithmic forms are characterized by
the fact that the pole order does not change under exterior derivatives and
wedge products.  The exterior derivatives and wedge products on $Ω^•_{\what{X}}
\bigl(\log γ^*D \bigr)$ therefore induce operations on the bundles of adapted
differentials.

\begin{obs}[Wedge products and exterior derivatives]\label{obs:3-12}%
  In Setting~\ref{set:3-1}, observe that the subsheaves
  \[
    Ω^•_{(X,D,γ)} \overset{ι_{1,•}}{⊆} Ω^•_{\what{X}} \bigl(\log γ^*⌊D⌋ \bigr)
  \]
  are closed under wedge products and exterior derivatives.  Given numbers $p$
  and $q$, we obtain natural operations
  \[
    Λ : Ω^p_{(X,D,γ)} ⨯ Ω^q_{(X,D,γ)} → Ω^{p+q}_{(X,D,γ)}
    \quad\text{and}\quad
    d : Ω^p_{(X,D,γ)} → Ω^{p+1}_{(X,D,γ)}.
    \eqno \qed
  \]
\end{obs}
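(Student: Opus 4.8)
The plan is to perform every operation inside the sheaf of logarithmic forms and to exploit that the latter is a sheaf of differential graded algebras. By Observation~\ref{obs:3-9} the map $ι_{1,•}$ embeds $Ω^•_{(X,D,γ)}$ into $Ω^•_{\what X}(\log γ^*\lfloor D\rfloor)$, which is closed under $\wedge$ and $d$ — logarithmic forms along a fixed nc divisor, with pole order preserved under both operations, as recalled at the start of this subsection. Consequently, for adapted $α,β$ the forms $α\wedge β$ and $dα$ automatically lie in $Ω^•_{\what X}(\log γ^*\lfloor D\rfloor)\subseteq\sB_{1,•}$, so the $\sB$-condition in $Ω^•_{(X,D,γ)}=\sA_{1,•}\cap\sB_{1,•}$ (Definition~\ref{def:3-2}) holds for free. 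The entire content of the statement therefore reduces to showing that the outputs also satisfy the $\sA$-condition, i.e. that they lie in $\sA_{1,•}=𝒪_{\what X}(\lfloor γ^*\{D\}\rfloor)\otimes γ^*Ω^•_X(\log\lfloor D\rfloor)$.

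I would dispose of $\wedge$ first, since it is $𝒪_{\what X}$-bilinear and may thus be checked on local generators. The assertion is local on $\what X$; in coordinates adapted to the nc divisors, as in the sample computations of Section~\ref{sec:3-1}, each generator of $Ω^•_{(X,D,γ)}$ is a monomial multiple of a logarithmic form pulled back from $X$. Wedging two such generators multiplies the monomial factors and wedges the pullbacks, and the image of $γ^*Ω^•_X(\log\lfloor D\rfloor)$ is itself closed under $\wedge$, so the output has the required shape. The one subtlety is the twist: a crude estimate doubles $\lfloor γ^*\{D\}\rfloor$, but the simultaneous logarithmic bound coming from $\sB$ forces each generator to consume far less than its full pole budget, and the numerical inequalities of the type $\lfloor γ^*D\rfloor\ge γ^*\lfloor D\rfloor$ confirm that a single twist survives, giving $α\wedge β\in\sA_{1,p+q}$.

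The exterior derivative is the crux, and the obstacle is structural: $d$ is \emph{not} $𝒪_{\what X}$-linear, so for a local section written as $f·σ$ (with $f\in𝒪_{\what X}$ and $σ$ adapted) the Leibniz term $df\wedge σ$ enlarges the pole order of the coefficient, and the $\sA$-bound cannot be deduced from $\sA$ alone. To control it I would compare $γ$ with a local uniformization. Being local, the question allows passing to a suitable Stein neighbourhood, over which Lemma~\ref{lem:2-36} supplies a strongly adapted cover $u:X_u\twoheadrightarrow X$; for a uniformization, Observation~\ref{obs:3-10} — equivalently Item~\ref{il:3-5-4} of Example~\ref{ex:3-5} — turns the inclusions $ι_{n,p}$ into isomorphisms, so that on $X_u$ the adapted tensors literally are the logarithmic tensors $\Sym^n Ω^p_{X_u}(\log u^*\lfloor D\rfloor)$, a DGA on which closure under $\wedge$ and $d$ is immediate and the induced operations are the usual ones. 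The base case $γ=\Id$ (Item~\ref{il:3-5-3} of Example~\ref{ex:3-5}), giving $Ω^•_X(\log\lfloor D\rfloor)$, exhibits the same mechanism.

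The point I expect to cost the most is transporting this conclusion back to an arbitrary $γ$ while respecting the floor operators $\lfloor n\,γ^*\{D\}\rfloor$. Here I would argue through Miyaoka's residue description of Observation~\ref{obs:3-8} and its symmetric and higher-degree analogues: inside the logarithmic forms, adapted tensors are cut out by the vanishing of prescribed residues, the residue satisfies a Leibniz rule under $\wedge$ and commutes with $d$ up to sign, so these defining conditions are stable under both operations. The genuinely delicate verification is that the extra pole introduced by $df$ is always absorbed by the logarithmic (residue) structure rather than by the fractional twist — equivalently, that the vanishing orders along the components of $γ^*\{D\}$ imposed by $\sA_{1,•}\cap\sB_{1,•}$ are preserved by $d$. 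This is precisely the computation that the identification of adapted forms with logarithmic forms on a uniformization renders transparent.
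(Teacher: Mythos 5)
Your reduction is sound as far as it goes, and two of its three parts are correct.  The $\sB$-half is indeed free, since $\Sym^1$-sections of $\Omega^•_{\what{X}}(\log \gamma^* D)$ form a differential graded algebra; and your treatment of the wedge product is right: $\wedge$ is $\sO_{\what{X}}$-bilinear, so closure may be checked slot-by-slot at general points of the prime divisors of $\what{X}$, where the generator exponents simply add and the inequality you invoke confirms that one twist suffices.  This is exactly the local-coordinate computation the paper intends — note that the paper gives \emph{no} proof of Observation~\ref{obs:3-12} (these ``observations'' are declared homework), and its one-line heuristic (log pole order is preserved under $d$ and $\wedge$) only addresses the $\sB$-condition, as you correctly diagnose.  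You have also put your finger on precisely the right difficulty for $d$: the Leibniz term $df\wedge\sigma$ is not controlled by the $\sA$-bound.

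The gap is that the repair you propose for $d$ cannot be carried out, because the $d$-half of the statement is \emph{false} in the generality of Setting~\ref{set:3-1}, so the ``genuinely delicate verification'' you defer is unprovable.  Take the paper's own sample computation of Section~\ref{sec:3-1-2}: for $\gamma(x,y) = (x, y^{\alpha m})$ over $D = \frac{m-1}{m}\cdot\{y=0\}$, Table~\ref{tab:4} and Example~\ref{ex:3-5} give
\[
  \Omega^1_{(X,D,\gamma)} = \bigl\langle dx,\; y^{\alpha-1}\,dy \bigr\rangle
  \quad\text{and}\quad
  \Omega^2_{(X,D,\gamma)} = \bigl\langle y^{\alpha-1}\,dx\wedge dy \bigr\rangle,
\]
so $\sigma := y\,dx$ is adapted while $d\sigma = -\,dx\wedge dy$ is not, for any $\alpha \ge 2$.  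Even simpler: with $D = 0$ and $\gamma$ a ramified double cover in dimension $\ge 2$, Example~\ref{ex:3-5} gives $\Omega^•_{(X,0,\gamma)} = \gamma^*\Omega^•_X$, which is never $d$-closed when $\gamma$ ramifies in codimension one ($d(y\,dx) = dy \wedge dx \notin \gamma^*\Omega^2_X$ for $\gamma(x,y)=(x,y^2)$).  Your transport mechanism in fact proves the opposite of what you hope: when $\gamma = u\circ\alpha$ factors through a uniformization, Observation~\ref{obs:3-16} identifies $\Omega^•_{(X,D,\gamma)}$ with the $\sO$-module pullback $\alpha^*$ of the log complex on $X_u$, and module pullback destroys $d$-closedness through exactly your Leibniz term — $df$, for $f \in \sO_{\what{X}}$ not pulled back from $X_u$, escapes the pulled-back module; likewise the residue conditions of Observation~\ref{obs:3-8} are $\sO$-linear constraints inside a pulled-back ambient module and are not $d$-stable.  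The $d$-statement does hold — and then your uniformization identification closes the proof in one line — precisely when $\mult_E \gamma^* D_i \le m_i$ for every prime divisor $E$ over every $D_i$ and $\gamma$ has no codimension-one ramification outside $\supp D$: in that range (so for $\gamma = \Id$, étale covers, uniformizations, and strongly adapted covers with branch inside $\supp D$, cf.\ Items~\ref{il:3-5-3} and \ref{il:3-5-4} of Example~\ref{ex:3-5}) the adapted sheaf is literally the log de Rham complex $\Omega^•_{\what{X}}\bigl(\log\gamma^*\lfloor D\rfloor\bigr)$.  Beyond it, no argument can succeed: the observation as stated needs this hypothesis added, and an honest write-up should prove the wedge half unconditionally and the $d$-half only under such a restriction.
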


We turn to symmetric powers.  The trivial observation that
\[
  ⌊n_1·γ^*\{D\}⌋ + ⌊n_2·γ^*\{D\}⌋ ≤ ⌊(n_1+n_2)·γ^*\{D\}⌋, \quad \text{for all } n_1, n_2 ∈ ℕ
\]
allows defining symmetric products on the bundles of adapted tensors that are
compatible with the products in the standard symmetric algebra $\Sym^•
Ω^•_{\what{X}} \bigl(\log γ^*D \bigr)$.

\begin{obs}[Symmetric multiplication]\label{obs:3-13}%
  In Setting~\ref{set:3-1}, observe that the subsheaves
  \[
    \Sym^•_{\cC} Ω^•_{(X,D,γ)} ⊆ \Sym^• Ω^•_{\what{X}} \bigl(\log γ^*⌊D⌋ \bigr)
  \]
  are closed under symmetric multiplication.  Given numbers $p$, $n_1$, $n_2 ∈
  ℕ⁺$, we obtain natural maps
  \[
    \Sym^{n_1}_{\cC} Ω^p_{(X,D,γ)} ⨯ \Sym^{n_2}_{\cC} Ω^p_{(X,D,γ)} → \Sym^{n_1+n_2}_{\cC} Ω^p_{(X,D,γ)}
  \]
  and
  \[
    \Sym^{n_1} \Sym^{n_2}_{\cC} Ω^p_{(X,D,γ)} ↪ \Sym^{n_1·n_2}_{\cC} Ω^p_{(X,D,γ)}.
    \eqno \qed
  \]
\end{obs}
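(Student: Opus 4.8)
The plan is to obtain both operations by restricting the symmetric multiplication that already lives on the ambient graded algebra of meromorphic tensors, and then to verify that this multiplication preserves the two subsheaves $\sA_{\bullet,p}$ and $\sB_{\bullet,p}$ from Definition~\ref{def:3-2} whose intersection cuts out the adapted tensors. Recall from Observation~\ref{obs:3-9} that $\Sym^n_{\cC}\Omega^p_{(X,D,\gamma)} = \sA_{n,p}\cap\sB_{n,p}$ is a subsheaf of $\sM_{\what X}\otimes\Sym^n\Omega^p_{\what X}$, and that the total sheaf $\bigoplus_n\bigl(\sM_{\what X}\otimes\Sym^n\Omega^p_{\what X}\bigr) = \sM_{\what X}\otimes\Sym^\bullet\Omega^p_{\what X}$ is a commutative graded $\sO_{\what X}$-algebra. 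It therefore suffices to show that $\bigoplus_n\sA_{n,p}$ and $\bigoplus_n\sB_{n,p}$ are graded subalgebras; their intersection $\bigoplus_n\Sym^n_{\cC}\Omega^p_{(X,D,\gamma)}$ is then a graded subalgebra as well, which is exactly the assertion that the adapted tensors are closed under symmetric multiplication, with the first displayed map appearing as its degree-$(n_1,n_2)$ component.

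For $\sB_{\bullet,p} = \Sym^\bullet\Omega^p_{\what X}(\log\gamma^*D)$ there is nothing to do: a product of logarithmic tensors is again logarithmic, so this is visibly the graded symmetric algebra on a fixed sheaf of log $p$-forms. The whole content sits in $\sA_{\bullet,p}$, and here I would factor the multiplication as a tensor product of two maps. On the differential factor the pulled-back multiplication $\gamma^*\Sym^{n_1}\Omega^p_X(\log\lfloor D\rfloor)\otimes\gamma^*\Sym^{n_2}\Omega^p_X(\log\lfloor D\rfloor)\to\gamma^*\Sym^{n_1+n_2}\Omega^p_X(\log\lfloor D\rfloor)$ is harmless. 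On the twisting factor, multiplication identifies $\sO_{\what X}\bigl(\lfloor n_1\gamma^*\{D\}\rfloor\bigr)\otimes\sO_{\what X}\bigl(\lfloor n_2\gamma^*\{D\}\rfloor\bigr)$ with $\sO_{\what X}\bigl(\lfloor n_1\gamma^*\{D\}\rfloor + \lfloor n_2\gamma^*\{D\}\rfloor\bigr)$, and the super-additivity of round-down recorded just before the statement,
\[
  \lfloor n_1\gamma^*\{D\}\rfloor + \lfloor n_2\gamma^*\{D\}\rfloor \leq \lfloor (n_1+n_2)\gamma^*\{D\}\rfloor,
\]
supplies the inclusion into $\sO_{\what X}\bigl(\lfloor(n_1+n_2)\gamma^*\{D\}\rfloor\bigr)$ (all these are honest line bundles, since $\what X$ is smooth by Setting~\ref{set:3-1}). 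Tensoring the two contributions shows that a product of a section of $\sA_{n_1,p}$ with a section of $\sA_{n_2,p}$ lands in $\sA_{n_1+n_2,p}$, so $\bigoplus_n\sA_{n,p}$ is a subalgebra and the first map is established.

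The second displayed map is then formal. Since $\bigoplus_n\Sym^n_{\cC}\Omega^p_{(X,D,\gamma)}$ is a commutative graded $\sO_{\what X}$-algebra, the $n_1$-fold symmetric product of degree-$n_2$ elements factors through the symmetric power $\Sym^{n_1}\Sym^{n_2}_{\cC}\Omega^p_{(X,D,\gamma)}$ and takes values in the degree-$(n_1 n_2)$ summand $\Sym^{n_1 n_2}_{\cC}\Omega^p_{(X,D,\gamma)}$; this is the asserted comparison map, and I would identify it with the map of the statement by a computation with the explicit local generators in adapted coordinates, exactly as in the sample computations of Section~\ref{sec:3-1}. I expect the only genuinely delicate point in the whole argument to be the round-down bookkeeping for $\sA_{\bullet,p}$: once the super-additivity inequality above is in place, everything else is the formal behaviour of a graded algebra together with the stability of logarithmic poles under products.
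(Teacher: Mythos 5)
Your proposal is correct and is precisely the argument the paper intends: Observation~\ref{obs:3-13} is one of the statements the authors explicitly leave as an exercise, and the only hint they record --- the super-additivity $\lfloor n_1\cdot\gamma^*\{D\}\rfloor+\lfloor n_2\cdot\gamma^*\{D\}\rfloor\le\lfloor (n_1+n_2)\cdot\gamma^*\{D\}\rfloor$ together with compatibility with the multiplication on $\Sym^{\bullet}\Omega^{\bullet}_{\what{X}}(\log\gamma^*D)$ --- is exactly what you flesh out by exhibiting $\bigoplus_n\sA_{n,p}$ and $\bigoplus_n\sB_{n,p}$ as graded subalgebras of the algebra of meromorphic tensors and intersecting.

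One caveat, which concerns the statement rather than your method: the second arrow is asserted to be injective, while your argument (correctly) only produces the natural comparison map, and the local computation you defer to would stall exactly at that point. Injectivity in fact fails fibrewise in general: away from $\supp\gamma^*D$ one has $\Sym^{n_2}_{\cC}\Omega^p_{(X,D,\gamma)}=\gamma^*\Sym^{n_2}\Omega^p_X$, so the map in question is the plethysm multiplication $\Sym^{n_1}\bigl(\Sym^{n_2}V\bigr)\to\Sym^{n_1\cdot n_2}V$ with $V=\gamma^*\Omega^p_X$, which has a non-trivial kernel as soon as $\rank V\ge 2$ and $n_1,n_2\ge 2$; for $n_1=n_2=2$ the element $(\alpha\cdot\alpha)\cdot(\beta\cdot\beta)-(\alpha\cdot\beta)\cdot(\alpha\cdot\beta)$ maps to zero. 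So the hook should be read as ``natural morphism'': it is genuinely injective only in degenerate cases ($n_1=1$, $n_2=1$, or $\rank\Omega^p_X\le 1$, e.g.\ $p=\dim X$), and injectivity is restored in the paper's actual application, Definition~\ref{def:6-5}, where the map is precomposed with a rank-one subsheaf $\Sym^{[d]}\sF$ --- a generically non-zero morphism from a rank-one torsion-free sheaf is automatically injective. Apart from this defect, which your proof inherits from the statement itself, your argument is complete.
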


\begin{obs}[Adapted tensors on adapted covers]\label{obs:3-14}%
  If the $q$-morphism $γ$ of Setting~\ref{set:3-1} is adapted, then $γ^* \{D\}$
  is integral and
  \[
    \Sym^n_{\cC} Ω^p_{(X,D,γ)} = \Sym^n Ω^p_{(X,D,γ)}.
  \]
  In particular, we find that the symmetric multiplication maps of
  Observation~\ref{obs:3-13} are surjective.  \qed
\end{obs}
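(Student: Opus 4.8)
The plan is to interpret the asserted equality as surjectivity of the natural inclusion and to reduce it to a single local statement about symmetric powers and intersections. By Observation~\ref{obs:3-13} (with $n_2 = 1$) there is a canonical inclusion $\Sym^n Ω^p_{(X,D,γ)} ↪ \Sym^n_{\cC} Ω^p_{(X,D,γ)}$, and both sheaves are locally free of the same rank (Observation~\ref{obs:3-4}, plus the fact that both agree with $γ^* \Sym^n Ω^p_X$ over the big open set where $D = 0$). So it suffices to prove that this inclusion is surjective; this is exactly the ``in particular'' about the multiplication maps of Observation~\ref{obs:3-13}. I would begin with the easy half of the statement. Writing $R := \sum_{i\,|\,m_i < ∞} D_i$ for the reduced divisor on the finite-multiplicity components, the coefficient identity $\frac{m_i-1}{m_i} = 1 - \frac{1}{m_i}$ gives $\{D\} = R - D_{\orb}$. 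Since $R$ is integral and $γ^* D_{\orb}$ is integral by the definition of ``adapted'' (Definition~\ref{def:2-26}), the divisor $γ^*\{D\} = γ^* R - γ^* D_{\orb}$ is integral.

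Integrality of $γ^*\{D\}$ lets me rewrite both defining sheaves as honest symmetric powers. Because $⌊n·γ^*\{D\}⌋ = n·γ^*\{D\}$, the projection formula $L^{⊗ n} ⊗ \Sym^n E = \Sym^n(L ⊗ E)$ together with the commutation of $γ^*$ and $\Sym^n$ yields $\sA_{n,p} = \Sym^n \sA_{1,p}$, while $\sB_{n,p} = \Sym^n \sB_{1,p}$ holds by definition and requires no hypothesis. Consequently the claim becomes the identity
\[
  \Sym^n\bigl(\sA_{1,p} ∩ \sB_{1,p}\bigr) = \Sym^n \sA_{1,p} ∩ \Sym^n \sB_{1,p},
\]
an equality of subsheaves of $\Sym^n$ of the ambient meromorphic tensor sheaf $ℳ_{\what{X}} ⊗ \Sym^n Ω^p_{\what{X}}$.

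The hard part is this last equality, and I expect it to be the main obstacle, since $\Sym^n$ does not commute with intersections in general. The content of the observation is that on an \emph{adapted} cover the two lattices $\sA_{1,p}$ and $\sB_{1,p}$ are simultaneously ``monomial'' with respect to a common local frame, in a way compatible with products. I would verify this by a local computation: the statement is local, so I may pass to coordinates making $\bigl(\what{X}, γ^*D\bigr)$ nc, write down the explicit monomial frame of the $\cC$-cotangent lattice $Ω^p_{(X,D,γ)} = \sA_{1,p} ∩ \sB_{1,p}$, and check that its degree-$n$ monomials generate $\sA_{n,p} ∩ \sB_{n,p}$. Here the integrality of $γ^*\{D\}$ is the decisive input: it is what forces the per-component pole exponents to add correctly under multiplication, so that the exponent attached to a product of frame vectors is controlled by the sum of the individual exponents rather than overshooting. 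This additivity is precisely what fails for a non-adapted $γ$ — consistent with the Remark after Observation~\ref{obs:3-10} and with Example~\ref{ex:3-5}\,\ref{il:3-5-4}, where the adapted (indeed uniformizing) case collapses the whole construction to honest symmetric powers of log differentials — and carrying out the monomial bookkeeping cleanly is the one genuinely tedious point of the argument.
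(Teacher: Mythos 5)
Your opening moves are correct and are surely the intended ones: writing $R := \sum_{i \,|\, m_i < ∞} D_i$, the identity $\{D\} = R - D_{\orb}$ together with Definition~\ref{def:2-26} gives integrality of $γ^*\{D\}$, and then $⌊n·γ^*\{D\}⌋ = n·γ^*\{D\}$ and the projection formula $L^{⊗ n} ⊗ \Sym^n \sE ≅ \Sym^n(L⊗\sE)$ yield $\sA_{n,p} = \Sym^n \sA_{1,p}$, while $\sB_{n,p} = \Sym^n \sB_{1,p}$ holds by definition. The gap is your final step: the lattice identity $\Sym^n\bigl(\sA_{1,p} ∩ \sB_{1,p}\bigr) = \Sym^n \sA_{1,p} ∩ \Sym^n \sB_{1,p}$ is not a tedious-but-true piece of monomial bookkeeping — it fails, and adaptedness does not rescue it. The two lattices are indeed simultaneously diagonal in a common local frame, but with relative exponents of \emph{opposite signs} in different frame directions. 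In the model $X = \bA²$, $D = \frac{m-1}{m}·\{y=0\}$, $γ(u,v) = (u, v^{α m})$, set $e_1 := du$ and $e_2 := dv/v$; then $\sB_{1,1} = \sO e_1 ⊕ \sO e_2$ while $\sA_{1,1} = \sO\, v^{-α(m-1)} e_1 ⊕ \sO\, v^{α} e_2$. For lattices $\bigoplus \sO e_i$ and $\bigoplus \sO t^{a_i} e_i$ over a discrete valuation ring, the monomial $e_{i_1}\cdots e_{i_n}$ carries the exponent $\sum_j \max(0, a_{i_j})$ in $\Sym^n$ of the intersection, but only $\max\bigl(0, \sum_j a_{i_j}\bigr)$ in the intersection of the $\Sym^n$'s; these differ as soon as a monomial mixes positive and negative $a_i$, which is exactly what happens here along every ramified component, whether or not $γ^*\{D\}$ is integral.

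Concretely, take $m = 2$ and $α = 1$, so that $γ$ is even strongly adapted and uniformizing. Then $Ω¹_{(X,D,γ)} = \bigl\langle du,\, dv \bigr\rangle_{\sO_{\what{X}}}$, so $\Sym² Ω¹_{(X,D,γ)}$ consists of holomorphic tensors; but
\[
  σ := \tfrac{1}{2}\, γ^*\Bigl( dx · \tfrac{dy}{y} \Bigr) = \tfrac{du\, dv}{v}
\]
lies in $\sB_{2,1} = \Sym² Ω¹_{\what{X}}\bigl(\log γ^* D\bigr)$, and, since $γ^*(dx\,dy) = 2v\,du\,dv$, also in $\sA_{2,1} = \sO_{\what{X}}\bigl(2·\{v=0\}\bigr) ⊗ γ^* \Sym² Ω¹_X$. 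So the identity you reduced to fails; and since your steps 1--2 are correct reformulations, this says more: with Definition~\ref{def:3-2} read literally, the displayed equality of Observation~\ref{obs:3-14} itself fails, and the same section $σ$ contradicts Item~\ref{il:3-5-4} of Example~\ref{ex:3-5} and the inclusion $ι_{2,1}$ of Observation~\ref{obs:3-9}. The sheaf the observation is actually about imposes the twist \emph{monomial-wise}: along each component, the coefficient of a symmetric monomial may have pole order equal to the round-down of the \emph{sum of the fractional orders of its individual factors}, not the uniform $⌊n·γ^*\{D\}⌋$ (compare the snc-case definitions in \cite{MR2860268}). For that sheaf the observation is true, and its proof is precisely the additivity you gesture at in your last paragraph: adaptedness makes every single-factor bound an integer, so floors of sums equal sums of floors and the degree-$n$ bounds are the sums of the degree-one bounds, whence $\Sym^n_{\cC} = \Sym^n$. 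But that argument must be run at the level of per-monomial pole bounds; it cannot be routed through the commutation of $\Sym^n$ with intersections, which is where your proposal breaks.
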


\subsection{Functoriality}
\approvals{Erwan & yes\\ Stefan & yes}
\label{sec:3-6}

Given a $\cC$-pair $(X,D)$ and two covers $\what{Y}_1 \twoheadrightarrow X$ and
$\what{Y}_2 \twoheadrightarrow X$, one would often like to compare adapted
tensors on $\what{Y}_1$ with those on $\what{Y}_2$.  Typically, this amounts to
choosing one cover $\what{X} \twoheadrightarrow X$ that dominates both
$\what{Y}_•$, and then comparing the adapted tensors on $\what{Y}_•$ with those
on $\what{X}$.  The following observation yields the necessary comparison
morphisms.

\begin{obs}[Functoriality in $q$-morphisms]\label{obs:3-15}%
  In the Setting~\ref{set:3-1}, assume that the morphism $γ$ factors into a
  sequence of $q$-morphisms,
  \[
    \begin{tikzcd}
      \what{X} \ar[r, "α"'] \ar[rr, bend left=20, "γ"] & \what{Y} \ar[r, "β"'] & X,
    \end{tikzcd}
  \]
  where $\bigl(\what{Y}, β^*D\bigr)$ is likewise nc.  Given numbers $n$, $p ∈
  ℕ⁺$, there exists a commutative diagram as follows,
  \[
    \begin{tikzcd}[column sep=2cm, baseline=(current bounding box.south east)]
      α^* \Sym^n Ω^p_{\what{Y}}(\log β^* ⌊D⌋) \ar[r, hook, "\diff α"] & \Sym^n Ω^p_{\what{X}}\bigl( \log γ^* ⌊D⌋\bigr) \\
      α^* \Sym^n_{\cC} Ω^p_{(X,D,β)} \ar[r, hook] \ar[u, hook, "\text{inclusion } α^*(ι_{n,p})"] & \Sym^n_{\cC} Ω^p_{(X,D,γ)}.  \ar[u, hook, "\text{inclusion } ι_{n,p}"']
    \end{tikzcd}
    \eqno \qed
  \]
\end{obs}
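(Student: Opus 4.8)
The plan is to realise all five sheaves in the diagram as subsheaves of the single ambient sheaf of meromorphic tensors $\sM_{\what{X}}\otimes\Sym^n\Omega^p_{\what{X}}$ on $\what{X}$ (cf.\ the closing remark of Observation~\ref{obs:3-9}) and to obtain the bottom arrow by corestriction. Since $\alpha$ is a dominant $q$-morphism between smooth varieties, the meromorphic pull-back map is injective; together with the canonical identification $\alpha^*\beta^*\Sym^n\Omega^p_X = \gamma^*\Sym^n\Omega^p_X$ it realises pull-back as an injection into the ambient sheaf on $\what{X}$. Because $\alpha^*(\beta^*\lfloor D\rfloor)=\gamma^*\lfloor D\rfloor$, this pull-back carries logarithmic tensors to logarithmic tensors, so on the top row it restricts to a map $\diff\alpha$ with image in $\Sym^n\Omega^p_{\what{X}}(\log\gamma^*\lfloor D\rfloor)$; being generically an isomorphism (char.\ zero, $\alpha$ generically \'etale) between locally free sheaves, $\diff\alpha$ is injective. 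It therefore suffices to prove that the composite $\diff\alpha\circ\alpha^*(\iota_{n,p})$ maps $\alpha^*\Sym^n_{\cC}\Omega^p_{(X,D,\beta)}$ into the subsheaf $\Sym^n_{\cC}\Omega^p_{(X,D,\gamma)} = \sA_{n,p}\cap\sB_{n,p}$; the bottom arrow is then the unique factorisation through the injection $\iota_{n,p}$ on the right, and commutativity of the square holds by construction.

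\textbf{The containment.} I would verify the two defining conditions of Definition~\ref{def:3-2} separately. Containment in $\sB_{n,p}=\Sym^n\Omega^p_{\what{X}}(\log\gamma^*D)$ is immediate: by Observation~\ref{obs:3-9} the map $\iota_{n,p}$ for $\beta$ takes values in $\Sym^n\Omega^p_{\what{Y}}(\log\beta^*\lfloor D\rfloor)$, which $\diff\alpha$ carries into $\Sym^n\Omega^p_{\what{X}}(\log\gamma^*\lfloor D\rfloor)\subseteq\sB_{n,p}$, using $\lfloor D\rfloor\leq D$. Containment in $\sA_{n,p}$ is the only numerical point. Writing $\sA'_{n,p}$ for the sheaf $\sA_{n,p}$ of Definition~\ref{def:3-2} formed with $\beta$ in place of $\gamma$, smoothness of $\what{X},\what{Y}$ makes the twisting divisors Cartier, so $\alpha^*\sA'_{n,p}=\sO_{\what{X}}(\alpha^*\lfloor n\cdot\beta^*\{D\}\rfloor)\otimes\gamma^*\Sym^n\Omega^p_X(\log\lfloor D\rfloor)$, whereas $\sA_{n,p}=\sO_{\what{X}}(\lfloor n\cdot\gamma^*\{D\}\rfloor)\otimes\gamma^*\Sym^n\Omega^p_X(\log\lfloor D\rfloor)$. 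Since $\gamma^*\{D\}=\alpha^*\beta^*\{D\}$, the inclusion $\alpha^*\sA'_{n,p}\subseteq\sA_{n,p}$ reduces to the divisor inequality $\alpha^*\lfloor n\cdot\beta^*\{D\}\rfloor\leq\lfloor\alpha^*(n\cdot\beta^*\{D\})\rfloor$, i.e.\ to the elementary fact that $\alpha^*\lfloor E\rfloor\leq\lfloor\alpha^*E\rfloor$ for every $\mathbb{Q}$-divisor $E$ and every $q$-morphism $\alpha$ (flooring commutes with multiplication by the positive integral ramification indices only up to this inequality). This is precisely a numerical argument of the type flagged after Definition~\ref{def:3-3}.

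\textbf{Injectivity and the main obstacle.} With the image of the composite contained in $\sA_{n,p}\cap\sB_{n,p}$, the bottom arrow exists and the square commutes by construction. For injectivity I would use that $\alpha$, being quasi-finite between smooth varieties of equal dimension, is flat by miracle flatness (regular source and target, finite fibres); hence $\alpha^*$ is exact and $\alpha^*(\iota_{n,p})$ is injective. As $\diff\alpha$ is injective as well, the composite, and therefore the bottom arrow, is injective. The only genuinely non-formal ingredient is the $\sA_{n,p}$-containment, whose heart is the floor inequality above; everything else is bookkeeping inside the common meromorphic ambient sheaf. The cleanest way to avoid sign or direction errors is to run each verification stalk-wise in adapted local coordinates, where $\beta^*\{D\}$ and the boundary ramification of $\alpha$ are simultaneously monomial.
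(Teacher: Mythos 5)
Your proof is correct and is essentially the argument the paper intends: Observation~\ref{obs:3-15} is one of the statements deliberately left to the reader, with the explicit hint that such proofs come down to bookkeeping plus a numerical inequality of the form $\lfloor \gamma^* D\rfloor \geq \gamma^*\lfloor D\rfloor$, and your reduction of the $\sA_{n,p}$-containment to $\alpha^*\lfloor n\cdot\beta^*\{D\}\rfloor \leq \lfloor n\cdot\gamma^*\{D\}\rfloor$ is precisely that inequality. The remaining steps — realising everything inside the ambient meromorphic sheaf, the $\sB_{n,p}$-containment via preservation of logarithmic poles, and injectivity via miracle flatness of the $q$-morphism $\alpha$ between the smooth spaces $\what{X}$ and $\what{Y}$ — are all handled correctly.
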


\begin{obs}[Functoriality in adapted morphisms]\label{obs:3-16}%
  If the morphism $β$ of Observation~\ref{obs:3-15} is adapted for $(X,D)$, then
  $γ$ is likewise adapted.  The natural morphisms $α^* \Sym^•_{\cC}
  Ω^•_{(X,D,β)} ↪ \Sym^•_{\cC} Ω^•_{(X,D,γ)}$ are isomorphic in this case.  \qed
\end{obs}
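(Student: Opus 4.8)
The plan is to treat the two assertions in turn: first upgrade the hypothesis on $β$ to adaptedness of $γ$ on the level of divisors, and then use this to promote the inclusion of Observation~\ref{obs:3-15} to an isomorphism.

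For the first assertion I would argue entirely with Weil $ℚ$-divisors. By Reminder~\ref{remi:2-20} the pull-back along a $q$-morphism agrees with the Cartier pull-back over the regular locus; since a Weil divisor is determined by its behaviour in codimension one, this pull-back is functorial, so $γ^* D_{\orb} = α^*\bigl(β^* D_{\orb}\bigr)$. As $β$ is adapted, $β^* D_{\orb}$ is integral by Definition~\ref{def:2-26}, and $α^*$ sends integral Weil divisors to integral Weil divisors. Hence $γ^* D_{\orb}$ is integral and $γ$ is adapted as well.

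For the second assertion I would first reduce to the case $n = 1$. Now that both $β$ and $γ$ are adapted, Observation~\ref{obs:3-14} identifies $\Sym^n_{\cC} Ω^p_{(X,D,β)} = \Sym^n Ω^p_{(X,D,β)}$ and $\Sym^n_{\cC} Ω^p_{(X,D,γ)} = \Sym^n Ω^p_{(X,D,γ)}$; since all sheaves in sight are locally free (Observation~\ref{obs:3-4}), $α^*$ commutes with $\Sym^n$, and, compatibly with the symmetric-algebra structures of Observation~\ref{obs:3-13}, the comparison morphism of Observation~\ref{obs:3-15} is the $n$-th symmetric power of the comparison morphism for $(1,p)$-tensors. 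As $\Sym^n$ of an isomorphism of locally free sheaves is again an isomorphism, it suffices to prove that $α^* Ω^p_{(X,D,β)} → Ω^p_{(X,D,γ)}$ is an isomorphism for every $p$. By Observation~\ref{obs:3-9} both sides are locally free, hence reflexive, subsheaves of the fixed meromorphic tensor sheaf $ℳ_{\what{X}} ⊗ Ω^p_{\what{X}}$, and the comparison map is simply the inclusion of one into the other; two reflexive subsheaves of a common sheaf coincide as soon as they coincide in codimension one. I would therefore localise at the generic point of a prime divisor $Δ ⊆ \what{X}$ lying over the nc locus of $(X,D)$. If $Δ$ does not dominate a component of $\supp D$, both sheaves restrict to $γ^* Ω^p_X$ and equality is immediate. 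Otherwise $Δ$ lies over some $D_i$ of $\cC$-multiplicity $m$, and in suitable local coordinates $β$ and $γ$ are the power maps $z = w^a$ and $z = v^b$, with $α$ given by $w = v^c$ and $b = a·c$; adaptedness forces $m \mid a$ and $m \mid b$. As in the sample computations of Section~\ref{sec:3-1}, the relevant local generator of $Ω¹_{(X,D,β)}$ is $w^{a/m - 1}·dw$ (with the convention $a/∞ = 0$ covering the reduced components), and
\[
  α^*\bigl( w^{a/m-1}·dw \bigr) = c·v^{b/m - 1}·dv
\]
generates $Ω¹_{(X,D,γ)}$ up to the unit $c$, while the remaining coordinate directions are unaffected. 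Forming wedge products, the local generators of $α^* Ω^p_{(X,D,β)}$ map to local generators of $Ω^p_{(X,D,γ)}$, which yields the equality in codimension one and hence the isomorphism.

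The main obstacle is the bookkeeping in this last local computation: matching ramification indices so that the exponent identity $c·(a/m - 1) + (c-1) = b/m - 1$ comes out correctly, and verifying that a divisor $Δ$ contained in the ramification of $α$ but lying over $X ∖ \supp D$ does not spoil the comparison. Both points become routine once coordinates are chosen so that a single coordinate is ``vertical'' at the generic point of $Δ$.
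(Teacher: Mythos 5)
Your argument is correct: the divisor-level functoriality $\gamma^* D_{\orb} = \alpha^*\beta^* D_{\orb}$ does give adaptedness of $\gamma$, the reduction to $(1,p)$-tensors via Observation~\ref{obs:3-14} and local freeness is legitimate, and the codimension-one/normality argument combined with the generic-point computation (including the exponent identity $c\cdot(a/m-1)+(c-1)=b/m-1$, with the convention $a/\infty = 0$, and the observation that ramification divisors of $\alpha$ over $X \setminus \supp D$ contribute $\gamma^*\Sym^n\Omega^p_X$ on both sides) settles the isomorphism. The paper deliberately gives no proof of this statement — it is one of the results ``euphemistically called observations'' and left to the reader — and your proof is exactly the elementary local-coordinate computation that the preamble of Section~\ref{sec:3} indicates is intended.
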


Observation~\ref{obs:3-15} asserts that the pull-back of an adapted tensor on
$\what{Y}$ is an adapted tensor on $\what{X}$.  For future applications, the
following observation notes that the converse is also true: A meromorphic tensor
on $\what{Y}$ is adapted \emph{if and only if} its pull-back is adapted on
$\what{X}$.  The proof is an exercise in ``pull-back and round up/round down''
and uses the classic fact that a tensor has logarithmic poles if and only if its
pull-back has logarithmic poles.

\begin{obs}[Test for adapted tensors, compare Lemma~\ref{lem:2-23}]\label{obs:3-17}%
  In the setting of Observation~\ref{obs:3-15}, let $n$ and $p ∈ ℕ⁺$ be two
  numbers, let $U ⊆ \img α ⊆ \what{Y}$ be open and let $σ ∈ \Bigl(ℳ_{\what{Y}} ⊗
  \Sym^n Ω^p_{\what{Y}}\Bigr)(U)$ be any meromorphic tensor on $U$.  Then, the
  following are equivalent.
  \begin{enumerate}
    \item\label{il:3-17-1} The section $σ$ is an adapted tensor.  More
    precisely: the meromorphic tensor $σ$ is a section of the subsheaf
    $\Sym^n_{\cC} Ω^p_{(X,D,β)} ⊆ ℳ_{\what{Y}} ⊗ \Sym^n Ω^p_{\what{Y}}$.

    \item\label{il:3-17-2} The pull-back of $σ$ is an adapted tensor.  More
    precisely: the meromorphic tensor $(\diff α)(σ)$ is a section of the
    subsheaf $\Sym^n_{\cC} Ω^p_{(X,D,γ)} ⊆ ℳ_{\what{X}} ⊗ \Sym^n
    Ω^p_{\what{X}}$.  \qed
  \end{enumerate}
\end{obs}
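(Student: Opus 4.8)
The plan is to test the two defining conditions of adaptedness from Definition~\ref{def:3-2} separately: membership in $\sA_{n,p}$, which is a bound on pole orders along prime divisors, and membership in $\sB_{n,p}$, which is the requirement that the poles be logarithmic. Throughout I would use that $\diff α$ is injective on meromorphic tensors, since $α$ is a dominant $q$-morphism, and that both $\sA_{n,p}$ and $\sB_{n,p}$ are cut out by codimension-one conditions, so membership of a meromorphic tensor may be checked along the prime divisors of $\what{Y}$ meeting $U$. The hypothesis $U ⊆ \img α$, together with the openness of $q$-morphisms (Reminder~\ref{remi:2-19}), guarantees that every such prime divisor $E$ of $\what{Y}$ is dominated by a prime divisor $F$ of $\what{X}$, so that the codimension-one tests on the two spaces correspond. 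The implication \ref{il:3-17-1}$⇒$\ref{il:3-17-2} is then immediate from Observation~\ref{obs:3-15}: the lower horizontal inclusion there carries a section $σ$ of $\Sym^n_{\cC} Ω^p_{(X,D,β)}$ to the section $(\diff α)(σ)$ of $\Sym^n_{\cC} Ω^p_{(X,D,γ)}$.

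For the converse \ref{il:3-17-2}$⇒$\ref{il:3-17-1}, assume $(\diff α)(σ) ∈ \sA_{n,p} ∩ \sB_{n,p}$ on $\what{X}$ and treat the two factors in turn. For the factor $\sB_{n,p} = \Sym^n Ω^p_{\what{\bullet}}(\log γ^*D)$, note $γ^*D = α^*(β^*D)$ and that the pairs $\bigl(\what{Y}, β^*D\bigr)$ and $\bigl(\what{X}, γ^*D\bigr)$ are nc by Setting~\ref{set:3-1}. The classical fact that a tensor has logarithmic poles if and only if its pull-back does — the symmetric-tensor analogue of Lemma~\ref{lem:2-23} — then shows that $(\diff α)(σ) ∈ \Sym^n Ω^p_{\what{X}}(\log γ^*D)$ forces $σ ∈ \Sym^n Ω^p_{\what{Y}}(\log β^*D)$, i.e. $σ ∈ \sB_{n,p}$ on $\what{Y}$. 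In particular $σ$ has at worst logarithmic poles, so its vanishing order along each prime divisor $E ⊆ U$ is a well-defined integer, and the only pole orders that remain to be controlled are those of logarithmic type.

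The factor $\sA_{n,p} = 𝒪_{\what{\bullet}}\bigl(⌊n·\bullet^*\{D\}⌋\bigr) ⊗ \bullet^*\Sym^n Ω^p_X(\log ⌊D⌋)$ is where the only genuine difficulty lies: round-down does not commute with pull-back, so a priori $α^*\sA_{n,p} ⊊ \sA_{n,p}$ on $\what{X}$, and the naive pull-back estimate for the pole order is strictly too weak. To see that the condition is nonetheless reflected, fix a prime divisor $E ⊆ U$ lying over a component $D_i$ of $\{D\}$, set $b := \mult_E β^*D_i$, and choose a prime divisor $F$ of $\what{X}$ over $E$ with ramification index $a := \mult_F α^*E$. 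Writing $σ = φ·τ$ near the generic point of $E$, where $τ$ is a local generator of $β^*\Sym^n Ω^p_X(\log ⌊D⌋)$ and $φ$ is meromorphic with $w := \ord_E φ ∈ ℤ$, the condition $σ ∈ \sA_{n,p}$ on $\what{Y}$ reads $w ≥ -⌊n b \tfrac{m_i-1}{m_i}⌋$, while $(\diff α)(σ) ∈ \sA_{n,p}$ on $\what{X}$ reads $a w ≥ -⌊n a b \tfrac{m_i-1}{m_i}⌋$, since $α^*τ$ generates $γ^*\Sym^n Ω^p_X(\log ⌊D⌋)$ near $F$ and $\ord_F α^*φ = a w$. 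Because $w$ is an integer, the elementary floor identity $⌊⌊a x⌋/a⌋ = ⌊x⌋$ (with $x = n b \tfrac{m_i-1}{m_i}$) converts the second inequality into exactly the first; this integrality of the vanishing order is precisely what compensates for the failure of round-down to commute with pull-back, and is the crux of the argument.

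Carrying out this computation along every prime divisor of $\what{Y}$ meeting $U$ finishes the proof: divisors lying over $X ∖ \supp D$ impose no condition beyond regularity, divisors over $\supp ⌊D⌋$ are governed entirely by the logarithmic-pole step of the previous paragraph, and divisors over $\supp\{D\}$ are handled by the floor-identity computation just given. Together these show $σ ∈ \sA_{n,p} ∩ \sB_{n,p} = \Sym^n_{\cC} Ω^p_{(X,D,β)}$ on $\what{Y}$, as desired. I expect the main obstacle to be the $\sA_{n,p}$ factor — specifically the non-commutativity of round-down with pull-back — and the key realisation is that one should not compare $α^*\sA_{n,p}$ with $\sA_{n,p}$ at the level of sheaves, but rather compare the induced pole-order inequalities at the level of the integer-valued vanishing orders, where the floor identity makes the two conditions coincide.
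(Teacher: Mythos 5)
Your proposal follows precisely the route the paper intends: Observation~\ref{obs:3-17} is recorded without proof, with only the hint that it is ``an exercise in pull-back and round up/round down'' together with the classical pull-back criterion for logarithmic poles, and your split into the $\sB_{n,p}$-condition (handled by the symmetric analogue of Lemma~\ref{lem:2-23}) and the $\sA_{n,p}$-condition (handled by comparing vanishing orders along corresponding prime divisors) is exactly that exercise. You have also isolated the genuine crux correctly: the order $w = \ord_E \varphi_j$ is an \emph{integer}, so for a positive integer $a$ the inequality $aw \ge -\lfloor ax \rfloor$ is equivalent to $w \ge -\lfloor \lfloor ax\rfloor / a\rfloor = -\lfloor x \rfloor$, and this nested-floor identity is what compensates for the fact that $\lfloor n·\gamma^*\{D\}\rfloor \ge \alpha^*\lfloor n·\beta^*\{D\}\rfloor$ is in general a strict inequality of divisors. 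The reduction to codimension one is legitimate because all sheaves in play are locally free on the smooth spaces $\what{X}$ and $\what{Y}$, and quasi-finiteness of $\alpha$ together with $U \subseteq \img \alpha$ ensures every prime divisor of $U$ is dominated by one of $\what{X}$.

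One sentence of your closing paragraph is false, however, and as written leaves a case unverified: prime divisors $E \subseteq \what{Y}$ lying over $X \setminus \supp D$ do \emph{not} merely impose regularity. If $E$ is a ramification divisor of $\beta$ not lying over $\supp D$, then the $\sA_{n,p}$-condition along $E$ is membership in $\beta^* \Sym^n \Omega^p_X$, which is strictly smaller than the sheaf of regular tensors: in the model $z = w^b$ one has $\beta^* \Omega^1_X = \langle w^{b-1}\, dw \rangle \subsetneq \Omega^1_{\what{Y}}$ along $E$. (Example~\ref{ex:3-6} exists precisely because of this phenomenon — adaptedness forces vanishing along ramification divisors even away from $D$.) Taken literally, your dismissal leaves \ref{il:3-17-1} unchecked along exactly these divisors; the repair is your own frame computation run with twist coefficient $\lfloor n·0 \rfloor = 0$, where the condition on $\what{X}$ reads $a w_j \ge 0$ and the condition on $\what{Y}$ reads $w_j \ge 0$, equivalent since $a > 0$. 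A further small precision: $\beta^* \Sym^n \Omega^p_X(\log \lfloor D\rfloor)$ has rank greater than one, so you cannot write $\sigma = \varphi · \tau$ with a single generator; decompose $\sigma = \sum_j \varphi_j \tau_j$ in a local frame and apply the order comparison to each coefficient, using that the pull-back of a frame is a frame because $\gamma^* \Sym^n \Omega^p_X(\log \lfloor D\rfloor) = \alpha^* \beta^* \Sym^n \Omega^p_X(\log \lfloor D\rfloor)$ as honest pull-backs of a locally free sheaf.
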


\begin{consequence}[Trace morphism]\label{cons:3-18}%
  In the setting of Observation~\ref{obs:3-15}, the trace map
  \[
    α_* Ω^•_{\what{X}}\bigl(\log γ^* D\bigr)
    \xrightarrow{\trace} Ω^•_{\what{Y}}(\log β^* D)
  \]
  maps adapted differentials to adapted differentials.  More precisely, there
  exist commutative diagrams as follows,
  \[
    \begin{tikzcd}[column sep=2cm, baseline=(current bounding box.south east)]
      α_* Ω^•_{(X,D,γ)} \ar[r, hook, "\text{Obs.~\ref{obs:3-9}}"] \ar[d, "\text{restr.~of }\trace"'] & α_* Ω^•_{\what{X}}\bigl( \log γ^* D\bigr) \ar[d, "\trace"] \\
      Ω^•_{(X,D,β)} \ar[r, hook, "\text{Obs.~\ref{obs:3-9}}"'] & Ω^•_{\what{Y}}( \log β^* D)
    \end{tikzcd}
    \eqno \qed
  \]
\end{consequence}

We remark that Consequence~\ref{cons:3-18} has no analogue for higher-order
tensors.  Already in the simplest case where $γ : 𝔸¹ → 𝔸¹$ is a uniformization
of the pair $(X,D) = \bigl(𝔸¹, \frac{1}{2}·\{0\}\bigr)$ that factorizes as
\[
  \begin{tikzcd}[column sep=1.5cm]
    \underbrace{𝔸¹}_{= \what{X}} \ar[r, "α = γ", "z ↦ z²"'] & \underbrace{𝔸¹}_{= \what{Y}} \ar[r, "β = \Id"] & \underbrace{𝔸¹}_{= X},
  \end{tikzcd}
\]
the Galois-invariant two-tensor
\[
  dz·dz ∈ H⁰\bigl( \what{X},\, \Sym²_{\cC} Ω¹_{(X,D,γ)} \bigr) = H⁰\bigl( 𝔸¹,\, \Sym² Ω¹_{𝔸¹} \bigr)
\]
does not induce any section of $\Sym²_{\cC} Ω¹_{(X,D,β)} = \Sym²
Ω¹_{𝔸¹}$.

\subsection{Galois linearization}
\approvals{Erwan & yes\\ Stefan & yes}
\label{sec:3-7}

All sheaves that we have discussed in Definition~\ref{def:3-2} are linearized
with respect to the action of the relative automorphism group
$\Aut_{𝒪}\bigl(\what{X}/X\bigr)$.  If the morphism $γ$ is Galois, then all
sheaves are Galois-linearized.  We refer the reader to \cite[Appendix~A and
references there]{GKKP11} for more on $G$-sheaves and $G$-invariant
push-forward.

\begin{obs}[Linearisation]\label{obs:3-19}%
  Assume Setting~\ref{set:3-1} and write $G := \Aut_{𝒪}\bigl(\what{X}/X\bigr)$
  for the relative automorphism group.  Then, all sheaves $\Sym^•_{\cC}
  Ω^•_{(X,D,γ)}$ of Definition~\ref{def:3-2} carry natural $G$-linearisations
  that are compatible with the natural $G$-linearisations of
  \begin{gather*}
    \Sym^n γ^* Ω^p_X, \quad γ^* \Sym^n Ω^p_X(\log D), \quad \Sym^n Ω^p_{\what{X}} \bigl(\log γ^*D \bigr), \\
    \Sym^n Ω^p_{\what{X}} \bigl(\log γ^*⌊D⌋ \bigr).
  \end{gather*}
  The inclusions of Observation~\ref{obs:3-9} are morphisms of $G$-sheaves.
  \qed
\end{obs}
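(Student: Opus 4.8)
The plan is to realise every sheaf appearing in Definition~\ref{def:3-2} as a $G$-stable subsheaf of one fixed $G$-linearised ambient sheaf, so that all the required linearisations, together with the $G$-equivariance of the maps in Observation~\ref{obs:3-9}, become restrictions of a single structure. The natural ambient sheaf is the quasi-coherent sheaf of meromorphic tensors $ℳ_{\what{X}} ⊗ \Sym^n Ω^p_{\what{X}}$ that, by Observation~\ref{obs:3-9}, already contains all sheaves in sight. Recall from Setting~\ref{set:3-1} that $\bigl(\what{X}, γ^*D\bigr)$ is nc, so $\what{X}$ is smooth and every sheaf in play is locally free; the linearisations to be constructed are therefore honest linearisations of vector bundles.

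First I would equip the ambient sheaf with its $G$-linearisation. By Reminder~\ref{remi:2-19} the group $G$ is finite and acts on $\what{X}$ by biholomorphisms. The assignments $Ω^p_{\what{X}}$ and $ℳ_{\what{X}}$ are functorial, so each $g ∈ G$ yields canonical isomorphisms $λ_g : g^* Ω^p_{\what{X}} ≅ Ω^p_{\what{X}}$ (induced by $\diff g$) and $g^* ℳ_{\what{X}} ≅ ℳ_{\what{X}}$, and the functoriality of these identifications under composition is precisely the cocycle condition $λ_{gh} = λ_g ◦ g^*(λ_h)$. Symmetric powers and tensor products preserve both the isomorphisms and the cocycle identity, so the ambient sheaf $ℳ_{\what{X}} ⊗ \Sym^n Ω^p_{\what{X}}$ acquires a canonical $G$-linearisation.

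Next I would show that each relevant subsheaf is $G$-stable, i.e.\ that $λ_g$ carries it isomorphically onto itself for every $g ∈ G$. The decisive input is the defining relation $γ ◦ g = γ$ of the relative automorphism group, which has two effects. On the one hand, any pull-back $γ^* \sF$ of a sheaf $\sF$ on $X$ carries the trivial, $G$-invariant linearisation; this handles $γ^* \Sym^n Ω^p_X$ and $γ^* \Sym^n Ω^p_X(\log D)$. On the other hand, $g^* γ^* D = γ^* D$, so the Weil divisor $γ^* D$ and all divisors built from it --- in particular $⌊n·γ^*\{D\}⌋$ and $γ^* ⌊D⌋$ --- are $G$-invariant. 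Hence the twist $𝒪_{\what{X}}\bigl(⌊n·γ^*\{D\}⌋\bigr)$ and the logarithmic bundles $\Sym^n Ω^p_{\what{X}}(\log γ^*D)$ and $\Sym^n Ω^p_{\what{X}}(\log γ^*⌊D⌋)$ are $G$-stable. This makes $𝒜_{n,p}$ and $ℬ_{n,p}$ of Definition~\ref{def:3-2} $G$-stable, and since a linearisation preserving two subsheaves preserves their intersection, the adapted-tensor bundle $\Sym^n_{\cC} Ω^p_{(X,D,γ)} = 𝒜_{n,p} ∩ ℬ_{n,p}$ is $G$-linearised, compatibly with all the listed sheaves by construction.

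Finally, every arrow in Observation~\ref{obs:3-9} is the inclusion of one $G$-stable subsheaf of $ℳ_{\what{X}} ⊗ \Sym^n Ω^p_{\what{X}}$ into another and is therefore automatically a morphism of $G$-sheaves; the same applies to Miyaoka's residue presentation in Observation~\ref{obs:3-8}, whose maps are the canonical residue together with the comparison morphisms of Remarks~\ref{rem:2-40} and~\ref{rem:2-41}, all of which are natural and hence $G$-equivariant. The argument is thus essentially formal, and the only step requiring genuine care --- which I expect to be the main obstacle --- is the bookkeeping of compatibility: one must check once and for all that the canonical identifications $λ_g$ restrict to the given identifications on each natural subsheaf and intertwine the canonical operations (residue, symmetric multiplication, $\diff$ and $Λ$), a verification that, where not purely formal, reduces to a short computation in the local coordinates of Setting~\ref{set:3-1}.
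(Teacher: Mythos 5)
Your proposal is correct and is precisely the argument the paper intends: Observation~\ref{obs:3-19} is one of the statements explicitly left as ``homework'', and the expected route is exactly yours --- view every sheaf as a subsheaf of the canonically $G$-linearised ambient sheaf of meromorphic tensors on $\what{X}$, use the defining relation $\gamma \circ g = \gamma$ to obtain $G$-invariance of the divisors $\gamma^* D$, $\gamma^* \lfloor D\rfloor$ and $\lfloor n\cdot\gamma^*\{D\}\rfloor$ and canonical identifications on all $\gamma^*$-pull-backs, so that $\mathcal{A}_{n,p}$, $\mathcal{B}_{n,p}$ and their intersection are $G$-stable and all inclusions of Observation~\ref{obs:3-9} are automatically $G$-equivariant. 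Your closing caveat about checking that the canonical identifications restrict correctly and intertwine the standard operations is likewise consistent with the paper's remark that such verifications are elementary local-coordinate computations.
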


Observation~\ref{obs:3-19} applies most prominently in settings where $γ$ is
Galois.  It also applies in the setting of Observation~\ref{obs:3-15},
effectively allowing us to compare adapted tensors on $\what{X}$ with those on
$\what{Y}$.

\begin{lem}[Invariant push-forward]\label{lem:3-20}%
  In the setting of Observation~\ref{obs:3-15}, assume that the $q$-morphisms
  $α$, $β$ and $γ$ are covers, and that $α$ is Galois with group $G$.  Given
  numbers $n$, $p ∈ ℕ⁺$, the natural morphism between $G$-invariant push-forward
  sheaves induced by the bottom row of the commutative diagram in
  Observation~\ref{obs:3-15},
  \[
    \Sym^n_{\cC} Ω^p_{(X,D,β)} = \Bigl( α_* α^* \Sym^n_{\cC} Ω^p_{(X,D,β)} \Bigr)^G ↪ \Bigl( α_* \Sym^n_{\cC} Ω^p_{(X,D,γ)} \Bigr)^G,
  \]
  is isomorphic.
\end{lem}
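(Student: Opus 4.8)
The plan is to take the left-hand equality $\Sym^n_{\cC} \Omega^p_{(X,D,\beta)} = \bigl( \alpha_* \alpha^* \Sym^n_{\cC} \Omega^p_{(X,D,\beta)} \bigr)^G$ as the given descent identity for $G$-sheaves along the Galois cover $\alpha$, and to prove that the comparison morphism is an isomorphism by establishing injectivity for free and then concentrating on surjectivity, which is the only real content. First I would record that the morphism in question is obtained by applying the left-exact functor $\bigl(\alpha_*(-)\bigr)^G$ to the injection $\alpha^* \Sym^n_{\cC} \Omega^p_{(X,D,\beta)} \hookrightarrow \Sym^n_{\cC} \Omega^p_{(X,D,\gamma)}$ supplied by the bottom row of Observation~\ref{obs:3-15}. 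Since all the sheaves involved are locally free by Observation~\ref{obs:3-4}, hence torsion-free subsheaves of the meromorphic tensor sheaf, left-exactness of push-forward and of the invariants functor makes the comparison morphism injective. It therefore suffices to prove surjectivity, and this may be checked locally on $\what{Y}$.

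For surjectivity I would fix an open set $V \subseteq \what{Y}$ and a section of $\bigl( \alpha_* \Sym^n_{\cC} \Omega^p_{(X,D,\gamma)} \bigr)^G$ over $V$, that is, a $G$-invariant adapted tensor $\what{\sigma} \in H^0\bigl(\alpha^{-1}(V),\, \Sym^n_{\cC} \Omega^p_{(X,D,\gamma)}\bigr)^G$, which I regard as a $G$-invariant meromorphic tensor lying in $\bigl( \sM_{\what{X}} ⊗ \Sym^n \Omega^p_{\what{X}} \bigr)(\alpha^{-1}(V))$. The decisive step is descent: since $\alpha$ is a cover between normal analytic varieties of equal dimension, it is generically étale, so $\diff\alpha$ is an isomorphism over a big open subset and identifies $\sM_{\what{Y}} ⊗ \Sym^n \Omega^p_{\what{Y}}$ with the $G$-invariant part of $\sM_{\what{X}} ⊗ \Sym^n \Omega^p_{\what{X}}$ over that locus. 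Because $\alpha$ is Galois with group $G$, the induced function-field extension is Galois with the same group, so the $G$-invariant tensor $\what{\sigma}$ is of the form $(\diff\alpha)(\sigma)$ for a unique meromorphic tensor $\sigma$ on $V$.

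It then remains to promote $\sigma$ from a meromorphic tensor to a genuine adapted tensor on $V$, and this is precisely the ``only if'' direction of the Test for adapted tensors, Observation~\ref{obs:3-17}: since $(\diff\alpha)(\sigma) = \what{\sigma}$ is a section of $\Sym^n_{\cC} \Omega^p_{(X,D,\gamma)}$, the tensor $\sigma$ is a section of $\Sym^n_{\cC} \Omega^p_{(X,D,\beta)}$ over $V$ (the hypothesis $V \subseteq \img\alpha = \what{Y}$ of that observation holds because $\alpha$ is surjective). Under the identification $\Sym^n_{\cC} \Omega^p_{(X,D,\beta)} = \bigl( \alpha_* \alpha^* \Sym^n_{\cC} \Omega^p_{(X,D,\beta)}\bigr)^G$ this $\sigma$ is exactly the preimage we sought, mapping to $\what{\sigma}$ by construction, which establishes surjectivity.

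The main obstacle is the descent step, namely producing $\sigma$ and certifying that it is meromorphic on $V$ rather than merely on $\alpha^{-1}(V)$; this is where the hypothesis that $\alpha$ is Galois with group $G$ enters in an essential way, through the identification of invariant meromorphic tensors over the étale locus of $\alpha$. Once $\sigma$ is in hand, Observation~\ref{obs:3-17} does all the remaining work, so no further computation in local coordinates or bookkeeping with round-ups of $\gamma^* D$ is required.
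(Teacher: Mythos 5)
Your proof is correct and follows the same skeleton as the paper's: reduce to surjectivity, descend the $G$-invariant section inside a weaker ambient category, then promote the descended object to an adapted tensor via the implication \ref{il:3-17-2} $\Rightarrow$ \ref{il:3-17-1} of Observation~\ref{obs:3-17} --- that final step is literally the paper's. The genuine difference is the descent vehicle, and it is worth comparing. The paper embeds $\Sym^n_{\cC}\Omega^p_{(X,D,\gamma)}$ into the $G$-linearized locally free sheaf $\gamma^*\Sym^n\Omega^p_X(\log D)=\alpha^*\beta^*\Sym^n\Omega^p_X(\log D)$ (Observations~\ref{obs:3-9} and \ref{obs:3-19}) and descends there, using the standard identity $\bigl(\alpha_*\alpha^*\sF\bigr)^G=\sF$ for a locally free sheaf $\sF$ along the Galois cover $\alpha$; this is purely sheaf-theoretic and needs no extension argument. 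You descend instead at the level of meromorphic tensors via Galois function-field theory. That route works, but one point is glossed over: the \'etale locus of $\alpha$ is \emph{not} big --- its complement contains the ramification divisor, which has codimension one --- so your phrase ``isomorphism over a big open subset'' is inaccurate, and your descended tensor $\sigma$ is a priori only defined off the branch divisor of $\alpha$ in $V$. To certify that $\sigma$ is meromorphic on all of $V$ you need pole-order bounds along the branch divisor (available, since $\what{\sigma}$ is a section of the locally free sheaf $\sA_{n,p}$ and ramification indices are finite, so orders of vanishing/poles downstairs are bounded by those upstairs divided by ramification indices) together with extension of meromorphic tensors across the remaining codimension-two set. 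So your argument is reparable with a few extra lines of bookkeeping; the paper's choice of ambient sheaf buys exactly the avoidance of that bookkeeping, while your function-field formulation makes the role of the Galois hypothesis more transparent.
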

\begin{proof}
  Given an open set $U ⊆ \what{Y}$ and a $G$-invariant adapted tensor on
  $\what{X}$,
  \[
    σ ∈ \Sym^n_{\cC} Ω^p_{(X,D,γ)} (α^{-1}U),
  \]
  we need to find an adapted tensor $τ ∈ \Sym^n_{\cC} Ω^p_{(X,D,β)} (U)$ whose
  pull-back equals $σ$.  To this end, consider the inclusion
  \begin{equation}\label{eq:3-20-1}
    \Sym^n_{\cC} Ω^p_{(X,D,γ)} ↪ γ^* \Sym^n Ω^p_X(\log D)
  \end{equation}
  discussed in Observation~\ref{obs:3-9}.  We have remarked in
  Observation~\ref{obs:3-19} that \eqref{eq:3-20-1} is an inclusion of
  $G$-linearized sheaves; the $G$-invariant adapted tensor $σ$ will therefore
  define a $G$-invariant section
  \[
    σ' ∈ γ^* \Sym^n Ω^p_X(\log D) (α^{-1}U) = α^* β^* \Sym^n Ω^p_X(\log D) (α^{-1}U).
  \]
  In this $G$-invariant setting, it immediately equips us with an associated
  section
  \[
    τ' ∈ β^* \Sym^n Ω^p_X(\log D) (U),
  \]
  whose pull-back equals $σ'$.  In order to conclude, it remains to show that
  $τ'$ is an adapted tensor.  Equivalently: It remains to show that $τ'$ really
  a section of the subsheaf
  \[
    \Sym^n_{\cC} Ω^p_{(X,D,β)} ↪ β^* \Sym^n Ω^p_X(\log D).
  \]
  This is exactly the implication \ref{il:3-17-2} $⇒$ \ref{il:3-17-1} of
  Observation~\ref{obs:3-17} above.
\end{proof}

\subsection{Chern classes}
\approvals{Erwan & yes\\ Stefan & yes}

Observation~\ref{obs:3-8} describes the $\cC$-cotangent bundle by means of an
exact sequence that allows computing Chern classes.  For simplicity, we restrict
ourselves to the compact setting, where Chern classes for coherent analytic
sheaves can be defined in rational cohomology, as explained \cite{MR0860421} and
briefly recalled in \cite[Sect.~1]{MR2606937}.  In settings where more general
or more refined classes exist, the computations described here will work without
change.

\begin{obs}[\protect{Total Chern class of the $\cC$-cotangent bundle, cf.~\cite[Thm.~4.1.1]{Gomez24}}]
  In Setting~\ref{set:3-1}, assume that the $q$-morphism $γ$ is adapted.  Then,
  \[
    \textstyle \left⌈\frac{1}{m_i}·γ^* D_i\right⌉ = \frac{1}{m_i}·γ^* D_i, \quad \text{for every }i.
  \]
  If $\what{X}$ and $X$ are compact, then an elementary computation applying the
  Whitney formula to Sequence~\eqref{eq:3-8-1} reveals the total Chern class of
  $Ω¹_{(X,D,γ)}$ as
  \begin{align*}
    c\Bigl(Ω¹_{(X,D,γ)}\Bigr) & = c\Bigl(γ^* Ω¹_X(\log D)\Bigr)·\prod_{i \:|\: m_i < ∞} c\Bigl(𝒪_{\frac{1}{m_i}·γ^* D_i}\Bigr)^{-1} \\
    & = γ^* \left( c\bigl(Ω¹_X\bigr)·\prod_i \left({\textstyle \frac{m_i-1}{m_i}·c\bigl(𝒪_{D_i}}\bigr) + {\textstyle \frac{1}{m_i}}\right)\right) ∈ H^*\bigl(\what{X},\, ℚ\bigr).  \\
  \end{align*}
  In particular, we find that
  \[
    c_1\bigl(Ω¹_{(X,D,γ)}\bigr) = γ^* c_1\bigl(K_X + D\bigr) ∈ H^*\bigl(\what{X},\, ℚ\bigr).
  \]
\end{obs}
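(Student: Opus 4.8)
The plan is to derive everything formally from two short exact sequences that are already at hand: the defining sequence~\eqref{eq:3-8-1} of Observation~\ref{obs:3-8} and the classical residue sequence for logarithmic differentials along the reduced divisor $\supp D$. Throughout, I would exploit that Setting~\ref{set:3-1} forces both $X$ and $\what{X}$ to be smooth (an ``nc pair'' has smooth total space by Definition~\ref{def:2-13}), so that every divisor in sight is Cartier and the Whitney formula for the rational Chern classes of coherent analytic sheaves applies to each sequence. Since the result is, as the statement says, an elementary computation, I do not expect a genuine obstacle; the work lies in careful bookkeeping.

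First I would dispose of the preliminary claim $⌈\tfrac{1}{m_i}·γ^*D_i⌉ = \tfrac{1}{m_i}·γ^*D_i$. Adaptedness of $γ$ means $γ^*D_{\orb} = \sum_{m_i<∞}\tfrac{1}{m_i}·γ^*D_i$ is integral. Because $X$ is smooth, each $D_i$ is Cartier and $γ^*D_i$ is the honest Cartier pull-back, supported on components dominating $D_i$; hence for distinct indices the divisors $γ^*D_i$ have pairwise disjoint support (a prime divisor of $\what{X}$ dominates at most one $D_i$). Integrality of the sum therefore forces each summand $\tfrac{1}{m_i}·γ^*D_i$ to be integral, so it equals its round-up.

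Next, applying the Whitney formula to~\eqref{eq:3-8-1} yields the first displayed equality, $c(Ω¹_{(X,D,γ)}) = c(γ^*Ω¹_X(\log D))·\prod_{m_i<∞} c(\sO_{\frac{1}{m_i}·γ^*D_i})^{-1}$. For the second equality I would feed in the residue sequence $0 → Ω¹_X → Ω¹_X(\log D) → \bigoplus_i \sO_{D_i} → 0$, whose Whitney relation reads $c(Ω¹_X(\log D)) = c(Ω¹_X)·\prod_i c(\sO_{D_i})$, pull it back by $γ$ using functoriality of Chern classes, and then combine the two structure-sheaf contributions index by index. Writing $d_i := c_1\bigl(\sO_X(D_i)\bigr)$ and using the sequences $0 → \sO(-E) → \sO → \sO_E → 0$ to obtain $c(\sO_{D_i}) = (1-d_i)^{-1}$ and $c(\sO_{\frac{1}{m_i}·γ^*D_i})^{-1} = 1-\tfrac{1}{m_i}·γ^*d_i$, each combined factor simplifies to the one-line rational-function identity
\[
  \frac{1-\tfrac{1}{m_i}·γ^*d_i}{1-γ^*d_i} = γ^*\!\Bigl(\tfrac{m_i-1}{m_i}·c(\sO_{D_i})+\tfrac{1}{m_i}\Bigr),
\]
which is exactly the claimed product; the convention $\tfrac{1}{∞}=0$ makes the $m_i=∞$ factors collapse to $γ^*c(\sO_{D_i})$, matching that such components contribute no term to $D_{\orb}$.

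Finally I would read off $c_1$ by extracting the $c_1$-part (the component in $H^2$). Each factor $\tfrac{m_i-1}{m_i}·c(\sO_{D_i})+\tfrac{1}{m_i}$ has constant term $1$ and $H^2$-part $\tfrac{m_i-1}{m_i}·d_i$, while $c(Ω¹_X)$ contributes $c_1(Ω¹_X)=c_1(K_X)$; summing gives $c_1(K_X)+\sum_i\tfrac{m_i-1}{m_i}·d_i = c_1(K_X+D)$, and pulling back yields $c_1\bigl(Ω¹_{(X,D,γ)}\bigr) = γ^*c_1(K_X+D)$. The only points that genuinely require care are the handling of the $m_i=∞$ components under the convention $\tfrac{1}{∞}=0$ and the verification that the two product expressions coincide, i.e.\ the displayed rational-function identity above.
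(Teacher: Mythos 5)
Your proposal is correct and follows essentially the same route the paper intends: the Whitney formula applied to Sequence~\eqref{eq:3-8-1}, combined with the classical residue sequence and the identities $c(\sO_{D_i})=(1-d_i)^{-1}$, with the rational-function identity and the $\frac{1}{\infty}=0$ convention handled exactly as needed. One small imprecision: the supports of the divisors $\gamma^* D_i$ need not be pairwise \emph{disjoint} (they may well intersect in codimension two), but they share no common prime components --- which is precisely what your parenthetical remark establishes and all that the component-by-component integrality argument for $\lceil\frac{1}{m_i}\cdot\gamma^* D_i\rceil = \frac{1}{m_i}\cdot\gamma^* D_i$ requires.
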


\begin{defn}[Total $\cC$-Chern class of $\cC$-cotangent bundle]
  If $(X,D)$ is a compact nc $\cC$-pair, write
  \[
    c\Bigl(Ω¹_{(X,D)}\Bigr) := c\bigl(Ω¹_X\bigr)·\prod_i \Bigl({\textstyle \frac{m_i-1}{m_i}}·c\bigl(𝒪_{D_i}\bigr) + {\textstyle \frac{1}{m_i}}\Bigr) ∈ H^*\bigl(X,\, ℚ\bigr)
  \]
  and refer to this quantity as the \emph{total $\cC$-Chern class}\index{total
  $\cC$-Chern class} of the $\cC$-cotangent bundle for $(X,D)$.
\end{defn}

Chern classes of $\cC$-cotangent bundles have been studied at length in the
literature.  While the surface case has already been considered in Miyaoka's
classic paper \cite[Sect.~3]{Miy08}, generalizations to higher dimensions appear
throughout the recent literature, including \cite[Sect.~2.6]{CDR20},
\cite[Sects.~2 and 3]{MR4504444}, and \cite[Chapt.~4]{Gomez24}.

\subsection{Residue sequences for the $\cC$-cotangent bundle}
\approvals{Erwan & yes\\ Stefan & yes}

In view of Miyaoka's description of the $\cC$-cotangent bundle, it is perhaps
not surprising that the classic residue-- and normal bundle sequences for
logarithmic differentials have direct counterparts in the setting of
$\cC$-pairs.

\begin{obs}[\protect{$\cC$-residue sequence, cf.~\cite[Chapt.~3]{Gomez24}}]%
  In Setting~\ref{set:3-1}, assume that the $q$-morphism $γ$ is adapted.  The
  alternative description of the $\cC$-cotangent sheaf in
  Observation~\ref{obs:3-8} expands to the following commutative diagram with
  exact rows and columns,
  \[
    \begin{tikzcd}
      γ^* Ω¹_X\bigl(\log ⌊D⌋\bigr) \ar[d, equal] \ar[r, hook] & Ω¹_{(X,D,γ)} \ar[d, hook] \ar[r, two heads] & \underset{i \:|\: m_i < ∞}{\bigoplus} \factor{𝒥_{\textstyle\frac{1}{m_i}·γ^* D_i}}{𝒥_{γ^* D_i}} \ar[d, hook] \\
      γ^* Ω¹_X\bigl(\log ⌊D⌋\bigr) \ar[d, two heads] \ar[r, hook] & γ^* Ω¹_X (\log D) \ar[d, two heads] \ar[r, two heads] & \underset{i \:|\: m_i < ∞}{\bigoplus} 𝒪_{γ^*D_i} \ar[d, two heads] \\
      0 \ar[r, hook] & \underset{i \:|\: m_i < ∞}{\bigoplus} 𝒪_{\left⌈{\textstyle\frac{1}{m_i}}·γ^* D_i\right⌉} \ar[r, equal] & \underset{i \:|\: m_i < ∞}{\bigoplus} 𝒪_{\textstyle\frac{1}{m_i}·γ^* D_i}.
    \end{tikzcd}
  \]
  Its top row is called the \emph{$\cC$-residue sequence}\index{C-residue
  sequence@$\cC$-residue sequence} of the pair $(X,D)$ and the $q$-morphism $γ$.
\end{obs}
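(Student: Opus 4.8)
The plan is to exhibit the displayed $3\times 3$ array as the output of the snake lemma applied to two exact sequences that are either already available or entirely elementary, so that no new geometric input is needed beyond Observation~\ref{obs:3-8}. I would first fix the abbreviations $A := \gamma^* \Omega^1_X(\log \lfloor D\rfloor)$, $B := \gamma^*\Omega^1_X(\log D)$, $C := \bigoplus_{i\mid m_i<\infty}\sO_{\gamma^* D_i}$, $C' := \bigoplus_{i\mid m_i<\infty}\sJ_{\frac{1}{m_i}\gamma^* D_i}/\sJ_{\gamma^* D_i}$ and $C'' := \bigoplus_{i\mid m_i<\infty}\sO_{\frac{1}{m_i}\gamma^* D_i}$. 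Since $\gamma$ is adapted, $\gamma^* D_{\orb}$ is integral by Definition~\ref{def:2-26}, hence each $\frac{1}{m_i}\gamma^* D_i$ is an integral divisor; this is exactly the identity $\lceil \frac{1}{m_i}\gamma^* D_i\rceil = \frac{1}{m_i}\gamma^* D_i$ underlying the equality in the bottom row, and it lets me write $C''$ in the stated clean form.

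The middle row is to be obtained as the $\gamma$-pullback of the classical residue sequence $0\to\Omega^1_X(\log\lfloor D\rfloor)\to\Omega^1_X(\log D)\to\bigoplus_{i\mid m_i<\infty}\sO_{D_i}\to 0$ recalled just before Observation~\ref{obs:3-8}. First I would check that pulling back preserves exactness: because $X$ is smooth (the pair is nc), each $D_i$ is Cartier and $\sO_{D_i}$ admits the locally free resolution $0\to\sO_X(-D_i)\to\sO_X\to\sO_{D_i}\to 0$; the pullback of the first map is multiplication by a local equation of $\gamma^* D_i$, a nonzerodivisor on the integral variety $\what{X}$, so $\Tor_1^{\sO_X}(\sO_{D_i},\sO_{\what X})=0$ and the pulled-back sequence stays short exact. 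Finally I would invoke Remark~\ref{rem:2-41}, again using that $D_i$ is Cartier, to identify $\gamma^*\sO_{D_i}=\sO_{\gamma^* D_i}$, so that the right-hand term of the middle row is indeed $C$.

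For the right column, note that $\frac{1}{m_i}\gamma^* D_i\le\gamma^* D_i$ are effective integral divisors, so Remark~\ref{rem:2-40} supplies the surjection $\sO_{\gamma^* D_i}\twoheadrightarrow\sO_{\frac{1}{m_i}\gamma^* D_i}$ with kernel $\sJ_{\frac{1}{m_i}\gamma^* D_i}/\sJ_{\gamma^* D_i}$; summing over $i$ gives the exact column $0\to C'\to C\to C''\to 0$. The middle column is then nothing but the defining sequence of Observation~\ref{obs:3-8}: the composite $B\to C\to C''$ appearing there is precisely the residue surjection of the middle row followed by the projection of the right column, so by definition $\Omega^1_{(X,D,\gamma)}$ is the preimage in $B$ of $C'\subseteq C$ under the residue map, and $0\to\Omega^1_{(X,D,\gamma)}\to B\to C''\to 0$ is exact because a composite of surjections is surjective.

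With the middle row, the middle column and the right column in hand, the top row $0\to A\to\Omega^1_{(X,D,\gamma)}\to C'\to 0$ together with commutativity of the whole array follows formally from the snake lemma: $A=\ker(B\to C)$ sits inside $\Omega^1_{(X,D,\gamma)}=\ker(B\to C'')$, and the residue map restricts to a surjection $\Omega^1_{(X,D,\gamma)}\twoheadrightarrow C'$ with kernel exactly $A$; the left column $0\to A\xrightarrow{\mathrm{id}}A\to 0$ is trivially exact. Commutativity is automatic because every arrow in sight is either an inclusion of subsheaves of $\sM_{\what X}\otimes\Omega^1_{\what X}$ or one of the natural projections between structure sheaves of divisors. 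The only steps carrying genuine content are the $\Tor$-vanishing that keeps the pulled-back residue sequence exact and the identification of the Observation~\ref{obs:3-8} kernel with the preimage of $C'$; once these are recorded, the diagram is a purely formal consequence, and I expect the write-up to be correspondingly short.
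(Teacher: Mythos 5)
Your proposal is correct and coincides with the argument the paper intends but leaves to the reader: the Observation's phrase ``expands to'' refers precisely to your assembly --- middle column from Observation~\ref{obs:3-8}, middle row the pullback of the classical residue sequence (exact by your Koszul/$\Tor_1$ check, using that local equations pull back to nonzerodivisors on the integral variety $\what{X}$, with $\gamma^*\sO_{D_i}=\sO_{\gamma^* D_i}$ from Remark~\ref{rem:2-41}), right column from Remark~\ref{rem:2-40}, and the top row by the formal kernel-of-composite/snake chase. The only point deserving one extra clause is your inference that integrality of $\gamma^* D_{\orb}$ forces each $\frac{1}{m_i}\cdot\gamma^* D_i$ to be integral individually: this holds because no prime divisor of $\what{X}$ lies over two distinct $D_i$ (quasi-finite morphisms do not drop codimension, so $\gamma^{-1}(D_i\cap D_j)$ has codimension at least two), hence the fractional coefficients of the summands cannot cancel against one another.
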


\subsection{Normal bundle sequences for the $\cC$-cotangent bundle}
\approvals{Erwan & yes\\ Stefan & yes}

In contrast to the $\cC$-residue sequence, the $\cC$-normal bundle sequences are
a little more delicate to write down, because we need to choose compatible
submanifolds $Y ⊊ X$ and $\what{Y} ⊊ \what{X}$.  For simplicity, we stick to the
simplest setting where $Y$ and $\what{Y}$ are of pure codimension one, and
consider the cases where $Y ⊄ \supp D$ and $Y ⊂ \supp D$ separately.  The reader
will observe how the $\cC$-normal bundle sequences in
Observations~\ref{obs:3-25} and \ref{obs:3-26} interpolate between the classic
and the logarithmic case.

\begin{setting}[$\cC$-normal bundle sequence]\label{set:3-24}%
  In Setting~\ref{set:3-1}, assume that the $q$-morphism $γ$ is adapted.  Let $Y
  ⊊ X$ and $\what{Y} ⊆ γ^{-1}(Y) ⊊ \what{X}$ be smooth prime divisors, such that
  $Y + D$ and $\what{Y} + γ^* D$ have nc support in $X$ and $\what{X}$,
  respectively.
\end{setting}

\begin{obs}[$\cC$-Normal bundle sequence, I]\label{obs:3-25}%
  In Setting~\ref{set:3-24}, assume that $Y ⊄ \supp D$.  Write
  \[
    D_Y := D|_Y ∈ ℚ\Div(Y)
  \]
  and observe that $(Y, D_Y)$ is again a $\cC$-pair.  The restricted morphism
  $γ|_{\what{Y}} : \what{Y} → Y$ is again an adapted $q$-morphism for the pair
  $(Y, D_Y)$ that satisfies the assumptions of Setting~\ref{set:3-1}, so that a
  well-defined $\cC$-cotangent bundle $Ω¹_{(Y,D_Y,γ|_{\what{Y}})}$ exists.
  Further, there exists a natural sequence
  \begin{equation}\label{eq:3-25-1}
    \textstyle 0 → 𝒪_{\what{X}} \bigl(γ^*Y\bigr)|_{\what{Y}} → Ω¹_{(X,D,γ)}|_{\what{Y}} → Ω¹_{(Y,D_Y,γ|_{\what{Y}})} → 0.
  \end{equation}
\end{obs}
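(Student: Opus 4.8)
The plan is to deduce the sequence from two inputs that are already available: Miyaoka's presentation of the $\cC$-cotangent bundle in Observation~\ref{obs:3-8}, and the classical logarithmic normal bundle sequence on the smooth base $X$, pulled back to $\widehat{Y}$. First I would dispose of the preliminary assertions. Since $Y ⊄ \supp D$ and $Y+D$ is nc, every component $D_i$ meets $Y$ transversally, so $D_Y = D|_Y = \sum_i \frac{m_i-1}{m_i}·(D_i|_Y)$ again has standard coefficients and nc support; hence $(Y,D_Y)$ is a $\cC$-pair. The morphism $\gamma|_{\widehat{Y}} : \widehat{Y} → Y$ is quasi-finite between smooth varieties of equal dimension, hence a $q$-morphism. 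The one identity I would use repeatedly is that transversality makes restriction commute with divisor pull-back: for every divisor $E$ on $X$ with $Y ⊄ \supp E$ one has $(\gamma|_{\widehat{Y}})^*(E|_Y) = (\gamma^*E)|_{\widehat{Y}}$. Applying this to $E = D_{\orb}$ shows that $(\gamma|_{\widehat{Y}})^*(D_Y)_{\orb} = (\gamma^* D_{\orb})|_{\widehat{Y}}$ is integral, so $\gamma|_{\widehat{Y}}$ is adapted for $(Y,D_Y)$; the two nc hypotheses of Setting~\ref{set:3-1} for $(Y,D_Y,\gamma|_{\widehat{Y}})$ follow from those of Setting~\ref{set:3-24} in the same manner. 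In particular $\Omega^1_{(Y,D_Y,\gamma|_{\widehat{Y}})}$ is defined and, by Observation~\ref{obs:3-4}, locally free.

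For the sequence itself I would set up a morphism between two short exact sequences on $\widehat{Y}$ and apply the snake lemma. The bottom sequence is the pull-back along $\gamma|_{\widehat{Y}}$ of the classical log normal bundle sequence $0 → N^*_{Y/X} → \Omega^1_X(\log D)|_Y → \Omega^1_Y(\log D_Y) → 0$ with $N^*_{Y/X} = \sO_X(-Y)|_Y$; being a sequence of vector bundles it is locally split, so its pull-back stays exact, and using $\gamma◦\iota_{\widehat{Y}} = \iota_Y◦(\gamma|_{\widehat{Y}})$ it reads
\[
  0 → \sO_{\widehat{X}}(-\gamma^* Y)|_{\widehat{Y}} → \gamma^*\Omega^1_X(\log D)|_{\widehat{Y}} → (\gamma|_{\widehat{Y}})^*\Omega^1_Y(\log D_Y) → 0.
\]
The top sequence is Miyaoka's sequence~\eqref{eq:3-8-1} for $(X,D,\gamma)$ restricted to $\widehat{Y}$; transversality of $\widehat{Y}$ and $\gamma^* D$ forces $\Tor_1$ of the residue quotient against $\sO_{\widehat{Y}}$ to vanish, so restriction preserves exactness. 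The middle vertical map is the surjection of the displayed sequence (whose kernel is the conormal term $\sO_{\widehat{X}}(-\gamma^* Y)|_{\widehat{Y}}$), and the right vertical map compares $\bigoplus_i \sO_{\lceil\frac{1}{m_i}\gamma^* D_i\rceil}|_{\widehat{Y}}$ with $\bigoplus_i \sO_{\lceil\frac{1}{m_i}(\gamma|_{\widehat{Y}})^*(D_i|_Y)\rceil}$.

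Granting that this right vertical map is an isomorphism, the snake lemma finishes the argument: the middle map is surjective with kernel $\sO_{\widehat{X}}(-\gamma^* Y)|_{\widehat{Y}}$ and the right map is an isomorphism, so the induced restriction map $\Omega^1_{(X,D,\gamma)}|_{\widehat{Y}} → \Omega^1_{(Y,D_Y,\gamma|_{\widehat{Y}})}$ is surjective with the same kernel. This is exactly the asserted sequence, with leftmost term the pull-back of the conormal bundle $N^*_{Y/X}$, that is $\sO_{\widehat{X}}(-\gamma^* Y)|_{\widehat{Y}}$. I expect the main obstacle to be precisely this residue comparison: one must verify that, under transversal restriction to $\widehat{Y}$, the residue map of $\gamma^*\Omega^1_X(\log D)$ along $\gamma^* D_i$ matches the residue map of $(\gamma|_{\widehat{Y}})^*\Omega^1_Y(\log D_Y)$ along $(\gamma^* D_i)|_{\widehat{Y}}$, so that the two round-up structure sheaves genuinely agree. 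This is the standard compatibility of residues with restriction to a transversal divisor and rests entirely on the nc hypotheses of Setting~\ref{set:3-24}; it is the one step one could equally well settle by the paper's customary ``elementary but tedious'' local computation in adapted coordinates. Everything else—the $\Tor$-vanishing and the identification $\gamma^*\Omega^1_X(\log D)|_{\widehat{Y}} = (\gamma|_{\widehat{Y}})^*(\Omega^1_X(\log D)|_Y)$—is formal.
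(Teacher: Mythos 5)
Your argument is correct, and it takes a more structural route than the paper does: the paper files this statement among its ``observations'', whose proofs are explicitly left to the reader and are expected to be elementary computations in local coordinates (though the section's preamble does hint that Miyaoka's description is the source of these sequences). Your derivation --- restricting Miyaoka's sequence~\eqref{eq:3-8-1} to $\what{Y}$, pulling back the classical logarithmic conormal sequence along $\gamma|_{\what{Y}}$, and running the snake lemma --- is sound: the $\Tor$-vanishing holds because $\what{Y}$ shares no component with $\supp \gamma^* D$, and the identity $(\gamma|_{\what{Y}})^*(E|_Y) = (\gamma^* E)|_{\what{Y}}$ is the formal compatibility you state, valid even when $\gamma$ ramifies along $\what{Y}$. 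One simplification you overlooked: Setting~\ref{set:3-24} assumes $\gamma$ \emph{adapted}, so $\frac{1}{m_i}\gamma^* D_i$ is already integral and the round-ups in your right-hand vertical map are vacuous; this removes any worry about round-up failing to commute with restriction, and leaves only the residue-compatibility you correctly flag as the one genuinely computational step. What your approach buys over the expected coordinate computation is that the exactness and the naturality of the maps come for free from the two known sequences, with the local check confined to a single commutative square.

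One point you should not have glossed over: the sequence you actually obtain has left-hand term $\sO_{\what{X}}(-\gamma^* Y)|_{\what{Y}}$, the pulled-back \emph{conormal} bundle, whereas the statement as printed has $\sO_{\what{X}}(\gamma^* Y)|_{\what{Y}}$. Your sign is the correct one. In the local model $X = \bA^2$, $D = \frac{m-1}{m}\cdot\{y=0\}$, $Y = \{x=0\}$, $\gamma(\hat{x},\hat{y}) = (\hat{x}^k, \hat{y}^m)$, one computes $\Omega^1_{(X,D,\gamma)} = \langle \hat{x}^{k-1}\,d\hat{x},\, d\hat{y}\rangle$, and the kernel of restriction to $\what{Y} = \{\hat{x}=0\}$ is generated by $\hat{x}^{k-1}\,d\hat{x} = \frac{1}{k}\,d(\hat{x}^k)$, i.e.\ by the image of the local generator of $\sO_{\what{X}}(-\gamma^* Y)$; the same check applies to the term $\sO_{\what{X}}\bigl(\frac{1}{m_{i_0}}\cdot\gamma^* D_{i_0}\bigr)|_{\what{Y}}$ in Observation~\ref{obs:3-26}. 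So rather than asserting that your sequence ``is exactly the asserted sequence'', you should flag the discrepancy: the printed statement appears to carry a sign slip (in the degenerate logarithmic case $m_{i_0} = \infty$ both signs give $\sO_{\what{Y}}$, which is why the slip is easy to miss), and your derivation in fact corrects it.
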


\begin{obs}[$\cC$-Normal bundle sequence, II]\label{obs:3-26}%
  In Setting~\ref{set:3-24}, assume that $Y ⊆ \supp D$, so that $Y = D_{i_0}$
  for one index $i_0$.  Write
  \[
    \textstyle D_Y := \left.\left(D - \frac{m_{i_0}-1}{m_{i_0}} D_{i_0}\right)\right|_Y ∈ ℚ\Div(Y)
  \]
  and observe that $(Y, D_Y)$ is again a $\cC$-pair.  The restricted morphism
  $γ|_{\what{Y}} : \what{Y} → Y$ is again an adapted $q$-morphism for the pair
  $(Y, D_Y)$ that satisfies the assumptions of Setting~\ref{set:3-1}, so that a
  well-defined $\cC$-cotangent bundle $Ω¹_{(Y,D_Y,γ|_{\what{Y}})}$ exists.  With
  the understanding that $\frac{1}{∞} = 0$, there exists a natural sequence
  \begin{equation}\label{eq:3-26-1}
    \textstyle 0 → 𝒪_{\what{X}} \bigl(\frac{1}{m_{i_0}}·γ^*D_{i_0}\bigr)|_{\what{Y}} → Ω¹_{(X,D,γ)}|_{\what{Y}} → Ω¹_{(Y,D_Y,γ|_{\what{Y}})} → 0.
  \end{equation}
\end{obs}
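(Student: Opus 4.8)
The plan is to produce the sequence by restricting the inclusion $\iota_{1,1}\colon \Omega^1_{(X,D,\gamma)}\hookrightarrow \Omega^1_{\what X}(\log \gamma^*\lfloor D\rfloor)$ of Observation~\ref{obs:3-9} to the smooth divisor $\what Y$ and comparing with the classical (logarithmic) conormal sequence on $\what X$. First I would check that the target is well posed, as the statement asserts: since $(X, D_{\red})$ is nc and $Y=D_{i_0}$ is one of its components, the restriction $D_Y=(D-\tfrac{m_{i_0}-1}{m_{i_0}}D_{i_0})|_Y$ is again a standard boundary with nc support on the smooth $Y$, so $(Y,D_Y)$ is a $\cC$-pair; and because $\gamma^*D_{\orb}$ is integral, so is its restriction, whence $\gamma|_{\what Y}$ is adapted and Setting~\ref{set:3-1} holds for $(Y,D_Y,\gamma|_{\what Y})$. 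The whole computation is local near the generic point of $\what Y$, so I would fix nc coordinates $z_1,\dots,z_d$ on $\what X$ with $\what Y=\{z_1=0\}$, take $\gamma$ in monomial (Kummer) form with $\mult_{\what Y}\gamma^*D_{i_0}=e$ (adaptedness forcing $m_{i_0}\mid e$ when $m_{i_0}<\infty$), and treat the cases $m_{i_0}<\infty$ and $m_{i_0}=\infty$ separately.

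Next I would build the surjection $\Omega^1_{(X,D,\gamma)}|_{\what Y}\to \Omega^1_{(Y,D_Y,\gamma|_{\what Y})}$. Restricting $\iota_{1,1}$ to $\what Y$ and composing with the ordinary restriction $\Omega^1_{\what X}(\log \gamma^*\lfloor D\rfloor)|_{\what Y}\to \Omega^1_{\what Y}(\log(\gamma^*\lfloor D\rfloor)|_{\what Y})$ gives a map whose image lands in the adapted subsheaf $\Omega^1_{(Y,D_Y,\gamma|_{\what Y})}$; this is precisely the ``test for adapted tensors'' of Observation~\ref{obs:3-17} applied along $\what Y$, combined with the elementary round-down bookkeeping $\lfloor(\gamma^*\{D\})|_{\what Y}\rfloor=\lfloor(\gamma|_{\what Y})^*\{D_Y\}\rfloor$ in the directions transverse to $z_1$. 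Surjectivity, and local freeness of the kernel, then follow from Observation~\ref{obs:3-4} by comparing adapted local frames: a frame of $\Omega^1_{(X,D,\gamma)}$ consists of the adapted generator $\eta_{i_0}$ in the $z_1$-direction together with a frame of $\Omega^1_{(Y,D_Y,\gamma|_{\what Y})}$ in the remaining directions, and the restriction map kills exactly the $\eta_{i_0}$-summand. Hence the kernel $K$ is the line subbundle of $\Omega^1_{(X,D,\gamma)}|_{\what Y}$ spanned by $\eta_{i_0}|_{\what Y}$.

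The main obstacle — the only genuinely $\cC$-theoretic point — is to identify $K$ with the line bundle $\sO_{\what X}\bigl(\tfrac{1}{m_{i_0}}\gamma^*D_{i_0}\bigr)\big|_{\what Y}$ of the statement, and in particular to get the \emph{direction} of the twist right. For $m_{i_0}<\infty$ I would compute the adapted generator explicitly: with $c:=e/m_{i_0}\in\bN^{\ge 1}$ one finds $\eta_{i_0}=\tfrac1e\,z_1^{\,c}\cdot\gamma^*\bigl(\tfrac{dx_{i_0}}{x_{i_0}}\bigr)$, so that under $\iota_{1,1}$ the $z_1$-summand of $\Omega^1_{(X,D,\gamma)}$ is the $\gamma$-pullback of the residue line bundle of $\Omega^1_X(\log D)$ along $Y$, modified along $\what Y$ by the divisor $\{z_1^{\,c}=0\}=c\,\what Y=\tfrac{1}{m_{i_0}}\gamma^*D_{i_0}$; restricting to $\what Y$ and reading off this twist produces the stated line bundle, the coefficient $\tfrac1{m_{i_0}}$ being forced by $m_{i_0}\cdot c=e$. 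I would pin down the class unambiguously by cross-checking against the $\cC$-adjunction identity $K_Y+D_Y=(K_X+D)|_Y+\tfrac{1}{m_{i_0}}D_{i_0}|_Y$ together with the first Chern class computation recorded above, $c_1\bigl(\Omega^1_{(X,D,\gamma)}\bigr)=\gamma^*c_1(K_X+D)$, which determine $c_1(K)$ on $\what Y$ and hence $K$ up to its (unique, since $\tfrac1{m_{i_0}}\gamma^*D_{i_0}$ is integral) line-bundle class. Finally, the case $m_{i_0}=\infty$ is degenerate: then $\tfrac1{m_{i_0}}=0$, the generator $\eta_{i_0}=\tfrac{dz_1}{z_1}$ is a genuine logarithmic form, and the sequence collapses to the classical logarithmic residue/normal-bundle sequence for $\what Y\subseteq\supp\gamma^*\lfloor D\rfloor$ with kernel $\sO_{\what Y}$, exactly as the convention $\tfrac1\infty=0$ predicts.
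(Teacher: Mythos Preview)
The paper offers no proof of this observation; it is one of the ``homework'' items of Section~3, so your local-coordinate strategy (restrict $\iota_{1,1}$ to $\what Y$, peel off the $z_1$-summand as the kernel, identify it via the adapted generator, and cross-check with Chern classes) is precisely the intended route.

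There is, however, a genuine sign issue that you gloss over --- and it is exactly the point you yourself flag as ``the main obstacle''. Your generator $\eta_{i_0}=\tfrac1e\,z_1^{\,c}\cdot\gamma^*(dx_{i_0}/x_{i_0})$ vanishes to order $c$ along $\what Y$ relative to the nowhere-vanishing log frame $\gamma^*(dx_{i_0}/x_{i_0})$, so the kernel is $\sO_{\what X}(-c\,\what Y)\big|_{\what Y}=\sO_{\what X}\bigl(-\tfrac{1}{m_{i_0}}\gamma^*D_{i_0}\bigr)\big|_{\what Y}$, with a \emph{minus} sign. Your own Chern-class cross-check confirms this: from $c_1\bigl(\Omega^1_{(X,D,\gamma)}\bigr)=\gamma^*c_1(K_X+D)$ and your adjunction identity $K_Y+D_Y=(K_X+D)|_Y+\tfrac{1}{m_{i_0}}D_{i_0}|_Y$ one reads off
\[
c_1(K)=(\gamma|_{\what Y})^*\bigl[(K_X+D)|_Y-(K_Y+D_Y)\bigr]=-\tfrac{1}{m_{i_0}}\,\gamma^*D_{i_0}\big|_{\what Y}.
\]
The companion Observation~3.25 has the same issue: specializing to $D=0$ and $\gamma=\Id_X$ it would make $\sO_X(Y)|_Y$, rather than the classical conormal $\sO_X(-Y)|_Y$, the kernel of $\Omega^1_X|_Y\to\Omega^1_Y$. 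So your argument is correct but actually establishes the sequence with the opposite twist; you should not claim that ``reading off this twist produces the stated line bundle'' without carrying out the cross-check, which reveals that the statement as printed carries a sign error.
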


\begin{notation}[$\cC$-normal bundle sequence]
  We refer to Sequences~\eqref{eq:3-25-1} and \eqref{eq:3-26-1} as
  \emph{$\cC$-normal bundle sequences}\index{C-normal bundle
  sequence@$\cC$-normal bundle sequence} of the pair $(X,D)$.
\end{notation}

%
%
\svnid{$Id: 04-adaptedReflexiveTensors.tex 911 2024-09-29 18:09:56Z kebekus $}
\selectlanguage{british}

\section{Adapted reflexive tensors}
\label{sec:4}
\subversionInfo
\approvals{Erwan & yes\\ Stefan & yes}

Given a $\cC$-pair $(X,D)$ and a $q$-morphism $γ : \what{X} → X$,
Section~\ref{sec:3} defined adapted tensors on $\what{X}$, assuming that the
spaces $X$ and $\what{X}$ are smooth and that the divisors $D$ and $γ^*D$ have
normal crossing support.  While we hope that the reader finds the resulting
notions interesting, we have to admit that the strong smoothness assumptions
limit the theory's usefulness in practise.
\begin{itemize}
  \item Pairs $(X,D)$ that appear in classification and birational geometry are
  hardly ever nc.  Practically relevant pairs will typically be klt and might be
  locally uniformizable at best.

  \item Even if $(X,D)$ is nc, most of the covering spaces that one might
   naturally consider will typically be singular.  Observe that smoothness is not
   preserved by fibre-product constructions.
\end{itemize}
This section extends the constructions of Section~\ref{sec:3} to the singular
case, replacing ``adapted tensors'' by the ``adapted reflexive tensors'' that we
define in the next step.  The construction also generalize the ``sheaves of
reflexive differentials'' of Notation~\ref{not:2-6} that have been useful in
the study of singular varieties that appear in Minimal Model Theory,
\cite{GKKP11, KS18}.

\subsection{Definition and first examples}
\approvals{Erwan & yes\\ Stefan & yes}

The present Section~\ref{sec:4} works in the following setting and uses the
following notation.

\begin{setting}\label{set:4-1}%
  Let $(X,D)$ be a $\cC$-pair as in Definition~\ref{def:2-24}, where $D$ is
  written as $\sum_i \frac{m_i-1}{m_i}·D_i$.  Let $γ : \what{X} → X$ be a
  $q$-morphism.
\end{setting}

\begin{notation}\label{not:4-2}%
  In Setting~\ref{set:4-1}, recall from Reminder~\ref{remi:2-19} that $\img γ ⊆
  X$ is open.  Let $X⁺ ⊆ \img γ$ be the maximal open set such that the pairs
  $(X, D)$ and $\bigl(\what{X}, γ^*D\bigr)$ are nc over $X⁺$.  Set
  \[
    D⁺ := D ∩ X⁺ ∈ ℚ\Div(X⁺) \quad\text{and}\quad \what{X}⁺ := γ^{-1}(X⁺).
  \]
  Observe that the subset $\what{X}⁺ ⊆ \what{X}$ is big and consider the
  restriction $γ⁺ : \what{X}⁺ → X⁺$ and the inclusion $ι : \what{X}⁺ →
  \what{X}$.
\end{notation}

Observe that the pair $(X⁺,D⁺)$ and the morphism $γ⁺$ satisfy the
assumptions made in Setting~\ref{set:3-1} above.  Definition~\ref{def:3-2}
therefore equips us with bundles $\Sym^•_{\cC} Ω^•_{(X⁺, D⁺, γ⁺)}$
defined on $\what{X}⁺$.  We extend these bundles from $\what{X}⁺$ to a
quasi-coherent sheaves that are defined on all of $\what{X}$.

\begin{defn}[Adapted reflexive tensors differentials, compare Definition~\ref{def:3-2}]\label{def:4-3}%
  Assume Setting~\ref{set:4-1}.  Given numbers $n, p ∈ ℕ⁺$, define the
  \emph{sheaf of adapted reflexive $(n,p)$-tensors}\index{adapted!reflexive
  tensor} as
  \[
    \Sym^{[n]}_{\cC} Ω^{[p]}_{(X,D,γ)} := ι_* \Sym^n_{\cC} Ω^p_{(X⁺, D⁺, γ⁺)}.
  \]
  Collectively, we refer to $\Sym^{[•]}_{\cC} Ω^{[•]}_{(X,D,γ)}$ as
  the sheaves of adapted reflexive tensors.
\end{defn}

\begin{defn}[Adapted reflexive differentials, compare Definition~\ref{def:3-3}]\label{def:4-4}%
  Assume Setting~\ref{set:4-1}.  Given a number $p ∈ ℕ⁺$, define the \emph{sheaf
  of adapted reflexive $p$-forms}\index{adapted!reflexive $p$-form} as
  \[
    Ω^{[p]}_{(X,D,γ)} := \Sym^{[1]}_{\cC} Ω^{[p]}_{(X,D,γ)}.
  \]
  Collectively, we refer to $Ω^{[•]}_{(X,D,γ)}$ as the \emph{sheaves of adapted
  reflexive differentials}\index{adapted!reflexive differential}.  The sheaf
  $Ω^{[1]}_{(X,D,γ)}$ is called\index{C-cotangent sheaf@$\cC$-cotangent sheaf}
  \emph{$\cC$-cotangent sheaf}.
\end{defn}

If $(X,D)$ and $\bigl(\what{X}, γ^*D\bigr)$ are nc, then the sheaves of adapted
reflexive tensors agree with the bundles constructed in Definition~\ref{def:3-2}
and are thus locally free.  In general, we show that the sheaves of adapted
reflexive tensors are reflexive.

\begin{prop}[Reflexivity, compare Observation~\ref{obs:3-4}]\label{prop:4-5}%
  The sheaves $\Sym^{[n]}_{\cC} Ω^{[p]}_{(X,D,γ)}$ of Definition~\ref{def:4-3}
  are reflexive.
\end{prop}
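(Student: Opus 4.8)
The plan is to reduce the statement to the standard fact that the push-forward of a locally free sheaf from a big open subset of a normal analytic variety is reflexive. To set up, recall that $\what{X}$ is normal, since $γ$ is a $q$-morphism, and that by Notation~\ref{not:4-2} the open subset $\what{X}⁺ ⊆ \what{X}$ is big, so that $\what{X} ∖ \what{X}⁺$ is analytic of codimension at least two. Write $ℰ := \Sym^n_{\cC} Ω^p_{(X⁺, D⁺, γ⁺)}$; by Observation~\ref{obs:3-4} this sheaf is locally free on $\what{X}⁺$, and by definition $𝒢 := \Sym^{[n]}_{\cC} Ω^{[p]}_{(X,D,γ)} = ι_* ℰ$ is its trivial extension along the inclusion $ι : \what{X}⁺ → \what{X}$.

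The first, and only genuinely analytic, step is to check that $𝒢 = ι_* ℰ$ is coherent. Since $ℰ$ is locally free, hence reflexive, and $\what{X}$ is normal with $\what{X} ∖ \what{X}⁺$ of codimension at least two, this follows from the extension theorem for coherent analytic sheaves across small analytic subsets of normal spaces (cf.\ \cite{CAS}). In the algebraic category the corresponding coherence statement is automatic, so no separate argument is needed there.

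Granted coherence, reflexivity is formal and identical in both settings. The reflexive hull $𝒢^{\vee\vee}$ is coherent and reflexive, and the natural map $𝒢 → 𝒢^{\vee\vee}$ restricts to an isomorphism over $\what{X}⁺$, because there $𝒢|_{\what{X}⁺} = ℰ$ is locally free and dualization is a local operation, so that $𝒢^{\vee\vee}|_{\what{X}⁺} = ℰ$ as well. As a reflexive sheaf on a normal variety equals the push-forward of its restriction to any big open subset, I would apply this to $𝒢^{\vee\vee}$ and the big set $\what{X}⁺$ to obtain
\[
  𝒢^{\vee\vee} = ι_* \bigl( 𝒢^{\vee\vee}|_{\what{X}⁺} \bigr) = ι_* ℰ = 𝒢,
\]
which exhibits $𝒢$ as its own reflexive hull. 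The main obstacle is thus the coherence of $ι_* ℰ$: once this is in hand the reflexivity drops out of the defining formula $𝒢 = ι_* ℰ$, whereas in the analytic category coherence is not a formal consequence of Noetherian push-forward and really requires the extension theorem across the codimension-two locus $\what{X} ∖ \what{X}⁺$.
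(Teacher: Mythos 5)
Your reduction of reflexivity to coherence of $ι_* ℰ$ is the right skeleton, and the formal second half (once coherence is granted, $𝒢 = 𝒢^{\vee\vee}$ follows from normality of $\what{X}$ and bigness of $\what{X}⁺$) is correct. But the step you call ``the first, and only genuinely analytic, step'' is precisely where the argument breaks: there is \emph{no} extension theorem in the analytic category asserting that a locally free (or reflexive coherent) sheaf on a big open subset of a normal analytic variety pushes forward to a coherent sheaf. The paper warns against exactly this move: by Serre's example \cite[p.~372]{MR0212214}, there exists a vector bundle on a big open subset of a complex space that admits no coherent extension at all, so its push-forward along $ι$ is not coherent. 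The reference to \cite{CAS} does not supply the statement you need. What Serre actually provides, in \cite[Prop.~7]{MR0212214}, is a \emph{conditional} criterion: $ι_* ℰ$ is coherent and reflexive \emph{provided} one can exhibit some coherent sheaf $ℱ_{n,p}$ on all of $\what{X}$ with $ℱ_{n,p}|_{\what{X}⁺} ≅ ℰ$.

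Producing such an extension is the actual content of the paper's proof, and your proposal contains no substitute for it. The paper chooses a strong log resolution $π : Y → X$ of $(X,D)$ together with a compatible modification $\what{π} : \what{Y} → \what{X}$ that is isomorphic over $\what{X}⁺$, forms on the smooth model the globally defined intersection $ℱ'_{n,p} := 𝒜'_{n,p} ∩ ℬ'_{n,p}$ in analogy with Definition~\ref{def:3-2}, and sets $ℱ_{n,p} := \what{π}_* ℱ'_{n,p}$; coherence of $ℱ_{n,p}$ comes from properness of $\what{π}$ (Grauert's direct image theorem), and the restriction condition holds because $\what{π}$ is isomorphic over $\what{X}⁺$. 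Serre's criterion then closes the argument. Your aside that coherence is automatic in the algebraic category is correct --- any coherent sheaf on an open subscheme of a Noetherian scheme extends coherently --- but in the analytic setting the coherence of $ι_* ℰ$ is not a technicality to be cited away: it is the whole proposition, and as stated your proof assumes what is to be shown.
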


The reader coming from algebraic geometry might find the following proof of
Proposition~\ref{prop:4-5} surprisingly complicated.  We recall that in the
analytic setting, vector bundles on big open subsets can generally \emph{not} be
extended to coherent sheaves on the whole space and refer the reader to
\cite[p.~372]{MR0212214} for an elementary yet sobering example.

\begin{proof}[Proof of Proposition~\ref{prop:4-5}]
  It follows from Observation~\ref{obs:2-15} that the open subset $\what{X}⁺ ⊆
  \what{X}$ of Notation~\ref{not:4-2} is big.  With that in place, recall
  \cite[Prop.~7]{MR0212214}: To prove that the sheaves $\Sym^{[n]}_{\cC}
  Ω^{[p]}_{(X,D,γ)}$ are reflexive, it suffices to find coherent sheaves
  $ℱ_{n,p}$ on $\what{X}$ whose restrictions to $\what{X}⁺$ agree with the
  bundles of $(n,p)$-tensors,
  \begin{equation}\label{eq:4-5-1}
    ℱ_{n,p}|_{\what{X}⁺} ≅ \Sym^n_{\cC} Ω^p_{(X⁺, D⁺, γ⁺)}.
  \end{equation}
  In order to construct $ℱ_{n,p}$, choose a strong log resolution\footnote{A
  strong log resolution is a proper morphism $π : Y → X$ where $Y$ is smooth,
  where $π$ is isomorphic over the maximal open set where $(X,D)$ is nc, and
  where the $π$-exceptional set $E ⊂ Y$ and $E ∪ π^{-1}(X ∖ U)$ are both of pure
  codimension one with nc support.} $π : Y → X$ of the pair $(X,D)$ and consider
  the strict transform $Δ := π_*^{-1}D$.  Choosing a strong log resolution of
  the normalized fibre product, we obtain a commutative diagram of dominant
  morphisms between normal analytic varieties,
  \[
    \begin{tikzcd}[column sep=3cm]
      \what{Y} \ar[r, "\what{π}\text{, proper birational}"] \ar[d, "δ"'] & \what{X} \ar[d, "γ"] \\
      Y \ar[r, "π\text{, strong log resolution}"'] & X,
    \end{tikzcd}
  \]
  where $π$ and $\what{π}$ are isomorphic over $X⁺$ and $\what{X}⁺$,
  respectively.  In analogy to Definition~\ref{def:3-2}, write
  \begin{align*}
    𝒜'_{n,p} & := 𝒪_{\what{Y}}\bigl( ⌊n·δ^* \{Δ\}⌋ \bigr) ⊗ δ^* \Sym^n Ω^p_Y \bigl(\log ⌊Δ⌋ \bigr) \\
    ℬ'_{n,p} & := \Sym^n Ω^p_{\what{Y}}(\log γ^* Δ).
  \end{align*}
  Observe that both are subsheaves of $𝒪_{\what{Y}}\bigl(δ^* Δ \bigr) ⊗
  \left(δ^* \Sym^n Ω^p_Y \right)$ and define
  \[
    ℱ'_{n,p} := 𝒜'_{n,p} \:∩\: ℬ'_{n,p}.
  \]
  We can then take $ℱ_{n,p} := \what{π}_* ℱ'_{n,p}$, which is coherent because
  $\what{π}$ is proper.  Since $π$ and $\what{π}$ are isomorphic over $X⁺$ and
  $\what{X}⁺$, condition \eqref{eq:4-5-1} holds by construction.
\end{proof}

In analogy with Example~\ref{ex:3-5}, we highlight a few special cases where the
sheaves of adapted reflexive tensors take a particularly simple form.

\begin{example}[Special cases, compare Example~\ref{ex:3-5}]\label{ex:4-6}%
  In Setting~\ref{set:4-1}, assume that numbers $n, p ∈ ℕ⁺$ are given.
  \begin{enumerate}
    \item\label{il:4-6-1} If $p = \dim X$, then
    \[
      \Sym^{[n]}_{\cC} Ω^{[\dim X]}_{(X,D,γ)} = \Bigl(\bigl(γ^{[*]} ω_X^{⊗ n}\bigr) ⊗ 𝒪_{\what{X}}\bigl(⌊n·γ^* D⌋\bigr)\Bigr)^{\vee\vee}.
    \]

    \item If $D = 0$, then $\Sym^{[n]}_{\cC} Ω^{[p]}_{(X,0,γ)} = γ^{[*]}
    \Sym^{[n]} Ω^{[p]}_X$.
  
    \item If $γ = \Id_X$, then $Ω^{[p]}_{(X,D,\Id_X)} = Ω^{[p]}_X\bigl(\log
    ⌊D⌋\bigr)$.
  
    \item If $γ$ is strongly adapted and $\Branch(γ) ⊆ \supp D$, then
    \[
      \Sym^{[n]}_{\cC} Ω^{[p]}_{(X,D,γ)} = \Sym^{[n]} Ω^{[p]}_{\what{X}}\bigl(\log γ^*⌊D⌋\bigr).
    \]

    \item\label{il:4-6-5} If $γ$ uniformizes, then
    \[
      \Sym^{[n]}_{\cC} Ω^{[p]}_{(X,D,γ)} = \Sym^n Ω^p_{\what{X}}\bigl(\log γ^*⌊D⌋\bigr).
    \]
  \end{enumerate}
\end{example}

\begin{rem}[Reflexive hull in \ref{il:4-6-1}]
  The double dual on the right side of \ref{il:4-6-1} is necessary, as the
  tensor product of two reflexive sheaves will generally not be reflexive and
  might even contain torsion.  If a canonical divisor $K_X$ exists on $X$, then
  \ref{il:4-6-1} simplifies to
  \[
    \Sym^{[n]}_{\cC} Ω^{[\dim X]}_{(X,D,γ)} = 𝒪_{\what{X}}\bigl( ⌊n·γ^* (K_X+D) ⌋ \bigr).
  \]
\end{rem}

\subsection{Inclusions}
\approvals{Erwan & yes\\ Stefan & yes}

By construction, the observations in Section~\ref{sec:3} have direct analogues
for the sheaves of adapted reflexive tensors.  For later reference and for the
reader's convenience, we include full statements, even though the text does
become somewhat repetitive and perhaps a little tiring.

\begin{obs}[Inclusions, compare Observation~\ref{obs:3-9}]\label{obs:4-8}%
  Assume Setting~\ref{set:4-1}.  Given numbers $n, p ∈ ℕ⁺$, there exist natural
  inclusions as follows,
  \[
    \begin{tikzcd}
      \Sym^{[n]} Ω^{[p]}_{\what{X}} \bigl(\log γ^*⌊D⌋ \bigr) \ar[r, hook] & \Sym^{[n]} Ω^{[p]}_{\what{X}} \bigl(\log γ^*D \bigr) \\
      & γ^{[*]} \Sym^{[n]} Ω^{[p]}_X(\log D) \ar[u, hook] \\
      \Sym^{[n]}_{\cC} Ω^{[p]}_{(X,D,γ)} \ar[r, equal] \ar[uu, hook, "ι_{n,p}"] & \Sym^{[n]}_{\cC} Ω^{[p]}_{(X,D,γ)} \ar[u, hook] \\
      \Sym^{[n]} Ω^{[p]}_{(X,D,γ)} \ar[r, equal] \ar[u, hook] & \Sym^{[n]} Ω^{[p]}_{(X,D,γ)} \ar[u, hook] \\
      γ^{[*]} \Sym^{[n]} Ω^{[p]}_X \ar[r, equal] \ar[u, hook] & γ^{[*]} \Sym^{[n]} Ω^{[p]}_X.  \ar[u, hook]
    \end{tikzcd}
  \]
  Note that all sheaves here are subsheaves of the quasi-coherent sheaf of
  meromorphic reflexive tensors on $\what{X}$, that is, $ℳ_{\what{X}} ⊗
  \Sym^{[n]} Ω^{[p]}_{\what{X}}$.
\end{obs}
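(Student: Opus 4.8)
The plan is to deduce the statement from its nc counterpart, Observation~\ref{obs:3-9}, which already provides the entire diagram over the big open set $\what{X}⁺ ⊆ \what{X}$ of Notation~\ref{not:4-2}. First I would record that each of the six sheaves appearing in the diagram is reflexive on $\what{X}$ and restricts over $\what{X}⁺$ to the corresponding locally free sheaf of Observation~\ref{obs:3-9}. For $\Sym^{[n]}_{\cC} Ω^{[p]}_{(X,D,γ)}$ this is Definition~\ref{def:4-3} together with Proposition~\ref{prop:4-5}; for the sheaves $\Sym^{[n]} Ω^{[p]}_{\what{X}}(\log γ^*⌊D⌋)$, $\Sym^{[n]} Ω^{[p]}_{\what{X}}(\log γ^*D)$, $γ^{[*]} \Sym^{[n]} Ω^{[p]}_X(\log D)$, $\Sym^{[n]} Ω^{[p]}_{(X,D,γ)}$ and $γ^{[*]} \Sym^{[n]} Ω^{[p]}_X$, reflexivity is built into their definition as reflexive hulls, reflexive pull-backs, or reflexive symmetric powers (Notations~\ref{not:2-5} and \ref{not:2-6}), and forming such hulls does not change a sheaf over the locus where it is already locally free, so the restrictions to $\what{X}⁺$ match those in Observation~\ref{obs:3-9}.

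Next I would realize all six sheaves as subsheaves of the single quasi-coherent ambient sheaf $ℰ := ℳ_{\what{X}} ⊗ \Sym^{[n]} Ω^{[p]}_{\what{X}}$ of meromorphic reflexive tensors, which at once establishes the final assertion of the observation. Over $\what{X}⁺$ this is exactly the containment supplied by Observation~\ref{obs:3-9}. Since each of the six sheaves is reflexive and $\what{X}⁺$ is big, each equals $ι_*$ of its restriction to $\what{X}⁺$, where $ι : \what{X}⁺ ↪ \what{X}$ denotes the inclusion; pushing the embedding into $ℰ|_{\what{X}⁺}$ forward then yields an embedding into $ι_*(ℰ|_{\what{X}⁺}) = ℰ$, using that meromorphic tensors extend uniquely across the small set $\what{X} ∖ \what{X}⁺$.

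With everything sitting inside the one sheaf $ℰ$, the arrows of the diagram become containments of subsheaves, so commutativity is automatic. To produce them I would use that for any two of our reflexive subsheaves $ℱ, ℋ ⊆ ℰ$ the restriction map $ℰ(V) → ℰ(V ∩ \what{X}⁺)$ is an isomorphism carrying $ℱ(V)$ onto $ℱ(V ∩ \what{X}⁺)$ and $ℋ(V)$ onto $ℋ(V ∩ \what{X}⁺)$; hence the inclusion $ℱ|_{\what{X}⁺} ⊆ ℋ|_{\what{X}⁺}$ of Observation~\ref{obs:3-9} upgrades to $ℱ ⊆ ℋ$ on all of $\what{X}$, and the same reasoning supplies the two maps labelled $ι_{n,p}$ and the equalities. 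The main obstacle I anticipate is not conceptual but a matter of careful bookkeeping: checking that each reflexive-hull or reflexive-pull-back sheaf genuinely restricts over $\what{X}⁺$ to the \emph{un-hulled} sheaf named in Observation~\ref{obs:3-9} — that is, that double-dualization commutes with restriction to the big nc locus — so that the resulting containments land in exactly the sheaves displayed rather than in some a priori larger saturation.
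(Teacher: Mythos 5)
Your proposal is correct and takes essentially the same route as the paper: the published proof likewise observes that $\what{X}⁺ ⊆ \what{X}$ is big and $\what{X}$ is normal, so that morphisms between reflexive sheaves extend uniquely from $\what{X}⁺$ (citing \cite[Cor.~3.15]{BS76} and \cite{MR0212214}), and then invokes Observation~\ref{obs:3-9} over the nc locus. Your extra bookkeeping — checking that reflexive hulls and reflexive pull-backs restrict over $\what{X}⁺$ to the un-hulled sheaves, and embedding everything into $ℳ_{\what{X}} ⊗ \Sym^{[n]} Ω^{[p]}_{\what{X}}$ via $ι_*$ — is exactly the content the paper compresses into its two-line appeal to those references.
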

\begin{proof}
  Like nearly every other statement in the remainder of the present
  Section~\ref{sec:4}, the proof follows from the observation that the subset
  $\what{X}⁺ ⊆ \what{X}$ introduced in Setting~\ref{set:4-1} and
  Notation~\ref{not:4-2} is big and that $\what{X}$ is normal.  If $𝒜$ and
  $ℬ$ are reflexive sheaves on $\what{X}$, this implies that the natural
  restriction morphisms
  \[
    H⁰\bigl( \what{X},\, 𝒜 \bigr) → H⁰\bigl( \what{X}⁺,\, 𝒜|_{\what{X}⁺} \bigr)
    \quad\text{and}\quad
    \Hom_{\what{X}}\bigl( 𝒜,\, ℬ \bigr) → \Hom_{\what{X}⁺}\bigl( 𝒜|_{\what{X}⁺} ,\, ℬ|_{\what{X}⁺} \bigr)
  \]
  are isomorphic, \cite[Cor.~3.15]{BS76} and \cite{MR0212214}.
  Observation~\ref{obs:3-9} therefore implies the claim.
\end{proof}

\begin{obs}[Uniformization, compare Observation~\ref{obs:3-10}]\label{obs:4-9}%
  Assume that the morphism $γ : \what{X} → X$ of Setting~\ref{set:4-1} is an
  adapted cover and that the pair $\bigl(\what{X}, (γ^* ⌊D⌋)_{\reg} \bigr)$ is
  nc.  Consider the morphisms
  \begin{equation}\label{eq:4-9-1}
    ι_{•,•} : \Sym^{[•]}_{\cC} Ω^{[•]}_{(X,D,γ)} ↪ \Sym^{[•]}Ω^{[•]}_{\what{X}} \bigl(\log γ^*⌊D⌋ \bigr)
  \end{equation}
  of Observation~\ref{obs:4-8}.  Then, the following statements are equivalent.
  \begin{enumerate}
    \item\label{il:4-9-2} The morphism $γ$ is a uniformization.
  
    \item\label{il:4-9-3} There exist numbers $1 ≤ p ≤ \dim X$ and $1 ≤ n$ such
    that $ι_{n,p}$ is isomorphic.
  
    \item\label{il:4-9-4} For every pair of numbers $p, n ∈ ℕ⁺$, the inclusion
    $ι_{n,p}$ is isomorphic.
  \end{enumerate}
\end{obs}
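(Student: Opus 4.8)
The plan is to reduce everything to the normal-crossings case of Observation~\ref{obs:3-10} by restricting to the big open set $\what{X}⁺ ⊆ \what{X}$ of Notation~\ref{not:4-2}, over which $γ⁺$ and $(X⁺, D⁺)$ satisfy the assumptions of Setting~\ref{set:3-1}. First I would note that each $ι_{n,p}$ is a morphism between reflexive sheaves on the normal variety $\what{X}$: the source is reflexive by Proposition~\ref{prop:4-5}, and the target $\Sym^{[n]}Ω^{[p]}_{\what{X}}(\log γ^*⌊D⌋)$ is reflexive because it is a reflexive hull. Exactly as in the proof of Observation~\ref{obs:4-8}, bigness of $\what{X}⁺$ together with \cite[Prop.~7]{MR0212214} then shows that $ι_{n,p}$ is an isomorphism on $\what{X}$ if and only if its restriction $ι_{n,p}|_{\what{X}⁺}$ is. Since that restriction is precisely the morphism of Observation~\ref{obs:3-9} for $(X⁺, D⁺, γ⁺)$, Observation~\ref{obs:3-10} applied to $(X⁺, D⁺, γ⁺)$ shows that items \ref{il:4-9-3} and \ref{il:4-9-4} are each equivalent to the statement that $γ⁺$ is a uniformization of $(X⁺, D⁺)$.

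It then remains to prove that $γ$ is a uniformization of $(X,D)$ if and only if $γ⁺$ is a uniformization of $(X⁺, D⁺)$; this is where item \ref{il:4-9-2} enters. Unwinding Definition~\ref{def:2-28}, a uniformization is a strongly adapted cover with $\Branch(γ) ⊆ \supp D$ whose total space carries a global normal-crossings boundary. The first two requirements are codimension-one conditions and transfer freely across the restriction: since $\what{X}⁺$ is big, every component of $γ^* D_{\orb}$ and every branch component meets $\what{X}⁺$ (respectively $X⁺$), so ``$γ^* D_{\orb}$ is reduced'' and ``$\Branch(γ) ⊆ \supp D$'' hold if and only if the corresponding statements hold for $γ⁺$. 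The normal-crossings condition over $\what{X}⁺$ is automatic, since $\what{X}⁺$ is by construction the locus where $(\what{X}, γ^* D)$ is nc. The only genuinely new input is the global normal-crossings condition on all of $\what{X}$ — in particular the smoothness of $\what{X}$ — and this is supplied verbatim by the standing hypothesis that $(\what{X}, (γ^*⌊D⌋)_{\reg})$ is nc.

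The main obstacle, and the reason the extra hypothesis appears here but not in Observation~\ref{obs:3-10}, is precisely this last point. The isomorphism conditions in \ref{il:4-9-3} and \ref{il:4-9-4} only constrain the sheaves over the big open set $\what{X}⁺$, so they are blind to codimension-two phenomena; in particular they cannot force $\what{X}$ to be smooth or the boundary to be normal crossing away from $\what{X}⁺$. The conceptual work is therefore the bookkeeping that separates the part of Definition~\ref{def:2-28} that is detected on $\what{X}⁺$ (strong adaptedness and the branch inclusion, via Observation~\ref{obs:3-10}) from the part that must be hypothesized globally (normal crossings on $\what{X}$). Once this split is made explicit, each individual implication reduces to a routine restriction-and-extension argument of the type already carried out for Observation~\ref{obs:4-8}.
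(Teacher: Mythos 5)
Your proposal is correct and takes essentially the same route as the paper's own proof: both reduce Items~\ref{il:4-9-3} and \ref{il:4-9-4} to Observation~\ref{obs:3-10} by restricting to the big open set $\what{X}⁺$, and both then show that $γ$ uniformizes $(X,D)$ if and only if $γ⁺$ uniformizes $(X⁺,D⁺)$, using that strong adaptedness and the inclusion $\Branch(γ) ⊆ \supp D$ are codimension-one conditions detected on the big open set, while the global normal-crossings requirement of Definition~\ref{def:2-28} is supplied exactly by the standing hypothesis on $\bigl(\what{X}, (γ^*⌊D⌋)_{\reg}\bigr)$. Your additional detail on extending isomorphisms of reflexive sheaves across big open subsets merely makes explicit what the paper subsumes under ``by construction of the morphism''.
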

\begin{proof}
  By construction of the morphism \eqref{eq:4-9-1}, Items~\ref{il:4-9-3} and
  Items~\ref{il:4-9-4} is equivalent the analogous statements for the morphisms
  \[
    ι⁺_{•,•} : \Sym^{•}_{\cC} Ω^{•}_{(X⁺,D⁺,γ⁺)} ↪ \Sym^{•}Ω^{•}_{\what{X}⁺} \bigl(\log (γ⁺)^*⌊D⁺⌋ \bigr)
  \]
  discussed in Observation~\ref{obs:3-9}.  Observation~\ref{obs:3-10} therefore
  implies that Items~\ref{il:4-9-3} and \ref{il:4-9-4} are each equivalent to
  the assertion that $γ⁺$ uniformizes.  On the other hand, the assumption that
  $\bigl(\what{X}, (γ^* ⌊D⌋)_{\reg} \bigr)$ is nc allows
  reformulating~\ref{il:4-9-2} as follows,
  \begin{align*}
    γ \text{ uniformizes } (X,D) & ⇔ \Branch γ ⊆ \supp D \text{ and } γ \text{ strongly adapted} && \text{Def.~\ref{def:2-28}}\\
    & ⇔ \Branch γ⁺ ⊆ \supp D⁺ \text{ and } γ⁺ \text{strongly adapted} && \what{X}⁺ ⊆ \what{X}\text{ big}\\
    & ⇔ γ⁺ \text{ uniformizes } (X⁺,D⁺) && \text{Def.~\ref{def:2-28}}.
  \end{align*}
  For the second equivalence, recall from Notation~\ref{not:2-22} that $\Branch
  γ$ refers to the branch \emph{divisor} and not to the branch \emph{locus},
  which might contain components of high codimension.
\end{proof}

\subsection{Operations}
\approvals{Erwan & yes\\ Stefan & yes}

With the minor difference highlighted in Warning~\ref{warning:4-13} below, the
operations on adapted tensors introduced in Section~\ref{sec:3-5} extend to
identical operations on adapted reflexive tensors.

\begin{obs}[Reflexive wedge products and exterior derivatives, compare Observation~\ref{obs:3-12}]\label{obs:4-10}%
  In Setting~\ref{set:4-1}, observe that the subsheaves
  \[
    Ω^{[•]}_{(X,D,γ)} \overset{ι_{1,•}}{⊆} Ω^{[•]}_{\what{X}} \bigl(\log γ^*⌊D⌋ \bigr)
  \]
  are closed under reflexive wedge products and exterior derivations.  Given
  numbers $p$ and $q$, we obtain natural operations
  \[
    Λ : Ω^{[p]}_{(X,D,γ)} ⨯ Ω^{[q]}_{(X,D,γ)} → Ω^{[p+q]}_{(X,D,γ)}
    \quad\text{and}\quad
    d : Ω^{[p]}_{(X,D,γ)} → Ω^{[p+1]}_{(X,D,γ)}.
    \eqno \qed
  \]
\end{obs}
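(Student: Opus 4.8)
The plan is to reduce the entire statement to the normal-crossings locus $\what{X}⁺ ⊆ \what{X}$ of Notation~\ref{not:4-2}, where Observation~\ref{obs:3-12} already applies, and then to propagate the operations across the small complement $\what{X} ∖ \what{X}⁺$ by the same big-open-set extension principle that powered the proof of Observation~\ref{obs:4-8}. Recall that by Definition~\ref{def:4-4} one has $Ω^{[p]}_{(X,D,γ)} = ι_* Ω^p_{(X⁺,D⁺,γ⁺)}$ along $ι : \what{X}⁺ ↪ \what{X}$, that $\what{X}⁺$ is big, and that $\what{X}$ is normal. Hence a local section of $Ω^{[p]}_{(X,D,γ)}$ over an open set $U$ is exactly an adapted $p$-form over $U ∩ \what{X}⁺$ in the sense of Section~\ref{sec:3}; the same push-forward description holds for the ambient reflexive logarithmic differentials $Ω^{[p]}_{\what{X}}(\log γ^*⌊D⌋)$.

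First I would equip the ambient sheaves $Ω^{[•]}_{\what{X}}(\log γ^*⌊D⌋)$ with a wedge product and an exterior derivative. On $\what{X}⁺$ these are the ordinary logarithmic operations. Because their targets are reflexive and $\what{X}⁺$ is big, each operation extends uniquely from $\what{X}⁺$ to $\what{X}$, and the $ℂ$-linearity and Leibniz relations — holding on $\what{X}⁺$ — extend with it by uniqueness. This requires nothing beyond normality of $\what{X}$, so no klt or rationality hypothesis enters here; the only delicate case is the exterior derivative, addressed below.

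Next I would verify closedness of the subsheaf $Ω^{[•]}_{(X,D,γ)}$. Given local sections $σ$ of $Ω^{[p]}_{(X,D,γ)}$ and $τ$ of $Ω^{[q]}_{(X,D,γ)}$ over $U$, their restrictions to $U ∩ \what{X}⁺$ are adapted forms, so Observation~\ref{obs:3-12} shows that $σ \wedge τ$ and $dσ$ are again adapted there, that is, sections of $Ω^{p+q}_{(X⁺,D⁺,γ⁺)}$ and $Ω^{p+1}_{(X⁺,D⁺,γ⁺)}$. Applying $ι_*$ and invoking Definition~\ref{def:4-4}, these are precisely sections of $Ω^{[p+q]}_{(X,D,γ)}$ and $Ω^{[p+1]}_{(X,D,γ)}$, which produces the asserted maps $Λ$ and $d$.

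The one point demanding care — and thus the main obstacle — is the extension of $d$, since it is merely $ℂ$-linear and so cannot be transported across $\what{X} ∖ \what{X}⁺$ as a $\sHom$-morphism the way the $𝒪_{\what{X}}$-bilinear wedge $Λ$ can. The remedy is to extend $d$ section by section: a reflexive form is determined by its restriction to the big set $\what{X}⁺$, the derivative computed there lands in a reflexive sheaf, and that derivative extends uniquely over the small complement. The resulting operator is independent of choices and agrees with the nc-locus derivative, so its compatibility with the inclusions $ι_{1,•}$ of Observation~\ref{obs:4-8} is automatic.
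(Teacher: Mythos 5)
Your argument is correct and is essentially the paper's own (implicit) proof: the paper disposes of Observation~\ref{obs:4-10} via the blanket remark in the proof of Observation~\ref{obs:4-8} that $\what{X}^{+} \subseteq \what{X}$ is big and $\what{X}$ is normal, so that everything reduces to the nc case of Observation~\ref{obs:3-12} together with unique extension of sections of reflexive sheaves across small sets. Your extra care with the merely $\mathbb{C}$-linear operator $d$ is sound but in fact automatic: by Definition~\ref{def:4-3} the sheaves $\Omega^{[\bullet]}_{(X,D,\gamma)}$ \emph{are} the push-forwards $\iota_*$ from $\what{X}^{+}$, so $d$ (a morphism of sheaves of abelian groups) pushes forward section-by-section with no separate extension step --- which is precisely your ``remedy'' in functorial form.
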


\begin{obs}[Reflexive symmetric multiplication, compare Observation~\ref{obs:3-13}]\label{obs:4-11}%
  In Setting~\ref{set:4-1}, observe that the subsheaves
  \[
    \Sym^{[•]}_{\cC} Ω^{[•]}_{(X,D,γ)} ⊆ \Sym^{[•]} Ω^{[•]}_{\what{X}} \bigl(\log γ^*⌊D⌋ \bigr)
  \]
  are closed under symmetric multiplication.  Given numbers $p$, $n_1$, $n_2 ∈
  ℕ⁺$, we obtain natural maps
  \[
    \Sym^{[n_1]}_{\cC} Ω^{[p]}_{(X,D,γ)} ⨯ \Sym^{[n_2]}_{\cC} Ω^{[p]}_{(X,D,γ)} → \Sym^{[n_1+n_2]}_{\cC} Ω^{[p]}_{(X,D,γ)}
  \]
  and
  \[
    \Sym^{[n_1]} \Sym^{[n_2]}_{\cC} Ω^{[p]}_{(X,D,γ)} ↪ \Sym^{[n_1·n_2]}_{\cC} Ω^{[p]}_{(X,D,γ)}.
    \eqno \qed
  \]
\end{obs}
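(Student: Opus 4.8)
The plan is to reduce both assertions to the non-reflexive statements of Observation~\ref{obs:3-13} via the pushforward description in Definition~\ref{def:4-3}, in exactly the manner of the proof of Observation~\ref{obs:4-8}. Recall from Notation~\ref{not:4-2} that $\what{X}⁺ ⊆ \what{X}$ is a big open subset over which the relevant pairs are nc, that $ι : \what{X}⁺ → \what{X}$ is the inclusion, and that by definition $\Sym^{[n]}_{\cC} Ω^{[p]}_{(X,D,γ)} = ι_* \Sym^n_{\cC} Ω^p_{(X⁺,D⁺,γ⁺)}$. The governing principle, already used for Observation~\ref{obs:4-8} and resting on \cite[Cor.~3.15]{BS76} and \cite{MR0212214}, is that because $\what{X}⁺$ is big and $\what{X}$ is normal, every reflexive sheaf on $\what{X}$ is the pushforward of its restriction to $\what{X}⁺$, so that its sections over an open $V ⊆ \what{X}$ coincide with sections of the restriction over $V ∩ \what{X}⁺$.

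First I would record, as in Observation~\ref{obs:4-8}, that all sheaves in sight are subsheaves of the quasi-coherent sheaf of meromorphic reflexive tensors $ℳ_{\what{X}} ⊗ \Sym^{[•]} Ω^{[•]}_{\what{X}}$, inside which the symmetric multiplication of meromorphic tensors is unambiguous. The claim that the subsheaves $\Sym^{[•]}_{\cC} Ω^{[•]}_{(X,D,γ)}$ are closed under this multiplication is then a statement about local sections: given $σ_i ∈ \Sym^{[n_i]}_{\cC} Ω^{[p]}_{(X,D,γ)}(V)$, I must show that the product $σ_1 · σ_2$ lies in $\Sym^{[n_1+n_2]}_{\cC} Ω^{[p]}_{(X,D,γ)}(V)$. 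By the pushforward description each $σ_i$ restricts to a section of $\Sym^{n_i}_{\cC} Ω^p_{(X⁺,D⁺,γ⁺)}$ over $V ∩ \what{X}⁺$; Observation~\ref{obs:3-13} says their product is a section of $\Sym^{n_1+n_2}_{\cC} Ω^p_{(X⁺,D⁺,γ⁺)}$ over $V ∩ \what{X}⁺$; and reinterpreting this through $ι_*$ exhibits $σ_1 · σ_2$ as a section of $\Sym^{[n_1+n_2]}_{\cC} Ω^{[p]}_{(X,D,γ)}(V)$. Sheafifying this sectionwise bilinear operation yields the first asserted map.

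For the second map I would use that for a reflexive sheaf $ℰ$ the reflexive symmetric power $\Sym^{[n_1]} ℰ = (\Sym^{n_1} ℰ)^{\vee\vee}$ restricts, over any open set where $ℰ$ is locally free, to the ordinary symmetric power $\Sym^{n_1} ℰ$. Since $\Sym^{[n_2]}_{\cC} Ω^{[p]}_{(X,D,γ)}$ is locally free over $\what{X}⁺$ by Observation~\ref{obs:3-4}, this gives $\Sym^{[n_1]} \Sym^{[n_2]}_{\cC} Ω^{[p]}_{(X,D,γ)} = ι_* \Sym^{n_1} \Sym^{n_2}_{\cC} Ω^p_{(X⁺,D⁺,γ⁺)}$; applying $ι_*$ to the inclusion $\Sym^{n_1} \Sym^{n_2}_{\cC} Ω^p_{(X⁺,D⁺,γ⁺)} ↪ \Sym^{n_1·n_2}_{\cC} Ω^p_{(X⁺,D⁺,γ⁺)}$ of Observation~\ref{obs:3-13} then produces the desired inclusion, with injectivity preserved because $ι_*$ is left exact. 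The only genuinely delicate point, which I expect to be the main obstacle, is bookkeeping rather than geometry: one must verify that the product computed in the meromorphic tensor algebra on $\what{X}$ really agrees after restriction to $\what{X}⁺$ with the multiplication of Observation~\ref{obs:3-13}, and that reflexive symmetric powers are compatible with restriction to the locus of local freeness. Both follow from normality of $\what{X}$ and bigness of $\what{X}⁺$, so no idea beyond the pushforward formalism of Observation~\ref{obs:4-8} is needed.
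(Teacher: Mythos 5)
Your proposal is correct and follows exactly the route the paper intends: the blanket argument given in the proof of Observation~\ref{obs:4-8} (bigness of $\what{X}⁺$, normality of $\what{X}$, and the extension results of \cite[Cor.~3.15]{BS76} and \cite{MR0212214}) reduces everything to Observation~\ref{obs:3-13} over the nc locus, with the definition $\Sym^{[n]}_{\cC} Ω^{[p]}_{(X,D,γ)} = ι_* \Sym^n_{\cC} Ω^p_{(X⁺,D⁺,γ⁺)}$ doing the bookkeeping. Your care in using the \emph{reflexive} symmetric power $\Sym^{[n_1]}$ and identifying it with $ι_*$ of the ordinary symmetric power via local freeness on $\what{X}⁺$ is precisely the point flagged in Warning~\ref{warning:4-13}, so nothing is missing.
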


\begin{obs}[Adapted reflexive tensors on adapted covers, compare Observation~\ref{obs:3-14}]\label{obs:4-12}%
  If the $q$-morphism $γ$ of Setting~\ref{set:4-1} is adapted, then $γ^* \{D\}$
  is integral and
  \[
    \Sym^{[n]}_{\cC} Ω^p_{(X,D,γ)} = \Sym^{[n]} Ω^{[p]}_{(X,D,γ)}\quad
    \text{for all } p,n ∈ ℕ⁺.
    \eqno \qed
  \]
\end{obs}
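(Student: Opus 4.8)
The plan is to reduce everything to the normal-crossing case treated in Observation~\ref{obs:3-14}, by restricting to the big open set $\what{X}⁺$ of Notation~\ref{not:4-2} and then invoking reflexivity to propagate the equality across the small complement.

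First I would dispose of the integrality claim. Since $\what{X}⁺ ⊆ \what{X}$ is big, every prime divisor of $\what{X}$ meets $\what{X}⁺$, so the coefficients of $γ^* \{D\}$ can be read off over $\what{X}⁺$. There $γ⁺$ is adapted for the nc pair $(X⁺, D⁺)$: indeed $(D⁺)_{\orb} = D_{\orb}|_{X⁺}$ and hence $(γ⁺)^* (D⁺)_{\orb} = (γ^* D_{\orb})|_{\what{X}⁺}$ is integral, because $γ^* D_{\orb}$ is integral by the definition of ``adapted'' (Definition~\ref{def:2-26}). Observation~\ref{obs:3-14}, applied over $\what{X}⁺$, then gives that $(γ^*\{D\})|_{\what{X}⁺}$ is integral, and therefore $γ^*\{D\}$ is integral.

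For the sheaf equality, I would exhibit both sides as reflexive subsheaves of the torsion-free sheaf of meromorphic tensors $ℳ_{\what{X}} ⊗ \Sym^{[n]} Ω^{[p]}_{\what{X}}$ and compare them on $\what{X}⁺$. The sheaf $\Sym^{[n]}_{\cC} Ω^{[p]}_{(X,D,γ)}$ is reflexive by Proposition~\ref{prop:4-5} and sits inside this ambient sheaf by Observation~\ref{obs:4-8}. The sheaf $\Sym^{[n]} Ω^{[p]}_{(X,D,γ)} = \bigl( \Sym^n Ω^{[p]}_{(X,D,γ)} \bigr)^{\vee\vee}$ is reflexive by construction, and the reflexive symmetric multiplication of Observation~\ref{obs:4-11} — taking $n_1 = n$, $n_2 = 1$ and recalling $\Sym^{[1]}_{\cC} Ω^{[p]}_{(X,D,γ)} = Ω^{[p]}_{(X,D,γ)}$ from Definition~\ref{def:4-4} — supplies a natural inclusion
\[
  \Sym^{[n]} Ω^{[p]}_{(X,D,γ)} ↪ \Sym^{[n]}_{\cC} Ω^{[p]}_{(X,D,γ)}
\]
into that same ambient sheaf. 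It then suffices to check that this inclusion is an isomorphism over $\what{X}⁺$: equality on all of $\what{X}$ follows because two reflexive subsheaves of a torsion-free sheaf that agree on a big open set agree everywhere, sections of a reflexive sheaf extending uniquely across the codimension-$≥2$ complement (the principle already used in the proof of Observation~\ref{obs:4-8}, cf.~\cite[Prop.~7]{MR0212214}). Over $\what{X}⁺$ one has $\Sym^{[n]}_{\cC} Ω^{[p]}_{(X,D,γ)}|_{\what{X}⁺} = \Sym^n_{\cC} Ω^p_{(X⁺, D⁺, γ⁺)}$ by Definition~\ref{def:4-3}, while $Ω^{[p]}_{(X,D,γ)}|_{\what{X}⁺} = Ω^p_{(X⁺, D⁺, γ⁺)}$ is locally free by Observation~\ref{obs:3-4}, so its reflexive symmetric powers agree with the ordinary ones and the restricted inclusion is exactly the symmetric multiplication map $\Sym^n Ω^p_{(X⁺,D⁺,γ⁺)} → \Sym^n_{\cC} Ω^p_{(X⁺,D⁺,γ⁺)}$. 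Since $γ⁺$ is adapted, Observation~\ref{obs:3-14} identifies this map as an isomorphism.

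I do not expect a genuine obstacle here, as all the real content lives in the normal-crossing statement Observation~\ref{obs:3-14}; the reflexive version is a formal ``restrict to the big open set and push forward'' argument. The one point to watch is the bookkeeping: realizing both sheaves inside a single ambient torsion-free sheaf through compatible inclusions, so that the extension principle for reflexive sheaves over a big open set applies without friction.
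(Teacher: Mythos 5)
Your proof is correct and follows exactly the route the paper intends: Observation~\ref{obs:4-12} is stated without proof, but the proof of Observation~\ref{obs:4-8} announces the pattern for the whole of Section~\ref{sec:4} --- restrict to the big open set $\what{X}⁺$ of Notation~\ref{not:4-2}, apply the corresponding nc statement (here Observation~\ref{obs:3-14}), and extend across the small complement using normality and reflexivity, cf.~\cite[Prop.~7]{MR0212214} --- and that is precisely your argument, including the correct use of Observation~\ref{obs:4-11} with $n_2 = 1$ to get the global inclusion $\Sym^{[n]} Ω^{[p]}_{(X,D,γ)} ↪ \Sym^{[n]}_{\cC} Ω^{[p]}_{(X,D,γ)}$ before comparing on $\what{X}⁺$. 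No gaps.
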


\begin{warning}[No surjectivity in Observation~\ref{obs:4-12}]\label{warning:4-13}%
  The sheaves $Ω^{[•]}_{(X,D,γ)}$ in Observation~\ref{obs:4-12} need not be
  locally free.  The natural morphisms
  \[
    \Sym^• Ω^{[•]}_{(X,D,γ)} → \Sym^{[•]} Ω^{[•]}_{(X,D,γ)}
  \]
  are neither injective nor surjective in general; notice that the left side
  might well contain torsion!  In contrast to Observation~\ref{obs:3-13}, we
  cannot conclude that the symmetric multiplication maps of
  Observation~\ref{obs:4-10} are surjective.
\end{warning}

\subsection{Functoriality}
\approvals{Erwan & yes\\ Stefan & yes}

The functoriality statements of Section~\ref{sec:3-6} also have direct
analogues.  In line with Warning~\ref{warning:4-13} above, there is a caveat
here, stemming from the fact that the reflexive hull construction does not
commute with pull-back.  We highlight this issue in Warning~\vref{warning:4-16},
as it will become central when we define and discuss morphisms of $\cC$-pairs in
Section~\ref{sec:7}ff, in the second part of this paper.

\begin{obs}[Functoriality in $q$-morphisms, compare Observation~\ref{obs:3-15}]\label{obs:4-14}%
  In Setting~\ref{set:4-1}, assume that the morphism $γ$ factors into a sequence of
  $q$-morphisms,
  \[
    \begin{tikzcd}
      \what{X} \ar[r, "α"'] \ar[rr, bend left=20, "γ"] & \what{Y} \ar[r, "β"'] & X.
    \end{tikzcd}
  \]
  Given numbers $n, p ∈ ℕ⁺$, there exists a commutative diagram as follows,
  \[
    \begin{tikzcd}[column sep=4cm, baseline=(current bounding box.south east)]
      α^{[*]} \Sym^{[n]} Ω^{[p]}_{\what{Y}}( \log β^* ⌊D⌋) \ar[r, hook, "\text{equals $\diff α$ over nc locus}"] & \Sym^{[n]} Ω^{[p]}_{\what{X}}\bigl( \log γ^* ⌊D⌋\bigr) \\
      α^{[*]} \Sym^{[n]}_{\cC} Ω^{[p]}_{(X,D,β)} \ar[r, hook] \ar[u, hook, "\text{inclusion }α^{[*]}(ι_{n,p})"] & \Sym^{[n]}_{\cC} Ω^{[p]}_{(X,D,γ)}.  \ar[u, hook, "\text{inclusion }ι_{n,p}"']
    \end{tikzcd}
    \eqno \qed
  \]
\end{obs}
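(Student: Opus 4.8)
The plan is to prove the statement by restricting everything to a big open subset of $\what X$ over which Observation~\ref{obs:3-15} applies directly, and then to extend the resulting morphisms and the commutativity relation to all of $\what X$ by reflexivity, exactly as in the proof of Observation~\ref{obs:4-8}. All four sheaves in the diagram are reflexive: the two in the right-hand column by Proposition~\ref{prop:4-5}, and the two in the left-hand column because they are reflexive pull-backs $α^{[*]}(-)$, hence reflexive by construction (Notation~\ref{not:2-5}). Since $\what X$ is normal and a morphism between reflexive sheaves is determined by its restriction to any big open subset — this is the isomorphism $\Hom_{\what X}(𝒜,ℬ) → \Hom_{\what X⁺}(𝒜|_{\what X⁺}, ℬ|_{\what X⁺})$ already used in the proof of Observation~\ref{obs:4-8}, cf.~\cite[Cor.~3.15]{BS76} and \cite{MR0212214} — it suffices to construct the horizontal morphisms and check commutativity over a suitable big open set.

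First I would choose that open set. Recall from Reminder~\ref{remi:2-19} that the $q$-morphisms $α$, $β$, $γ$ are open with finite fibres, so preimages of small analytic subsets are again small. Consequently there is a big open subset $W ⊆ \what X$ such that: the space $\what X$ is smooth and $γ^* D$ is nc over $W$; the image $α(W)$ lies in the locus of $\what Y$ where $\what Y$ is smooth, $β^* D$ is nc, and both $\Sym^{[n]}_{\cC} Ω^{[p]}_{(X,D,β)}$ and $\Sym^{[n]} Ω^{[p]}_{\what Y}(\log β^* ⌊D⌋)$ are locally free; and $γ(W)$ lies in the nc locus of $(X,D)$. Over $W$ the triple $α|_W$, $β|_{α(W)}$, $γ|_W$ satisfies Setting~\ref{set:3-1}, the reflexive tensors of Definition~\ref{def:4-3} restrict to the locally free bundles of Definition~\ref{def:3-2}, and — because the sheaves on $\what Y$ are locally free over $α(W)$ — the reflexive pull-backs $α^{[*]}(-)$ in the left-hand column restrict to ordinary pull-backs $α^*(-)$. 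Hence the restriction of the asserted diagram to $W$ is exactly the diagram provided by Observation~\ref{obs:3-15}, whose top map is $\diff α$ and whose squares commute with injective arrows.

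It then remains to extend. The two vertical inclusions are already defined on all of $\what X$ — they are $ι_{n,p}$ and $α^{[*]}(ι_{n,p})$ of Observation~\ref{obs:4-8} — and the top horizontal map $\diff α$ extends to a morphism of the reflexive sheaves in the top row by the extension principle above. By the same principle the bottom morphism, defined over $W$, extends uniquely to a morphism $α^{[*]} \Sym^{[n]}_{\cC} Ω^{[p]}_{(X,D,β)} → \Sym^{[n]}_{\cC} Ω^{[p]}_{(X,D,γ)}$ on all of $\what X$; and commutativity of the square, valid over the big set $W$, propagates to $\what X$ since two morphisms of reflexive sheaves agreeing on a big open subset agree everywhere. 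Injectivity of the two horizontal arrows is automatic, since they are injective over $W$ by Observation~\ref{obs:3-15} and their kernels are subsheaves of the torsion-free sheaves on the source supported on the small complement $\what X ∖ W$, hence zero.

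I expect the main obstacle to be the bookkeeping in the choice of $W$, specifically the verification that the reflexive pull-backs $α^{[*]}(-)$ in the left-hand column genuinely restrict to the ordinary pull-backs $α^*(-)$ occurring in Observation~\ref{obs:3-15}. This is the step where the hypothesis that $α$ is a $q$-morphism enters essentially: one needs $α(W)$ to land inside the locus where the sheaves on $\what Y$ are locally free, and bigness of that locus must survive pull-back along $α$, which rests precisely on the openness and finiteness of fibres recorded in Reminder~\ref{remi:2-19}.
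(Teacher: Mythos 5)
Your proposal is correct and takes essentially the same route as the paper: Observation~\ref{obs:4-14} is one of the statements the authors dispose of with the blanket argument given in the proof of Observation~\ref{obs:4-8}, namely restriction to a big open subset where Observation~\ref{obs:3-15} applies verbatim, followed by extension of the morphisms and of commutativity using reflexivity of all four sheaves and the restriction isomorphism $\Hom_{\what{X}}(\sA,\sB) → \Hom_{\what{X}^{+}}(\sA|_{\what{X}^{+}},\sB|_{\what{X}^{+}})$ from \cite[Cor.~3.15]{BS76} and \cite{MR0212214}. Your additional verifications — that $α^{[*]}$ agrees with $α^{*}$ over the locus where the sheaves on $\what{Y}$ are locally free, and that this locus has small complement whose preimage under the quasi-finite $α$ stays small — correctly fill in exactly the bookkeeping the paper leaves to the reader.
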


\begin{obs}[Functoriality in adapted morphisms, compare Observation~\ref{obs:3-16}]\label{obs:4-15}%
  If the morphism $β$ of Observation~\ref{obs:4-14} is adapted for $(X,D)$,
  then $γ$ is likewise adapted.  The natural morphisms $α^{[*]} \Sym^{[•]}_{\cC}
  Ω^{[•]}_{(X,D,β)} ↪ \Sym^{[•]}_{\cC} Ω^{[•]}_{(X,D,γ)}$ are isomorphic in this
  case.  \qed
\end{obs}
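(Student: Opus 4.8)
The plan is to reduce both claims to their locally free, nc counterparts, exploiting that adapted reflexive tensors are by construction reflexive sheaves determined by their restriction to a big open set. The adaptedness of $γ$ is immediate: writing $γ^* D_{\orb} = α^*\bigl(β^* D_{\orb}\bigr)$ and recalling that the pull-back of an integral Weil divisor under a $q$-morphism is again integral (Reminder~\ref{remi:2-20}), the hypothesis that $β^* D_{\orb}$ is integral forces $γ^* D_{\orb}$ to be integral, which is exactly the composition property recorded after Definition~\ref{def:2-26}. For the isomorphism statement, I would first note that both sides are reflexive on the normal variety $\what{X}$—the target by Proposition~\ref{prop:4-5}, the source because reflexive pull-back is reflexive by construction (Notation~\ref{not:2-5}). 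As in the proof of Observation~\ref{obs:4-8}, a morphism of reflexive sheaves on a normal space is an isomorphism as soon as it is an isomorphism over a big open subset, so it suffices to exhibit such a subset on which the comparison morphism of Observation~\ref{obs:4-14} is an isomorphism.

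Next I would construct the big open set. Let $X_{\nc} ⊆ X$, $\what{Y}_{\nc} ⊆ \what{Y}$ and $\what{X}_{\nc} ⊆ \what{X}$ be the maximal open subsets where $(X,D)$, $\bigl(\what{Y}, β^*D\bigr)$ and $\bigl(\what{X}, γ^*D\bigr)$ are nc; each is big by Observation~\ref{obs:2-15}. Put
\[
  V := \what{X}_{\nc} ∩ α^{-1}\bigl(\what{Y}_{\nc}\bigr) ∩ γ^{-1}\bigl(X_{\nc}\bigr).
\]
Since $α$ and $γ$ are quasi-finite, they pull sets of codimension $≥ 2$ back to sets of codimension $≥ 2$; hence $α^{-1}(\what{Y}_{\nc})$ and $γ^{-1}(X_{\nc})$ are big, and so is their intersection with the big set $\what{X}_{\nc}$. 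Thus $V$ is big, and by design $α(V) ⊆ \what{Y}_{\nc} ∩ β^{-1}(X_{\nc})$, which lies in the open locus where the $β$-side pair and $(X,D)$ are simultaneously nc.

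It remains to check the isomorphism over $V$. Here all three pairs are nc, so the restricted morphisms $α$, $β$, $γ$ fall under Setting~\ref{set:3-1}, and the adapted reflexive tensors on both sides coincide with the locally free adapted tensors of Definition~\ref{def:3-2}. Because the $β$-side sheaf is locally free over $α(V)$, its reflexive pull-back $α^{[*]}$ agrees there with the ordinary pull-back $α^*$, so over $V$ the comparison morphism is precisely the one appearing in Observation~\ref{obs:3-16}; that observation identifies it as an isomorphism. Reflexivity then propagates the isomorphism to all of $\what{X}$. I expect the only genuine obstacle to be this last reduction: one must confirm that over the big locus $V$ both the reflexive-hull and the reflexive-pull-back operations become invisible—so that the new singular statement collapses exactly onto the already-established locally free one—while the remaining work is the routine verification that $V$ is big and that its image lands in the correct nc locus.
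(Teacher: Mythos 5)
Your proposal is correct and takes essentially the approach the paper intends for this observation (which it leaves as an exercise via the template spelled out in the proof of Observation~\ref{obs:4-8}): adaptedness of $γ$ from the composition observation after Definition~\ref{def:2-26}, and the isomorphism by restricting to a big open set where all pairs are nc, invoking Observation~\ref{obs:3-16}, and extending by reflexivity on the normal space $\what{X}$. One micro-point to make explicit: your locus $\what{Y}_{\nc} ∩ β^{-1}(X_{\nc})$ can be strictly larger than the saturated set $β^{-1}(X⁺)$ of Notation~\ref{not:4-2} (a point of $\what{Y}_{\nc}$ over $X_{\nc}$ may share its fibre with non-nc points), so the local freeness of $\Sym^{[•]}_{\cC} Ω^{[•]}_{(X,D,β)}$ there needs the one-line remark that this reflexive sheaf agrees with the local Definition~\ref{def:3-2} bundle on the whole simultaneous-nc locus, both being reflexive and agreeing on the big subset $β^{-1}(X⁺)$ --- or simply replace $V$ by $α^{-1}\bigl(β^{-1}(X⁺_β)\bigr) ∩ \what{X}⁺_γ$, which is big for the same preimage reasons and sidesteps the issue.
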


\begin{warning}[Reflexive pull-back in the functoriality statement]\label{warning:4-16}%
  In the setting of Observation~\ref{obs:4-14}, there exist natural sheaf
  morphisms
  \begin{equation}\label{eq:4-16-1}
    α^* \Sym^{[•]}_{\cC} Ω^{[•]}_{(X,D,β)} → α^{[*]} \Sym^{[•]}_{\cC} Ω^{[•]}_{(X,D,β)}
  \end{equation}
  that are however neither injective nor surjective in general; notice that $α^*
  \Sym^{[•]}_{\cC} Ω^{[•]}_{(X,D,β)}$ might well contain torsion!  This will
  become important.  For later reference, we note a few settings where
  \eqref{eq:4-16-1} is isomorphic indeed.
  \begin{enumerate}
    \item\label{il:4-16-2} The morphism \eqref{eq:4-16-1} is isomorphic if
    $\Sym^{[•]}_{\cC} Ω^{[•]}_{(X,D,β)}$ is locally free.

    \item The morphism \eqref{eq:4-16-1} is isomorphic if $α$ is flat.
  \end{enumerate}
  On the positive side, observe that if the $q$-morphism $β$ is adapted and
  $Ω^{[•]}_{(X,D,β)}$ is locally free, then
  \[
    \Sym^{[•]}_{\cC} Ω^{[•]}_{(X,D,β)} \overset{\text{Obs.~\ref{obs:4-12}}}{=}
    \Sym^{[•]} Ω^{[•]}_{(X,D,β)} \overset{\text{loc.~free}}{=}
    \Sym^• Ω^{[•]}_{(X,D,β)}
  \]
  is locally free, so that Item~\ref{il:4-16-2} applies.
\end{warning}

As in Section~\ref{sec:3-6}, we note that a partial converse of
Observation~\ref{obs:4-14} holds true.

\begin{obs}[Test for adapted reflexive tensors, compare Observation~\ref{obs:3-17}]\label{obs:4-17}%
  In the setting of Observation~\ref{obs:4-14}, let $n$ and $p ∈ ℕ⁺$ be two
  numbers, let $U ⊆ \img α ⊆ \what{Y}$ be open and let $σ ∈ \Bigl(ℳ_{\what{Y}} ⊗
  \Sym^{[n]}Ω^{[p]}_{\what{Y}}\Bigr)(U)$ be any meromorphic reflexive tensor on
  $U$.  Then, the following are equivalent.
  \begin{enumerate}
    \item The section $σ$ is an adapted reflexive tensor.  More precisely: the
    meromorphic reflexive tensor $σ$ is a section of the subsheaf
    \[
      \Sym^{[n]}_{\cC} Ω^{[p]}_{(X,D,β)} ⊆ ℳ_{\what{Y}} ⊗ \Sym^{[n]}Ω^{[p]}_{\what{Y}}.
    \]

    \item The pull-back of $σ$ is an adapted reflexive tensor.  More precisely:
    the meromorphic reflexive tensor $(\diff α)(σ)$ is a section of the subsheaf
    \[
      \Sym^{[n]}_{\cC} Ω^{[p]}_{(X,D,γ)} ⊆ ℳ_{\what{Y}} ⊗ \Sym^{[n]} Ω^{[p]}_{\what{X}}.  \eqno \qed
    \]
  \end{enumerate}
\end{obs}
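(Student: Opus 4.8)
The plan is to reduce the statement to its counterpart for ordinary (non-reflexive) adapted tensors, Observation~\ref{obs:3-17}, by restricting to a big open subset over which all pairs in sight are nc, and then to use reflexivity to transport the conclusion back to the whole spaces. This is the same pattern used for every other statement in the present section (cf.\ the proof of Observation~\ref{obs:4-8}): the genuine content is a local computation, and it is already packaged in the nc case.

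First I would fix a common nc locus. Let $X° ⊆ \img γ$ be the maximal open subset over which the three pairs $(X,D)$, $\bigl(\what{Y}, β^*D\bigr)$ and $\bigl(\what{X}, γ^*D\bigr)$ are simultaneously nc, and put $D° := D ∩ X°$, $\what{X}° := γ^{-1}(X°)$ and $\what{Y}° := β^{-1}(X°)$, so that $\what{X}° = α^{-1}(\what{Y}°)$ and $α$ restricts to $α° : \what{X}° → \what{Y}°$. The factorization $\what{X}° \xrightarrow{α°} \what{Y}° \xrightarrow{β°} X°$ then satisfies the hypotheses of Observation~\ref{obs:3-17}. Because the non-nc loci of the three pairs are small (Observation~\ref{obs:2-15}) and $q$-morphisms preserve codimension, $\what{X}° ⊆ \what{X}$ is big; and since $U ⊆ \img α$ maps into $\img γ$, the analytic set $U ∖ \what{Y}°$ is small, so $U ∩ \what{Y}°$ is big in $U$ while $α^{-1}U ∩ \what{X}°$ is big in $α^{-1}U$. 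One also checks $U ∩ \what{Y}° ⊆ \img α°$, as required in Observation~\ref{obs:3-17}.

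Next I would test membership in the reflexive sheaves on these big open sets. By Definition~\ref{def:4-3} the sheaf $\Sym^{[n]}_{\cC} Ω^{[p]}_{(X,D,β)}$ is a push-forward from the nc locus, where it coincides with the ordinary adapted tensor sheaf; being reflexive (Proposition~\ref{prop:4-5}), its sections over $U$ are determined by their restriction to the big subset $U ∩ \what{Y}°$, exactly as in the proof of Observation~\ref{obs:4-8} (using \cite[Cor.~3.15]{BS76} and \cite{MR0212214}). Hence $σ$ is a section of $\Sym^{[n]}_{\cC} Ω^{[p]}_{(X,D,β)}$ over $U$ if and only if $σ° := σ|_{U ∩ \what{Y}°}$ is an adapted tensor for $(X°, D°, β°)$; likewise $(\diff α)(σ)$ is a section of $\Sym^{[n]}_{\cC} Ω^{[p]}_{(X,D,γ)}$ over $α^{-1}U$ if and only if its restriction to $α^{-1}U ∩ \what{X}°$ is an adapted tensor for $(X°, D°, γ°)$. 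Since pull-back commutes with restriction and $α^{-1}U ∩ \what{X}° = (α°)^{-1}(U ∩ \what{Y}°)$, that restriction equals $(\diff α°)(σ°)$. The two conditions thus read ``$σ°$ is adapted for $β°$'' and ``$(\diff α°)(σ°)$ is adapted for $γ°$'', which are equivalent by Observation~\ref{obs:3-17}.

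The routine local computation is entirely hidden inside Observation~\ref{obs:3-17}, so the only genuinely delicate point is the bookkeeping in the second paragraph: one must exhibit a \emph{single} big open set over which all three pairs are nc and over which reflexive membership can be tested. The mild subtlety I expect to be the main obstacle is that $U$ is only assumed to lie in $\img α$, so $\what{Y}°$ need not be big in all of $\what{Y}$; it is, however, big \emph{inside $U$}, which is all that the reflexivity argument requires. Once this is granted, the equivalence is a formal consequence of the nc case.
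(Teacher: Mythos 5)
Your proof is correct and matches the paper's intended argument: the paper leaves Observation~4.17 as an exercise, but the proof of Observation~4.8 announces exactly this pattern --- restrict to the big nc locus, invoke the nc-case statement (here Observation~3.17), and use reflexivity of the sheaves together with normality of the ambient spaces (\cite[Cor.~3.15]{BS76}, \cite{MR0212214}) to test membership on a big open subset. Your care with the fact that $U ⊆ \img α$ forces bigness only \emph{inside} $U$ rather than in all of $\what{Y}$ is precisely the right bookkeeping, and the rest is the formal reduction the authors had in mind.
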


\begin{consequence}[Trace morphism, compare Consequence~\ref{cons:3-18}]\label{cons:4-18}%
  In the setting of Observation~\ref{obs:4-14}, the trace map
  \[
    α_* Ω^{[•]}_{\what{X}}\bigl( \log γ^* D\bigr)
    \xrightarrow{\trace} Ω^{[•]}_{\what{Y}}( \log β^* D)
  \]
  maps adapted reflexive differentials to adapted reflexive differentials.  More
  precisely, there exist commutative diagrams as follows,
  \[
    \begin{tikzcd}[column sep=2cm, baseline=(current bounding box.south east)]
      α_* Ω^{[•]}_{(X,D,γ)} \ar[r, hook, "\text{Obs.~\ref{obs:4-8}}"] \ar[d, "\text{restr.~of }\trace"'] & α_* Ω^{[•]}_{\what{X}}\bigl( \log γ^* D\bigr) \ar[d, "\trace"] \\
      Ω^{[•]}_{(X,D,β)} \ar[r, hook, "\text{Obs.~\ref{obs:4-8}}"'] & Ω^{[•]}_{\what{Y}}( \log β^* D)
    \end{tikzcd}
    \eqno \qed
  \]
\end{consequence}

\subsection{Galois linearization}
\approvals{Erwan & yes\\ Stefan & yes}

Unsurprisingly, the linearization morphisms discussed in Section~\ref{sec:3-7}
also extend from $\what{X}⁺$ to $\what{X}$.

\begin{obs}[Linearisation, compare Observation~\ref{obs:3-19}]\label{obs:4-19}%
  Assume Setting~\ref{set:4-1} and write $G := \Aut_{𝒪}\bigl(\what{X}/X\bigr)$
  for the relative automorphism group.  Then, all sheaves $\Sym^{[•]}_{\cC}
  Ω^{[•]}_{(X,D,γ)}$ of Definition~\ref{def:4-3} carry natural
  $G$-linearisations that are compatible with the natural $G$-linearisations of
  \begin{gather*}
    \Sym^{[n]} γ^{[*]} Ω^{[p]}_X, \quad γ^{[*]} \Sym^{[n]} Ω^{[p]}_X(\log D), \quad \Sym^{[n]} Ω^{[p]}_{\what{X}} \bigl(\log γ^*D \bigr), \\
    \Sym^{[n]} Ω^{[p]}_{\what{X}} \bigl(\log γ^*⌊D⌋ \bigr).
  \end{gather*}
  The inclusions of Observation~\ref{obs:4-8} are morphisms of $G$-sheaves.
  \qed
\end{obs}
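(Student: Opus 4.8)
The plan is to reduce the entire statement to the nc case already settled in Observation~\ref{obs:3-19}, by restricting to the big open set $\what{X}⁺$ of Notation~\ref{not:4-2} and then transporting everything to $\what{X}$ via the pushforward functor $ι_*$. The whole argument runs along the same lines as the proof of Observation~\ref{obs:4-8}.

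First I would record that $\what{X}⁺ = γ^{-1}(X⁺)$ is $G$-invariant and that the inclusion $ι : \what{X}⁺ → \what{X}$ is $G$-equivariant. This is immediate: every $g ∈ G$ satisfies $γ ◦ g = γ$ by definition of the relative automorphism group, so $g$ preserves the $γ$-preimage of $X⁺$. Moreover $γ⁺ ◦ (g|_{\what{X}⁺}) = γ⁺$, so restriction yields a group homomorphism $G → \Aut_{𝒪}\bigl(\what{X}⁺/X⁺\bigr)$, $g ↦ g|_{\what{X}⁺}$. Pulling back along this homomorphism turns any $\Aut_{𝒪}\bigl(\what{X}⁺/X⁺\bigr)$-linearisation on $\what{X}⁺$ into a $G$-linearisation.

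Next I would invoke Observation~\ref{obs:3-19} on the nc data $(X⁺, D⁺, γ⁺)$. It equips each bundle $\Sym^n_{\cC} Ω^p_{(X⁺, D⁺, γ⁺)}$ with a natural linearisation compatible with those on the ambient sheaves $(γ⁺)^* \Sym^n Ω^p_{X⁺}(\log D⁺)$, $\Sym^n Ω^p_{\what{X}⁺}\bigl(\log (γ⁺)^* D⁺\bigr)$, and so on, and it records that the inclusions of Observation~\ref{obs:3-9} respect these linearisations. By the previous paragraph these are now $G$-linearisations. Since $ι$ is $G$-equivariant, $ι_*$ carries $G$-linearised sheaves to $G$-linearised sheaves and $G$-morphisms to $G$-morphisms. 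By Definition~\ref{def:4-3} we have $ι_* \Sym^n_{\cC} Ω^p_{(X⁺, D⁺, γ⁺)} = \Sym^{[n]}_{\cC} Ω^{[p]}_{(X,D,γ)}$, so the target sheaf acquires its $G$-linearisation, and the inclusions of Observation~\ref{obs:4-8} (being $ι_*$ of the inclusions of Observation~\ref{obs:3-9}) are morphisms of $G$-sheaves.

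The only genuinely delicate point is to confirm that the $G$-linearisations obtained by pushforward on the reflexive ambient sheaves coincide with the evident $G$-linearisations those sheaves carry independently — for instance the linearisation of $\Sym^{[n]} Ω^{[p]}_{\what{X}}(\log γ^* D)$ coming from $G$ acting on $\what{X}$ and fixing $γ^* D$, or the linearisation of $γ^{[*]} \Sym^{[n]} Ω^{[p]}_X(\log D)$ coming from the canonical isomorphisms $g^* γ^* ≅ γ^*$ induced by $γ ◦ g = γ$. This I expect to be the main, though still routine, obstacle. It follows because each of these ambient sheaves is reflexive and restricts over $\what{X}⁺$ to the corresponding nc sheaf of Observation~\ref{obs:3-19}; since $\what{X}⁺ ⊆ \what{X}$ is big and $\what{X}$ is normal, both linearisations agree on $\what{X}⁺$, and a morphism of reflexive sheaves on $\what{X}$ is determined by its restriction to $\what{X}⁺$ by \cite[Cor.~3.15]{BS76} and \cite{MR0212214}. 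Hence the two linearisations coincide on all of $\what{X}$, and all compatibilities propagate from $\what{X}⁺$ by functoriality of $ι_*$.
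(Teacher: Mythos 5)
Your argument is correct and coincides with the paper's intended proof: the paper leaves Observation~\ref{obs:4-19} as homework, with the proof of Observation~\ref{obs:4-8} spelling out exactly your strategy — restrict to the big, $G$-invariant open set $\what{X}⁺$, invoke the nc-case Observation~\ref{obs:3-19}, and extend to $\what{X}$ using that sections and morphisms of reflexive sheaves on a normal space are determined on big open subsets, \cite[Cor.~3.15]{BS76} and \cite{MR0212214}. Your final paragraph, checking that the pushforward linearisations agree with the evident ones on the ambient reflexive sheaves, is a sound filling-in of the one detail the paper elides.
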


\begin{lem}[Invariant push-forward, compare Lemma~\ref{lem:3-20}]\label{lem:4-20}%
  In the setting of Observation~\ref{obs:4-14}, assume that the $q$-morphisms
  $α$, $β$ and $γ$ are covers, and that $α$ is Galois with group $G$.  Given
  numbers $n$, $p ∈ ℕ⁺$, the natural morphism between $G$-invariant push-forward
  sheaves induced by the bottom row of the commutative diagram in
  Observation~\ref{obs:4-14},
  \begin{equation}\label{eq:4-20-1}
    \Sym^{[n]}_{\cC} Ω^{[p]}_{(X,D,β)} = \Bigl( α_* α^{[*]} \Sym^{[n]}_{\cC} Ω^{[p]}_{(X,D,β)} \Bigr)^G ↪ \Bigl( α_* \Sym^{[n]}_{\cC} Ω^{[p]}_{(X,D,γ)} \Bigr)^G,
  \end{equation}
  is isomorphic.
\end{lem}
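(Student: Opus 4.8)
The plan is to reduce the statement to its already-established nc counterpart, Lemma~\ref{lem:3-20}, by restricting to a suitable big open subset and then extending the resulting isomorphism by reflexivity. The displayed morphism in \eqref{eq:4-20-1} is the one induced on $G$-invariant push-forwards by the bottom arrow $α^{[*]} \Sym^{[n]}_{\cC} Ω^{[p]}_{(X,D,β)} ↪ \Sym^{[n]}_{\cC} Ω^{[p]}_{(X,D,γ)}$ of Observation~\ref{obs:4-14}; here the first equality in \eqref{eq:4-20-1} is the standard descent identity $\bigl(α_* α^{[*]} \mathcal{F}\bigr)^G = \mathcal{F}$ for the $G$-linearized reflexive sheaf $\mathcal{F}$ along the Galois cover $α$. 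Write $\mathcal{A} := \Sym^{[n]}_{\cC} Ω^{[p]}_{(X,D,β)}$ for the source and $\mathcal{B} := \bigl( α_* \Sym^{[n]}_{\cC} Ω^{[p]}_{(X,D,γ)} \bigr)^G$ for the target, both coherent sheaves on $\what Y$. I would first record two structural facts: $\mathcal{A}$ is reflexive by Proposition~\ref{prop:4-5}, and $\mathcal{B}$ is torsion-free, being the $G$-invariant subsheaf of the push-forward under the finite morphism $α$ of the torsion-free sheaf $\Sym^{[n]}_{\cC} Ω^{[p]}_{(X,D,γ)}$.

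Next I would pass to the nc locus. Choose big open subsets $\what Y° ⊆ \what Y$ and $\what X° := α^{-1}(\what Y°) ⊆ \what X$ over which the pairs $(X,D)$, $\bigl(\what Y, β^* D\bigr)$ and $\bigl(\what X, γ^* D\bigr)$ are simultaneously nc; such sets exist and are big by Observation~\ref{obs:2-15}, together with the fact that preimages of big open sets under covers are again big. Over $\what Y°$ and $\what X°$ the sheaves of adapted reflexive tensors of Definition~\ref{def:4-3} restrict, by construction, to the honest bundles of adapted tensors of Section~\ref{sec:3}, the reflexive pull-backs become ordinary pull-backs, and the restricted morphisms $α°, β°, γ°$ are covers with $α°$ Galois of group $G$. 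Since $\what X° = α^{-1}(\what Y°)$, forming $α_*$ and taking $G$-invariants commute with restriction to $\what Y°$, so the inclusion $\mathcal{A} ↪ \mathcal{B}$ restricts over $\what Y°$ to precisely the morphism of Lemma~\ref{lem:3-20} for the covers $α°, β°, γ°$. That lemma yields that $\mathcal{A}|_{\what Y°} → \mathcal{B}|_{\what Y°}$ is an isomorphism.

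It remains to upgrade this to a global isomorphism, and this is where reflexivity does the work. The map is injective by construction, so I only need surjectivity. Given a local section $s ∈ \mathcal{B}(U)$, its restriction to the big open $U ∩ \what Y°$ lies in $\mathcal{A}(U ∩ \what Y°)$ by the previous paragraph; since $\mathcal{A}$ is reflexive and $U ∖ \what Y°$ is small, this section extends uniquely to $\tilde s ∈ \mathcal{A}(U)$, whose image agrees with $s$ on $U ∩ \what Y°$ and hence, by torsion-freeness of $\mathcal{B}$, equals $s$ on $U$. I expect the main obstacle to be the bookkeeping in the second paragraph: arranging the big open sets so that $\what X° = α^{-1}(\what Y°)$, that the three pairs are simultaneously nc, and that the restricted morphisms genuinely satisfy the hypotheses of Lemma~\ref{lem:3-20}, in particular that $α°$ remains Galois with the full group $G$. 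Working over the nc locus is exactly what sidesteps the failure of reflexive pull-back to be functorial flagged in Warning~\ref{warning:4-16}: the delicate descent of $G$-invariant sections through $α^{[*]}$, which would otherwise obstruct a direct argument via Observation~\ref{obs:4-17}, is handled on $\what Y°$ by the honest computation already contained in Lemma~\ref{lem:3-20}.
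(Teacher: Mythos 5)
Your proof is correct and follows essentially the same route as the paper's: restrict to a big open set over which everything is nc, invoke Lemma~\ref{lem:3-20} there, and extend the isomorphism using reflexivity over the normal space $\what{Y}$. The only (harmless) deviation is that you establish surjectivity by hand via reflexivity of the source and torsion-freeness of the target, where the paper instead cites \cite[Lem.~A.4]{GKKP11} for reflexivity of \emph{both} $G$-invariant push-forward sheaves and extends the isomorphism directly.
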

\begin{proof}
  Lemma~\ref{lem:3-20} guarantees that the sheaf morphism \eqref{eq:4-20-1} is
  isomorphic over the big open set $\what{X}⁺ ⊆ \what{X}$.  On the other
  hand, recall\footnote{The paper \cite{GKKP11} formulates this result in the
  algebraic setting.  The proof of \cite[Lem.~A.4]{GKKP11} works without change
  also for normal analytic varieties.} from \cite[Lem.~A.4]{GKKP11} that the
  invariant push-forward sheaves in \eqref{eq:4-20-1} are both reflexive.  The
  isomorphism will therefore extend from $\what{X}⁺$ to $\what{X}$.
\end{proof}

%
%
\svnid{$Id: 05-pullBack.tex 911 2024-09-29 18:09:56Z kebekus $}
\selectlanguage{british}

\section{Pull-back over uniformizable pairs}
\label{sec:5}

\subsection{Motivation}
\approvals{Erwan & yes \\ Stefan & yes}
\label{sec:5-1}

This section establishes pull-back properties of adapted reflexive tensors;
these will be instrumental when we define ``morphisms of $\cC$-pairs'' in
Sections~\ref{sec:7}--\ref{sec:8} below.  To motivate the somewhat technical
discussion, let us first recall Reid's construction of the Albanese for
projective varieties with rational singularities, \cite[Prop.~2.3]{Reid83a} and
\cite[Sect.~2.4]{BS95}.

\begin{reminder}[Albanese for algebraic varieties with rational singularities]
  Given a complex, projective variety $\what{X}$ with rational singularities,
  Reid considers a resolution of singularities, $φ : Y → \what{X}$ and takes the
  Albanese of $Y$.  The assumption that $\what{X}$ has rational singularities
  implies that all $1$-differentials on $Y$ are trivial on $φ$-fibres.  This in
  turn yields a factorization,
  \begin{equation}\label{eq:5-1-1}
    \begin{tikzcd}[column sep=2cm]
      Y \ar[d, "φ\text{, resolution}"'] \ar[r, "\alb(Y)"] & \Alb(Y) \\
      \what{X}, \ar[ur, bend right=10, "∃!"']
    \end{tikzcd}
  \end{equation}
  and shows that $\Alb(Y)$ does not depend on the choice of the resolution.  It
  is therefore reasonable to take $\Alb(Y)$ as the Albanese of $\what{X}$.
\end{reminder}

For the forthcoming construction of an ``Albanese for $\cC$-pairs'', we would
like to emulate Reid's argument in a setting where $\what{X}$ is a cover of a
locally uniformizable $\cC$-pair $(X,D)$.  But covers need not have rational
singularities, so that we cannot expect a factorization of the Albanese as in
\eqref{eq:5-1-1} above!  We will however show that differentials on $Y$ that are
\emph{adapted} outside the $φ$-exceptional locus are trivial on $φ$-fibres.  More
generally, we show that any adapted differential on
\[
  \what{X}_{\reg} ≅ Y ∖ φ\text{-exceptional set}
\]
extends to a differential form on $Y$ that is trivial on $φ$-fibres.  To give an
adapted differential on $\what{X}_{\reg}$ it is of course equivalent to give an
adapted reflexive differential $\what{X}$.  By the end of the day, we will thus
construct a ``pull-back map''
\[
  H⁰ \bigl(\what{X},\, Ω^{[1]}_{(X,D,•)} \bigr) → H⁰\bigl( Y,\, Ω¹_Y \bigr).
\]
This section aims to construct pull-back maps more generally, for arbitrary
tensors and arbitrary morphisms $φ$ from manifolds to $\what{X}$.

\subsection{Main results}
\approvals{Erwan & yes \\ Stefan & yes}
\label{sec:5-2}

To formulate our results precisely and to set the stage for the remainder of the
present section, consider the following situation.

\begin{setting}[Smooth space mapping to cover of $X$]\label{set:5-2}%
  Let $(X, D_X)$ be a locally uniformizable $\cC$-pair.  Let $(Y, D_Y)$ be a log
  pair.  Assume that $(Y, D_Y)$ is nc and consider a sequence of morphisms
  \[
    \begin{tikzcd}[column sep=2cm]
      Y \arrow[r, "φ"] & \what{X} \arrow[r, "γ\text{, $q$-morphism}"] & X,
    \end{tikzcd}
  \]
  where $\supp φ^* γ^* ⌊D_X⌋ ⊆ \supp D_Y$.
\end{setting}

\begin{rem}
  We do not assume that the variety $\what{X}$ of Setting~\ref{set:5-2} is
  smooth.  The morphism $φ$ may take its image in the singular locus of
  $\what{X}$.
\end{rem}

\CounterStep{}Maintain Setting~\ref{set:5-2}.  Following ideas and methods of
\cite{MR3084424}, we aim to construct ``natural'' pull-back morphisms
\begin{equation}\label{eq:5-4-1}
  d_{\cC} φ : φ^* \Sym^{[•]}_{\cC} Ω^{[•]}_{(X,D_X,γ)} → \Sym^• Ω^•_Y(\log D_Y)
\end{equation}
that compare adapted reflexive tensors on $\what{X}$ with logarithmic Kähler
tensors on the manifold $Y$.

\begin{rem}[Pull-back for a uniformized variety]%
  Assume Setting~\ref{set:5-2}.  If the $q$-morphism $γ$ is a uniformization,
  then Item~\ref{il:4-6-5} of Example~\ref{ex:4-6} identifies adapted reflexive
  tensors on $\what{X}$ with logarithmic Kähler tensors,
  \[
    \Sym^{[•]}_{\cC} Ω^{[•]}_{(X,D_X,γ)} = \Sym^• Ω^•_{\what{X}}(\log γ^* ⌊D_X⌋).
  \]
  The most natural choice for the pull-back morphisms \eqref{eq:5-4-1} is the
  standard pull-back of logarithmic Kähler differentials and tensors.
\end{rem}

\begin{rem}[Optimality and possible generalizations]
  This section works in Setting~\ref{set:5-2}, where $(X,D)$ is locally
  uniformizable.  If one is interested only in adapted reflexive differentials
  rather than adapted reflexive tensors, it is conceivable that a pull-back
  morphism as in \eqref{eq:5-4-1} exists under less restrictive conditions.  We
  discuss possible generalizations in Section~\ref{sec:15-1} near the end of
  this paper.
\end{rem}

\subsection{Construction of pull-back maps in the uniformizable case}
\approvals{Erwan & yes \\ Stefan & yes}

We begin with an explicit construction for a pull-back morphism, at least in the
setting where $X$ is uniformizable.

\begin{construction}[Pull-back of sections in the uniformizable case]\label{cons:5-7}%
  In Setting~\ref{set:5-2}, assume that $(X, D_X)$ is uniformizable.  Choose a
  uniformization $u : X_u \twoheadrightarrow X$ and consider a
  diagram
  \[
    \begin{tikzcd}[column sep=3cm, row sep=1cm]
      \wcheck{Y}⁺ \arrow[r, "\wcheck{φ}"] \ar[d, two heads, "s⁺"'] & \wcheck{X} \arrow[d, two heads, "t"] \ar[r, "\wcheck{γ}"] & X_u \ar[d, two heads, "u\text{, uniformization}"]\\
      Y⁺ \arrow[r, "φ⁺ := φ|_{Y⁺}"'] & \what{X} \arrow[r, "γ\text{, $q$-morphism}"'] & X,
    \end{tikzcd}
  \]
  constructed as follows.
  \begin{itemize}
    \item Choose a component $\wcheck{X}$ of the normalized fibre product $X_u
    ⨯_X \what{X}$.

  \item Choose a component $\wcheck{Y}$ of the normalized fibre product $Y
    ⨯_{\what{X}} \wcheck{X}$ and denote the natural morphism by $s : \wcheck{Y}
    → Y$

  \item Let $Y⁺ ⊆ Y$ be the maximal open set over which $(\wcheck{Y}, s^* D_Y)$
  is nc and denote the preimage by $\wcheck{Y}⁺ := s^{-1}(Y⁺)$.
  \end{itemize}
  To begin the construction in earnest, observe that there are natural
  morphisms,
  \begin{align}
    (s⁺)^*(φ⁺)^* \Sym^{[•]}_{\cC} Ω^{[•]}_{(X,D_X,γ)} & = \wcheck{φ}^*t^* \Sym^{[•]}_{\cC} Ω^{[•]}_{(X,D_X,γ)} && \text{commutativity} \nonumber \\
    & → \wcheck{φ}^* t^{[*]} \Sym^{[•]}_{\cC} Ω^{[•]}_{(X,D_X,γ)} && \text{natural} \label{eq:5-7-1} \\
    & → \wcheck{φ}^* \Sym^{[•]}_{\cC} Ω^{[•]}_{(X,D_X,γ◦t)} && \text{Observation~\ref{obs:4-14}.} \nonumber
  \intertext{Secondly, recall}
    \Sym^{[•]}_{\cC} Ω^{[•]}_{(X,D_X,γ◦t)} &= \Sym^{[•]}_{\cC} Ω^{[•]}_{(X,D_X,u◦\wcheck{γ})} && \text{commutativity} \nonumber \\
    & = \wcheck{γ}^{[*]} \Sym^{[•]}_{\cC} Ω^{[•]}_{(X,D_X,u)} && \text{Observation~\ref{obs:4-15}} \label{eq:5-7-2} \\
    & = \wcheck{γ}^{*} \Sym^• Ω^•_{X_u}(\log u^* ⌊D_X⌋) && \text{Example~\ref{ex:4-6}}.  \nonumber
  \intertext{As a consequence, pull-back of logarithmic Kähler tensors yields natural embeddings}
    \wcheck{φ}^* \Sym^{[•]}_{\cC} Ω^{[•]}_{(X,D_X,γ◦t)} & = (\wcheck{γ}◦\wcheck{φ})^{*} \Sym^• Ω^•_{X_u}(\log u^* ⌊D_X⌋) && \text{\eqref{eq:5-7-2}} \nonumber \\
    & ↪ \Sym^• Ω^•_{\wcheck{Y}⁺}(\log s^* D_Y) && \text{pull-back} \label{eq:5-7-3} \\
    & = (s⁺)^* \Sym^• Ω^•_{Y⁺}(\log D_Y) && \text{branching of }s⁺.  \nonumber
  \intertext{Combining \eqref{eq:5-7-1} with \eqref{eq:5-7-3} and taking push-forward, we obtain maps as follows,}
    (φ⁺)^* \Sym^{[•]}_{\cC} Ω^{[•]}_{(X,D_X,γ)} & → (s⁺)_*(s⁺)^*(φ⁺)^* \Sym^{[•]}_{\cC} Ω^{[•]}_{(X,D_X,γ)} && \text{natural} \nonumber \\
    & → (s⁺)_*(s⁺)^* \Sym^• Ω^•_{Y⁺}(\log D_Y) && \text{\eqref{eq:5-7-3}}◦\text{\eqref{eq:5-7-1}} \nonumber \\
    & → \Sym^• Ω^•_{Y⁺}(\log D_Y) && \text{trace}.  \nonumber
  \end{align}
  Given that $Ω^•_Y(\log D_Y)$ is locally free and $Y⁺ ⊆ Y$ is big, these maps
  extend to the desired pull-back morphisms $d_{\cC}φ$ of the form promised in
  \eqref{eq:5-4-1} above.
\end{construction}

We leave the proof of the following fact to the reader.

\begin{fact}[Canonicity]
  The pull-back morphisms $d_{\cC} φ$ of Construction~\ref{cons:5-7} do not
  depend on any of the choices made in the construction.  \qed
\end{fact}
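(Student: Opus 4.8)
The plan is to reduce everything to the fact that the target $\Sym^• Ω^•_Y(\log D_Y)$ is locally free, hence torsion-free. Any two $𝒪_Y$-linear morphisms out of a fixed sheaf that agree on a dense open subset of $Y$ then agree globally: their difference factors through a torsion-free sheaf but is supported on a proper analytic subset, so it vanishes. It therefore suffices to compare the morphisms $d_{\cC}φ$ coming from two different systems of choices over a single big open subset of $Y$ contained in the good loci of both systems. This is the only point where reflexivity and the analytic subtleties of extension are used; once equality is known on a dense open set, it propagates to all of $Y$ automatically.

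I would organise the comparison around a common-refinement principle. A system of choices consists of a uniformization $u$, a component $\wcheck{X}$ of the normalized fibre product $X_u ⨯_X \what{X}$, and a component $\wcheck{Y}$ of $Y ⨯_{\what{X}} \wcheck{X}$. Given two such systems, I would dominate both by a third: passing to a component of the normalized fibre product $X_{u_1} ⨯_X X_{u_2}$ and, if necessary, to a further adapted cover refining it, yields a common cover through which both constructions factor. The comparison maps needed to identify the three constructions are supplied by Observations~\ref{obs:4-14} and~\ref{obs:4-15} and by the invariant push-forward of Lemma~\ref{lem:4-20}. Independence of the choice of component is the special case in which the dominating cover is a Galois closure: two components are interchanged by a deck transformation, and the $G$-invariant push-forward of Lemma~\ref{lem:4-20} does not see this. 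The whole question thus collapses to the single assertion that passing from a system of choices to one factoring through it leaves $d_{\cC}φ$ unchanged.

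Establishing this refinement invariance is, I expect, the main obstacle, and it amounts to tracking the construction through the identifications \eqref{eq:5-7-1}--\eqref{eq:5-7-3}. The logarithmic pull-back of Kähler tensors in \eqref{eq:5-7-3} is strictly functorial; the identifications in \eqref{eq:5-7-2} rest on Observation~\ref{obs:4-15} and Example~\ref{ex:4-6}, which survive refinement because a cover dominating a uniformization is again adapted and the natural comparison morphisms of Observation~\ref{obs:4-14} stay isomorphic. What remains is the behaviour of the trace: for a tower $\wcheck{Y}' \xrightarrow{h} \wcheck{Y} \xrightarrow{s⁺} Y⁺$ of covers one needs the transitivity $\trace_{s⁺◦h} = \trace_{s⁺}◦h_*(\trace_h)$ together with the fact that the normalized trace of $h$, composed with pull-back along $h$, is the identity. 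The normalization is precisely what makes the degrees cancel, so that tracing down to $Y⁺$ in one step agrees with tracing through the intermediate cover $\wcheck{Y}$; Observation~\ref{obs:4-17} guarantees that the intermediate object is genuinely an adapted reflexive tensor, so that the intermediate trace is even defined.

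Assembling the pieces, refinement invariance together with the existence of common refinements shows that every system of choices produces the same morphism over the chosen big open set, and the torsion-freeness reduction of the first paragraph then upgrades this to an equality of the morphisms $d_{\cC}φ$ on all of $Y$. The only genuinely delicate bookkeeping is the trace computation of the third paragraph; everything else is a formal manipulation of functorial operations.
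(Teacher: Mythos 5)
The paper does not actually prove this Fact: it is explicitly left to the reader, and the ``Uniqueness'' discussion at the end of Section~\ref{sec:5-5} only points to the analogous arguments in \cite[Sect.~6.4]{MR3084424} and \cite[Sect.~14]{KS18}. Measured against the argument that Construction~\ref{cons:5-7} makes available, your outline is the intended one and is sound: reduce to agreement on a dense open subset via torsion-freeness of $\Sym^• Ω^•_Y(\log D_Y)$, dominate any two systems of choices by a common refinement, and observe that the normalized trace is transitive in towers and invariant under deck transformations, so that all degree factors cancel. Your identification of the trace normalization as the crux is exactly right: the paper writes only ``trace'', but the normalization is forced if $d_{\cC} φ$ is to satisfy Fact~\ref{fact:5-11} and Proposition~\ref{prop:5-15}.

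Two points deserve sharpening. First, a component of the normalized fibre product $X_{u_1} ⨯_X X_{u_2}$ is strongly adapted with branch divisor in $\supp D_X$, but the nc condition of Definition~\ref{def:2-28} may fail over a codimension-two subset of $X$, and a ``further adapted cover'' cannot repair that; the correct fix is to restrict to the big open subset over which the refined cover is a uniformization --- legitimate precisely because Construction~\ref{cons:5-7} commutes with restriction to open subsets (beginning of Section~\ref{sec:5-4}) and your first-paragraph reduction needs agreement only on a dense open set. Alternatively, one can avoid refining the uniformizations altogether and compare the two systems over a common cover of $\wcheck{Y}_1$ and $\wcheck{Y}_2$, using Observations~\ref{obs:4-14}, \ref{obs:4-15} and \ref{obs:4-19}, which require no nc hypothesis. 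Second, Observation~\ref{obs:4-17} is not what makes the intermediate trace defined: in Construction~\ref{cons:5-7} the trace acts on the pulled-back locally free sheaf $(s⁺)^* \Sym^• Ω^•_{Y⁺}(\log D_Y)$ and exists for any cover, with no adaptedness required; Consequence~\ref{cons:4-18} would be the relevant statement if one insisted on tracing adapted tensors instead. Neither point affects the architecture of your argument.
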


\subsection{Construction of pull-back maps in general}
\label{sec:5-4}
\approvals{Erwan & yes \\ Stefan & yes}

Construction~\ref{cons:5-7} evidently commutes with restrictions to open subsets
of domain and target, which allows extending the setup from the uniformizable to
the locally uniformizable case.

\begin{fact}[Pull-back over locally uniformizable pairs]\label{fact:5-9}%
  In Setting~\ref{set:5-2}, there exist unique sheaf morphisms
  \[
    d_{\cC} φ : φ^* \Sym^{[•]}_{\cC} Ω^{[•]}_{(X,D_X,γ)} → \Sym^• Ω^•_Y(\log D_Y)
  \]
  such that for every uniformizable open subset $X⁺ ⊆ X$ with preimages
  $\what{X}⁺ ⊆ \what{X}$ and $Y⁺ ⊆ Y$, the restrictions $d_{\cC} φ|_{Y⁺}$ agree
  with the pull-back morphisms $d_{\cC} \bigl(φ|_{Y⁺}\bigr)$ of
  Construction~\ref{cons:5-7}.  \qed
\end{fact}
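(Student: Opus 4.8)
The plan is to prove existence and uniqueness by a gluing argument, using local uniformizability to reduce everything to Construction~\ref{cons:5-7} together with the remark, recorded just above the statement, that this construction is compatible with restriction to open subsets of domain and target. Uniqueness is formal, and I would dispatch it first via the separation axiom for the sheaf $\sHom$: two morphisms between the fixed sheaves $\varphi^* \Sym^{[\bullet]}_{\cC}\Omega^{[\bullet]}_{(X,D_X,\gamma)}$ and $\Sym^\bullet\Omega^\bullet_Y(\log D_Y)$ that agree on the members of an open cover of $Y$ are equal. Since $(X,D_X)$ is locally uniformizable, every point of $X$ has a uniformizable neighbourhood, so the uniformizable open subsets $X^+ \subseteq X$ cover $X$; as $\gamma$ and $\varphi$ are morphisms, the preimages $Y^+ := \varphi^{-1}\bigl(\gamma^{-1}(X^+)\bigr)$ then cover $Y$. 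The prescribed compatibility $d_{\cC}\varphi|_{Y^+} = d_{\cC}\bigl(\varphi|_{Y^+}\bigr)$ pins the morphism down on this cover, so at most one $d_{\cC}\varphi$ can satisfy the requirement.

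For existence I would glue the local morphisms $d_{\cC}\bigl(\varphi|_{Y^+}\bigr)$ supplied by Construction~\ref{cons:5-7}, invoking the gluing axiom for $\sHom$. One first checks that these are defined, i.e. that every open subset of a uniformizable pair is again uniformizable: the restriction of a uniformization $u : X_u \twoheadrightarrow X$ to the preimage of an open $V \subseteq X$ is still a strongly adapted cover, with branch divisor inside $\supp D_X$ and the required nc property, hence a uniformization of $\bigl(V, D_X|_V\bigr)$. It then remains to verify agreement on an overlap $X^+_1 \cap X^+_2$, which is itself uniformizable. Both $d_{\cC}\bigl(\varphi|_{Y^+_1}\bigr)$ and $d_{\cC}\bigl(\varphi|_{Y^+_2}\bigr)$ restrict, over $X^+_1 \cap X^+_2$, to the single morphism $d_{\cC}\bigl(\varphi|_{Y^+_1 \cap Y^+_2}\bigr)$, so they coincide there and glue to a global $d_{\cC}\varphi$. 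Restricting the glued morphism to an \emph{arbitrary} uniformizable open $X^+$ and comparing with $d_{\cC}\bigl(\varphi|_{Y^+}\bigr)$ on the cover $\{X^+ \cap X^+_\alpha\}$ shows, by the same restriction-compatibility, that the full requirement of the statement holds.

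The only genuine content, and the step I expect to be the main obstacle, is precisely the restriction-compatibility of Construction~\ref{cons:5-7} that the preceding remark calls ``evident''. To justify it I would trace through the construction and observe that each ingredient commutes with restriction to an open $V \subseteq X$ and its preimages: restricting $u$, the normalized fibre products $\wcheck{X}$, $\wcheck{Y}$ and the morphism $s$ all restrict to the corresponding objects over $V$; the canonical map $t^* \to t^{[*]}$ of \eqref{eq:5-7-1}, the identifications of \eqref{eq:5-7-2} coming from Observations~\ref{obs:4-14} and \ref{obs:4-15} and Example~\ref{ex:4-6}, the pull-back of logarithmic Kähler tensors in \eqref{eq:5-7-3}, and the trace map are each local on the base and hence compatible with open restriction; and the push-forward $(s^+)_*$ commutes with restriction along open immersions. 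Because different components of the fibre products may be forced upon the overlap, I would finally invoke the canonicity statement recorded above to conclude that the restricted morphism is independent of those choices and therefore genuinely equals $d_{\cC}\bigl(\varphi|_V\bigr)$. With restriction-compatibility in hand, the gluing argument above goes through verbatim.
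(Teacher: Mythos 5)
Your proposal is correct and follows exactly the route the paper takes: the paper itself offers no proof beyond the remark that Construction~\ref{cons:5-7} ``evidently commutes with restrictions to open subsets'', stating Fact~\ref{fact:5-9} with a \qed, and your write-up is precisely that gluing argument made explicit --- uniqueness from the separation property of sheaf morphisms on the cover by uniformizable opens, existence by gluing the local $d_{\cC}\bigl(φ|_{Y⁺}\bigr)$, with the restriction-compatibility of the construction (including independence of the component choices in the normalized fibre products, via the canonicity fact) correctly identified as the only substantive point. No gaps; you have simply supplied the routine details the authors left to the reader.
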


\begin{defn}[Pull-back over locally uniformizable pairs]
  We refer to the pull-back morphisms $d_{\cC} •$ of Fact~\ref{fact:5-9} as the
  \emph{pull-back for adapted reflexive tensors over the locally uniformizable
  pair $(X, D_X)$}\index{pull-back for adapted reflexive tensors}.
\end{defn}

\subsection{Universal properties}
\approvals{Erwan & yes \\ Stefan & yes}
\label{sec:5-5}

Construction~\ref{cons:5-7} enjoys a number of fairly obvious properties whose
proofs are conceptually easy, but lengthy to write down.  To keep the size of
this already long paper within reason, we leave the proofs of the following
facts to the reader.

\begin{fact}[Compatibility with Kähler differentials]\label{fact:5-11}%
  In Setting~\ref{set:5-2}, let $σ ∈ H⁰\bigl( \what{X},\, \Sym^n
  Ω^p_{\what{X}}\bigr)$ be a Kähler tensor, with associated reflexive tensor
  $σ_r ∈ H⁰\bigl( \what{X},\, \Sym^{[n]} Ω^{[p]}_{\what{X}}\bigr)$.  If $σ_r$ is
  adapted, then the composed morphism
  \begin{align*}
    H⁰\bigl( \what{X},\, \Sym^{[n]}_{\cC} Ω^{[p]}_{(X,D_X,γ)}\bigr) & \xrightarrow{φ^*} H⁰\bigl( Y,\, φ^* \Sym^{[n]}_{\cC} Ω^{[p]}_{(X,D_X,γ)}\bigr) \\
    & \xrightarrow{H⁰\bigl( d_{\cC} φ \bigr)} H⁰\bigl( Y,\, \Sym^n Ω^p_Y(\log D_Y)\bigr)
  \end{align*}
  maps the adapted reflexive tensor $σ_r$ to
  \[
    (dφ) (σ) ∈ H⁰\bigl( Y,\, \Sym^n Ω^p_Y \bigr) ⊆ H⁰\bigl( Y,\, \Sym^n Ω^p_Y(\log D_Y)\bigr).  \eqno \qed
  \]
\end{fact}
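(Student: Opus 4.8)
The plan is to verify the identity $H^0(d_{\cC}\varphi)(\varphi^*\sigma_r) = (d\varphi)(\sigma)$ by reducing to the explicit Construction~\ref{cons:5-7} and tracking the genuine Kähler tensor $\sigma$ through each of its steps. First I would observe that both sides are global sections of the locally free sheaf $\Sym^n\Omega^p_Y(\log D_Y)$ on the smooth, irreducible variety $Y$; since this sheaf is torsion-free, it suffices to prove the equality over a dense open subset. By Fact~\ref{fact:5-9} the morphism $d_{\cC}\varphi$ is determined by its restrictions over preimages of uniformizable open subsets of $X$, and because $(X,D_X)$ is locally uniformizable these preimages cover $Y$. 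Using naturality of the ordinary Kähler pull-back $d\varphi$ under restriction, this reduces the claim to the case where $X$ is uniformizable, so that Construction~\ref{cons:5-7} applies directly with its uniformization $u$ and fibre-product diagram.

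Next I would work over the maximal big open subset $\wcheck{Y}^+$ of the construction, where all spaces, pairs and morphisms are nc and smooth and every sheaf in sight is locally free, so that the reflexive pull-backs and the $t^*\to t^{[*]}$ comparison maps of Warning~\ref{warning:4-16} reduce to honest pull-backs of (log) Kähler tensors. The key point is that $\sigma$ is a genuine Kähler tensor with no poles: its reflexive hull $\sigma_r$ is carried by the maps \eqref{eq:5-7-1} to the honest Kähler tensor $(dt)(\sigma)$ sitting inside the adapted sheaf $\Sym^n_{\cC}\Omega^p_{(X,D_X,\gamma\circ t)}$, because the inclusion of Observation~\ref{obs:4-14} equals $dt$ over the nc locus. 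Composing with the identification \eqref{eq:5-7-2} and the pull-back \eqref{eq:5-7-3}, and using the commutativity $t\circ\wcheck{\varphi}=\varphi^+\circ s^+$ together with the chain rule for $d$, the resulting section of $\Sym^n\Omega^p_{\wcheck{Y}^+}(\log s^*D_Y)$ is exactly $(s^+)^*\bigl((d\varphi^+)(\sigma)\bigr)$. Because $\sigma$ is holomorphic, this tensor carries no genuine log poles, which is what will ultimately place the answer in $\Sym^n\Omega^p_Y$ rather than merely in $\Sym^n\Omega^p_Y(\log D_Y)$.

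Finally I would run the section through the push-forward and trace steps of Construction~\ref{cons:5-7}. Feeding $(\varphi^+)^*\sigma_r$ into the adjunction unit produces $(s^+)^*(\varphi^+)^*\sigma_r$, which by the previous paragraph equals $(s^+)^*\bigl((d\varphi^+)(\sigma)\bigr)$ over $\wcheck{Y}^+$; applying the trace then returns $(d\varphi^+)(\sigma)$, using that the trace is a left inverse to pull-back of tensors along the cover $s^+$. This is a genuine Kähler tensor on $Y^+$, equal to the restriction of $(d\varphi)(\sigma)$. Since $Y^+\subseteq Y$ is big and the target sheaf is reflexive, the equality extends uniquely across the small complement, proving $H^0(d_{\cC}\varphi)(\varphi^*\sigma_r)=(d\varphi)(\sigma)$ with the target landing in $\Sym^n\Omega^p_Y$.

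I expect the main obstacle to be the bookkeeping in \eqref{eq:5-7-1}: one must check that the natural morphism $t^*\Sym^n_{\cC}\Omega^p_{(X,D_X,\gamma)}\to t^{[*]}\Sym^n_{\cC}\Omega^p_{(X,D_X,\gamma)}$ of Warning~\ref{warning:4-16}, followed by the Observation~\ref{obs:4-14} inclusion, sends $t^*\sigma_r$ precisely to the honest Kähler tensor $(dt)(\sigma)$, with no correction coming from the torsion or reflexive-hull phenomena flagged there. The safe route is to establish this identity only over the big nc locus, where $t^*=t^{[*]}$ and the Observation~\ref{obs:4-14} map literally is $dt$, and then to rely on torsion-freeness of the target and bigness of $Y^+$ to extend; once this single identification is secured, the remaining steps are the routine ``trace of a pulled-back tensor'' computation.
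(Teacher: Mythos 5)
The paper never writes this proof out: Fact~\ref{fact:5-11} is stated with a \qed{} and explicitly left to the reader, so your proposal can only be measured against the intended argument, which is indeed the unfolding of Construction~\ref{cons:5-7} that you carry out. Most of your chain is right: the reduction to the uniformizable case via Fact~\ref{fact:5-9} (the preimages of uniformizable opens even cover $Y$, so no dense-open argument is needed there), the identification of $\wtilde{σ} :=$ image of $t^* σ_r$ under \eqref{eq:5-7-1} with the honest pull-back $(dt)(σ)$ over the big nc locus of $\wcheck{X}$, its extension to all of $\wcheck{X}$ using that $\Sym^{[n]}_{\cC} Ω^{[p]}_{(X,D_X,γ\circ t)} = \wcheck{γ}^* \Sym^n Ω^p_{X_u}(\log u^* ⌊D_X⌋)$ is locally free, and the final trace-plus-bigness-of-$Y⁺$ step (modulo the convention, which the construction implicitly uses, that the trace is normalized so that $\trace \circ (s⁺)^* = \operatorname{id}$ rather than $\deg(s⁺)·\operatorname{id}$).

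The genuine gap sits in your middle paragraph, where you work ``over $\wcheck{Y}⁺$, where all spaces, pairs and morphisms are nc and smooth and every sheaf in sight is locally free.'' The set $\wcheck{Y}⁺$ only controls $(\wcheck{Y}, s^* D_Y)$; it does not place $\img \wcheck{φ}$ inside the nc locus of $\wcheck{X}$. The remark following Setting~\ref{set:5-2} stresses that $φ$ may map into the singular locus of $\what{X}$, in which case $\img \wcheck{φ}$ can lie entirely inside the non-nc locus $B ⊊ \wcheck{X}$, and then \emph{no} dense open of $\wcheck{Y}⁺$ exists on which ``$t^* = t^{[*]}$ and the Observation~\ref{obs:4-14} map literally is $dt$.'' Your safe route does secure $\wtilde{σ}$ as a global section of a locally free sheaf agreeing generically with $(dt)(σ)$; but the next step, the chain-rule identification $\bigl(d(\wcheck{γ}\circ\wcheck{φ})\bigr)\bigl(\wcheck{φ}^*\wtilde{σ}\bigr) = (ds⁺)\bigl((dφ⁺)(σ)\bigr)$, in effect compares $(d\wcheck{γ})(\wtilde{σ})$ with $(dt)(σ)$ inside $\Sym^n Ω^p_{\wcheck{X}}$ along $\img \wcheck{φ}$ --- and on the singular $\wcheck{X}$ this sheaf has torsion, so the two Kähler tensors agree only up to a torsion section $δ$ supported on $B$. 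Pull-backs of torsion Kähler tensors along maps into their support need not vanish, so one must still show $(d\wcheck{φ})(\wcheck{φ}^* δ) = 0$; none of the dense-open/torsion-free reductions you invoke reach this, since they all presuppose that the image meets the good locus. Closing this case requires a genuinely additional argument --- a local analysis along the image, as in \cite[Sect.~6.4]{MR3084424} and \cite[Sect.~14]{KS18}, which is exactly where the paper points in its uniqueness discussion at the end of Section~\ref{sec:5-5}, for precisely this ``image contained in the singular locus'' scenario. Where $\wcheck{φ}^{-1}$ of the nc locus of $\wcheck{X}$ is dense in $\wcheck{Y}⁺$, your proof is complete as written.
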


To avoid any potential confusion, we recall that the assumption ``$σ_r$
adapted'' in Fact~\ref{fact:5-11} is equivalent to the assumption that $σ_r$ is
contained in the subspace of adapted reflexive tensors,
\[
  σ_r ∈ H⁰\bigl( \what{X},\, \Sym^{[n]}_{\cC} Ω^{[p]}_{(X,D_X,γ)}\bigr) ⊆ H⁰\bigl( \what{X},\, \Sym^{[n]} Ω^{[p]}_{\what{X}}\bigr).
\]
The term $(dφ) (σ)$ is the classic pull-back of Kähler tensors.

\begin{fact}[Functoriality]\label{fact:5-12}
  Let $(X, D_X)$ be a locally uniformizable $\cC$-pair and let $(Y_•, D_{Y_•})$
  be nc log pairs.  Assume that a commutative diagram of the following form is
  given,
  \[
    \begin{tikzcd}[column sep=2cm]
      Y_1 \ar[r, "φ_1"] \ar[d, "α"'] & \what{X}_1 \ar[r, "γ_1\text{, $q$-morphism}"] \ar[d, "β"] & X \ar[d, equal] \\
      Y_2 \ar[r, "φ_2"'] & \what{X}_2 \ar[r, "γ_2\text{, $q$-morphism}"'] & X,
    \end{tikzcd}
  \]
  where
  \[
    \supp φ_•^* γ_•^* ⌊D_X⌋ ⊆ \supp D_{Y_•}
    \quad\text{and}\quad
    \supp α^* D_{Y_2} ⊆ \supp D_{Y_1}.
  \]
  Then, the pull-back morphisms form a
  commutative diagram of sheaves on $Y_1$, as follows
  \[
    \begin{tikzcd}[column sep=2cm, baseline=(current bounding box.south east)]
      φ_1^* \: \Sym^{[•]}_{\cC} Ω^{[•]}_{(X,D_X,γ_1)} \ar[r, "d_{\cC} φ_1"] & \Sym^• Ω^•_{Y_1}(\log D_{Y_1}) \\
      φ_1^* β^{[*]} \: \Sym^{[•]}_{\cC} Ω^{[•]}_{(X,D_X,γ_2)} \ar[u, "φ_1^*(\text{Obs.~\ref{obs:4-14}})"] & α^* \Sym^• Ω^•_{Y_2}(\log D_{Y_2}) \ar[u, "dα"'] \\
      φ_1^* β^* \: \Sym^{[•]}_{\cC} Ω^{[•]}_{(X,D_X,γ_2)} \ar[u, "\text{natl.}"] \ar[r, equal] & α^* φ_2^* \: \Sym^{[•]}_{\cC} Ω^{[•]}_{(X,D_X,γ_2)} \ar[u, "α^*(d_{\cC} φ_2)"']
    \end{tikzcd}
    \eqno \qed
  \]
\end{fact}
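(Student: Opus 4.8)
The plan is to reduce the commutativity of the square to the uniformizable case, and there to the classical functoriality of Kähler pull-back together with the compatibility of the trace map with pull-back. First I would exploit that the target sheaf $\Sym^• Ω^•_{Y_1}(\log D_{Y_1})$ is locally free — hence torsion-free — because $(Y_1, D_{Y_1})$ is an nc log pair. Two sheaf morphisms on $Y_1$ with this target therefore coincide as soon as they coincide over a dense open subset. By Fact~\ref{fact:5-9}, both $d_{\cC}φ_1$ and $d_{\cC}φ_2$ are the unique extensions of the restrictions produced by Construction~\ref{cons:5-7} over uniformizable open subsets of $X$. Since the locally uniformizable locus $X^{\lu} ⊆ X$ is big and is covered by uniformizable open subsets, it suffices to verify commutativity after restricting to the preimage in $Y_1$ of a uniformizable open $X° ⊆ X$. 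Thus I may assume that $(X, D_X)$ is uniformizable and argue directly with Construction~\ref{cons:5-7}.

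Next I would fix a single uniformization $u : X_u ↠ X$ and run Construction~\ref{cons:5-7} for $φ_1$ and $φ_2$ through compatibly chosen fibre products. Concretely, I would pick a component $\wcheck{X}_2$ of $X_u ⨯_X \what{X}_2$ and then a component $\wcheck{X}_1$ of $\wcheck{X}_2 ⨯_{\what{X}_2} \what{X}_1$; since $γ_1 = γ_2 ◦ β$, the latter is also a component of $X_u ⨯_X \what{X}_1$ and is therefore a legitimate choice for the construction of $d_{\cC}φ_1$. Choosing a component $\wcheck{Y}_2$ of $Y_2 ⨯_{\what{X}_2} \wcheck{X}_2$ and a component $\wcheck{Y}_1$ of $Y_1 ⨯_{\what{X}_1} \wcheck{X}_1$ compatibly, using the identity $β ◦ φ_1 = φ_2 ◦ α$ from the left-hand square, assembles the two instances of the construction into a single commutative cube over a big open subset of $Y_1$.

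With these choices, both $d_{\cC}φ_1$ and $d_{\cC}φ_2$ are, by Construction~\ref{cons:5-7}, given by the same three-step recipe: pull the adapted reflexive tensor back to the fibre-product cover $\wcheck{X}_•$, where \eqref{eq:5-7-2} and Example~\ref{ex:4-6}, Item~\ref{il:4-6-5}, identify it with an honest logarithmic Kähler tensor on $X_u$ pulled back to $\wcheck{X}_•$; pull this Kähler tensor back along $\wcheck{γ}_• ◦ \wcheck{φ}_•$ to $\wcheck{Y}_•$; and push it down to $Y_•$ by the trace along the Galois cover $s_•$. Because the fibre products fit into one commutative cube, the two routes around the square both compute the same logarithmic Kähler tensor on $X_u$, pulled back to $\wcheck{Y}_1$ along a single composite morphism and then traced down to $Y_1$. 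Verifying this reduces to two classical facts: the functoriality of the pull-back of logarithmic Kähler differentials — using $β ◦ φ_1 = φ_2 ◦ α$ on the fibre-product lifts to identify the two composites — and the compatibility of the trace map with pull-back, in the spirit of Consequence~\ref{cons:4-18} and Lemma~\ref{lem:4-20}.

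The main obstacle is organizational rather than conceptual: one must choose all the fibre products coherently enough that a single commutative cube simultaneously computes both applications of Construction~\ref{cons:5-7}, and one must track how the two potentially lossy maps in the left column — the comparison morphism of Observation~\ref{obs:4-14} and the natural reflexive-versus-ordinary pull-back \eqref{eq:4-16-1}, whose failure to be isomorphisms in general is the content of Warning~\ref{warning:4-16} — interact with the trace. On the fibre-product covers the adapted reflexive tensors are locally free logarithmic Kähler tensors, so these maps lose nothing there; the torsion-freeness reduction of the first paragraph then propagates the resulting identity from the big uniformizable locus back to all of $Y_1$.
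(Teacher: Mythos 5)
The paper gives no written proof of Fact~\ref{fact:5-12} --- the preamble of Section~\ref{sec:5-5} explicitly leaves it to the reader --- and your outline is the intended argument: localize over uniformizable open subsets of $X$ using the characterization in Fact~\ref{fact:5-9}, then run Construction~\ref{cons:5-7} for $\varphi_1$ and $\varphi_2$ through one compatible cube of fibre products, which is legitimate by the canonicity statement directly after Construction~\ref{cons:5-7}. Two minor remarks on your first paragraph: in the setting of Fact~\ref{fact:5-12} the pair $(X,D_X)$ is locally uniformizable \emph{everywhere}, so $X^{\lu}=X$; the preimages in $Y_1$ of uniformizable opens therefore cover $Y_1$, and since commutativity of a diagram of sheaf morphisms is local, the torsion-freeness/density crutch is unnecessary (a single uniformizable open need not have dense preimage in $Y_1$ anyway, so ``check on each member of a cover'' is the correct framing, not ``check on a dense open'').

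The one place where your argument is thinner than it looks is the trace step. ``Compatibility of the trace map with pull-back'' is \emph{not} a classical fact in the generality you invoke it: for the normalized trace of a cover $s$, the composition $\operatorname{tr}_s$ followed by pull-back is not the identity, and your square $\wcheck{Y}_1 \to \wcheck{Y}_2$ over $Y_1 \to Y_2$ is in general not cartesian. Concretely, for $s_2 : z \mapsto z^2$, $E_2 = \sO_{Y_2}$ and the section $z$ of $s_2^* E_2$, the trace vanishes identically while the restriction of $z$ to a point of $\wcheck{Y}_2$ over $y \neq 0$ does not; so the identity $d\alpha \circ \alpha^* \circ \operatorname{tr}_{s_2} = \operatorname{tr}_{s_1} \circ \wcheck{\alpha}^*$ fails for arbitrary sections. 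It must be established for the specific sections in play, namely images under \eqref{eq:5-7-3}$\circ$\eqref{eq:5-7-1}, which are logarithmic Kähler tensors pulled back from $X_u$ through the commuting cube --- for these, $\wcheck{\alpha}^*$ of the $\wcheck{Y}_2$-section literally equals the $\wcheck{Y}_1$-section, and the traces are compared after passing to Galois closures and averaging, using the $G$-linearizations of Observation~\ref{obs:4-19} and the invariant push-forward of Lemma~\ref{lem:4-20}, in the way Consequence~\ref{cons:4-18} is obtained. You do flag exactly this interaction as the main obstacle, so I read the proposal as correct in outline; but be aware that this step is precisely where the ``conceptually easy, but lengthy to write down'' part of the authors' disclaimer lives, and your phrase ``reduces to two classical facts'' oversells what a citation can carry there.
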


\begin{fact}[Open immersions]
  In Setting~\ref{set:5-2}, assume that the morphism $φ : Y → \what{X}$ is an
  open immersion, so we may view $Y$ as an open subset of $\what{X}$.  The
  pull-back morphisms $d_{\cC} φ$ are then equal to the composition of the
  following sequence of sheaf morphisms,
  \begin{align*}
    \Sym^{[•]}_{\cC} Ω^{[•]}_{(X,D_X,γ)}\bigr|_Y & ↪ \Sym^{[•]} Ω^{[•]}_{\what{X}} \bigl(\log γ^*⌊D_X⌋ \bigr)\bigr|_Y && \text{Observation~\ref{obs:4-8}} \\
    & ↪ \Sym^{[•]} Ω^{[•]}_Y(\log D_Y) = \Sym^• Ω^•_Y(\log D_Y),
  \end{align*}
  where the last inclusion is induced by the assumption that $\supp γ^*⌊D_X⌋
  ⊆ \supp D_Y$.  \qed
\end{fact}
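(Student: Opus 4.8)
The plan is to reduce the asserted equality of two sheaf morphisms to a comparison over a big open subset and then to unwind Construction~\ref{cons:5-7}. First I would record that, since the pair $(Y,D_Y)$ is nc, the space $Y$ is smooth; as $\varphi$ is an open immersion, its image is a smooth open subset of $\what{X}$, so $Y \subseteq \what{X}_{\reg}$ and $\varphi^* = \varphi^{[*]}$ introduces no torsion. Set $U := Y \cap \what{X}^+$, where $\what{X}^+ \subseteq \what{X}$ is the big nc-locus of Notation~\ref{not:4-2}; then $U$ is big, hence dense, in the irreducible variety $Y$. The target $\Sym^\bullet \Omega^\bullet_Y(\log D_Y)$ is locally free, in particular torsion-free, so any two sheaf morphisms into it that agree on the dense open $U$ must agree on all of $Y$ (their difference has image a subsheaf of a torsion-free sheaf vanishing generically, hence zero). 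It therefore suffices to prove the claimed equality after restriction to $U$.

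Next, by the uniqueness and locality built into Fact~\ref{fact:5-9}, I may shrink $X$ to a uniformizable open subset and work directly with Construction~\ref{cons:5-7}. Choose a uniformization $u : X_u \to X$ and form the fibre-product diagram of that construction. The decisive simplification is that, for an open immersion $\varphi$, the normalized fibre products collapse: $\wcheck{Y} = t^{-1}(Y)$ is an open subset of $\wcheck{X}$, the map $\wcheck{\varphi}$ becomes the corresponding open immersion, and $s = s^+$ is merely the restriction of the cover $t$, hence finite and generically étale. Over $(s^+)^{-1}(U)$ every sheaf in sight is locally free and the reflexive pull-backs agree with ordinary pull-backs, so for a local section $\sigma$ of $\Sym^{[\bullet]}_{\cC} \Omega^{[\bullet]}_{(X,D_X,\gamma)}$ the maps~\eqref{eq:5-7-1} and the identifications~\eqref{eq:5-7-2} exhibit $\wcheck{\varphi}^* t^* \sigma$ as an honest logarithmic Kähler tensor.

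The heart of the matter is then a diagram chase. Commutativity of the fibre-product diagram gives $\wcheck{\varphi}^* t^* \sigma = (s^+)^* \varphi^* \sigma = (s^+)^*\bigl(\sigma|_U\bigr)$, and, regarding $\sigma|_U$ through Observation~\ref{obs:4-8} as a section of $\Sym^\bullet \Omega^\bullet_U(\log \gamma^*\lfloor D_X\rfloor) \subseteq \Sym^\bullet \Omega^\bullet_U(\log D_Y)$, this $\sigma|_U$ is exactly the image of $\varphi^*\sigma$ under the claimed composition of inclusions. It thus remains to check that the final two arrows of Construction~\ref{cons:5-7}---the adjunction unit into $(s^+)_*(s^+)^* \Sym^\bullet \Omega^\bullet_U(\log D_Y)$ followed by the (normalised) trace of $s^+$---compose to the identity. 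This is the standard fact that, for a finite generically étale morphism, the averaging trace map splits the adjunction unit, precisely the mechanism underlying the reflexive pull-back of \cite{MR3084424}. Hence $d_{\cC}\varphi(\varphi^*\sigma) = \sigma|_U$ on $U$, which is the asserted inclusion; by the reduction above this holds on all of $Y$.

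I expect the main obstacle to be bookkeeping rather than anything conceptual: one must confirm that the trace used in Construction~\ref{cons:5-7} is the normalised (averaging) trace, so that unit-then-trace is the identity and not multiplication by $\deg s^+$, and one must verify that all the natural morphisms \eqref{eq:5-7-1}--\eqref{eq:5-7-3} genuinely restrict to ordinary pull-backs once $\varphi$, and hence $\wcheck{\varphi}$, is an open immersion. Both points are routine given local freeness over $U$, but they are exactly where the argument would fail if the normalisation of the trace were mishandled.
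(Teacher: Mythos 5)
The paper leaves this Fact's proof to the reader, and your argument is precisely the intended one: reduce to agreement over a big open subset using torsion-freeness of $\Sym^\bullet \Omega^\bullet_Y(\log D_Y)$, observe that for an open immersion the normalized fibre products of Construction~\ref{cons:5-7} collapse so that over the nc locus every map in \eqref{eq:5-7-1}--\eqref{eq:5-7-3} is simply the meromorphic-tensor pull-back, and conclude via the unit--trace identity. Your one flagged worry resolves exactly as you expect --- the trace in Construction~\ref{cons:5-7} must be the averaging trace, since otherwise $d_{\cC}$ would contradict Fact~\ref{fact:5-11} by a factor of $\deg s^+$ already for trivial data --- and your minor imprecisions ($\wcheck{Y}$ is a \emph{component} of $t^{-1}(Y)$; $Y^+$ may be a proper big subset of $Y$, so $s \neq s^+$ in general; and $U$ may need further shrinking to a big subset avoiding the $s$-images of the non-nc loci upstairs) are harmless precisely because your reduction only requires agreement over some big open subset of $Y$.
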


\begin{fact}[Standard operations]\label{fact:5-14}%
  The pull-back morphisms $d_{\cC} φ$ of Fact~\ref{fact:5-9} commute with
  (reflexive) wedge products, symmetric products and exterior derivatives.  \qed
\end{fact}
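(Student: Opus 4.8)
The plan is to reduce the three compatibility statements to a local computation on the nc locus over a uniformization, where every sheaf in sight becomes an honest bundle of logarithmic Kähler tensors and the operations become the classical wedge product, symmetric product and exterior derivative. Since Fact~\ref{fact:5-9} defines $d_{\cC}φ$ by its restrictions over uniformizable open subsets, and since wedge products, symmetric products and the exterior derivative all commute with restriction to opens, it suffices to argue in the uniformizable setting of Construction~\ref{cons:5-7}. There, the target $\Sym^• Ω^•_Y(\log D_Y)$ is locally free, hence torsion-free; any two morphisms into it that agree over the big open subset $Y⁺ ⊆ Y$ agree everywhere, because their difference has image a torsion subsheaf of a torsion-free sheaf. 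I would therefore check each of the three identities over $Y⁺$ only.

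The key step is a clean description of $d_{\cC}φ$ over $Y⁺$ in terms of the cover $s⁺ : \wcheck{Y}⁺ \twoheadrightarrow Y⁺$ of Construction~\ref{cons:5-7}. Tracing the morphisms \eqref{eq:5-7-1}, \eqref{eq:5-7-2} and \eqref{eq:5-7-3}, an adapted reflexive tensor $σ$ is first transported, via the identifications of Example~\ref{ex:4-6} and Observation~\ref{obs:4-15}, to a logarithmic Kähler tensor $ρ$ on the uniformization $X_u$, and then realized on $\wcheck{Y}⁺$ as $\wcheck{τ} := (\wcheck{γ}◦\wcheck{φ})^* ρ$ by honest pull-back of logarithmic Kähler tensors along the morphism $\wcheck{γ}◦\wcheck{φ} : \wcheck{Y}⁺ → X_u$ of smooth spaces. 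Writing $G := \Aut_{𝒪}\bigl(\wcheck{Y}⁺/Y⁺\bigr)$, the linearisations of Observation~\ref{obs:4-19} show that $\wcheck{τ}$ is the image of the $G$-invariant section $(s⁺)^* σ$ under a morphism of $G$-sheaves, hence is $G$-invariant; therefore $\wcheck{τ} = (s⁺)^* τ$ for a unique logarithmic Kähler tensor $τ$ on $Y⁺$, and the final (normalized) trace of Construction~\ref{cons:5-7} recovers exactly this descent. In other words, $d_{\cC}φ(σ)$ is characterized by the identity $(s⁺)^*\bigl(d_{\cC}φ(σ)\bigr) = \wcheck{τ}$.

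With this characterization the three compatibilities are formal. The realization $σ ↦ \wcheck{τ}$ is a genuine pull-back of logarithmic Kähler tensors between smooth spaces, hence a morphism of exterior and symmetric algebras that commutes with the exterior derivative $d$; likewise $(s⁺)^*$ is such a morphism, and it is injective because $s⁺$ is a surjective cover of smooth varieties. Given adapted reflexive tensors $σ_1, σ_2$ with descents $τ_1, τ_2$, the realization of $σ_1 Λ σ_2$ equals $\wcheck{τ}_1 Λ \wcheck{τ}_2 = (s⁺)^*(τ_1 Λ τ_2)$, so injectivity of $(s⁺)^*$ forces $d_{\cC}φ(σ_1 Λ σ_2) = τ_1 Λ τ_2 = d_{\cC}φ(σ_1) Λ d_{\cC}φ(σ_2)$; the symmetric case is verbatim the same. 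For the exterior derivative one uses that realization commutes with $d$, so that $(s⁺)^*\bigl(d_{\cC}φ(dσ)\bigr)$ equals the realization of $dσ$, namely $d\wcheck{τ} = (s⁺)^*(dτ)$, whence $d_{\cC}φ(dσ) = dτ = d\bigl(d_{\cC}φ(σ)\bigr)$.

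I expect the main obstacle to be the characterization in the second paragraph, specifically verifying that the realization $\wcheck{τ}$ is a genuine invariant pull-back of logarithmic Kähler tensors and that the trace step recovers its $G$-descent, rather than merely producing an adapted tensor. This rests on two classical facts that I would invoke for smooth spaces and covers, in the spirit of \cite{MR3084424}: that pull-back of logarithmic Kähler tensors commutes with wedge products, symmetric products and $d$, and that for a surjective cover $s⁺$ of smooth varieties the normalized trace $\tfrac{1}{\deg s⁺}\,\trace◦(s⁺)^*$ is the identity on logarithmic tensors and commutes with $d$. The only genuinely delicate point is the interaction of the trace with the non-$𝒪$-linear operator $d$; once this is matched against Construction~\ref{cons:5-7}, the remaining verification is routine bookkeeping.
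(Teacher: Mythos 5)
The paper offers no proof to compare against: Section~\ref{sec:5-5} explicitly leaves Facts~\ref{fact:5-11}--\ref{fact:5-14} to the reader, so the benchmark is the intended unwinding of Construction~\ref{cons:5-7}, and your overall architecture is the right one. You correctly reduce to the uniformizable case via Fact~\ref{fact:5-9}, extend identities from big open sets using torsion-freeness of $\Sym^• Ω^•_Y(\log D_Y)$, and --- most importantly --- you isolate the genuine crux: every arrow in Construction~\ref{cons:5-7} except the final trace is a composition of pull-backs, hence compatible with wedge products, symmetric products and $d$, whereas the trace is $𝒪$-linear but neither multiplicative nor compatible with $d$. Your remedy, characterizing $d_{\cC}φ(σ)$ by the identity $(s⁺)^*\bigl(d_{\cC}φ(σ)\bigr) = \wcheck{τ}$ and then exploiting that $(s⁺)^*$ is an injective morphism of algebras commuting with $d$, is exactly what makes the verification close.

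The gap is in your justification of that characterization, i.e.\ of the descent of $\wcheck{τ}$ along $s⁺$. Observation~\ref{obs:4-19} linearizes the sheaves $\Sym^{[•]}_{\cC} Ω^{[•]}_{(X,D,•)}$ with respect to the relative automorphism group of the cover \emph{defining} them --- $\Aut_{𝒪}\bigl(\wcheck{X}/X\bigr)$, resp.\ $\Aut_{𝒪}\bigl(X_u/X\bigr)$ --- and these groups do not act through $\wcheck{φ}$. Your group $G = \Aut_{𝒪}\bigl(\wcheck{Y}⁺/Y⁺\bigr)$ acts on $\wcheck{Y}⁺$ alone; there is no induced action on $\wcheck{X}$ or $X_u$ making the chain \eqref{eq:5-7-1}--\eqref{eq:5-7-3} a morphism of $G$-sheaves, so ``image of a $G$-invariant section under a morphism of $G$-sheaves'' is unjustified. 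Concretely, for $g ∈ G$ one has $g^*\wcheck{τ} = (\wcheck{γ}◦\wcheck{φ}◦g)^*ρ$, and $\wcheck{φ}◦g$ is merely a \emph{second} lift of $φ⁺◦s⁺$ through $t$; comparing the two lifts requires deck transformations of $u$, and since $ρ$ is not invariant under $\Aut_{𝒪}(X_u/X)$, invariance of $\wcheck{τ}$ is not formal. Moreover $s⁺$, chosen as a component of a normalized fibre product, need not be Galois, so even $G$-invariance alone would not give descent. The repair stays inside the paper's toolkit: invoke canonicity (Fact~5.8) to replace $u$ and $\wcheck{Y}$ by Galois closures; show that $\wcheck{γ}◦\wcheck{φ}◦g$ and $\wcheck{γ}◦\wcheck{φ}$ differ by a single $η ∈ \Aut_{𝒪}(X_u/X)$ on each irreducible component (a covering-by-closed-analytic-sets argument, needed because $φ$ may map $Y$ entirely into the branch locus, so the étale-locus argument is unavailable); then use the genuine $\Aut_{𝒪}(X_u/X)$-linearization of Observations~\ref{obs:3-19}/\ref{obs:4-19} together with the invariant push-forward Lemma~\ref{lem:4-20} to conclude descent, checking on a dense open and extending by torsion-freeness. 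Once this step is secured, your final paragraph is correct, and the worry you raise about the trace versus $d$ indeed evaporates, because the normalized trace restricted to descended sections is inverse to $(s⁺)^*$.
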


\subsubsection{Consequences of the universal properties}
\approvals{Erwan & yes \\ Stefan & yes}

We highlight a few cases that will later become relevant.  The following
propositions are direct consequences of the compatibility between $d_{\cC}$ and
the pull-back of Kähler tensors, as stated in Fact~\vref{fact:5-11}.

\begin{prop}[Smooth base spaces]\label{prop:5-15}%
  In Setting~\ref{set:5-2}, assume that the space $X$ is smooth and $D_X = 0$,
  so that $\Sym^{[•]}_{\cC} Ω^{[•]}_{(X,0,γ)} = γ^* \Sym^• Ω^•_X ⊆ \Sym^•
  Ω^•_{\what{X}}$ is a sheaf of Kähler tensors.  If $(Y,D_Y)$ is nc, then the
  pull-back morphisms $d_{\cC} φ$ equal the standard pull-back of Kähler
  tensors.  More precisely, there exist commutative diagrams as follows,
  \[
    \begin{tikzcd}[column sep=1cm, baseline=(current bounding box.south east)]
      φ^* \Sym^{[•]}_{\cC} Ω^{[•]}_{(X,0,γ)} \ar[rr, "d_{\cC} φ"] \ar[d, equal] && \Sym^• Ω^•_Y(\log D_Y) \ar[d, equal] \\
      φ^*γ^* \Sym^• Ω^•_X \ar[r, "d (γ◦ φ)"'] & \Sym^• Ω^•_Y \ar[r, hook] & \Sym^• Ω^•_Y(\log D_Y).
    \end{tikzcd}
    \eqno \qed
  \]
\end{prop}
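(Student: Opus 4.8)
The plan is to reduce everything to Construction~\ref{cons:5-7} by exhibiting a completely trivial uniformization. Since $X$ is smooth and $D_X=0$ the boundary is empty, so $\Branch(\Id_X)=\emptyset\subseteq\supp D_X$, the pair $(X,\supp \Id_X^*\lfloor D_X\rfloor)=(X,\emptyset)$ is nc, and $\Id_X^*(D_X)_{\orb}=0$ is (vacuously) reduced. Hence the identity $\Id_X$ is a uniformization of $(X,0)$ in the sense of Definition~\ref{def:2-28}, so $(X,0)$ is in fact uniformizable and the pull-back morphism $d_{\cC}\varphi$ is computed \emph{directly} by Construction~\ref{cons:5-7}, with no need to pass to local charts via Fact~\ref{fact:5-9}. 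The identification $\Sym^{[\bullet]}_{\cC}\Omega^{[\bullet]}_{(X,0,\gamma)}=\gamma^*\Sym^\bullet\Omega^\bullet_X$ asserted in the statement is the case $D_X=0$ of Example~\ref{ex:4-6}, where the reflexive pull-back $\gamma^{[*]}$ may be replaced by the ordinary pull-back $\gamma^*$ because $\Sym^\bullet\Omega^\bullet_X$ is locally free on the smooth variety $X$.

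Next I would feed $u:=\Id_X$ (so $X_u=X$) into Construction~\ref{cons:5-7} and observe that all the normalized fibre products collapse. Since $\what{X}$ is normal, the component $\wcheck{X}$ of $X_u\times_X\what{X}=\what{X}$ is $\what{X}$ itself, so $t=\Id_{\what{X}}$ and $\wcheck{\gamma}=\gamma$. Since $Y$ is smooth, hence normal, the component $\wcheck{Y}$ of $Y\times_{\what{X}}\wcheck{X}=Y$ is $Y$ itself, so $s=\Id_Y$ and $\wcheck{\varphi}=\varphi$; and because $(Y,D_Y)$ is nc by Setting~\ref{set:5-2}, the nc locus $Y^+$ equals all of $Y$, whence $s^+=\Id_Y$.

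With these identifications the three chains of morphisms in Construction~\ref{cons:5-7} degenerate. The maps \eqref{eq:5-7-1} reduce to the identity, since $t=\Id$ turns both the natural transformation $t^*\to t^{[*]}$ and the comparison of Observation~\ref{obs:4-14} into identities. In \eqref{eq:5-7-2} the case $D_X=0$ of Example~\ref{ex:4-6} gives $\Sym^{[\bullet]}_{\cC}\Omega^{[\bullet]}_{(X,0,\Id_X)}=\Sym^\bullet\Omega^\bullet_X$, so that chain terminates at $\gamma^*\Sym^\bullet\Omega^\bullet_{X_u}(\log u^*\lfloor D_X\rfloor)=\gamma^*\Sym^\bullet\Omega^\bullet_X$, matching the identification of the statement. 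Finally \eqref{eq:5-7-3} is, by construction, the pull-back of Kähler tensors $(\gamma\circ\varphi)^*\Sym^\bullet\Omega^\bullet_X\to\Sym^\bullet\Omega^\bullet_Y(\log D_Y)$, that is, $d(\gamma\circ\varphi)$ post-composed with the inclusion $\Sym^\bullet\Omega^\bullet_Y\hookrightarrow\Sym^\bullet\Omega^\bullet_Y(\log D_Y)$. The concluding push-forward/trace step of Construction~\ref{cons:5-7} is taken along $s^+=\Id_Y$ and is therefore trivial. Composing, $d_{\cC}\varphi$ is exactly $d(\gamma\circ\varphi)$ followed by the inclusion into log tensors, which is the asserted commutative diagram.

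I do not expect a genuine obstacle here: the entire content lies in recognizing that the identity uniformizes $(X,0)$ and then verifying that every fibre product, every reflexive-hull comparison, and the trace in Construction~\ref{cons:5-7} collapses to an identity. The one point deserving care is the systematic replacement of $\gamma^{[*]}$ by $\gamma^*$ and of $\Sym^{[\bullet]}\Omega^{[\bullet]}$ by $\Sym^\bullet\Omega^\bullet$, which is legitimate precisely because $X$ and $Y$ are smooth and the relevant sheaves are locally free. As an independent consistency check, Fact~\ref{fact:5-11} reproduces the same conclusion on sections: every section of $\gamma^*\Sym^\bullet\Omega^\bullet_X$ is an adapted Kähler tensor, and Fact~\ref{fact:5-11} sends it to its ordinary Kähler pull-back $(d\varphi)(\gamma^*\tau)=d(\gamma\circ\varphi)(\tau)$ inside $\Sym^\bullet\Omega^\bullet_Y\subseteq\Sym^\bullet\Omega^\bullet_Y(\log D_Y)$.
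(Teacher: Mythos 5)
Your proof is correct, and it takes a genuinely more explicit route than the paper. The paper disposes of Proposition~5.15 (together with 5.16 and 5.17) in one line, declaring it a ``direct consequence of the compatibility between $d_{\cC}$ and the pull-back of Kähler tensors, as stated in Fact~5.11'': locally, $\gamma^* \Sym^\bullet \Omega^\bullet_X$ is generated by pull-backs of Kähler tensors from $X$, each of which is an adapted Kähler tensor on $\what{X}$, so Fact~5.11 plus $\sO$-linearity identifies $d_{\cC}\varphi$ with $d(\gamma\circ\varphi)$ followed by the inclusion into log tensors. You instead unwind Construction~5.7 from scratch, feeding in the trivial uniformization $u = \Id_X$ --- which does uniformize $(X,0)$, as you check against Definition~2.28 --- and verifying that every fibre product, reflexive-hull comparison, and trace collapses to an identity; your collapse checks ($\wcheck{X} = \what{X}$ by normality of $\what{X}$, $\wcheck{Y} = Y$, $Y^+ = Y$ since $(Y,D_Y)$ is nc by Setting~5.2, and $\gamma^{[*]} = \gamma^*$ on the locally free $\Sym^\bullet\Omega^\bullet_X$) are all sound, and Fact~5.9's uniqueness then pins down $d_{\cC}\varphi$ globally since $(X,0)$ is uniformizable outright, with no patching over local charts needed. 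What each approach buys: the paper's argument is shorter and treats 5.15--5.17 uniformly, but it leans on Fact~5.11, whose proof the paper explicitly leaves to the reader; your computation is self-contained and does not invoke that unproven fact (you use it only as a consistency check on sections), at the cost of tracking the degenerate diagram through the three chains \eqref{eq:5-7-1}--\eqref{eq:5-7-3}. Had you followed the paper's route, note that Fact~5.11 as stated concerns global sections, so the sheaf-level claim would still require the routine localization-and-linearity step that your direct argument sidesteps.
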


\begin{prop}[Uniformizations]
  In Setting~\ref{set:5-2}, assume that the cover $γ$ is a uniformization, so
  that $\Sym^{[•]}_{\cC} Ω^{[•]}_{(X,0,γ)} = \Sym^• Ω^•_{\what{X}}(\log γ^*
  ⌊D⌋)$ is a sheaf of logarithmic Kähler tensors.  If $(Y,D_Y)$ is nc, then the
  pull-back morphisms $d_{\cC} φ$ equal the standard pull-back of Kähler
  differentials.  More precisely, there exist commutative diagrams as follows,
  \[
    \begin{tikzcd}[column sep=2cm, baseline=(current bounding box.south east)]
      φ^* \Sym^{[•]}_{\cC} Ω^{[•]}_{(X,0,γ)} \ar[r, "d_{\cC} φ"] \ar[d, equal] & \Sym^• Ω^•_Y(\log D_Y) \ar[d, equal] \\
      \Sym^• Ω^•_{\what{X}}(\log γ^* ⌊D⌋) \ar[r, "d φ"'] & \Sym^• Ω^•_Y(\log D_Y).
    \end{tikzcd}
    \eqno \qed
  \]
\end{prop}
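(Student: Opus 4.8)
The plan is to reduce the statement to the compatibility of $d_{\cC}φ$ with the ordinary pull-back of Kähler tensors recorded in Fact~\ref{fact:5-11}. First I would observe that, under the identification $\Sym^{[•]}_{\cC} Ω^{[•]}_{(X,D_X,γ)} = \Sym^• Ω^•_{\what{X}}(\log γ^*⌊D_X⌋)$ supplied by Item~\ref{il:4-6-5} of Example~\ref{ex:4-6}, both arrows in the asserted square are genuine sheaf morphisms
\[
  φ^* \Sym^• Ω^•_{\what{X}}(\log γ^*⌊D_X⌋) → \Sym^• Ω^•_Y(\log D_Y),
\]
namely $d_{\cC}φ$ on the one hand and the standard logarithmic pull-back $dφ$ on the other; the latter is well defined precisely because Setting~\ref{set:5-2} guarantees $\supp φ^*γ^*⌊D_X⌋ ⊆ \supp D_Y$. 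Since $(Y,D_Y)$ is nc, the target $\Sym^• Ω^•_Y(\log D_Y)$ is locally free, hence torsion-free. It therefore suffices to prove that the two morphisms agree after restriction to a single dense open subset of $Y$: their difference is a morphism into a torsion-free sheaf that vanishes on a dense open set, and such a morphism is identically zero.

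For the dense open set I would take $Y' := φ^{-1}\bigl(γ^{-1}(X^{\circ})\bigr)$, where $X^{\circ} = X ∖ \supp ⌊D_X⌋$ is the open part from Notation~\ref{not:2-14}. Its complement is contained in $\supp φ^*γ^*⌊D_X⌋ ⊆ \supp D_Y$, an analytic subset of pure codimension one, so $Y'$ is indeed dense and open. The point of passing to $Y'$ is that over $X^{\circ}$ one has $⌊D_X⌋ = 0$, so that over $γ^{-1}(X^{\circ})$ the sheaf of adapted (reflexive) tensors reduces to the sheaf of ordinary Kähler tensors,
\[
  \Sym^• Ω^•_{\what{X}}(\log γ^*⌊D_X⌋)\big|_{γ^{-1}(X^{\circ})} = \Sym^• Ω^•_{\what{X}}\big|_{γ^{-1}(X^{\circ})},
\]
and every local Kähler tensor there is automatically adapted. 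This is exactly the hypothesis under which Fact~\ref{fact:5-11} computes $d_{\cC}φ$.

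I would then apply the (evident local analogue of the) compatibility in Fact~\ref{fact:5-11} over $Y'$: for a local Kähler tensor $σ$ on an open subset of $γ^{-1}(X^{\circ})$ — whose reflexive hull is adapted by the previous paragraph — the fact sends $φ^*σ$ to the classical pull-back $(dφ)(σ)$, landing in $\Sym^• Ω^•_Y ⊆ \Sym^• Ω^•_Y(\log D_Y)$. As such tensors locally generate $φ^*\Sym^• Ω^•_{\what{X}}$ over $Y'$ and both $d_{\cC}φ$ and $dφ$ are $𝒪_Y$-linear, the two morphisms agree on all of $Y'$. Combined with the torsion-free extension argument of the first paragraph, this yields $d_{\cC}φ = dφ$ on $Y$ and hence the asserted commutative square.

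I expect the main obstacle to be the bookkeeping at the interface between Fact~\ref{fact:5-11} and the present statement. That fact is phrased for non-logarithmic Kähler tensors and for individual sections, whereas here the adapted tensors carry logarithmic poles along $γ^*⌊D_X⌋$ and one wants an equality of sheaf morphisms. Restricting to the open part $X^{\circ}$ is what removes the logarithmic poles and legitimizes the use of Fact~\ref{fact:5-11}, while torsion-freeness of $\Sym^• Ω^•_Y(\log D_Y)$ is what propagates the equality back across the boundary $\supp D_Y$; checking that these two reductions mesh correctly is the only genuinely non-formal part of the argument. As a sanity cross-check I would note that the same two ingredients prove Proposition~\ref{prop:5-15}, with $X^{\circ}$ replaced by all of $X$ since there $⌊D_X⌋ = 0$ from the outset.
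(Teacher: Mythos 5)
Your proposal is correct and matches the paper's intended argument: the paper states this proposition without proof, declaring it a direct consequence of the compatibility with pull-back of Kähler tensors in Fact~\ref{fact:5-11}, and your proof is exactly that consequence made explicit --- apply the local form of Fact~\ref{fact:5-11} over the dense open set $φ^{-1}\bigl(γ^{-1}(X°)\bigr)$, where the identification of Example~\ref{ex:4-6} turns adapted tensors into honest Kähler tensors, and then propagate the equality $d_{\cC}φ = dφ$ across $\supp D_Y$ using coherence of the source and torsion-freeness of the locally free target $\Sym^• Ω^•_Y(\log D_Y)$. Both reduction steps are sound as written, so nothing further is needed.
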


\begin{prop}[Constant morphism]\label{prop:5-17}
  In Setting~\ref{set:5-2}, assume that $φ$ is constant.  Then, the pull-back
  morphisms $d_{\cC} φ$ are zero.
\end{prop}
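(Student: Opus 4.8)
The plan is to reduce to the uniformizable case and then to recognise the pull-back morphism as a composition that factors through the pull-back of logarithmic Kähler tensors along a \emph{constant} morphism, which necessarily vanishes. First I would invoke Fact~\ref{fact:5-9}: the morphism $d_{\cC}φ$ is uniquely determined by its restrictions to the preimages of uniformizable open subsets of $X$. Writing $\what{x} ∈ \what{X}$ for the image point of the constant morphism $φ$ and setting $x := γ(\what{x})$, local uniformizability of $(X, D_X)$ furnishes a uniformizable open neighbourhood $X⁺ ∋ x$. Because $γ ◦ φ$ is constant with value $x ∈ X⁺$, its preimage is all of $Y$, so it suffices to treat the case where $X$ is uniformizable and to show that the morphism produced by Construction~\ref{cons:5-7} is zero.

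In that setting I would fix a uniformization $u : X_u \twoheadrightarrow X$ together with the auxiliary diagram of Construction~\ref{cons:5-7}. The crucial point is that the composed morphism $\wcheck{γ} ◦ \wcheck{φ} : \wcheck{Y}⁺ → X_u$ is constant. Indeed, commutativity of the diagram gives $u ◦ (\wcheck{γ} ◦ \wcheck{φ}) = γ ◦ t ◦ \wcheck{φ} = (γ ◦ φ) ◦ s⁺$, which is constant with value $x$; hence $\wcheck{γ} ◦ \wcheck{φ}$ takes values in the finite set $u^{-1}(x) ⊆ X_u$. Since $\wcheck{Y}$ was chosen to be an irreducible component of a normalized fibre product and $\wcheck{Y}⁺ ⊆ \wcheck{Y}$ is a non-empty open subset, $\wcheck{Y}⁺$ is irreducible and therefore connected, so a morphism from $\wcheck{Y}⁺$ to a finite set must be constant.

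With this in hand, the pull-back step \eqref{eq:5-7-3}, which by construction is the pull-back of logarithmic Kähler tensors along $\wcheck{γ} ◦ \wcheck{φ}$, is the zero morphism: pulling back a tensor in $\Sym^n Ω^p$ with $n, p ∈ ℕ⁺$ along a constant morphism yields zero, as the differential of a constant morphism vanishes. Consequently the composite of \eqref{eq:5-7-1} and \eqref{eq:5-7-3} is zero, and the remaining operations of Construction~\ref{cons:5-7} — push-forward by $(s⁺)_*$ and the trace — carry this zero morphism down to $Y$. Hence $d_{\cC}φ = 0$ on $Y⁺ = Y$, as claimed.

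I expect the main obstacle to be the careful bookkeeping needed to justify the constancy of $\wcheck{γ} ◦ \wcheck{φ}$ — in particular, verifying that $\wcheck{Y}⁺$ is connected and that $u^{-1}(x)$ is finite — and confirming that the zero morphism genuinely propagates through the final push-forward and trace steps rather than being resurrected by them. A slightly less self-contained but cleaner alternative would be to observe that $d_{\cC}φ$ is a morphism into the locally free (hence torsion-free) sheaf $\Sym^• Ω^•_Y(\log D_Y)$, so it is determined by its behaviour over a dense open subset of $Y$; one then runs the same constancy argument there and concludes that the morphism vanishes identically.
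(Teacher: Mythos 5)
Your proof is correct, but it takes a genuinely different route from the paper. The paper disposes of the statement in one line: it applies the functoriality statement, Fact~\ref{fact:5-12}, to the diagram in which $φ$ is factored as $Y \to \{x\} \hookrightarrow \what{X}$ over the identity of $X$, where $\{x\} := \img φ$; since all sheaves $\Sym^n Ω^p$ with $p ≥ 1$ vanish on a point, the commutative diagram of Fact~\ref{fact:5-12} forces $d_{\cC} φ = d\,\const = 0$. You instead re-enter Construction~\ref{cons:5-7} after the (correct) reduction via Fact~\ref{fact:5-9} to a single uniformizable neighbourhood --- a reduction that works precisely because the image is one point, so all of $Y$ lies over that neighbourhood --- and you show that the middle arrow \eqref{eq:5-7-3} vanishes because $\wcheck{γ} ◦ \wcheck{φ}$ maps into the finite fibre $u^{-1}(x)$ and is therefore constant. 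The paper's argument buys brevity and robustness: it uses the universal properties as a black box and would survive any change to the details of Construction~\ref{cons:5-7}; your argument buys transparency, locating exactly which step of the construction kills the morphism, at the cost of the bookkeeping you yourself flag. One small repair: your justification that $\wcheck{Y}⁺$ is connected (``non-empty open subset of an irreducible space'') is not valid in the analytic category, where open subsets of irreducible complex spaces can be disconnected; it does hold here because $\wcheck{Y}⁺$ is the complement of a proper analytic subset (the non-nc locus) of the irreducible $\wcheck{Y}$ --- but in fact you can sidestep connectedness entirely, since a map into a finite set is locally constant on any open set, and the differential of a locally constant map already vanishes, which is all that the vanishing of \eqref{eq:5-7-3} requires.
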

\begin{proof}
  Write $\{x\} := \img φ$.  Applying Fact~\ref{fact:5-12} to the diagram
  \[
    \begin{tikzcd}[column sep=2cm]
      Y \ar[r, "φ"] \ar[d, "\const"'] & \what{X} \ar[r, "γ\text{, $q$-morphism}"] \ar[d, equal] & X \ar[d, equal] \\
      \{x\} \ar[r, hook] & \what{X} \ar[r, "γ"'] & X,
    \end{tikzcd}
  \]
  we obtain identifications $d_{\cC} φ = d \const = 0$.
\end{proof}

\subsubsection{Uniqueness}
\approvals{Erwan & yes \\ Stefan & yes}

We mentioned in Section~\ref{sec:5-2} that the pull-back morphisms $d_{\cC}$ are
uniquely determined by ``functoriality'' and ``compatibility with pull-back of
Kähler differentials''.  While true, this might require some explanation.  To
begin, observe that compatibility with the pull-back of Kähler differentials
alone does \emph{not} determine the pull-back morphism $d_{\cC} φ$ in all
possible settings --- with the notation of Setting~\ref{set:5-2}, think of a
case where the image of $φ$ is entirely contained in the singular locus of
$\what{X}$.

It is however true that compatibility with the pull-back of Kähler differentials
and functoriality \emph{together} determine the collection of pull-back morphisms,
\[
  \bigl(d_{\cC} φ \bigr)_{(φ \text{ from a sequence of morphisms as in Setting~\ref{set:5-2}})}.
\]
Precise statements and proofs are not hard to give, but will be lengthy and
painful to spell out.  Rather than going into too much detail here, we refer the
reader to \cite[Sect.~6.4]{MR3084424} and \cite[Sect.~14]{KS18} that discuss
completely analogous situations.

%
%
\svnid{$Id: 06-invariants.tex 911 2024-09-29 18:09:56Z kebekus $}
\selectlanguage{british}

\section{Invariants of \texorpdfstring{$\cC$}{ C}-pairs}
\label{sec:6}
\subversionInfo
\approvals{Erwan & yes \\ Stefan & yes}

Almost every invariant defined for compact Kähler manifolds (or logarithmic
Kähler pairs) has an analogue in the setting of $\cC$-pairs.  This section
introduces two invariants of particular importance: irregularities and
Kodaira-Iitaka dimensions for sheaves of tensors.

\subsection{Irregularities}
\approvals{Erwan & yes \\ Stefan & yes}
\label{sec:6-1}

For $\cC$-pairs, adapted differentials take the role that ordinary differentials
play for ordinary spaces.  Accordingly, there exists a meaningful notion of
``irregularity'' for $\cC$-pairs.  It goes without saying that the irregularity
is of fundamental importance when we discuss $\cC$-analogues of the Albanese in
the follow-up paper \cite{orbialb2}.

\begin{defn}[Irregularity, augmented irregularity]
  Let $(X, D)$ be a compact $\cC$-pair and let $γ : \what{X} \twoheadrightarrow
  X$ be any cover.  We refer to the number
  \[
    q(X,D,γ) := h⁰ \Bigl( \what{X},\, Ω^{[1]}_{(X,D,γ)} \Bigr)
  \]
  as the \emph{irregularity of $(X,D,γ)$}\index{irregularity of a $\cC$-pair and
  cover}.  The number
  \[
    q⁺(X, D) := \sup \bigl\{ q(X,D,γ) \,|\, γ \text{ a cover} \bigr\} ∈ ℕ ∪ \{∞\}
  \]
  is called \emph{augmented irregularity of the $\cC$-pair $(X,
  D)$}\index{augmented irregularity of a $\cC$-pair}.
\end{defn}

We do not fully understand how the irregularities $q(X,D,γ)$ depend on the
covering map $γ$.  Section~\ref{sec:15-2} gathers several open questions
there.  Before turning to a $\cC$-analogue of the Kodaira-Iitaka dimension in
the next subsection, we highlight a few cases where the irregularities can be
computed.

\begin{lem}[Projective manifolds of small codimension]\label{lem:6-2}%
  Let $X$ be a projective manifold.  If $X$ admits an embedding $X ⊆ ℙ^n$ with
  $n < 2·\dim X$, then $q⁺(X,0) = 0$.
\end{lem}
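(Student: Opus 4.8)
The plan is to unwind the definition of $q⁺$ and reduce the whole statement to a single cohomological vanishing on $X$. Since $D=0$ and $X$ is smooth, Item~(2) of Example~\ref{ex:4-6} identifies $Ω^{[1]}_{(X,0,γ)}$ with $γ^{[*]}Ω^{[1]}_X$; moreover $Ω^1_X$ is locally free, so its pull-back $γ^*Ω^1_X$ is already locally free, hence reflexive, and $γ^{[*]}Ω^1_X=γ^*Ω^1_X$. Thus for every cover $γ:\what{X}\twoheadrightarrow X$ one has $q(X,0,γ)=h⁰(\what{X},γ^*Ω^1_X)$, and since $γ$ is finite and $Ω^1_X$ is locally free, the projection formula gives $q(X,0,γ)=h⁰\bigl(X,Ω^1_X⊗γ_*𝒪_{\what{X}}\bigr)$. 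The entire assertion $q⁺(X,0)=0$ therefore amounts to showing that $H⁰\bigl(X,Ω^1_X⊗γ_*𝒪_{\what{X}}\bigr)=0$ for every finite cover $γ$.

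Next I would feed in the topology forced by $n<2\dim X$. By the Barth–Larsen theorem the restriction $\Pic(ℙ^n)→\Pic(X)$ is an isomorphism, so $\Pic(X)=ℤ·𝒪_X(1)$ with $𝒪_X(1)$ ample, and $H^1(X,ℂ)=0$, whence $q(X)=h⁰(X,Ω^1_X)=0$ and $X$ is simply connected. If $\dim X=1$ then $X=ℙ^1$ and $γ^*Ω^1_X=γ^*𝒪(-2)$ has negative degree, so $h⁰=0$ trivially; from now on set $d:=\dim X≥2$. If $γ$ is étale then simple connectivity forces $γ=\Id_X$ and $γ_*𝒪_{\what{X}}=𝒪_X$, giving $H⁰(Ω^1_X)=0$; otherwise $γ$ ramifies and its branch divisor, being a nonzero effective divisor, is an ample multiple of $𝒪_X(1)$.

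For the vanishing itself I would dualize and use a Nakano-type theorem. Serre duality gives
\[
  H⁰\bigl(X,Ω^1_X⊗γ_*𝒪_{\what{X}}\bigr)^{\vee} ≅ H^d\bigl(X,Ω^{d-1}_X⊗(γ_*𝒪_{\what{X}})^{\vee}\bigr),
\]
using $T_X⊗ω_X=Ω^{d-1}_X$. In characteristic zero the normalized trace splits $(γ_*𝒪_{\what{X}})^{\vee}=𝒪_X⊕𝒬^{\vee}$, and the trivial summand contributes $H^d(X,Ω^{d-1}_X)≅H⁰(X,Ω^1_X)^{\vee}=0$. It remains to kill $H^d\bigl(X,Ω^{d-1}_X⊗𝒬^{\vee}\bigr)$. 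The model case is a cyclic cover, where $(γ_*𝒪_{\what{X}})^{\vee}=\bigoplus_{i=0}^{m-1}𝒪_X(ia)$ with $a≥1$, so $𝒬^{\vee}=\bigoplus_{i≥1}𝒪_X(ia)$ is ample and each summand vanishes by Akizuki–Nakano because $(d-1)+d>d$. In general $γ_*𝒪_{\what{X}}$ does not split into line bundles, so I would instead establish the ampleness of $𝒬^{\vee}$ abstractly: Grothendieck duality identifies $(γ_*𝒪_{\what{X}})^{\vee}=γ_*ω_{\what{X}/X}$, the semipositivity theorem of Kollár and Viehweg makes $γ_*ω_{\what{X}}$ nef, and since $\Pic(X)=ℤ$ with nonempty (hence ample) branch this pins down the needed positivity of the trace-free quotient $𝒬^{\vee}$. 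Granting that $𝒬^{\vee}$ is ample, Le Potier's vanishing theorem for ample vector bundles ($H^q(X,Ω^p_X⊗𝒱)=0$ for $p+q>d$) applies with $p=d-1$, $q=d$ and finishes the proof.

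The main obstacle is precisely this last positivity step for non-cyclic covers. The reductions in the first three paragraphs are formal, and Le Potier's vanishing is exactly calibrated by the inequality $2d-1>d$ that the hypothesis $n<2\dim X$ secures through $d≥2$; what genuinely requires care is upgrading the nefness of $γ_*ω_{\what{X}/X}$ to the \emph{ampleness} of its trace-free part $𝒬^{\vee}$, since here $γ_*𝒪_{\what{X}}$ is a genuine vector bundle with no line-bundle filtration. I expect this to be the crux, and I would either argue it directly from the ampleness of the branch locus together with $\Pic(X)=ℤ$, or fall back on a Harder–Narasimhan/semistability argument with respect to the unique polarization $𝒪_X(1)$ to control $Ω^{d-1}_X⊗𝒬^{\vee}$ in top degree.
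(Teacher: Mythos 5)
Your formal reductions (Example~\ref{ex:4-6}, projection formula, Serre duality, trace splitting) are fine, but the proof breaks at three concrete points. First, Barth--Larsen gives $\Pic(\mathbb{P}^n) \cong \Pic(X)$ only in the range $n \le 2\dim X - 2$; the hypothesis $n < 2\dim X$ allows the boundary case $n = 2\dim X - 1$, where your claim fails: the quadric $\mathbb{P}^1 \times \mathbb{P}^1 \subset \mathbb{P}^3$ has Picard rank two, and branch divisors of its covers (e.g.\ of a cyclic cover pulled back from one ruling) need not be ample. Everything in your positivity bootstrap that invokes $\Pic(X) = \mathbb{Z}$ is unavailable in exactly this extremal case. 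Second, even granting $\Pic(X)=\mathbb{Z}$, the step you flag as the crux does not go through: Viehweg-type results give only \emph{weak positivity} of $\gamma_*\omega_{\what{X}/X}$, not nefness, and nefness plus ample determinant does not imply ampleness for bundles of rank $\ge 2$ (think of $\mathcal{O}_X \oplus \mathcal{O}_X(1)$); ampleness of the trace-free part is a theorem of Lazarsfeld for covers of $\mathbb{P}^n$ and of a few homogeneous targets, but is not known for arbitrary $X$. Third, and decisively, you misquote Le Potier: for an ample bundle $\mathcal{V}$ of rank $r$ the vanishing $H^q(X, \Omega^p_X \otimes \mathcal{V}) = 0$ requires $p + q \ge \dim X + r$, not $p + q > \dim X$ (the latter is Akizuki--Nakano, for line bundles only; on $\mathbb{P}^d$ one has $H^{d-1}(\mathbb{P}^d, \Omega^d_{\mathbb{P}^d} \otimes T_{\mathbb{P}^d}) \cong H^{d-1}(\mathbb{P}^d, \Omega^{d-1}_{\mathbb{P}^d}) \ne 0$, with $p+q = 2d-1 > d$). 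Since $\rank \mathcal{Q}^{\vee} = \deg\gamma - 1$ is unbounded, your application with $p = d-1$, $q = d$ only works for covers of degree at most $d$, so the argument collapses precisely for the covers of large degree over which the supremum $q^+(X,0)$ is taken.

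The idea you are missing is descent rather than vanishing upstairs, cover by cover. The paper argues: a nontrivial $\sigma \in H^0\bigl(\what{X}, \gamma^*\Omega^1_X\bigr)$ on a Galois cover with group $G$ yields the nonzero $G$-invariant tensor $\prod_{g \in G} g^*\sigma \in H^0\bigl(\what{X}, \Sym^{\#G}\gamma^*\Omega^1_X\bigr)$, and invariant push-forward (Lemma~\ref{lem:4-20}) turns this into a nonzero section of $\Sym^{\#G}\Omega^1_X$ on $X$ itself. Schneider's theorem \cite{MR1144433} --- valid for all $n < 2\dim X$, including the boundary case that defeats your Picard argument --- asserts $h^0\bigl(X, \Sym^N\Omega^1_X\bigr) = 0$ for every $N$, finishing the proof. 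That single classical vanishing for symmetric differentials replaces your entire duality-and-positivity apparatus; the Galois-averaging trick converting a $1$-form on the cover into a symmetric differential on $X$ is the step your proposal lacks.
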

\begin{proof}
  If $q⁺(X,0) > 0$, then there exists a Galois cover $γ : \what{X}
  \twoheadrightarrow X$ with group $G$ and a non-trivial section $σ$ in
  $Ω^{[1]}_{(X,0,γ)} = γ^* Ω¹_X$.  The pluri-differential
  \[
    \prod_{g ∈ G} g^* σ ∈ H⁰ \bigl( \what{X},\, \Sym^{\#G} Ω^{[1]}_{(X,0,γ)}\bigr)
  \]
  is $G$-invariant, not trivial, and by Lemma~\ref{lem:4-20} gives a non-trivial
  section in $\Sym^{\#G} Ω¹_X$.  It has, however, been shown in \cite{MR1144433}
  that $h⁰\bigl( X,\, \Sym^N Ω¹_X \bigr) = 0$ for every number $N ∈ ℕ$.
\end{proof}

To construct a more interesting example, recall from \cite[Thm.~1.5]{MR3953506}
that every projective, klt variety $X$ with numerically trivial canonical class
admits a cover $\wtilde{X} \twoheadrightarrow X$, étale in codimension one, and
a decomposition
\[
  \wtilde{X} ≅ A ⨯ \prod_j Y_j ⨯ \prod_k Z_k
\]
where $A$ is an Abelian variety, where the $Y_j$ are (possibly singular)
Calabi-Yau varieties and the $Z_k$ are (possibly singular) irreducible
symplectic varieties.  We refer the reader to \cite[Sect.~1.4 and
Def.~1.3]{MR3988092} for a discussion and for the definition of ``singular
Calabi-Yau'' and ``singular irreducible symplectic''.

\begin{lem}
  Let $Y$ be singular Calabi-Yau or singular irreducible symplectic.  If $φ : X
  → Y$ is any birational morphism between normal, projective varieties, and if
  $D ∈ \Div(X)$ is a $φ$-exceptional divisor that makes $(X,D)$ a $\cC$-pair,
  then $q⁺(X,D) = 0$.
\end{lem}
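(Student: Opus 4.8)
The plan is to argue by contradiction, following closely the strategy of Lemma~\ref{lem:6-2}. Suppose that $q⁺(X,D) > 0$. Then there exist a cover $γ : \what{X} \twoheadrightarrow X$ and a non-trivial section $σ ∈ H⁰\bigl(\what{X},\, Ω^{[1]}_{(X,D,γ)}\bigr)$. Passing to the Galois closure of $γ$ and using the inclusion of Observation~\ref{obs:4-14}, together with the elementary fact that the pull-back of a non-zero meromorphic form under a dominant morphism is again non-zero, I may assume without loss of generality that $γ$ is Galois, with group $G := \Aut_{𝒪}\bigl(\what{X}/X\bigr)$ of order $N := \#G$.

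The next step is to symmetrise. Using the symmetric multiplication of Observation~\ref{obs:4-11}, I form the product
\[
  τ := \prod_{g ∈ G} g^* σ ∈ H⁰\bigl(\what{X},\, \Sym^{[N]}_{\cC} Ω^{[1]}_{(X,D,γ)}\bigr).
\]
This tensor is $G$-invariant, and it is non-trivial because it is a product of non-zero meromorphic tensors inside the integral symmetric algebra $ℳ_{\what{X}} ⊗ \Sym^{[•]} Ω^{[•]}_{\what{X}}$. Applying Lemma~\ref{lem:4-20} with $β = \Id_X$ and $α = γ$, the $G$-invariant tensor $τ$ descends to a non-trivial section $\bar{τ} ∈ H⁰\bigl(X,\, \Sym^{[N]}_{\cC} Ω^{[1]}_{(X,D,\Id_X)}\bigr)$. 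By Observation~\ref{obs:4-8}, the latter sheaf embeds into $\Sym^{[N]} Ω^{[1]}_X(\log D)$, so $\bar{τ}$ is a non-zero reflexive logarithmic symmetric differential on $X$.

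Now I would exploit that $D$ is $φ$-exceptional. Let $Y° ⊆ Y$ be the maximal open subset over which $φ$ is an isomorphism; since $Y$ is normal and $φ$ is birational, the complement $Y ∖ Y°$ has codimension at least two, so $Y°$ is big. Its preimage $X° := φ^{-1}(Y°)$ is disjoint from $\supp D$, for if some component $D_i$ met $X°$ it would map isomorphically onto a divisor in $Y°$ and hence not be contracted, contradicting that $D$ is $φ$-exceptional. Over $X°$ the logarithmic poles disappear, so $\bar{τ}|_{X°}$ is an honest reflexive symmetric differential; transporting it through the isomorphism $φ|_{X°} : X° \xrightarrow{\sim} Y°$ and extending reflexively across the small set $Y ∖ Y°$ yields a non-trivial section $\what{τ} ∈ H⁰\bigl(Y,\, \Sym^{[N]} Ω^{[1]}_Y\bigr)$, where I use that $\Sym^{[N]} Ω^{[1]}_Y$ is reflexive and that restriction to the dense open $X°$ is injective on the torsion-free sheaf $\Sym^{[N]} Ω^{[1]}_X(\log D)$.

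The contradiction, and the main obstacle, is the vanishing $H⁰\bigl(Y,\, \Sym^{[N]} Ω^{[1]}_Y\bigr) = 0$ for $Y$ singular Calabi–Yau or singular irreducible symplectic. This is the exact analogue of the input $h⁰(X, \Sym^N Ω¹_X) = 0$ used in Lemma~\ref{lem:6-2}, and it is the one genuinely external ingredient; everything preceding it is a formal consequence of the functoriality and descent statements of Section~\ref{sec:4}. I would deduce this vanishing from the structure theory underlying the decomposition theorem \cite{MR3953506}: for such $Y$ the reflexive cotangent sheaf is stable with vanishing first Chern class, so $\Sym^{[N]} Ω^{[1]}_Y$ is polystable of slope zero and its global sections correspond to parallel symmetric tensors for the singular Ricci-flat metric; the holonomy being of type $SU$ respectively $Sp$, there are no non-zero invariants in symmetric powers of the standard representation, which forces $H⁰\bigl(Y,\, \Sym^{[N]} Ω^{[1]}_Y\bigr) = 0$ and is consistent with the defining property \cite{MR3988092} that such varieties carry no reflexive $1$-forms. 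Making this last step fully rigorous in the singular analytic setting is where the real work lies.
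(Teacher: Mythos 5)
Your proof is correct and follows essentially the same route as the paper's: symmetrise over the Galois group, descend via the invariant push-forward of Lemma~\ref{lem:4-20}, use the $φ$-exceptionality of $D$ to transport the resulting pluri-differential through the isomorphism locus to a big open subset of $Y$, and extend reflexively. The one step you flag as ``the real work'' --- the vanishing $H⁰\bigl(Y,\, \Sym^{[N]} Ω^{[1]}_Y\bigr) = 0$ for singular Calabi--Yau and singular irreducible symplectic varieties --- is precisely \cite[Thm.~1.11]{MR3988092}, which the paper simply cites; your holonomy sketch is indeed the idea behind that theorem's proof, so the citation closes the gap and no further work is needed.
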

\begin{proof}
  Assume that $q⁺(X,D) > 0$.  As in the proof of Lemma~\ref{lem:6-2}, we can
  then construct a pluri-differential form on $X_{\reg} ∖ \supp D$, hence a
  non-trivial section $σ ∈ H⁰\bigl( Y⁺,\, \Sym^• Ω¹_{Y⁺}\bigr)$, where
  \[
    Y⁺ := Y_{\reg} ∖ \text{indeterminacy locus of } φ^{-1}.
  \]
  Since $Y⁺ ⊆ Y$ is a big open subset, $σ$ induces a non-trivial reflexive
  pluri-differential $σ' ∈ H⁰\bigl( Y,\, \Sym^{[•]} Ω^{[1]}_Y\bigr)$.  However,
  it has been shown in \cite[Thm.~1.11]{MR3988092} that no such reflexive
  pluri-differential exist if $Y$ is singular Calabi-Yau or singular irreducible
  symplectic.
\end{proof}

We refer the reader to \cite{MR3127814} for more on the relation between
existence of pluri-differentials and the geometry of the underlying space.

\begin{example}[Unbounded Irregularities]
  Let $X$ be a compact Riemann surface of general type, and let $γ : \what{X} →
  X$ be any étale cover.  Then,
  \[
    q(X,0,γ)
    = h⁰ \bigl( \what{X},\, Ω^{[1]}_{(X,0,γ)} \bigr)
    = h⁰ \bigl( \what{X},\, γ^* Ω¹_X \bigr)
    = h⁰ \bigl( \what{X},\, Ω¹_{\what{X}} \bigr)
    = g(\what{X}).
  \]
  Given that étale covers of arbitrarily high degrees exist, we find that $q⁺(X,
  0) = ∞$.
\end{example}

\subsection{The $\cC$-Kodaira-Iitaka dimension}
\approvals{Erwan & yes \\ Stefan & yes}
\label{sec:6-2}

This section introduces the $\cC$-Kodaira-Iitaka dimension for rank-one sheaves
of adapted tensors.  We refer the reader to \cite[Sect.~4]{MR2860268} for a
related construction, for references, and proper attributions.

\begin{defn}[$\cC$-product sheaves]\label{def:6-5}%
  Let $(X,D)$ be a $\cC$-pair and let $γ : \what{X} → X$ be a $q$-morphism.
  Assume we are given numbers $n, d, p ∈ ℕ⁺$ and a coherent subsheaf of adapted
  reflexive tensors,
  \[
    ℱ ⊆ \Sym^{[n]}_{\cC} Ω^{[p]}_{(X, D, γ)}.
  \]
  Using the inclusion
  \[
    \Sym^{[d]} ℱ
    ⊆ \Sym^{[d]} \Sym^{[n]}_{\cC} Ω^{[p]}_{(X, D, γ)}
    \overset{\text{Obs.~\ref{obs:4-11}}}{⊆} \Sym^{[d·n]}_{\cC} Ω^{[p]}_{(X, D, γ)},
  \]
  we define the \emph{$d^{\text{th}}$ $\cC$-product sheaf of
  $ℱ$}\index{C-product sheaf@$\cC$-product sheaf} as
  \[
    \Sym^{[d]}_{\cC} ℱ := \text{saturation of } \Sym^{[d]} ℱ
    \text{ in } \Sym^{[d·n]}_{\cC} Ω^{[p]}_{(X, D, γ)}.
  \]
\end{defn}

\begin{rem}[Elementary properties]
  As the saturation of a coherent sheaf within a reflexive sheaf, the product
  sheaf $\Sym^{[d]}_{\cC} ℱ$ of Definition~\ref{def:6-5} is always reflexive.  If
  $ℱ$ has rank one, then $\Sym^{[d]}_{\cC} ℱ$ also has rank one.
\end{rem}

\begin{defn}[$\cC$-Kodaira-Iitaka dimension]%
  Let $(X,D)$ be a compact $\cC$-pair and let $γ : \what{X} \twoheadrightarrow
  X$ be a cover.  Given numbers $n, p ∈ ℕ⁺$ and a coherent, rank-one subsheaf of
  adapted reflexive tensors,
  \[
    ℱ ⊆ \Sym^{[n]}_{\cC} Ω^{[p]}_{(X, D, γ)},
  \]
  consider the set
  \[
    M := \left\{ m ∈ ℕ\, \left|\, h⁰\bigl( \what{X},\, \Sym^{[m]}_{\cC} ℱ \bigr) > 0 \right.  \right\}.
  \]
  If $M = ∅$, we say that the sheaf $ℱ$ has \emph{$\cC$-Kodaira-Iitaka dimension
  minus infinity} and write $κ_{\cC}(ℱ) = -∞$.  Otherwise, consider the natural
  meromorphic maps
  \[
    φ_m : \what{X} \dasharrow ℙ\left( H⁰\Bigl( \what{X},\, \Sym^{[m]}_{\cC} ℱ \Bigr)^\vee
    \right), \text{ for each } m ∈ M
  \]
  and define the \emph{$\cC$-Kodaira-Iitaka dimension}\index{C-Kodaira-Iitaka
  dimension@$\cC$-Kodaira-Iitaka dimension} as
  \[
    κ_{\cC}(ℱ) = \max_{m ∈ M} \left\{ \dim \overline{φ_m(\what{X})}\right\},
  \]
  where $\overline{φ_m(\what{X})}$ denotes the Zariski closure of $φ_m(\what{X})
  ⊆ ℙ^•$.
\end{defn}

\begin{rem}
  Recall from Reminder~\vref{remi:2-8} that the $φ_m$ are meromorphic indeed, so
  that $φ_m(X) ⊆ ℙ^•$ are constructible.  This implies that the $\max$ in the
  definition of $κ_{\cC}(ℱ)$ is a maximum and that $κ_{\cC}(ℱ) ≤ \dim X$.
\end{rem}

\begin{warning}
  Unlike the standard Kodaira-Iitaka dimension, the $\cC$-Kodaira-Iitaka
  dimension is defined only for subsheaves of adapted reflexive differentials.
  Its value is generally not an invariant of the sheaf alone, and will often
  depend on the embedding into $\Sym^{[•]}_{\cC} Ω^{[•]}_{(X, D, γ)}$.
\end{warning}

\subsection{Bogomolov-Sommese vanishing and special pairs}
\approvals{Erwan & yes \\ Stefan & yes}

Campana has observed in \cite[Sect.~3.5]{MR2831280} that the classic vanishing
theorem of Bogomolov and Sommese, \cite[Cor.~6.9]{EV92} carries over to
$\cC$-pairs with simple normal crossing boundary.  Using extension theorems for
differential forms on log canonical spaces, Patrick Graf generalized Campana's
observation substantially in his thesis \cite{GrafThesis}.  While Graf works
with projective varieties, his arguments carry over to the setting of compact
Kähler spaces\footnote{Graf's thesis relies on the paper \cite{GKKP11}, which
provides the relevant extension theorems for differential forms in the algebraic
setting.  The newer paper \cite{KS18} establishes analogous results in the
analytic setting.}.

\begin{thm}[\protect{Bogomolov-Sommese vanishing on $X$, \cite[Thm.~1.2]{Gra13}}]\label{thm:6-10}%
  \index{Bogomolov-Sommese vanishing!on a $\cC$-pair}Let $(X,D)$ be a log
  canonical $\cC$-pair where $X$ is compact Kähler.  If $ℱ ⊆ Ω^{[p]}_{(X, D,
  \Id_X)}$ is coherent of rank one, then $κ_{\cC}(ℱ) ≤ p$.  \qed
\end{thm}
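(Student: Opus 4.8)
The plan is to reduce the statement to the Bogomolov-Sommese vanishing theorem for logarithmic differentials on log canonical spaces --- which is Graf's \cite[Thm.~1.2]{Gra13} --- and then to translate that result into the $\cC$-setting using the identifications of Section~\ref{sec:4}. First I would unwind the sheaves involved. Since $γ = \Id_X$, Example~\ref{ex:4-6} gives $Ω^{[p]}_{(X,D,\Id_X)} = Ω^{[p]}_X(\log ⌊D⌋)$, so $ℱ$ is a coherent rank-one subsheaf of $Ω^{[p]}_X(\log ⌊D⌋) ⊆ Ω^{[p]}_X(\log D_{\red})$. By construction (Definition~\ref{def:4-3}, cf.~Definition~\ref{def:3-2}) together with Observation~\ref{obs:4-8}, every $\cC$-product sheaf $\Sym^{[m]}_{\cC} ℱ$ of Definition~\ref{def:6-5} is a rank-one reflexive subsheaf of
\[
  \Sym^{[m]}_{\cC} Ω^{[p]}_{(X,D,\Id_X)} ⊆ 𝒪_X\bigl(⌊m·\{D\}⌋\bigr) ⊗ \Sym^{[m]} Ω^{[p]}_X\bigl(\log ⌊D⌋\bigr) ⊆ \Sym^{[m]} Ω^{[p]}_X(\log D_{\red}),
\]
so the admissible pole order of a degree-$m$ section along a fractional component $D_i$ is at most $⌊m·\tfrac{m_i-1}{m_i}⌋$. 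In other words, $κ_{\cC}(ℱ)$ is precisely a Kodaira-Iitaka dimension of $ℱ$ computed inside symmetric powers of logarithmic $p$-forms, where the fractional boundary $\{D\}$ controls the allowed poles exactly as the orbifold coefficients of the $\cC$-pair prescribe.

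Second, I would let $\sA$ denote the saturation of $ℱ$ inside $Ω^{[p]}_X(\log D_{\red})$, a rank-one reflexive saturated subsheaf. Since $ℱ ⊆ \sA$, the $\cC$-product sheaves satisfy $\Sym^{[m]}_{\cC} ℱ ⊆ \operatorname{sat}\bigl(\sA^{[m]}\bigr)$, the saturation being taken in $\Sym^{[m]} Ω^{[p]}_X(\log D_{\red})$; consequently the meromorphic maps $φ_m$ of Definition~\ref{def:6-5} factor through the maps attached to $\sA$, and $κ_{\cC}(ℱ)$ is bounded by the Kodaira-Iitaka dimension of the saturated sheaf $\sA$ (computed, as in Graf's framework, via saturations in symmetric powers). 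The decisive point is that the twist by $⌊m·\{D\}⌋ ≤ m·\{D\}$ keeps all relevant pole orders within the range governed by the \emph{log canonical} boundary $D$, and not by the larger reduced boundary $D_{\red}$, for which $(X,D_{\red})$ need not be log canonical.

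Third, I would invoke Graf's Bogomolov-Sommese vanishing for the log canonical pair $(X,D)$, \cite[Thm.~1.2]{Gra13}, which bounds the Kodaira-Iitaka dimension of such a rank-one subsheaf of $p$-forms by $p$, yielding $κ_{\cC}(ℱ) ≤ p$. Graf works with projective varieties, and his proof proceeds through a log resolution of $(X,D)$ followed by the extension theorem for reflexive differential forms on log canonical pairs of \cite{GKKP11}, reducing to the classical statement \cite[Cor.~6.9]{EV92} on the smooth model. To obtain the compact Kähler version claimed here, I would run the same argument verbatim, replacing the algebraic extension theorem \cite{GKKP11} by its analytic counterpart \cite{KS18}; no further change is needed, since the Hodge-theoretic input of \cite{EV92} is already available in the Kähler setting.

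The main obstacle is the bookkeeping in the second step: one must check that the saturation defining $\Sym^{[m]}_{\cC} ℱ$ \emph{inside the $\cC$-tensor sheaf} agrees, for the purpose of computing Kodaira-Iitaka dimensions, with the ordinary saturation of $\sA^{[m]}$ inside $\Sym^{[m]} Ω^{[p]}_X(\log D_{\red})$, and that the fractional pole bound $⌊m·\{D\}⌋$ matches the coefficients of the log canonical boundary $D$ rather than those of $D_{\red}$. This is exactly the subtlety flagged in Warning~\ref{warning:4-13} about the failure of surjectivity for reflexive symmetric powers, so some care is required to see that no extra sections are introduced. Once this comparison is in place, the reduction to \cite{Gra13} and the Kähler upgrade through \cite{KS18} are purely formal.
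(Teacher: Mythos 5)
Your proposal matches the paper's own treatment: Theorem~\ref{thm:6-10} is established there by direct citation of Graf's theorem \cite[Thm.~1.2]{Gra13}, with the footnote observing that Graf's projective argument (log resolution, the extension theorem of \cite{GKKP11}, reduction to \cite[Cor.~6.9]{EV92} on the smooth model) carries over verbatim to the compact Kähler setting once \cite{GKKP11} is replaced by its analytic counterpart \cite{KS18} --- which is exactly your third step. The only remark worth making is that your second step is lighter than you fear: Graf's theorem is already formulated for fractional boundaries, with an adapted Kodaira--Iitaka dimension built from twists by $\lfloor m\cdot\{D\}\rfloor$, so the comparison between the $\cC$-saturations $\Sym^{[m]}_{\cC} ℱ$ and ordinary saturations inside $\Sym^{[m]} Ω^{[p]}_X(\log D_{\red})$ is essentially definitional rather than the genuine obstacle flagged in Warning~\ref{warning:4-13}.
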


We say that a pair is special if the inequality in the Bogomolov-Sommese
vanishing theorem is strict.

\begin{defn}[Bogomolov sheaf on $X$]\label{def:6-11}%
  Let $(X,D)$ be a log canonical $\cC$-pair where $X$ is compact Kähler.  A
  \emph{Bogomolov sheaf on $X$}\index{Bogomolov sheaf} is a coherent sheaf $ℱ ⊆
  Ω^{[p]}_{(X, D, \Id_X)}$ of rank one such that if $κ_{\cC}(ℱ) = p$.
\end{defn}

\begin{defn}[Special $\cC$-pair]\label{def:6-12}%
  Let $(X,D)$ be a log canonical $\cC$-pair where $X$ is compact Kähler.  The
  pair $(X,D)$ is called \emph{special}\index{special $\cC$-pair} when there are
  no Bogomolov sheaves.
\end{defn}

\begin{warning}
  We remark that Definition~\ref{def:6-11} differs from Campana's.  In
  \cite[Déf.~5.17]{MR2831280}, Campana defines ``specialness'' in terms of
  meromorphic fibrations $X \dasharrow Y$ onto orbifolds of general type.  For
  pairs with snc boundary, he shows in \cite[Cor.~3.13]{MR2831280} that the two
  definitions agree.
\end{warning}

\begin{rem}
  The importance of special pairs comes from the existence of the \emph{core
  map} constructed by Campana \cite[Théo.~10.1]{MR2831280} for smooth
  $\cC$-pairs: Given a smooth $\cC$-pair $(X,D)$ where $X$ is compact Kähler,
  there exists a unique fibration $c_{(X,D)}: (X,D) \dashrightarrow C(X,D)$ with
  special generic fibres and orbifold base of general type.  Campana's
  construction therefore splits any smooth $\cC$-pair into two antithetic parts,
  of ``special'' and ``general'' type.
\end{rem}
 
Under assumptions that are substantially stronger than those of
Theorem~\ref{thm:6-10}, an analogue of the Bogomolov-Sommese vanishing theorem
will hold for sheaves of adapted reflexive tensors on arbitrary covers of $X$.
We include a full statement for future reference and refer the reader to
Sections~\ref{sec:15-2} and \ref{sec:15-4} for questions concerning potential
generalizations.

\begin{prop}[Bogomolov-Sommese vanishing on covers of $X$]\label{prop:6-15}%
  \index{Bogomolov-Sommese vanishing!on a cover of a $\cC$-pair}Let $(X,D)$ be a
  locally uniformizable $\cC$-pair where $X$ is compact Kähler.  Let $γ :
  \what{X} \twoheadrightarrow X$ be a cover.  If $ℱ ⊆ Ω^{[p]}_{(X, D, γ)}$ is
  coherent of rank one, then $κ_{\cC}(ℱ) ≤ p$.
\end{prop}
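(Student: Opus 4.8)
The plan is to reduce the statement on the cover $\what X$ to the Bogomolov-Sommese vanishing on $X$ itself, namely Theorem~\ref{thm:6-10}. The key observation is that the $\cC$-Kodaira-Iitaka dimension of a rank-one subsheaf $ℱ ⊆ Ω^{[p]}_{(X,D,γ)}$ is computed from sections of the $\cC$-product sheaves $\Sym^{[m]}_{\cC} ℱ$ over $\what X$, and we want to compare these with sections of adapted tensors over $X$ itself, i.e.\ over the trivial cover $\Id_X$. The tools for this comparison are the invariant push-forward results, concretely Lemma~\ref{lem:4-20}, together with the fact that $(X,D)$ being locally uniformizable is (by the remarks in Section~\ref{sec:2-5}) klt and hence log canonical, so that Theorem~\ref{thm:6-10} applies to $(X,D)$.

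Let me walk through the main steps. First I would like to reduce to the Galois case: choose a cover $δ : \wtilde X \twoheadrightarrow \what X$ so that $γ◦δ : \wtilde X \twoheadrightarrow X$ is Galois with group $G$, which is harmless because taking further covers can only increase the relevant spaces of sections and the functoriality of Observation~\ref{obs:4-15} identifies adapted reflexive tensors for $γ$ with (the invariants of) those for $γ◦δ$. So assume $γ$ itself is Galois with group $G$. Next, given a rank-one subsheaf $ℱ ⊆ Ω^{[p]}_{(X,D,γ)}$ and a positive section $σ ∈ H⁰(\what X, \Sym^{[m]}_{\cC} ℱ)$ for some $m$, I would form the $G$-average or, more efficiently, the product $\prod_{g ∈ G} g^* σ$ over the group, exactly as in the proof of Lemma~\ref{lem:6-2}. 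This produces a nonzero $G$-invariant section of $\Sym^{[m·\#G]}_{\cC} Ω^{[p]}_{(X,D,γ)}$. By Lemma~\ref{lem:4-20}, applied with $β = \Id_X$ and $α = γ$, the $G$-invariant sections of adapted reflexive tensors on $\what X$ coincide with adapted reflexive tensors on $X$ for the morphism $\Id_X$, so this invariant section descends to a nonzero section of $\Sym^{[m·\#G]}_{\cC} Ω^{[p]}_{(X,D,\Id_X)}$.

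The heart of the argument is then to organize this descent so that it respects the subsheaf structure and is compatible with the $\cC$-symmetric product operation, so that one obtains a rank-one subsheaf $ℱ' ⊆ Ω^{[p]}_{(X,D,\Id_X)}$ on $X$ whose $\cC$-Kodaira-Iitaka dimension is at least that of $ℱ$. Concretely, I would let $ℱ'$ be the saturation inside $Ω^{[p]}_{(X,D,\Id_X)}$ of the rank-one image generated by the descended invariant sections, and check that the meromorphic maps $φ_m$ attached to $ℱ$ on $\what X$ factor through (a power of) the corresponding maps for $ℱ'$ on $X$ via the finite morphism $γ$. Since $γ$ is finite, the dimension of the image of $φ_m$ on $\what X$ equals the dimension of the image of the corresponding map on $X$, giving $κ_{\cC}(ℱ) ≤ κ_{\cC}(ℱ')$. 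Applying Theorem~\ref{thm:6-10} to $ℱ' ⊆ Ω^{[p]}_{(X,D,\Id_X)}$ yields $κ_{\cC}(ℱ') ≤ p$, and the proposition follows.

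The step I expect to be the main obstacle is the compatibility between the $\cC$-symmetric product operation $\Sym^{[d]}_{\cC}$ (which involves a saturation, by Definition~\ref{def:6-5}) and the $G$-invariant descent of Lemma~\ref{lem:4-20}. The group-averaging product $\prod_{g} g^* σ$ naturally lives in $\Sym^{[m·\#G]}_{\cC} Ω^{[p]}_{(X,D,γ)}$ rather than in $\Sym^{[\#G]}_{\cC}$ of the rank-one sheaf $\Sym^{[m]}_{\cC} ℱ$, so one must carefully track how the saturations and the embeddings of Observation~\ref{obs:4-11} interact with taking invariants, and verify that no sections are lost or gained in a way that would spoil the comparison of Kodaira-Iitaka dimensions. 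Once this bookkeeping is done correctly — essentially showing that $κ_{\cC}$ can only increase under the finite pull-back $γ$ — the reduction to Theorem~\ref{thm:6-10} is immediate, and the local uniformizability hypothesis is exactly what guarantees both that $(X,D)$ is log canonical and that the pull-back machinery of Section~\ref{sec:5} is available.
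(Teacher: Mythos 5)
Your proposal takes a genuinely different route from the paper --- descending from $\what{X}$ to $X$ via Galois averaging and Lemma~\ref{lem:4-20}, then invoking Theorem~\ref{thm:6-10} --- but it contains a gap that I do not see how to close, and which is exactly the obstacle you flag at the end, in sharper form than you anticipate. Since $ℱ ⊆ Ω^{[p]}_{(X,D,γ)}$ is an \emph{arbitrary} rank-one subsheaf, it need not be $G$-invariant, so the naive descent $(γ_* ℱ)^G ⊆ Ω^{[p]}_{(X,D,\Id_X)}$ can be trivial. Your fix, the product $\prod_{g ∈ G} g^* σ$, repairs invariance at the level of sections but lands in $\Sym^{[m·\#G]}_{\cC} Ω^{[p]}_{(X,D,γ)}$, i.e.\ in a symmetric power with exponent strictly larger than one. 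There is no mechanism for extracting from such sections a rank-one subsheaf of $Ω^{[p]}_{(X,D,\Id_X)}$ itself: ``the rank-one image generated by the descended invariant sections'' is a subsheaf of $\Sym^{[N]}_{\cC} Ω^{[p]}_{(X,D,\Id_X)}$ with $N > 1$, not of $Ω^{[p]}_{(X,D,\Id_X)}$. And Theorem~\ref{thm:6-10} applies \emph{only} to subsheaves of $Ω^{[p]}_{(X, D, \Id_X)}$; the analogous bound $κ_{\cC} ≤ p$ for rank-one subsheaves of higher symmetric powers is simply false. Already for $D = 0$, $γ = \Id$ on a smooth projective surface with $c_1² > c_2$, Bogomolov's theorem produces rank-one subsheaves of $\Sym^n Ω¹_X$, $n ≫ 0$, of Kodaira dimension $2 > 1 = p$. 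So once the averaging trick has pushed you into a symmetric power, the target statement you would need on $X$ no longer holds, and no bookkeeping of saturations can recover it.

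The paper avoids descent altogether and goes the opposite way: it passes \emph{up} to a strong log resolution $π : Y → \what{X}$ and uses the pull-back machinery of Section~\ref{sec:5} (Fact~\ref{fact:5-9}, which is where local uniformizability enters --- not merely through ``locally uniformizable $⇒$ klt $⇒$ log canonical'' as in your reduction). It defines $ℱ^n_Y$ as the saturation of $(d_{\cC} π)\bigl(π^* \Sym^{[n]}_{\cC} ℱ\bigr)$ in $\Sym^n Ω^p_Y(\log D_Y)$ and then proves the key identity $\bigl(ℱ¹_Y\bigr)^{⊗ n} = ℱ^n_Y$ by a local direct-summand argument: $ℱ¹_Y$ is saturated in $Ω^p_Y(\log D_Y)$, hence locally a direct summand over a big open set, hence its tensor powers are saturated in the symmetric powers. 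This identity is precisely what collapses all the higher symmetric data onto powers of the single line bundle $ℱ¹_Y ⊆ Ω^p_Y(\log D_Y)$, so that the final application of Bogomolov--Sommese, \cite[Cor.~6.9]{EV92} on the smooth compact Kähler $Y$, is for a subsheaf of $Ω^p_Y(\log D_Y)$ itself --- exactly the configuration your descent route loses. If you want to salvage your strategy, you would need an additional argument reducing to $G$-invariant $ℱ$ (or a substitute for Theorem~\ref{thm:6-10} on the quotient), and I do not believe either is available.
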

\begin{proof}
  For the reader's convenience, we subdivide the proof into relatively
  independent steps.

  \subsubsection*{Step 1: Setup}
  
  Let $π : Y \twoheadrightarrow \what{X}$ be a strong log resolution of the pair
  $(\what{X}, γ^* ⌊D⌋)$ and consider the reduced divisor
  \[
    D_Y := \Bigl(π^*γ^* ⌊D⌋\Bigr)_{\red} ∈ \Div(Y).
  \]
  The pair $(Y, D_Y)$ is then snc, and Fact~\ref{fact:5-9} provides us with
  pull-back maps
  \[
    d_{\cC} π : π^* \Sym^{[n]}_{\cC} Ω^{[p]}_{(X,D,γ)} → \Sym^n Ω^p_Y (\log D_Y).
  \]
  We consider the saturated images of the $\cC$-product sheaves
  $\Sym^{[n]}_{\cC} ℱ$ and write
  \[
    ℱ^n_Y := \text{saturation of } (d_{\cC} π) \Bigl(π^* \Sym^{[n]}_{\cC} ℱ\Bigr) \text{ in } \Sym^n Ω^p_Y (\log D_Y).
  \]
  There are two things that we can say immediately.
  \begin{enumerate}
    \item The sheaves $ℱ^n_Y$ are reflexive of rank one.  Since $Y$ is smooth,
    this implies that $ℱ^n_Y$ are invertible, \cite[Lem.~1.1.5]{OSS}.
  
    \item\label{il:6-15-2} The compatibility of pull-back and reflexive
    symmetric products asserts that
    \[
      \bigl(ℱ¹_Y\bigr)^{⊗ n} = \Sym^n ℱ¹_Y \overset{\text{Fact~\ref{fact:5-14}}}{⊆} ℱ^n_Y, \quad \text{for every } n ∈ ℕ⁺.
    \]
  \end{enumerate}

  \subsubsection*{Step 2: Relation between $ℱ¹_Y$ and $ℱ^n_Y$}

  Following ideas of Patrick Graf \cite{GrafThesis,Gra13} and simplifying some
  of his arguments, we will show in this step that the inclusions in
  Item~\ref{il:6-15-2} are in fact equalities,
  \begin{equation}\label{eq:6-15-3}
    \bigl(ℱ¹_Y\bigr)^{⊗ n} = ℱ^n_Y, \quad \text{for every } n ∈ ℕ⁺.
  \end{equation}
  Since the sheaves on both sides of \eqref{eq:6-15-3} are locally free, it
  suffices to show equality on a suitable big open subset of $Y$.  To this end,
  recall from the construction of the saturation that $ℱ¹_Y$ appears on the left
  of an exact sequence of coherent sheaves on $Y$,
  \[
    0 → ℱ¹_Y → Ω^p_Y (\log D_Y) → 𝒬 → 0,
  \]
  where $𝒬$ is torsion free, and hence locally free over a suitable big, open
  subset $Y° ⊆ Y$, see \cite[Cor.~on p.~75]{OSS}.  Since short exact sequences
  of locally free sheaves are locally split, the sheaf $ℱ¹_Y|_{Y°}$ is locally a
  direct summand of $Ω^p_Y (\log D_Y)|_{Y°}$.  But then $\bigl(ℱ¹_Y\bigr)^{⊗
  n}|_{Y°}$ is locally a direct summand of $\Sym^n Ω^p_Y (\log D_Y)|_{Y°}$.  The
  subsheaf $\bigl(ℱ¹_Y\bigr)^{⊗ n}|_{Y°} ⊆ \Sym^n Ω^p_Y (\log D_Y)|_{Y°}$ is
  therefore saturated, hence equal to $ℱ^n_Y|_{Y°}$.  Equality~\eqref{eq:6-15-3}
  thus follows.

  \subsubsection*{Step 3: End of proof}

  By construction, sections of the $\cC$-product sheaves $\Sym^{[n]}_{\cC} ℱ$
  induce section of $ℱ^n_Y$,
  \[
    H⁰\Bigl( \what{X},\, \Sym^{[m]}_{\cC} ℱ \Bigr)
    ↪ H⁰\Bigl( Y,\, ℱ^n_Y \Bigr)
    \overset{\text{\eqref{eq:6-15-3}}}{=} H⁰\Bigl( Y,\, (ℱ¹_Y)^{⊗ n} \Bigr).
  \]
  If $h⁰\bigl( \what{X},\, \Sym^{[m]}_{\cC} ℱ \bigr) > 0$, then the associated
  meromorphic mappings are related,
  \[
    \begin{tikzcd}[row sep=1cm]
      Y \ar[r, dashed, "ψ_m"] \ar[d, two heads, "π"'] & ℙ\left( H⁰\Bigl( X,\, ℱ^n_Y \Bigr)^\vee \right) \ar[d, dashed, two heads, "\text{projection}"] \\
      \what{X} \ar[r, dashed, "φ_m"'] & ℙ\left( H⁰\Bigl( \what{X},\, \Sym^{[m]}_{\cC} ℱ \Bigr)^\vee \right),
    \end{tikzcd}
  \]
  so that $\dim \overline{φ_m(\what{X})} ≤ \dim \overline{ψ_m(Y)}$.  In summary,
  we find that $κ_{𝒞}(ℱ) ≤ κ(ℱ¹_Y)$.  The classic Bogomolov-Sommese vanishing
  theorem, \cite[Cor.~6.9]{EV92}, asserts that $κ(ℱ¹_Y) ≤ p$.
\end{proof}

\subsection{Conjectures on the geometry of special pairs}
\approvals{Erwan & yes \\ Stefan & yes}

The class of special $\cC$-pairs is supposed to generalize rational or elliptic
curves, which suggests the following conjectures made by Campana
\cite[Conj.~13.10, 13.15, 13.23]{MR2831280}.

\begin{conjecture}
  Let $(X,D)$ be a smooth $\cC$-pair where $X$ is compact Kähler.
  \begin{enumerate}
  \item If $⌊D⌋=0$ and $(X,D)$ is special, then the orbifold fundamental group
  $π_1(X,D)$ is almost Abelian.

  \item The orbifold Kobayashi pseudo-distance $d_{(X,D)}$ vanishes identically
  if and only if $(X,D)$ is special.

  \item If $(X,D)$ is projective defined over a number field $k$, then there
  exists a finite extension $k' ⊃ k$ such that rational points $(X,D)(k')$ are
  dense if and only if $(X,D)$ is special.
  \end{enumerate}
\end{conjecture}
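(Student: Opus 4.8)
The plan is to reduce all three equivalences to Campana's \emph{core map} $c_{(X,D)} : (X,D) \dashrightarrow C(X,D)$, recalled in the remark following Definition~\ref{def:6-12}, which writes an arbitrary smooth $\cC$-pair with $X$ compact K\"ahler as a fibration whose general fibres are \emph{special} and whose base $C(X,D)$ is a $\cC$-pair of \emph{general type}. Each equivalence then splits into two antithetic tasks: a \emph{positive} assertion about special pairs (produce a dense set of rational points, a Zariski-dense entire curve, a vanishing Kobayashi pseudo-distance, or an almost-Abelian fundamental group) and a \emph{negative} assertion about the general-type base (rational points are not potentially dense, there is no dense entire curve, the pseudo-distance is somewhere positive, the fundamental group is large). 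The contribution of the present paper is precisely the functorial machinery in which this reduction becomes rigorous: morphisms of $\cC$-pairs, the pull-back of adapted reflexive tensors of Fact~\ref{fact:5-9}, and the Bogomolov--Sommese vanishing of Theorem~\ref{thm:6-10} and Proposition~\ref{prop:6-15}.

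For part~(1) I would realise the fundamental-group statement via a $\cC$-pair Shafarevich, or $\Gamma$-reduction, map. The mechanism is that a fundamental group which is not almost Abelian should yield, after passing to a suitable cover $\gamma : \what{X} \onto X$, a positive-dimensional reduction and hence a rank-one subsheaf of $\Omega^{[p]}_{(X,D,\gamma)}$ of maximal $\cC$-Kodaira--Iitaka dimension --- that is, a Bogomolov sheaf in the sense of Definition~\ref{def:6-11}. This contradicts the bound $\kappa_{\cC} \le p$ of Proposition~\ref{prop:6-15} unless the reduction map is constant, forcing $\pi_1(X,D)$ to be almost Abelian. Turning ``large fundamental group'' into ``maximal-dimension adapted tensor'' is the delicate point, and the $\cC$-Albanese constructed in the companion paper \cite{orbialb2} is designed exactly to control the orbifold structure on the target of such reductions.

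Parts~(2) and~(3) are the analytic and arithmetic shadows of the same dichotomy. Here the \emph{negative} halves are, honestly, beyond current reach: ``general type $\Rightarrow$ no dense entire curve'' and ``general type $\Rightarrow$ not potentially dense'' already contain the Green--Griffiths--Lang and Bombieri--Lang conjectures, so no unconditional argument should be expected. What \emph{can} be planned is the reduction itself: using the pull-back $d_{\cC}$ and Proposition~\ref{prop:6-15}, one shows that a Zariski-dense entire curve, respectively a potentially dense set of rational points, on $(X,D)$ descends to the same on the base $C(X,D)$, so that the conjectured hyperbolicity and arithmetic of general-type $\cC$-pairs would force $C(X,D)$ to be a point and hence $(X,D)$ to be special. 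The \emph{positive} halves require \emph{constructing} entire curves and rational points on special pairs; the standard plan is to build them on the semi-abelian and rationally-connected ``atoms'' into which a special pair should decompose, which is why \cite{orbialb2} develops a $\cC$-Albanese and \cite{orbialb3} a $\cC$-analogue of the Bloch--Ochiai theorem.

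The main obstacle is thus not the reduction, which the functorial framework of this paper is meant to make routine, but the two extremes it isolates: proving Lang--Vojta-type hyperbolicity and non-density for $\cC$-pairs of general type, and proving the existence of abundant entire curves and rational points on special $\cC$-pairs. Both are wide open even for ordinary varieties, so a complete proof of the conjecture is a long-term research programme rather than a single argument; the realistic near-term goal is to establish the equivalences \emph{conditionally} on the classical Lang--Vojta and Green--Griffiths--Lang conjectures.
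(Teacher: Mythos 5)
This statement is one of the two conjectures closing Section~\ref{sec:6}: the paper attributes it to Campana (citing Conj.~13.10, 13.15 and 13.23 of \cite{MR2831280}) and offers \emph{no proof} of it. There is therefore no argument of the paper to compare yours against, and your proposal --- which candidly describes itself as a research programme rather than a proof --- should be judged on those terms. On that score you are essentially right: the core-map dichotomy you invoke is exactly the conceptual picture the paper itself records in the remark following Definition~\ref{def:6-12}, and your identification of the two irreducible difficulties (Lang--Vojta/Green--Griffiths--Lang-type non-density for general-type $\cC$-pairs, and the construction of abundant rational points and entire curves on special pairs, via the $\cC$-Albanese of \cite{orbialb2} and the Bloch--Ochiai analogue of \cite{orbialb3}) matches how the authors position their follow-up work. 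Your honest conclusion --- that only a conditional reduction is within reach --- is the correct assessment of the state of the art.

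Two smaller inaccuracies in your sketch are worth flagging. First, part~(1) of the conjecture is \emph{not} an equivalence: it asserts only the implication ``special $\Rightarrow$ $\pi_1(X,D)$ almost Abelian'' (indeed the converse is false already for classical varieties --- an Abelian variety blown up in a point is not special in general, yet has Abelian fundamental group --- so your plan of ``reducing all three equivalences'' to the core map misstates what must be proved in~(1)). Second, the descent step you call routine --- transporting a Zariski-dense entire curve or a potentially dense set of rational points from $(X,D)$ to the orbifold base $C(X,D)$ --- is not covered by the machinery of this paper as it stands: the core map is only meromorphic, the base carries an orbifold structure determined by multiple fibres rather than by a pull-back of divisors, and the paper explicitly defers the theory of $\cC$-\emph{meromorphic} maps and bimeromorphic invariance to the open problems of Section~\ref{sec:15-4} (Problem~\ref{p:15-10}). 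So even the reduction you describe requires foundations that do not yet exist for singular pairs; for smooth pairs it is due to Campana, not to the functorial framework here.
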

 
Campana has shown that a special compact Kähler manifold $X$ has a surjective
Albanese map \cite[Prop.~5.3]{Cam04} which implies in particular that its
classical augmented irregularity (computed with étale covers) satisfies
$\wtilde{q}(X) ≤ \dim X$.  We formulate a conjecture generalizing this property
to special $\cC$-pairs.  It will be discussed in a sequel \cite{orbialb2} of
this article.

\begin{conjecture}
  Let $(X,D)$ be a log canonical $\cC$-pair where $X$ is compact Kähler.  If
  $(X,D)$ is special, then the augmented irregularity satisfies $q⁺(X, D)≤ \dim
  X$.
\end{conjecture}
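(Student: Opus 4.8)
The plan is to emulate Campana's classical argument, which derives the bound on the (augmented) irregularity from surjectivity of the Albanese, replacing the ordinary Albanese by the $\cC$-Albanese constructed in the sequel \cite{orbialb2}. Recall that there the $\cC$-Albanese is defined as the universal morphism $a : (X,D) \dashrightarrow A$ to a $\cC$-semitoric variety $A$, the analogue of a semi-abelian variety. The first step is to identify the augmented irregularity with the dimension of this target: because $A$ is built from a semitorus, its adapted reflexive $1$-forms are translation invariant and pull back, through every cover $\gamma : \what{X} \to X$ and via the pull-back maps of Fact~\ref{fact:5-9}, to all of $H^0\bigl(\what{X},\, \Omega^{[1]}_{(X,D,\gamma)}\bigr)$; the universal property then yields $q^+(X,D) = \dim A$. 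Granting this identification, the inequality $q^+(X,D) \le \dim X$ becomes equivalent to the assertion that the $\cC$-Albanese map $a$ is dominant.

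Second, I would prove dominance of $a$ for special pairs by contradiction. If $a$ were not dominant, its image $Z := \overline{a(X)}$ would be a proper subvariety of the $\cC$-semitoric variety $A$. The $\cC$-analogue of the Ueno--Kawamata structure theory for subvarieties of semi-abelian varieties should then produce a nontrivial quotient $A \twoheadrightarrow B$ onto a $\cC$-semitoric variety for which the induced $\cC$-pair structure on the image of $Z$ is of general type, that is, carries a rank-one subsheaf of $\Omega^{[p]}$ with $\kappa_{\cC} = p$. Pulling this datum back along $a$ produces a rank-one subsheaf of $\Omega^{[p]}_{(X,D,\Id_X)}$ of maximal $\cC$-Kodaira--Iitaka dimension, i.e.\ a Bogomolov sheaf on $(X,D)$ in the sense of Definition~\ref{def:6-11}, contradicting the assumption that $(X,D)$ is special (Definition~\ref{def:6-12}). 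Here Theorem~\ref{thm:6-10} plays the same role it plays classically: it guarantees that a general-type quotient genuinely forces $\kappa_{\cC} = p$ and hence yields a Bogomolov sheaf.

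The main obstacle is twofold, and both parts are exactly what the sequel must supply. First, one must construct the $\cC$-Albanese $A$ with a universal property strong enough to give the identification $q^+(X,D) = \dim A$ over all covers simultaneously; this is delicate because the pull-back machinery of Section~\ref{sec:5} was established under the stronger hypothesis that $(X,D)$ is locally uniformizable, whereas here $(X,D)$ is only log canonical, so the compatibility of adapted reflexive differentials with the semitoric structure must be re-proved in this generality. Second, and more seriously, one needs a genuine $\cC$-pair version of the Ueno--Kawamata fibration theorem: a subvariety of a $\cC$-semitoric variety that is not a translate of a sub-semitorus must be shown to fiber, in the orbifold sense, over a lower-dimensional $\cC$-semitoric base of general type, with the boundary divisors behaving correctly under the quotient $A \twoheadrightarrow B$. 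Establishing this orbifold structure theorem is the crux, and is precisely the circle of ideas deferred to \cite{orbialb2}.
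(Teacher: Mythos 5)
The statement you are trying to prove is a \emph{conjecture} in this paper: the authors explicitly state it without proof and defer its discussion to the sequel \cite{orbialb2}, so there is no proof in the paper to compare against. Your text is honest about this --- it is a strategy outline, not a proof, and the two ``main obstacles'' you name (construction of the $\cC$-Albanese with a strong enough universal property, and an orbifold Ueno--Kawamata structure theorem) are genuinely open ingredients, not routine verifications. Your plan does match the route the authors signal: Campana's classical argument via surjectivity of the Albanese \cite[Prop.~5.3]{Cam04}, transported to $\cC$-pairs via the $\cC$-semitoric Albanese of \cite{orbialb2}. So as a research programme it is aligned with the paper; as a proof it does not exist yet.

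Beyond the gaps you name, let me point out two you pass over too quickly. First, the identification $q^+(X,D) = \dim A$ is a substantially stronger claim than anything the paper establishes: $q^+(X,D)$ is a supremum of $h^0\bigl(\what{X}, \Omega^{[1]}_{(X,D,\gamma)}\bigr)$ over \emph{all} covers $\gamma$, and the paper explicitly lists the dependence of $q(X,D,\gamma)$ on $\gamma$ as an open question (Section~\ref{sec:15-2}); moreover $q^+$ can be infinite (the paper's own example of a general type curve gives $q^+(X,0)=\infty$), so even finiteness of $q^+$ for special log canonical pairs is part of what must be proved, and a universal map to a single fixed target $A$ does not automatically control sections of $\Omega^{[1]}_{(X,D,\gamma)}$ on every cover --- one needs that every adapted reflexive $1$-form on every cover descends from, or is detected by, $A$. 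Second, your contradiction step produces a rank-one sheaf of maximal $\cC$-Kodaira--Iitaka dimension inside $\Omega^{[p]}_{(X,D,\gamma)}$ for some cover $\gamma$, but ``Bogomolov sheaf'' and ``special'' in Definitions~\ref{def:6-11} and \ref{def:6-12} are formulated with $\gamma = \Id_X$ only; to contradict specialness you must descend the sheaf to $\Omega^{[p]}_{(X,D,\Id_X)}$ (say via an invariant push-forward as in Lemma~\ref{lem:4-20}) without losing the equality $\kappa_{\cC} = p$, or else invoke a Bogomolov--Sommese statement on covers --- and Proposition~\ref{prop:6-15} supplies that only under local uniformizability, strictly stronger than the log canonical hypothesis of the conjecture. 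Both points sit exactly in the regime (log canonical, not locally uniformizable) where the pull-back machinery of Section~\ref{sec:5} is unavailable, which is why the conjecture remains open.
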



\phantomsection\addcontentsline{toc}{part}{Morphisms of \texorpdfstring{$\cC$}{C}-pairs}

%
%
\svnid{$Id: 07-diagrams.tex 848 2024-07-15 06:02:59Z kebekus $}
\selectlanguage{british}

\section{Diagrams admitting pull-back}
\subversionInfo
\label{sec:7}
\approvals{Erwan & yes \\ Stefan & yes}

We feel that the sheaves of adapted reflexive differentials, and in particular
the $\cC$-cotangent sheaf, are the key objects that make Campana's theory
useful.  Accordingly, we define a morphism of $\cC$-pairs as a morphism of
varieties that allows pull-back of adapted reflexive differentials, in a manner
that is compatible with the standard pull-back of Kähler differentials, and
hence with the pull-back maps introduced in Section~\ref{sec:5-4} above.

\subsection{Diagrams admitting pull-back}
\approvals{Erwan & yes \\ Stefan & yes}

The requirement that pull-back be ``compatible with the standard pull-back of
Kähler differentials and with the pull-back maps introduced in
Section~\ref{sec:5-4}'' is conceptually straightforward, but gets somewhat
technical to write down correctly.  To avoid any potential of confusion, we
clarify assumptions and notation explicitly, remind of the relevant facts, and
formulate the main definition in great detail.

\begin{setting}[Commutative diagram of $q$-morphisms]\label{set:7-1}%
  Assume we are given $\cC$-pairs $(X, D_X)$ and $(Y, D_Y)$ and a commutative
  diagram of the following form,
  \begin{equation}\label{eq:7-1-1}
    \begin{tikzcd}[column sep=large]
      \what{X} \ar[r, "\what{φ}"] \ar[d, "a\text{, $q$-morphism}"'] & \what{Y} \ar[d, "b\text{, $q$-morphism}"] \\
      X \ar[r, "φ"'] & Y.
    \end{tikzcd}
  \end{equation}
  Following the conventions of Sections~\ref{sec:2-3} and \ref{sec:2-5} write
  \begin{itemize}
    \item $X° := X ∖ \supp ⌊D_X⌋$ for the open part of $(X,D_X)$,

    \item $Y° := Y ∖ \supp ⌊D_Y⌋$ for the open part of $(Y,D_Y)$, and

    \item $Y^{\lu} ⊆ Y$ for the maximal open subset over which $(Y,D_Y)$ is
    locally uniformizable.
  \end{itemize}
  We assume that
  \begin{equation}\label{eq:7-1-2}
    φ(X°) ⊆ Y° \quad\text{and}\quad \img φ ∩ Y^{\lu} ≠ ∅.
  \end{equation}
\end{setting}

\begin{rem}[$q$-morphisms and covers]
  We stress that the morphisms $a$ and $b$ in \eqref{eq:7-1-1} need not be
  surjective.  In other words, we ask that $a$ and $b$ are \emph{$q$-morphisms}
  (and hence open, not necessarily surjective), but not necessarily adapted
  \emph{covers} (which would imply surjective).
\end{rem}

The discussion of Setting~\ref{set:7-1} involves a number of additional and
auxiliary objects, such as pull-back divisors and preimage sets.  For the
reader's convenience, we use the following notation consistently throughout the
present section.

\begin{notation}[Divisors, open sets and restrictions in Setting~\ref{set:7-1}]\label{not:7-3}%
  In Setting~\ref{set:7-1}, consider the reduced divisor
  \[
    \what{D}_X := \bigl(a^* ⌊D_X⌋\bigr)_{\red} ∈ \Div(\what{X}).
  \]
  We consider the preimage set
  \[
    \what{Y}⁺ := b^{-1} \bigl(Y^{\lu}\bigr) ⊆ \what{Y}
    \quad\text{and let}\quad
    \what{X}⁺ ⊆ \what{φ}^{-1}\bigl(\what{Y}⁺\bigr) ⊆ \what{X}
  \]
  be the maximal open subset where $(\what{X}, \what{D}_X)$ is nc.  For brevity,
  denote the restrictions by
  \[
    \bigl(\what{X}⁺, \what{D}_X⁺\bigr) := \bigl(\what{X}, \what{D}_X \bigr) \bigr|_{\what{X}⁺}
    \quad\text{and}\quad
    \what{φ}⁺ := \what{φ}|_{\what{X}⁺} : \what{X}⁺ → \what{Y}⁺.
  \]
  The following diagram summarizes the situation,
  \[
    \begin{tikzcd}
      \what{X}⁺ \ar[d, "a|_{\what{X}⁺}"'] \ar[rrr, bend left=20, "\what{φ}⁺"] \ar[r, phantom, pos=0.65, "⊆"] & \what{X} \ar[r, "\what{φ}"'] \ar[d, "a"'] & \what{Y} \ar[d, "b"] \ar[r, phantom, pos=0.45, "⊇"] & \what{Y}⁺ \ar[d, "b|_{\what{Y}⁺}"] \\
      φ^{-1}\bigl(Y^{\lu}\bigr) \ar[r, phantom, "⊆"] & X \ar[r, "φ"'] & Y \ar[r, phantom, "⊇"] & Y^{\lu}.
    \end{tikzcd}
  \]
\end{notation}

\begin{obs}[Pull-back morphisms in Setting~\ref{set:7-1}]\label{obs:7-4}%
  Setting~\ref{set:7-1} reproduces the setup discussed in Section~\ref{sec:5},
  where we introduced the pull-back map for adapted reflexive differentials.
  Using Notation~\ref{not:7-3}, observe that conditions~\eqref{eq:7-1-2} of
  Setting~\ref{set:7-1} guarantee that
  \[
    \supp \bigl(\what{φ}⁺\bigr)^* b^* ⌊D_Y⌋ ⊆ \supp \what{D}_X⁺
    \quad\text{and}\quad
    \what{X}⁺ ≠ ∅.
  \]
  Modulo some differences in the notation, we are therefore in the setting
  spelled out in \vref{set:5-2}.  Fact~\ref{fact:5-9} will therefore apply to
  give canonical pull-back morphisms
  \begin{equation}\label{eq:7-4-1}
    d_{\cC} \what{φ}⁺ : \bigl(\what{φ}⁺\bigr)^* \Sym^{[n]}_{\cC} Ω^{[•]}_{(Y,D_Y,b)} → \Sym^n Ω^•_{\what{X}⁺}(\log \what{D}_X⁺)
  \end{equation}
  that enjoy all properties spelled out in Fact~\ref{fact:5-11}--\ref{fact:5-14}
  above.
\end{obs}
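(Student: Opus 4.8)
The plan is to recognise the data of Setting~\ref{set:7-1} as an instance of Setting~\ref{set:5-2}, after restricting to the relevant open sets and swapping the roles of source and target. Concretely, I would take the locally uniformizable pair of Setting~\ref{set:5-2} to be $\bigl(Y^{\lu}, D_Y|_{Y^{\lu}}\bigr)$, its cover to be $b|_{\what{Y}⁺} : \what{Y}⁺ → Y^{\lu}$ (a $q$-morphism, being the restriction of the $q$-morphism $b$ to the open set $\what{Y}⁺ = b^{-1}(Y^{\lu})$), and the smooth nc log pair to be $\bigl(\what{X}⁺, \what{D}_X⁺\bigr)$, which maps into the cover via $\what{φ}⁺ : \what{X}⁺ → \what{Y}⁺$. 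The pair $\bigl(Y^{\lu}, D_Y|_{Y^{\lu}}\bigr)$ is locally uniformizable by the very definition of $Y^{\lu}$, and $\bigl(\what{X}⁺, \what{D}_X⁺\bigr)$ is an nc log pair because $\what{D}_X = \bigl(a^*⌊D_X⌋\bigr)_{\red}$ is reduced while $\what{X}⁺$ is, by Notation~\ref{not:7-3}, the maximal open set on which $(\what{X}, \what{D}_X)$ is nc. With this dictionary in place, the two displayed assertions are exactly the hypotheses of Fact~\ref{fact:5-9}, and the pull-back morphism \eqref{eq:7-4-1} is the one produced there, inheriting its properties verbatim from Facts~\ref{fact:5-11}--\ref{fact:5-14}.

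For the support containment I would argue purely in terms of preimages of supports, which sidesteps any question of whether $\what{φ}⁺$ pulls Weil divisors back as divisors. By construction the support $\supp\bigl(\what{φ}⁺\bigr)^* b^*⌊D_Y⌋$ is contained in $\bigl(\what{φ}⁺\bigr)^{-1}\bigl(\supp b^*⌊D_Y⌋\bigr) ⊆ \bigl(\what{φ}⁺\bigr)^{-1}\bigl(b^{-1}(\supp⌊D_Y⌋)\bigr)$. Commutativity of \eqref{eq:7-1-1} gives $b◦\what{φ} = φ◦a$, so this last set equals $\bigl(a|_{\what{X}⁺}\bigr)^{-1}\bigl(φ^{-1}(\supp⌊D_Y⌋)\bigr)$. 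The first condition in \eqref{eq:7-1-2}, namely $φ(X°) ⊆ Y°$, unwinds via Notation~\ref{not:2-14} to the inclusion $φ^{-1}(\supp⌊D_Y⌋) ⊆ \supp⌊D_X⌋$; since $a$ is quasi-finite, $a^{-1}(\supp⌊D_X⌋)$ is pure of codimension one and equals $\supp\what{D}_X$, so the support in question lands in $\supp\what{D}_X ∩ \what{X}⁺ = \supp\what{D}_X⁺$, as required.

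The one point that genuinely uses the geometry, and which I regard as the crux, is the nonemptiness $\what{X}⁺ ≠ ∅$. Using $b◦\what{φ} = φ◦a$ once more I compute $\what{φ}^{-1}\bigl(\what{Y}⁺\bigr) = a^{-1}\bigl(φ^{-1}(Y^{\lu})\bigr)$. Now $\img a$ and $φ^{-1}(Y^{\lu})$ are both nonempty open subsets of $X$: the former because $a$ is open (Reminder~\ref{remi:2-19}), the latter because $\img φ ∩ Y^{\lu} ≠ ∅$ by the second condition in \eqref{eq:7-1-2}. As $X$ is an analytic variety, hence irreducible, any two nonempty open subsets meet, so $\img a ∩ φ^{-1}(Y^{\lu}) ≠ ∅$ and therefore $\what{φ}^{-1}\bigl(\what{Y}⁺\bigr) ≠ ∅$. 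Finally, the locus where $(\what{X}, \what{D}_X)$ is nc is big (Observation~\ref{obs:2-15}), hence dense in the irreducible variety $\what{X}$, so it meets the nonempty open set $\what{φ}^{-1}\bigl(\what{Y}⁺\bigr)$; the intersection is precisely $\what{X}⁺$, which is therefore nonempty.

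With both conditions verified, Fact~\ref{fact:5-9} applies to the data above and delivers the canonical pull-back morphisms \eqref{eq:7-4-1}, together with the asserted properties. I would close by noting that the sheaf $\Sym^{[n]}_{\cC} Ω^{[•]}_{(Y,D_Y,b)}$ in \eqref{eq:7-4-1} restricts over $\what{Y}⁺$ to the sheaf of adapted reflexive tensors for the restricted data $\bigl(Y^{\lu}, D_Y|_{Y^{\lu}}, b|_{\what{Y}⁺}\bigr)$, since the construction in Definition~\ref{def:4-3} commutes with restriction to open subsets; this is what makes the identification with Setting~\ref{set:5-2} literal rather than merely ``up to differences in notation''.
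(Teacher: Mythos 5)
Your proposal is correct and follows the same route the paper intends: Observation~7.4 is left unproved there precisely because it amounts to unwinding Notation~\ref{not:7-3} against Setting~\ref{set:5-2}, and your dictionary (locally uniformizable pair $(Y^{\lu}, D_Y|_{Y^{\lu}})$, $q$-morphism $b|_{\what{Y}⁺}$, nc log pair $(\what{X}⁺,\what{D}_X⁺)$) is exactly the intended identification. Your two supplied details --- that $a^{-1}(\supp⌊D_X⌋)=\supp\what{D}_X$ via quasi-finiteness \emph{and} openness of the $q$-morphism $a$ (Reminder~\ref{remi:2-19}), and that $\what{X}⁺≠∅$ via irreducibility of $X$ plus bigness of the nc locus (Observation~\ref{obs:2-15}) --- are precisely the points the paper leaves implicit, and both are argued correctly.
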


\begin{rem}[Pull-back and adapted reflexive differentials]
  For the upcoming discussion, recall from Observation~\ref{obs:4-8} that the
  targets of the pull-back morphisms \eqref{eq:7-4-1} contain the sheaves of
  adapted reflexive tensors,
  \[
    \Sym^{[•]}_{\cC} Ω^{[•]}_{(X,D_X,a)} \bigr|_{\what{X}⁺} ⊆ \Sym^• Ω^•_{\what{X}⁺}(\log \what{D}_X⁺).
  \]
\end{rem}

\begin{defn}[{Diagrams admitting pull-back of adapted tensors}]\label{def:7-6}%
  Assume Setting~\ref{set:7-1} and use Notation~\ref{not:7-3}.  Given numbers
  $n, p ∈ ℕ⁺$, we say that \emph{$\what{φ}$ admits pull-back of adapted
  reflexive $(n,p)$-tensors}\index{admitting pull-back!of adapted reflexive
  tensors} or \emph{Diagram~\eqref{eq:7-1-1} admits pull-back of adapted
  reflexive $(n,p)$-tensors} if there exists a sheaf morphism
  \[
    η : \what{φ}^* \Bigl( \Sym^{[n]}_{\cC} Ω^{[p]}_{(Y, D_Y, b)} \Bigr) → \Sym^{[n]}_{\cC} Ω^{[p]}_{(X, D_X, a)}
  \]
  whose restriction to $\what{X}⁺$ agrees with the pull-back morphism $d_{\cC}
  \what{φ}⁺$.  In other words, there exists a factorization of $d_{\cC}
  \what{φ}⁺$ as follows,
  \[
    \begin{tikzcd}[column sep=1.2cm]
      \what{φ}^* \bigl( \Sym^{[n]}_{\cC} Ω^{[p]}_{(Y, D_Y, b)} \bigr)\bigr|_{\what{X}⁺} \ar[r, "η|_{\what{X}⁺}"'] \ar[rr, bend left=10, "d_{\cC} \what{φ}⁺"] & \Sym^{[n]}_{\cC} Ω^{[p]}_{(X, D_X, a)}\bigr|_{\what{X}⁺} \ar[r, hook, "\text{Obs.~\ref{obs:4-8}}"'] & \Sym^n Ω^p_{\what{X}⁺}(\log \what{D}⁺_X),
    \end{tikzcd}
  \]
\end{defn}

\begin{notation}[Diagrams admitting pull-back of adapted differentials]
  Assume the setting of Definition~\ref{def:7-6}.  We say that
  \emph{Diagram~\eqref{eq:7-1-1} admits pull-back of adapted reflexive
  differentials}\index{admitting pull-back!of adapted reflexive differentials}
  if it admits pull-back of adapted reflexive $(1,p)$-tensors, for every $p ∈
  ℕ⁺$.
\end{notation}

\begin{warning}[Not enough to consider $p = 1$]%
  Assume the setup of Definition~\ref{def:7-6}.  When proving that $\what{φ}$
  admits pull-back of adapted reflexive differentials, one might be tempted to
  hope that it suffices to check that $\what{φ}$ admits pull-back of adapted
  reflexive differentials only for $p=1$.  This is not the case in general.  The
  sheaf $Ω^{[1]}_{(Y, D_Y, b)}$ is typically \emph{not} locally free, the
  natural morphisms
  \[
    \bigwedge^p Ω^{[1]}_{(Y, D_Y, b)} → Ω^{[p]}_{(Y, D_Y, b)}
  \]
  are typically \emph{not} surjective, and sections in $Ω^{[p]}_{(Y, D_Y, b)}$
  can typically \emph{not even locally} be written as products of sections in
  $Ω^{[1]}_{(Y, D_Y, b)}$.  We refer the reader to Example~\vref{ex:8-7} for a
  concrete example.
\end{warning}

\subsection{Elementary properties}
\approvals{Erwan & yes \\ Stefan & yes}

The following observations are elementary but useful.  We include them for later
reference.

\begin{obs}[Uniqueness]
  If it exists at all, then the morphism $η$ of Definition~\ref{def:7-6} is
  unique.  \qed
\end{obs}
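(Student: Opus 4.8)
The plan is to reduce the statement to torsion-freeness of the target sheaf together with the irreducibility of $\what{X}$. First I would suppose that two sheaf morphisms $η$ and $η'$ both satisfy the requirement of Definition~\ref{def:7-6}, so that both restrict over the open set $\what{X}⁺$ to the canonical pull-back morphism $d_{\cC} \what{φ}⁺$. Their difference $δ := η - η'$ is then a morphism $\what{φ}^*\bigl( \Sym^{[n]}_{\cC} Ω^{[p]}_{(Y, D_Y, b)} \bigr) → \Sym^{[n]}_{\cC} Ω^{[p]}_{(X, D_X, a)}$ whose restriction to $\what{X}⁺$ is the zero morphism, and the task becomes to show that $δ = 0$ on all of $\what{X}$.

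The first ingredient is that $\what{X}⁺$ is dense in $\what{X}$. Indeed, $\what{X}$ is an analytic variety and is therefore irreducible, while Observation~\ref{obs:7-4} records that conditions~\eqref{eq:7-1-2} of Setting~\ref{set:7-1} force $\what{X}⁺ ≠ ∅$; any nonempty open subset of an irreducible space is dense. The second ingredient is Proposition~\ref{prop:4-5}, by which the target $\Sym^{[n]}_{\cC} Ω^{[p]}_{(X, D_X, a)}$ is reflexive, and in particular torsion-free.

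With these two facts in hand the conclusion follows from a standard torsion-free argument. Given any open $U ⊆ \what{X}$ and any local section $s$ of the source over $U$, the image $δ(s)$ is a section of the torsion-free sheaf $\Sym^{[n]}_{\cC} Ω^{[p]}_{(X, D_X, a)}$ over $U$ that vanishes on the dense open subset $U ∩ \what{X}⁺$; a section of a torsion-free sheaf on an irreducible normal space that vanishes on a dense open set embeds into its sheaf of meromorphic sections with trivial image, hence is zero. Thus $δ(s) = 0$ for every such $s$, so $δ = 0$ and $η = η'$.

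I expect no serious obstacle here; the only point that genuinely requires care is that one cannot argue via the isomorphism $\Hom_{\what{X}}(𝒜, ℬ) ≅ \Hom_{\what{X}⁺}(𝒜|_{\what{X}⁺}, ℬ|_{\what{X}⁺})$ used in the proof of Observation~\ref{obs:4-8}, since the source $\what{φ}^*\bigl( \Sym^{[n]}_{\cC} Ω^{[p]}_{(Y, D_Y, b)} \bigr)$ is a (non-reflexive) pull-back and may well carry torsion. It is therefore essential that the argument uses only the torsion-freeness of the \emph{target}, which is all that is needed, and in particular that it does not require $\what{X}⁺$ to be big, only nonempty.
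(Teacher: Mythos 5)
Your argument is correct and is essentially the paper's own: the remark placed right after this observation notes that $η$ is a section of $\sHom_{\what{X}}\bigl(\what{φ}^* \Sym^{[n]}_{\cC} Ω^{[p]}_{(Y, D_Y, b)},\, \Sym^{[n]}_{\cC} Ω^{[p]}_{(X, D_X, a)}\bigr)$, which is torsion-free (indeed reflexive) because the target is, so $η$ is determined by its restriction to the dense open set $\what{X}⁺$ — exactly your difference-morphism argument.

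Two points of precision. First, your justification of density, ``any nonempty open subset of an irreducible space is dense'', is a Zariski-topology reflex that is false for the Euclidean topology of a complex space: an open ball in $ℂ²$ is not dense. What saves the step is that $\what{X} ∖ \what{X}⁺$ is analytic — $Y^{\lu} ⊆ Y$ is Zariski open, so $\what{φ}^{-1}\bigl(\what{Y}⁺\bigr)$ has analytic complement, as does the nc locus — and a proper analytic subset of an irreducible complex space is nowhere dense; hence $\what{X}⁺$ is dense after all. (Alternatively, nonemptiness genuinely suffices without density: the non-vanishing locus of a section of a torsion-free coherent sheaf is open and closed, because at a boundary point of the support the germ is annihilated by a power of a nonzero function cutting out the support, by the Rückert Nullstellensatz and local irreducibility of the normal space $\what{X}$; since $\what{X}$ is connected, the section vanishes identically. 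So your closing claim is right, but not for the reason you give.) Second, your caveat about Observation~\ref{obs:4-8} is slightly misplaced: $\sHom(𝒜, ℬ)$ is reflexive whenever $ℬ$ is, irrespective of torsion in $𝒜$ — the paper uses exactly this two observations later. The genuine reason one cannot simply quote the restriction isomorphism there is that $\what{X}⁺$ need not be \emph{big}; but for uniqueness only injectivity of restriction is needed, which is precisely what your torsion-freeness argument supplies.
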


\begin{obs}[Compatibility with pull-back of Kähler tensors]\label{obs:7-10}%
  In the setting of Definition~\ref{def:7-6}, assume that we are given numbers
  $n, p ∈ ℕ⁺$ for which the pull-back morphism $η$ exists.  Let $σ ∈ H⁰\bigl(
  \what{Y},\, \Sym^n Ω^p_{\what{Y}}\bigr)$ be a Kähler tensor, with pull-back $τ
  ∈ H⁰\bigl( \what{X},\, \Sym^n Ω^p_{\what{X}}\bigr)$ and denote the associated
  reflexive tensors by
  \[
    σ_r ∈ H⁰\bigl(\what{Y},\, \Sym^{[n]} Ω^{[p]}_{\what{Y}}\bigr)
    \quad\text{and}\quad
    τ_r ∈ H⁰\bigl(\what{X},\, \Sym^{[n]} Ω^{[p]}_{\what{X}}\bigr).
  \]
  If $σ_r$ is adapted, then Fact~\ref{fact:5-11} implies that $τ_r$ is adapted,
  and that the composed morphism
  \begin{align*}
    H⁰\bigl( \what{Y},\, \Sym^{[n]}_{\cC} Ω^{[p]}_{(Y,D_Y,b)}\bigr) & \xrightarrow{\what{φ}^*} H⁰\bigl( \what{X},\, φ^* \Sym^{[n]}_{\cC} Ω^{[p]}_{(Y,D_Y,b)}\bigr) \\
    & \xrightarrow{H⁰(η)} H⁰\bigl( \what{X},\, \Sym^{[n]}_{\cC} Ω^{[p]}_{(X,D_X,a)}\bigr)
  \end{align*}
  maps $σ_r$ to $τ_r$.  \qed
\end{obs}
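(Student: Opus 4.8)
The plan is to reduce the whole statement to the big open subset $\what{X}^+$ of Notation~\ref{not:7-3}, where Fact~\ref{fact:5-11} applies verbatim. First I would record, as in Observation~\ref{obs:7-4}, that the restricted data $\what{X}^+ \xrightarrow{\what{\varphi}^+} \what{Y}^+ \xrightarrow{b} Y$ is an instance of Setting~\ref{set:5-2}: here the locally uniformizable $\cC$-pair is $(Y,D_Y)$, the $q$-morphism is $b$, and the smooth nc log pair is $\bigl(\what{X}^+,\what{D}_X^+\bigr)$. With this dictionary in place, Fact~\ref{fact:5-11} applies to the pull-back morphism $d_{\cC}\what{\varphi}^+$ and tells me that the composition $H^0\bigl(d_{\cC}\what{\varphi}^+\bigr)\circ(\what{\varphi}^+)^*$ carries $\sigma_r|_{\what{Y}^+}$ to the classical Kähler pull-back $(d\what{\varphi})(\sigma)$ restricted to $\what{X}^+$. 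Since $(d\what{\varphi})(\sigma)=\tau$ by definition of $\tau$, this image is $\tau|_{\what{X}^+}$, sitting inside the subsheaf $\Sym^n\Omega^p_{\what{X}^+}$ of honest (non-logarithmic) Kähler tensors.

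Next I would invoke the defining property of $\eta$ from Definition~\ref{def:7-6}: the restriction $\eta|_{\what{X}^+}$, postcomposed with the inclusion of Observation~\ref{obs:4-8}, equals $d_{\cC}\what{\varphi}^+$. Hence $H^0(\eta)\bigl(\what{\varphi}^*\sigma_r\bigr)$ is a global section of the reflexive sheaf $\Sym^{[n]}_{\cC}\Omega^{[p]}_{(X,D_X,a)}$ whose restriction to $\what{X}^+$ coincides, under the inclusion of Observation~\ref{obs:4-8}, with $\tau|_{\what{X}^+}$. On the other hand $\what{X}^+$ lies in the smooth locus of $\what{X}$, so the reflexive hull $\tau_r$ of the Kähler tensor $\tau$ agrees with $\tau$ there; thus $\tau_r$ and $H^0(\eta)\bigl(\what{\varphi}^*\sigma_r\bigr)$ are two meromorphic reflexive tensors in $\sM_{\what{X}}\otimes\Sym^{[n]}\Omega^{[p]}_{\what{X}}$ that agree on the big, hence dense, open subset $\what{X}^+$. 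Because a meromorphic section is determined by its restriction to any dense open subset of the normal variety $\what{X}$, the two sections are equal. This equality delivers both assertions at once: $\tau_r$ equals a section of the adapted subsheaf and is therefore itself adapted, and the composed morphism through $\eta$ sends $\sigma_r$ to $\tau_r$.

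I expect no genuine difficulty: all the substance is hidden in Fact~\ref{fact:5-11}, and the argument is essentially a bookkeeping exercise. The two points that require attention are getting the correspondence between Setting~\ref{set:7-1} and Setting~\ref{set:5-2} right --- identifying which of the four pairs plays the role of base, cover, and smooth domain --- and the clean passage from an identity over the big open set $\what{X}^+$ to an identity of global sections, which is legitimate precisely because the sheaves involved are reflexive and the ambient sheaf of meromorphic tensors is determined on dense opens.
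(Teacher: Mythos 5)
Your proof is correct and follows essentially the same route the paper intends for this observation: apply Fact~\ref{fact:5-11} over $\what{X}^{+}$ (after matching Setting~\ref{set:7-1} with Setting~\ref{set:5-2} exactly as you do), combine it with the defining property of $\eta$ from Definition~\ref{def:7-6}, and propagate the resulting identity inside $\sM_{\what{X}} \otimes \Sym^{[n]} \Omega^{[p]}_{\what{X}}$ from a dense open set to all of $\what{X}$. One small correction: $\what{X}^{+}$ need not be \emph{big} in $\what{X}$, since $\what{\varphi}$ is not quasi-finite and may contract divisors into the small set $\what{Y} \setminus \what{Y}^{+}$ --- but it is dense (its complement is a proper analytic subset of the irreducible variety $\what{X}$), and density is all your final extension step actually uses.
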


\begin{obs}[Local nature]\label{obs:7-11}%
  In the setting of Definition~\ref{def:7-6}, assume that we are given numbers
  $n, p ∈ ℕ⁺$.  Let $(\what{U}_i)_{i ∈ I}$ and $(\what{V}_j)_{j ∈ J}$ be open
  coverings of $\what{X}$ and $\what{Y}$, respectively.  Then, the morphism
  $\what{φ}$ admits pull-back of adapted reflexive $(n,p)$-tensors if and only
  if every restricted morphism
  \[
    \what{φ}|_{\what{U}_i ∩ \what{φ}^{-1}(\what{V}_i)} : \what{U}_i ∩ \what{φ}^{\,-1}\bigl(\what{V}_i\bigr) → \what{V}_i
  \]
  admits pull-back of adapted reflexive $(n,p)$-tensors.  \qed
\end{obs}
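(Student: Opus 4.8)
The plan is to read \emph{``$\what{\phi}$ admits pull-back of adapted reflexive $(n,p)$-tensors''} as the existence of a global section, with prescribed restriction to $\what{X}⁺$, of the sheaf $\sHom\bigl(\what{\phi}^*(\Sym^{[n]}_{\cC}Ω^{[p]}_{(Y,D_Y,b)}),\, \Sym^{[n]}_{\cC}Ω^{[p]}_{(X,D_X,a)}\bigr)$, and then to exploit that both this $\sHom$-sheaf and the prescribed-restriction condition are local in nature. First I would set up notation: write $\what{W}_{ij} := \what{U}_i ∩ \what{\phi}^{-1}(\what{V}_j)$, and note that these form an open cover of $\what{X}$ (because the $\what{U}_i$ cover $\what{X}$ and the $\what{\phi}^{-1}(\what{V}_j)$ cover $\what{\phi}^{-1}(\what{Y})$). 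Since $q$-morphisms are open, each restricted morphism $\what{\phi}|_{\what{W}_{ij}} : \what{W}_{ij} → \what{V}_j$ sits inside a restricted instance of Setting~\ref{set:7-1}, obtained by restricting $a$, $b$, $\phi$ and the boundary divisors to the relevant open images; its associated nc-locus is $\what{W}_{ij} ∩ \what{X}⁺$, and its pull-back map is simply the restriction of $d_{\cC}\what{\phi}⁺$ of Observation~\ref{obs:7-4}. The two facts I would rely on throughout are: (i) the pull-back maps $d_{\cC}$ commute with restriction to open subsets of domain and target (as noted for Construction~\ref{cons:5-7}, before Fact~\ref{fact:5-9}); and (ii) the pull-back morphism $η$ of Definition~\ref{def:7-6}, if it exists, is unique (the uniqueness observation above).

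For the forward implication, suppose a global $η$ exists. Restricting $η$ to each $\what{W}_{ij}$ yields a sheaf morphism of the required source and target over $\what{W}_{ij}$; by fact (i), its restriction to $\what{W}_{ij} ∩ \what{X}⁺$ agrees with the pull-back map of the restricted diagram, so $η|_{\what{W}_{ij}}$ witnesses that the restricted morphism admits pull-back. This direction is immediate. For the converse, suppose each restricted morphism admits pull-back, with witnessing morphism $η_{ij}$ (unique by (ii)). I would glue the $η_{ij}$ into a global $η$. On an overlap $\what{W}_{ij} ∩ \what{W}_{i'j'}$, the restrictions $η_{ij}$ and $η_{i'j'}$ are both witnessing morphisms for the diagram over that overlap — again because $d_{\cC}$ commutes with the further restriction — so by the uniqueness observation applied to the overlap diagram they coincide there. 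As $\sHom$ is a sheaf, the $η_{ij}$ therefore glue to a unique global sheaf morphism $η$. Finally, its defining property, namely agreement with $d_{\cC}\what{\phi}⁺$ on $\what{X}⁺$, may be checked on the open cover $\{\what{W}_{ij} ∩ \what{X}⁺\}$ of $\what{X}⁺$, where it holds by construction; hence $η$ witnesses that $\what{\phi}$ admits pull-back.

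The step I expect to be the main (though modest) obstacle is precisely the overlap-compatibility in the converse direction: everything reduces to formal sheaf gluing \emph{once} one knows that the two local morphisms agree on overlaps, and this agreement is where both key inputs are genuinely used. One must verify that each overlap diagram is itself a legitimate instance of Setting~\ref{set:7-1} — so that the uniqueness observation is applicable — and that the restricted pull-back map there is compatibly obtained from $d_{\cC}\what{\phi}⁺$; the commutation of $d_{\cC}$ with restriction is exactly what supplies this compatibility. I would therefore organize the write-up so that the bookkeeping of restricted settings and their nc-loci (via Notation~\ref{not:7-3}) is done once, and then invoke uniqueness and the sheaf property of $\sHom$ to conclude.
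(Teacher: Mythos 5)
Your proposal is correct and is exactly the argument the paper intends: the paper marks Observation~\ref{obs:7-11} with \qed{} and leaves it to the reader, but the two ingredients you use — uniqueness of $η$ (the Uniqueness observation, resting on density of $\what{X}⁺$ and torsion-freeness of $\Sym^{[n]}_{\cC} Ω^{[p]}_{(X,D_X,a)}$) and the view of $η$ as a section of the reflexive $\sHom$-sheaf, glued via the restriction-compatibility of $d_{\cC}$ — are precisely those the paper itself supplies immediately before and after the statement. Your closing caveat about checking that each restricted and overlap diagram is a legitimate instance of Setting~\ref{set:7-1} (in particular that its image still meets the locally uniformizable locus, which follows since $\what{X}⁺$ is dense in $\what{X}$) is the right point to be careful about, and handling it once via Notation~\ref{not:7-3} is how the paper's framework is designed to be used.
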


If the pull-back map $η$ exists for given numbers $n, p ∈ ℕ⁺$, it can be seen as
a section of the subsheaf
\[
  \sHom_{\what{X}} \Bigl( \what{φ}^* \Sym^{[n]}_{\cC} Ω^{[p]}_{(Y, D_Y, b)},\, \Sym^{[n]}_{\cC} Ω^{[p]}_{(X, D_X, a)} \Bigr),
\]
which is reflexive since $\Sym^{[n]}_{\cC} Ω^{[p]}_{(X, D_X, a)}$ is.  To give a
section of this sheaf over $\what{X}$, it is thus equivalent to give a section
over any big open subset.

\begin{obs}[Removing small subsets]\label{obs:7-12}%
  In the setting of Definition~\ref{def:7-6}, assume that we are
  given numbers $n, p ∈ ℕ⁺$.  Let $\what{U} ⊆ \what{X}$ be a big open subset.
  Then, the morphism $\what{φ}$ admits pull-back of adapted reflexive
  $(n,p)$-tensors if and only if the morphism $\what{φ}|_{\what{U}}$ admits
  pull-back of adapted reflexive $(n,p)$-tensors.  \qed
\end{obs}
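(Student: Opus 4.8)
The plan is to reduce everything to the extension property for sections of reflexive sheaves across small sets, exactly as foreshadowed in the paragraph preceding the statement. Write
\[
  \sG := \sHom_{\what{X}}\Bigl( \what{φ}^* \Sym^{[n]}_{\cC} Ω^{[p]}_{(Y, D_Y, b)},\ \Sym^{[n]}_{\cC} Ω^{[p]}_{(X, D_X, a)} \Bigr).
\]
A pull-back morphism for the fixed numbers $n, p$ is the same datum as a section of $\sG$ whose restriction to $\what{X}⁺$ equals $d_{\cC}\what{φ}⁺$ (after composing with the inclusion of Observation~\ref{obs:4-8}). Since $\Sym^{[n]}_{\cC} Ω^{[p]}_{(X, D_X, a)}$ is reflexive by Proposition~\ref{prop:4-5}, the sheaf $\sG$ is reflexive as well.

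The forward direction is immediate: I would simply restrict a given pull-back morphism $η$ for $\what{φ}$ to $\what{U}$. Here one uses that the nc-locus attached to $\what{φ}|_{\what{U}}$ by Notation~\ref{not:7-3} is $\what{X}⁺ ∩ \what{U}$ (because ``nc'' is local) and that $d_{\cC}$ commutes with restriction to open subsets (Construction~\ref{cons:5-7}), so that $η|_{\what{U}}$ still satisfies the required compatibility and is thus a pull-back morphism for $\what{φ}|_{\what{U}}$.

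For the converse I would start from a pull-back morphism $η_{\what{U}} ∈ \sG(\what{U})$ for $\what{φ}|_{\what{U}}$. Because $\what{U} ⊆ \what{X}$ is big, $\what{X}$ is normal, and $\sG$ is reflexive, the restriction map $\sG(\what{X}) → \sG(\what{U})$ is an isomorphism --- this is the Hartogs-type statement for reflexive sheaves already invoked in the proof of Observation~\ref{obs:4-8}, see \cite[Cor.~3.15]{BS76} and \cite{MR0212214}. Thus $η_{\what{U}}$ extends uniquely to a morphism $η ∈ \sG(\what{X})$, and it only remains to verify the compatibility condition $η|_{\what{X}⁺} = d_{\cC}\what{φ}⁺$.

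Checking this compatibility is the one place where a little care is needed, and I expect it to be the only genuine (if minor) obstacle. By construction $η|_{\what{X}⁺ ∩ \what{U}} = η_{\what{U}}|_{\what{X}⁺ ∩ \what{U}} = \bigl(d_{\cC}\what{φ}⁺\bigr)|_{\what{X}⁺ ∩ \what{U}}$, again using that $d_{\cC}$ commutes with restriction. Now $\what{X}⁺ ∩ \what{U}$ is big in $\what{X}⁺$, since its complement is the trace on $\what{X}⁺$ of the small analytic set $\what{X} ∖ \what{U}$; and over $\what{X}⁺$ the target $\Sym^n Ω^p_{\what{X}⁺}(\log \what{D}_X⁺)$ is locally free, so the $\sHom$-sheaf controlling the two morphisms is reflexive. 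Two of its sections that agree on the big open set $\what{X}⁺ ∩ \what{U} ⊆ \what{X}⁺$ therefore agree on all of $\what{X}⁺$, which gives $η|_{\what{X}⁺} = d_{\cC}\what{φ}⁺$ and completes the converse.
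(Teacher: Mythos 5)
Your proof is correct and follows exactly the route the paper intends: the paragraph preceding Observation~\ref{obs:7-12} already frames a pull-back morphism as a section of the reflexive sheaf $\sHom_{\what{X}}\bigl( \what{φ}^* \Sym^{[n]}_{\cC} Ω^{[p]}_{(Y, D_Y, b)},\, \Sym^{[n]}_{\cC} Ω^{[p]}_{(X, D_X, a)} \bigr)$, so the statement reduces to the Hartogs-type extension of sections of reflexive sheaves across small subsets of a normal variety, which is why the paper records it with a bare \emph{qed}. Your explicit verification that the extended section still agrees with $d_{\cC}\what{φ}⁺$ on all of $\what{X}⁺$ --- using that $\what{X}⁺ ∩ \what{U}$ is big in $\what{X}⁺$ and that the relevant $\sHom$-sheaf over the nc locus is torsion-free --- is precisely the detail the paper leaves implicit, and you handle it correctly.
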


%
%
\svnid{$Id: 08-morphisms.tex 911 2024-09-29 18:09:56Z kebekus $}
\selectlanguage{british}

\section{Morphisms of \texorpdfstring{$\cC$}{ C}-pairs}
\subversionInfo
\label{sec:8}
\approvals{Erwan & yes \\ Stefan & yes}

As motivated in the introduction, we define a $\cC$-morphism as a morphism where
every diagram admits pull-back of adapted reflexive differentials.  We impose
Condition~\eqref{eq:7-1-2} to ensure that the word ``pull-back of adapted
reflexive differentials'' carries meaning.

\begin{defn}[Morphisms of $\cC$-pairs]\label{def:8-1}%
  Given $\cC$-pairs $(X, D_X)$ and $(Y, D_Y)$ and a morphism $φ : X → Y$ with
  \[
    φ(X°) ⊆ Y° \quad\text{and}\quad \img φ ∩ Y^{\lu} ≠ ∅,
  \]
  call $φ$ a \emph{morphism between $\cC$-pairs $(X, D_X)$ and $(Y,
  D_Y)$}\index{morphism!of $\cC$-pairs} if every commutative diagram of form
  \eqref{eq:7-1-1} admits pull-back of adapted reflexive differentials.
\end{defn}

\begin{rem}[Notation used in Definition~\ref{def:8-1}]
  Definition~\ref{def:8-1} uses the standard notation where $X° ⊆ X$ and $Y° ⊆
  Y$ denote the open parts of $(X,D_X)$ and $(Y,D_Y)$, and $Y^{\lu} ⊆ Y$ is the
  maximal open subset over which $(Y,D_Y)$ is locally uniformizable.
\end{rem}

\begin{notation}[$\cC$-morphisms]
  In the setting of Definition~\ref{def:8-1}, we will often write $φ : (X, D_X)
  → (Y, D_Y)$ to indicate that a given morphism $φ : X → Y$ is a morphism
  between the $\cC$-pairs $(X, D_X)$ and $(Y, D_Y)$.  We use the word
  \emph{$\cC$-morphism}\index{C-morphism@$\cC$-morphism} for brevity.
\end{notation}

\begin{warning}[Pull-back differentials vs.~pull-back of tensors]
  Note that Definition~\ref{def:8-1} asks for pull-back of adapted reflexive
  differentials and \emph{not} for the more general pull-back of adapted
  reflexive \emph{tensors}.  This is a deliberate design decision, reflecting
  the fact that differentials, rather than tensors, are the objects that carry
  geometric meaning.  Section~\ref{sec:13} discusses criteria to guarantee that
  some $\cC$-morphisms do indeed induce pull-back of adapted reflexive
  \emph{tensors}.
\end{warning}

The following criterion is a direct consequence of Observations~\ref{obs:7-11}
and \ref{obs:7-12}.

\begin{obs}[Local nature and removing small subsets]\label{obs:8-5}%
  Given $\cC$-pairs $(X, D_X)$ and $(Y, D_Y)$ and a morphism $φ : X → Y$ such
  that $φ(X°) ⊆ Y°$ and $\img φ ∩ Y^{\lu} ≠ ∅$, the following conditions are
  equivalent.
  \begin{enumerate}
    \item The morphism $φ$ is a $\cC$-morphism.

    \item There exist open coverings $(U_i)_{i ∈ I}$ and $(V_j)_{j ∈ J}$ of $X$
    and $Y$, respectively, such that every restricted morphism
    \[
      φ|_{U_i ∩ φ^{-1}(V_j)} : U_i ∩ φ^{-1}(V_j) → V_j
    \]
    is a $\cC$-morphism between the restricted $\cC$-pairs
    \[
      \bigl(U_i ∩ φ^{-1}(V_j),\, D_X ∩ U_i ∩ φ^{-1}(V_j) \bigr)
      \quad\text{and}\quad
      (V_j, D_Y ∩ V_j).
    \]

    \item There exists a big open subset $U ⊂ X$ such that the restriction
    $φ|_U$ is a $\cC$-morphism $(U, D_X ∩ U) → (Y, D_Y)$.  \qed
  \end{enumerate}
\end{obs}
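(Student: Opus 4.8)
The plan is to prove the two substantive directions $(2)⇒(1)$ and $(3)⇒(1)$ directly from Observations~\ref{obs:7-11} and~\ref{obs:7-12}, and to dispose of $(1)⇒(2)$ and $(1)⇒(3)$ as trivialities. For $(1)⇒(2)$ one takes the one-element coverings $(U_i)_i = (X)$ and $(V_j)_j = (Y)$, whose only associated restricted morphism is $φ$ itself; for $(1)⇒(3)$ one takes $U = X$, which is big in itself. Together with the two substantive directions this yields $(1)⇔(2)$ and $(1)⇔(3)$, hence all equivalences. The real content is to show that the local hypotheses force \emph{every} commutative diagram of form~\eqref{eq:7-1-1} over $φ$ to admit pull-back of adapted reflexive differentials, and in both directions the mechanism is the same: start from an arbitrary such diagram, with $q$-morphisms $a : \what{X} → X$, $b : \what{Y} → Y$ and lift $\what{φ}$, pull the chosen data on $X$ and $Y$ back to $\what{X}$ and $\what{Y}$, apply the relevant statement for that fixed diagram, and conclude by Definition~\ref{def:8-1}.

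For $(2)⇒(1)$, I would fix a diagram~\eqref{eq:7-1-1} over $φ$ and set $\what{U}_i := a^{-1}(U_i)$ and $\what{V}_j := b^{-1}(V_j)$; since $a$ and $b$ are defined everywhere these are open coverings of $\what{X}$ and $\what{Y}$. Using $b◦\what{φ} = φ◦a$ one computes $\what{U}_i ∩ \what{φ}^{-1}(\what{V}_j) = a^{-1}\bigl(U_i ∩ φ^{-1}(V_j)\bigr)$, so the restricted morphism $\what{φ}|_{\what{U}_i ∩ \what{φ}^{-1}(\what{V}_j)}$ is exactly the lift sitting in the restricted diagram over $φ|_{U_i ∩ φ^{-1}(V_j)}$, with $q$-morphisms obtained by restricting $a$ and $b$. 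By hypothesis each such $φ|_{U_i ∩ φ^{-1}(V_j)}$ is a $\cC$-morphism, so every restricted diagram admits pull-back of adapted reflexive $(1,p)$-tensors for all $p$. Observation~\ref{obs:7-11}, applied with the coverings $(\what{U}_i)$, $(\what{V}_j)$ and each fixed $p$, then shows the original diagram admits pull-back of adapted reflexive $(1,p)$-tensors; letting $p$ vary gives pull-back of adapted reflexive differentials, and arbitrariness of the diagram gives $(1)$.

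For $(3)⇒(1)$, fix a diagram~\eqref{eq:7-1-1} and set $\what{U} := a^{-1}(U)$. Because $a$ is a $q$-morphism---quasi-finite between equidimensional normal varieties, hence unable to raise dimension---the preimage of the small set $X ∖ U$ is again small, so $\what{U} ⊆ \what{X}$ is big. The restriction $\what{φ}|_{\what{U}}$ fits into a diagram over $φ|_U$ via $a|_{\what{U}} : \what{U} → U$ and $b$, and by hypothesis $φ|_U$ is a $\cC$-morphism, so this diagram admits pull-back. Here I would invoke that the sheaves of adapted reflexive tensors depend only on the pair over the image of their defining $q$-morphism (Notation~\ref{not:4-2} builds everything over $X⁺ ⊆ \img γ$): since $\img(a|_{\what{U}}) ⊆ U$, admitting pull-back over $φ|_U$ is literally the same condition as admitting pull-back over $φ$ with the restricted $q$-morphism $a|_{\what{U}} : \what{U} → X$. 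Observation~\ref{obs:7-12}, applied to the big open subset $\what{U} ⊆ \what{X}$, then upgrades this to pull-back for the full $\what{φ}$, and again the diagram was arbitrary.

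The one genuine point to get right is that being a $\cC$-morphism is a statement about all diagrams simultaneously, so the local hypotheses must be fed into an arbitrary diagram rather than a single chosen one; the two preimage constructions $\what{U}_i = a^{-1}(U_i)$ and $\what{U} = a^{-1}(U)$ are precisely what let Observations~\ref{obs:7-11} and~\ref{obs:7-12}---each stated for one fixed diagram---carry the argument. The only mildly delicate verifications are that preimages of a covering under $a$, $b$ are again coverings, that preimages of big open sets under a $q$-morphism stay big (using Reminder~\ref{remi:2-19} and quasi-finiteness), and that the adapted-tensor sheaves are insensitive to shrinking the target around the image; none of these should present real difficulty.
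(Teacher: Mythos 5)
Your proof is correct and takes exactly the paper's route: the paper states Observation~\ref{obs:8-5} as ``a direct consequence of Observations~\ref{obs:7-11} and \ref{obs:7-12}'' and leaves the details to the reader, and your preimage constructions $\what{U}_i = a^{-1}(U_i)$, $\what{V}_j = b^{-1}(V_j)$ and $\what{U} = a^{-1}(U)$ are precisely those omitted details. The verifications you flag --- that $b◦\what{φ} = φ◦a$ identifies the restricted diagrams, that quasi-finiteness keeps $a^{-1}(U)$ big, and that the adapted-tensor sheaves only see the pair over $\img γ$ --- are all sound.
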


\subsection{First example}
\approvals{Erwan & yes \\ Stefan & yes}

To check if a given morphism is a $\cC$-morphism, Definition~\ref{def:8-1}
requires us in principle to consider all diagrams of form \eqref{eq:7-1-1} and
to check if they admit pull-back of adapted reflexive differentials, for all
numbers $p$.  This can be cumbersome.  We will therefore postpone the discussion
of interesting examples until useful criteria are established in the subsequent
Section~\ref{sec:9}.  For now, we only mention one example, which we work out in
detail.

\begin{example}[Morphisms to a manifold without boundary]\label{ex:8-6}
  If $(X,D_X)$ is any $\cC$-pair and $φ : X → Y$ is any morphism to a manifold
  $Y$, then $φ$ is a $\cC$-morphism between the pairs $(X,D_X)$ and $(Y,0)$.
  For a proof, assume that a diagram of form \eqref{eq:7-1-1} is given,
  \[
    \begin{tikzcd}[column sep=large]
      \what{X} \ar[r, "\what{φ}"] \ar[d, "a\text{, $q$-morphism}"'] & \what{Y} \ar[d, "b\text{, $q$-morphism}"] \\
      X \ar[r, "φ"'] & Y.
    \end{tikzcd}
  \]
  We need to show that $\what{φ}$ admits pull-back of adapted reflexive
  differentials.  To begin, observe that
  \[
    a^{[*]} Ω^{•}_X = Ω^{[•]}_{(X,0,a)} ⊆ Ω^{[•]}_{(X,D_X,a)}
    \quad\text{while}\quad
    Ω^{[•]}_{(Y,0,b)} = b^* Ω^{•}_Y.
  \]
  Pull-back of adapted reflexive differentials is therefore a matter of
  pulling-back Kähler differentials.  To make this precise, follow
  Notation~\ref{not:7-3} so that $\what{X}⁺$ is the maximal open set of
  $\what{X}$ where $(\what{X}, \what{D}_X)$ is nc.  Proposition~\vref{prop:5-15}
  will then identify the pull-back morphisms for adapted reflexive
  differentials,
  \[
    d_{\cC} \what{φ}^{\:+} : \underbrace{(\what{φ}^{\:+})^* Ω^{[•]}_{(Y,0,b)}}_{= \what{φ}^*b^* Ω^•_Y|_{\what{X}⁺} = a^*φ^* Ω^•_Y|_{\what{X}⁺}} → Ω^•_{\what{X}⁺}(\log \what{D}_X⁺),
  \]
  with the pull-back map of Kähler
  differentials,
  \[
    \begin{tikzcd}[column sep=large]
      a^*φ^* Ω^•_Y|_{\what{X}⁺} \ar[r, "a^*(dφ)"'] \ar[rrr, bend left=10, "d_{\cC} \what{φ}⁺"] & a^* Ω^•_X|_{\what{X}⁺} \ar[r, hook, "da"'] & Ω^•_{\what{X}⁺} \ar[r, hook] & Ω^•_{\what{X}⁺}(\log \what{D}_X⁺).
    \end{tikzcd}
  \]
  It follows that
  \[
    \img d_{\cC} \what{φ}^{\:+} ⊆ \img da|_{\what{X}⁺} = Ω^{[•]}_{(X,0,a)}|_{\what{X}⁺} ⊆ Ω^{[•]}_{(X,D_X,a)}|_{\what{X}⁺}.
  \]
  It remains to verify that the morphisms $d_{\cC} \what{φ}^{\:+}$ extend from
  $\what{X}⁺$ to morphisms
  \[
    η : \what{φ}^{\:*} Ω^{[•]}_{(Y, 0, b)} → Ω^{[•]}_{(X, D_X, a)}
  \]
  that are defined on all of $\what{X}$.  Extension is however clear, given that
  $Ω^{[•]}_{(Y, 0, b)}$ is locally free and $\what{X}⁺ ⊂ \what{X}$ is a big
  subset.
\end{example}

\subsection{First Counterexamples}
\approvals{Erwan & yes \\ Stefan & yes}

For singular varieties, Definition~\ref{def:8-1} is quite restrictive.  The
reader might find it surprising that a morphism between varieties does not
always give a morphism of $\cC$-pairs, even if target and domain are equipped
with the empty boundary.

\begin{example}[Pull-back for 1-differentials, not for 2-differentials]\label{ex:8-7}%
  Consider a construction described in \cite[Appendix~B]{KS18}: Let $E$ be an
  elliptic curve and let $L ∈ \Pic(E)$ be ample.  Let $Y$ be the affine cone
  over $E$ with conormal bundle $L$ and let $φ : X → Y$ be the minimal
  resolution, obtained by blowing up the vertex.  Considering the trivial pairs
  $(X,0)$ and $(Y,0)$, a diagram of the form \eqref{eq:7-1-1} is then given as
  \[
    \begin{tikzcd}[column sep=2cm, row sep=1cm]
      X \ar[r, two heads, "\what{φ} = φ"] \ar[d, hook, two heads, "a\text{, identity}"'] & Y \ar[d, hook, two heads, "b\text{, identity}"] \\
      X \ar[r, two heads, "φ\text{, resolution}"'] & Y.
    \end{tikzcd}
  \]
  It follows from \cite[Prop.~B.2]{KS18} that $\what{φ}$ admits pull-back of
  reflexive 1-differentials, but not of reflexive 2-differentials.  In
  particular, $φ$ does \emph{not} give a morphism between the $\cC$-pairs
  $(X,0)$ and $(Y,0)$.
\end{example}

\begin{example}[Resolution of the $A_1$-singularity]\label{ex:8-8}%
  Let $\what{Y} := 𝔸²$ and let $b : \what{Y} → Y$ be the quotient morphism for
  the action of the multiplicative group $± 1$, so that $Y$ has a unique
  singular point, which is of $A_1$ type.  We claim that the minimal resolution
  morphism $φ : X → Y$ is \emph{not} a morphism between the $\cC$-pairs $(X,0)$
  and $(Y,0)$.  In order to see this, construct a diagram as in
  \eqref{eq:7-1-1},
  \begin{equation}\label{eq:8-8-1}
    \begin{tikzcd}[row sep=1cm]
      \what{X} \ar[rrr, two heads, "\what{φ}\text{, blow-up}"] \ar[d, two heads, "a\text{, quotient}"'] &&& 𝔸² \ar[d, two heads, "b\text{, quotient}"] \ar[r, phantom, "="] & \what{Y} \\
      X \ar[rrr, two heads, "φ\text{, blow-up}"'] &&& \factor{𝔸²}{± 1} \ar[r, phantom, "="] & Y
    \end{tikzcd}
  \end{equation}
  The morphism $a$ is two-to-one and ramified exactly along the
  $\what{φ}$-exceptional curve in $\what{X}$.  A direct application of the
  definitions shows
  \[
    Ω^{[1]}_{(X, 0, a)} = a^* Ω¹_X ⊂ Ω¹_{\what{X}} \quad\text{while}\quad
    Ω^{[1]}_{(Y, 0, b)} = Ω¹_{\what{Y}}.
  \]
  Note however that the differential $d \what{φ} : \what{φ}^* Ω¹_{\what{Y}} →
  Ω¹_{\what{X}}$ does \emph{not} take its image in $a^* Ω¹_X$.
\end{example}

\begin{rem}[Kummer K3s]\label{rem:8-9}%
  Example~\ref{ex:8-8} shows in particular that the contraction morphism $φ : X
  \twoheadrightarrow Y = \factor{A}{±1}$ from a Kummer K3 surface to its
  associated torus quotient \emph{is not} a morphism between the $\cC$-pairs $(X,
  0)$ and $(Y, 0)$.  This observation will be of critical importance when we
  discuss the Albanese of a $\cC$-pair in the forthcoming paper \cite{orbialb2}.
\end{rem}

We continue this Example~\ref{ex:8-8} in Section~\vref{sec:10}, once suitable
criteria for $\cC$-morphisms have been established.  It will turn out that the
morphisms of Example~\ref{ex:8-8} and Remark~\ref{rem:8-9} do induce
$\cC$-morphisms once the correct, natural multiplicities of the exceptional sets
are taken into account.

%
%
\svnid{$Id: 09-criteria.tex 911 2024-09-29 18:09:56Z kebekus $}
\selectlanguage{british}

\section{Criteria for \texorpdfstring{$\cC$}{ C}-morphisms}
\label{sec:9}
\subversionInfo
\approvals{Erwan & yes \\ Stefan & yes}
 
Throughout the present section, we consider Setting~\ref{set:7-1} in the special
case where the $q$-morphisms are \emph{(adapted) covers} and in particular
surjective.  We formulate our setup precisely and fix notation.

\begin{setting}\label{set:9-1}%
  Let $(X, D_X)$ and $(Y, D_Y)$ be two $\cC$-pairs and assume that there exists
  a commutative diagram as follows,
  \[
    \begin{tikzcd}[column sep=large]
      \what{X} \ar[r, "\what{φ}"] \ar[d, two heads, pos=0.4, "a\text{, arbitrary cover}"'] & \what{Y} \ar[d, two heads, pos=0.4, "b\text{, adapted cover}"] \\
      X \ar[r, "φ"'] & Y,
    \end{tikzcd}
  \]
  where $φ(X°) ⊆ Y°$ and $\img φ ∩ Y^{\lu} ≠ ∅$.  Use
  Notation~\ref{not:7-3}/Observation~\ref{obs:7-4} and consider the canonical
  pull-back morphisms
  \begin{equation}\label{eq:9-1-1}
    d_{\cC} \what{φ}^{\,+} : (\what{φ}^{\,+})^* Ω^{[•]}_{(Y,D_Y,b)} → Ω^•_{\what{X}⁺}(\log \what{D}_X⁺)
  \end{equation}
  Setting~\ref{set:9-1} ends here.
\end{setting}

If the morphism $\what{φ}$ of Setting~\ref{set:9-1} admits pull-back of adapted
reflexive differentials, one might be tempted to hope that $φ$ is then a
$\cC$-morphism.  This is not the case in general.  The following example shows
that \emph{one cannot check that a given morphism is a $\cC$-morphism by looking
at one pair of covers only, even if both covers are adapted}.

\begin{example}[Not enough to check one pair of adapted morphisms]\label{ex:9-2}
  Let $φ : X → Y$ be the minimal resolution of the $A_1$-singularity, as
  discussed in Example~\ref{ex:8-8}.  Take $a := \Id_X$ and $b := \Id_Y$ and
  note that the identity morphisms are adapted covers for the pairs $(X, 0)$ and
  $(Y, 0)$.  We are thus in Setting~\ref{set:9-1}.  It is then very obvious that
  $Ω^{[•]}_{(X,0,\Id_X)} = Ω^{•}_X$ and $Ω^{[•]}_{(Y,0,\Id_Y)} = Ω^{[•]}_Y$, and
  the existence of pull-back morphisms $π^* Ω^{[•]}_Y → Ω^•_X$ is precisely the
  content of the extension theorem for the $A_1$-singularity, see
  \cite[Thm.~1.1]{GKK08}, \cite[Thm.~1.5]{GKKP11} or \cite[Cor.~1.8]{KS18}.  But
  we have seen in Example~\ref{ex:8-8} that $φ$ is not a $\cC$-morphism.
\end{example}

In spite of the negative Example~\ref{ex:9-2}, there do exist relevant settings
where a look at \emph{one} adapted cover and \emph{one} value of $p$ suffices to
guarantee that a given morphism of varieties is in fact a morphism of
$\cC$-pairs.  The following proposition and its corollary identify two such
cases.

\begin{prop}[Criterion for $\cC$-morphisms]\label{prop:9-3}%
  In Setting~\ref{set:9-1}, assume that $Ω^{[1]}_{(Y, D_Y, b)}$ is locally free
  and that there exists a sheaf morphism
  \[
    η¹: \what{φ}^{\,*} Ω^{[1]}_{(Y, D_Y, b)} → Ω^{[1]}_{(X, D_X, a)}
  \]
  whose restriction to $\what{X}⁺$ agrees with the canonical pull-back morphisms
  $d_{\cC}\what{φ}^{\,+}$.  Then, $φ$ is a $\cC$-morphism between the pairs $(X,
  D_X)$ and $(Y, D_Y)$.
\end{prop}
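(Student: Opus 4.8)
The plan is to unwind Definition~\ref{def:8-1}: to show that $φ$ is a $\cC$-morphism I must show that \emph{every} diagram of the form~\eqref{eq:7-1-1} admits pull-back of adapted reflexive $(1,p)$-tensors for \emph{every} $p$. The hypothesis provides this only for the single diagram fixed in Setting~\ref{set:9-1} and only for $p=1$, so there are two gaps to close: upgrading from $p=1$ to all $p$ on the given diagram, and passing from the given diagram to an arbitrary one. Example~\ref{ex:9-2} shows that the second passage genuinely uses the local-freeness hypothesis, which is precisely the property that fails there (for the $A_1$-singularity, $Ω^{[1]}_{(Y,0,\Id_Y)} = Ω^{[1]}_Y$ is not locally free at the singular point).

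For the first gap I would use local freeness of $Ω^{[1]}_{(Y,D_Y,b)}$ to identify $Ω^{[p]}_{(Y,D_Y,b)}$ with $\bigwedge^p Ω^{[1]}_{(Y,D_Y,b)}$. The reflexive wedge of Observation~\ref{obs:4-10} supplies a natural morphism $w : \bigwedge^p Ω^{[1]}_{(Y,D_Y,b)} → Ω^{[p]}_{(Y,D_Y,b)}$ that is an isomorphism over the big nc locus $\what{Y}⁺$ by an elementary computation in local coordinates; since the source is locally free, the target reflexive (Proposition~\ref{prop:4-5}), and the two agree on a big open subset of the normal space $\what{Y}$, the morphism $w$ is an isomorphism. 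I can then define
\[
  η^p : \what{φ}^* Ω^{[p]}_{(Y,D_Y,b)} = \textstyle\bigwedge^p \what{φ}^* Ω^{[1]}_{(Y,D_Y,b)} \xrightarrow{\bigwedge^p η¹} \textstyle\bigwedge^p Ω^{[1]}_{(X,D_X,a)} \xrightarrow{\wedge} Ω^{[p]}_{(X,D_X,a)},
\]
the last arrow being the reflexive wedge of Observation~\ref{obs:4-10}. Because $d_{\cC}$ commutes with wedge products (Fact~\ref{fact:5-14}) and $η¹$ restricts to $d_{\cC}\what{φ}⁺$ by assumption, $η^p$ restricts to $d_{\cC}\what{φ}⁺$ on $\what{X}⁺$; hence the given diagram admits pull-back for all $p$.

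For the second gap I would fix an arbitrary diagram $(\what{X}', \what{Y}', a', b')$ of form~\eqref{eq:7-1-1} and compare it with the given one through a common refinement. Choosing components of normalized fibre products, I obtain a cover $c : \what{Y}_0 → \what{Y}$ with $\what{Y}_0$ also dominating $\what{Y}'$, and a cover $e : \what{X}_0 → \what{X}$ with $\what{X}_0$ also dominating $\what{X}'$, together with a compatible $\what{φ}_0 : \what{X}_0 → \what{Y}_0$; since $b$ is adapted, the composite $b_0 := b ◦ c$ is again adapted. First I transport the pull-back property \emph{up} to the refined diagram $(\what{X}_0, \what{Y}_0)$: adaptedness of $b_0$ together with Observation~\ref{obs:4-15} gives $Ω^{[p]}_{(Y,D_Y,b_0)} = c^{[*]} Ω^{[p]}_{(Y,D_Y,b)} = c^* Ω^{[p]}_{(Y,D_Y,b)}$ (reflexive pull-back equals ordinary pull-back by local freeness, cf.~Warning~\ref{warning:4-16}), so pulling $η^p$ back along $e$ and composing with the functoriality morphism of Observation~\ref{obs:4-14} furnishes the factorization required for the refined diagram. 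Then I transport \emph{down} to $(\what{X}', \what{Y}', a', b')$: for a local adapted reflexive tensor on $\what{Y}'$, its image under $d_{\cC}\what{φ}'⁺$ pulls back, by functoriality of $d_{\cC}$ (Fact~\ref{fact:5-12}), to the image under $d_{\cC}\what{φ}_0⁺$ of the corresponding tensor, which is adapted by the previous step; the test of Observation~\ref{obs:4-17} — a reflexive tensor is adapted if and only if its pull-back is — then forces the image of $d_{\cC}\what{φ}'⁺$ into $Ω^{[p]}_{(X,D_X,a')}$, so this diagram admits pull-back as well.

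The main obstacle is the descending step, that is, controlling an arbitrary and possibly non-adapted cover $b'$ of $Y$ from data attached to the single adapted cover $b$. The two features that make this possible are the local-freeness hypothesis — which validates the wedge identification of the first step and, via Warning~\ref{warning:4-16}, keeps the reflexive pull-backs equal to ordinary ones — and the converse test Observation~\ref{obs:4-17}, which lets adaptedness be verified after pulling back to the refinement. Throughout I would invoke the local nature of the notion and the freedom to discard small subsets (Observations~\ref{obs:8-5} and~\ref{obs:7-12}) to arrange that the fibre products carry the nc structure required to run the pull-back machinery of Section~\ref{sec:5}.
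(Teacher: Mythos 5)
Your first two steps are sound and in fact reproduce the paper's own argument almost verbatim: the identification $\Omega^{[p]}_{(Y,D_Y,b)} \cong \bigwedge^p \Omega^{[1]}_{(Y,D_Y,b)}$ (local freeness, reflexivity, agreement over the big nc locus, with adaptedness of $b$ entering the coordinate computation) and the transport up the refinement via Observations~\ref{obs:4-14} and \ref{obs:4-15} are precisely the chains \eqref{eq:9-6-2}--\eqref{eq:9-6-3} in the paper's proof. The gap is in your descending step. Write $\alpha' : \what{X}_0 \to \what{X}'$ for the cover you construct over the arbitrary diagram. Observation~\ref{obs:4-17} together with Fact~\ref{fact:5-12} shows only that the canonical morphism $d_{\cC}\what{\varphi}'{}^{+}$ --- which is defined on $\what{X}'{}^{+}$ and nowhere else --- takes its values in the subsheaf $\Omega^{[p]}_{(X,D_X,a')}\bigr|_{\what{X}'{}^{+}}$. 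Definition~\ref{def:7-6} demands more: a sheaf morphism $\eta'$ defined on \emph{all} of $\what{X}'$ whose restriction to $\what{X}'{}^{+}$ is $d_{\cC}\what{\varphi}'{}^{+}$. Your closing sentence (``so this diagram admits pull-back as well'') silently conflates these two statements. They coincide when $\what{X}'{}^{+}$ is big, because the Hom-sheaf into the reflexive target is reflexive (the paragraph before Observation~\ref{obs:7-12}); but $\what{X}'{}^{+}$ lies inside the preimage of $Y^{\lu}$, and nothing in Setting~\ref{set:9-1} prevents $\varphi$ from mapping a divisor of $X$ into the small set $Y \setminus Y^{\lu}$ --- the resolution-of-singularities situation that the definition is built to handle. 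In that case $\what{X}' \setminus \what{X}'{}^{+}$ contains divisors, Observation~\ref{obs:7-12} does not apply, and extending across those divisors is exactly the nontrivial content: Example~\ref{ex:8-7} shows that such extensions can genuinely fail.

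The repair is the mechanism you did not invoke: the trace morphism, which the paper isolates as Lemma~\ref{lem:9-5} (resting on Consequence~\ref{cons:4-18}). From the globally defined morphism on the refinement $\what{X}_0$ that your ascending step already produces, descend by
\[
  (\what{\varphi}')^{*}\, \Omega^{[p]}_{(Y,D_Y,b')}
  \longrightarrow \alpha'_*\, \alpha'^{*}\, (\what{\varphi}')^{*}\, \Omega^{[p]}_{(Y,D_Y,b')}
  \longrightarrow \alpha'_*\, \Omega^{[p]}_{(X,D_X,a' \circ \alpha')}
  \xrightarrow{\trace} \Omega^{[p]}_{(X,D_X,a')},
\]
which is defined on all of $\what{X}'$; agreement with $d_{\cC}\what{\varphi}'{}^{+}$ over $\what{X}'{}^{+}$ then follows from Fact~\ref{fact:5-12}, exactly as the paper leaves to the reader. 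With this substitution your architecture agrees with the paper's up to the order of operations: the paper first reduces the arbitrary diagram to the refined one by Lemma~\ref{lem:9-5} and only then forms the wedge powers of $\eta^1$ there, while you form them on the given cover first.
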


Proposition~\ref{prop:9-3} will be shown in Section~\ref{sec:9-2} below.

\begin{cor}[Criterion for $\cC$-morphisms]\label{cor:9-4}%
  In Setting~\ref{set:9-1}, assume that $\what{Y}$ is smooth of dimension two
  and that there exists a sheaf morphism
  \[
    η¹: \what{φ}^{\,*} Ω^{[1]}_{(Y, D_Y, b)} → Ω^{[1]}_{(X, D_X, a)}
  \]
  whose restriction to $\what{X}⁺$ agrees with the canonical pull-back morphisms
  $d_{\cC}\what{φ}^{\,+}$.  Then, $φ$ is a $\cC$-morphism between the pairs $(X,
  D_X)$ and $(Y, D_Y)$.
\end{cor}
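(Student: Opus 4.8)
The plan is to deduce Corollary~\ref{cor:9-4} directly from Proposition~\ref{prop:9-3}. The hypotheses and the conclusion of the two statements coincide, except that Proposition~\ref{prop:9-3} requires the sheaf $Ω^{[1]}_{(Y, D_Y, b)}$ to be locally free, whereas here we instead assume that $\what{Y}$ is smooth of dimension two. It therefore suffices to verify that, under the hypotheses of the corollary, the sheaf $Ω^{[1]}_{(Y, D_Y, b)}$ is automatically locally free; the morphism $η¹$ supplied by assumption is then precisely the datum needed to invoke Proposition~\ref{prop:9-3}.

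To establish local freeness, I would recall from Definition~\ref{def:4-4} that $Ω^{[1]}_{(Y, D_Y, b)}$ is a coherent sheaf on $\what{Y}$, which Proposition~\ref{prop:4-5} identifies as reflexive. The key input is then the classical fact that every reflexive coherent sheaf on a smooth surface is locally free: the locus on which a reflexive sheaf fails to be locally free has codimension at least three (equivalently, over a two-dimensional regular local ring a reflexive module has depth two and is thus free), so on a two-dimensional smooth space this locus is empty (compare \cite[Lem.~1.1.5]{OSS}, which records the rank-one case already used elsewhere in this paper). Since $\what{Y}$ is smooth of dimension two, the reflexive sheaf $Ω^{[1]}_{(Y, D_Y, b)}$ is therefore locally free.

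With local freeness in hand, all hypotheses of Proposition~\ref{prop:9-3} hold verbatim, and that proposition yields the assertion that $φ$ is a $\cC$-morphism between $(X, D_X)$ and $(Y, D_Y)$. I expect no genuine obstacle in this argument: its only substantive ingredient is the smooth-surface local-freeness statement, and the single point meriting attention is that reflexivity must hold on all of $\what{Y}$, not merely over the open locus where the relevant pairs are nc --- which is exactly what Proposition~\ref{prop:4-5} guarantees.
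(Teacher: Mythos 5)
Your proposal is correct and follows essentially the same route as the paper, whose entire proof is to note that the reflexive sheaf $Ω^{[1]}_{(Y, D_Y, b)}$ is locally free on the smooth surface $\what{Y}$ (citing \cite[Lem.~1.1.10]{OSS}, the general-rank statement of the fact you reprove via the depth/codimension argument) and then apply Proposition~\ref{prop:9-3}. Your additional remark that reflexivity on all of $\what{Y}$ is supplied by Proposition~\ref{prop:4-5} is exactly the right supporting observation.
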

\begin{proof}
  Recall from \cite[Lem.~1.1.10]{OSS} that the reflexive sheaf $Ω^{[1]}_{(Y,
  D_Y, b)}$ is locally free and apply Proposition~\ref{prop:9-3}.
\end{proof}

\subsection{Elementary criteria for $\cC$-morphisms}
\approvals{Erwan & yes \\ Stefan & yes}

The proof of Proposition~\ref{prop:9-3} relies on the following lemma.  Since
the lemma and the subsequent criterion for a morphism of varieties to be a
$\cC$-morphism will be used several times in the sequel, we found it worth the
while to spell out all details.

\begin{lem}[Test for pull-back of adapted reflexive differentials]\label{lem:9-5}%
  Assume we are given two $\cC$-pairs, $(X, D_X)$ and $(Y, D_Y)$, and a
  commutative diagram of morphisms between normal, analytic varieties,
  \begin{equation}\label{eq:9-5-1}
    \begin{tikzcd}[column sep=large, row sep=large]
      \wcheck{X} \ar[r, "\wcheck{φ}"] \ar[d, two heads, "α\text{, cover}"'] & \wcheck{Y} \ar[d, two heads, "\:β\text{, cover}"] \\
      \what{X} \ar[r, "\what{φ}"] \ar[d, "a\text{, $q$-morphism}"'] & \what{Y} \ar[d, "b\text{, $q$-morphism}"] \\
      X \ar[r, "φ"'] & Y.
    \end{tikzcd}
  \end{equation}
  If $\wcheck{φ}$ admits pull-back of adapted reflexive differentials, then
  $\what{φ}$ admits pull-back of adapted reflexive differentials.
\end{lem}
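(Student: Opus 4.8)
The plan is to construct, for each $p ∈ ℕ⁺$, the required extension $η : \what{φ}^* Ω^{[p]}_{(Y,D_Y,b)} → Ω^{[p]}_{(X,D_X,a)}$ of the canonical pull-back $d_{\cC}\what{φ}^{\,+}$ by transporting the analogous morphism that already exists on $\wcheck{X}$ down along the cover $α$. Write $a◦α$ and $b◦β$ for the composed $q$-morphisms, so that the hypothesis ``$\wcheck{φ}$ admits pull-back of adapted reflexive differentials'' furnishes a sheaf morphism $\wcheck{η} : \wcheck{φ}^* Ω^{[p]}_{(Y,D_Y,b◦β)} → Ω^{[p]}_{(X,D_X,a◦α)}$, defined on \emph{all} of $\wcheck{X}$, whose image lands in the adapted reflexive subsheaf and which restricts to $d_{\cC}\wcheck{φ}^{\,+}$ over the nc locus. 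Since $η$ is a section of the reflexive sheaf $\sHom\bigl(\what{φ}^* Ω^{[p]}_{(Y,D_Y,b)},\, Ω^{[p]}_{(X,D_X,a)}\bigr)$ and $\what{X}⁺$ is dense, Observations~\ref{obs:7-11} and \ref{obs:7-12} reduce the task to extending $d_{\cC}\what{φ}^{\,+}$ across the codimension-one points of $\what{X}$ outside $\what{X}⁺$ — precisely the points that $\what{φ}$ maps into the non-locally-uniformizable locus of $(Y,D_Y)$, or that $\what{φ}$ contracts. This extension is the genuine content; it cannot fail to exist for $\cC$-morphisms but does fail for mere morphisms of varieties (cf.~Examples~\ref{ex:8-7} and \ref{ex:8-8}).

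The two inputs are functoriality and the test criterion. First, Fact~\ref{fact:5-12}, applied to the square with vertical covers $α$ and $β$, yields over the common nc locus the identity
\[
  (\diff α) ◦ α^*\bigl(d_{\cC}\what{φ}^{\,+}\bigr) = d_{\cC}\wcheck{φ}^{\,+} ◦ κ,
\]
where $κ$ is the pull-back along $\wcheck{φ}$ of the natural map $β^*(-) → β^{[*]}(-)$ composed with the comparison inclusion of Observation~\ref{obs:4-14}. The right-hand side equals $\wcheck{η}◦κ$, a regular, everywhere-defined morphism whose image lies in $Ω^{[p]}_{(X,D_X,a◦α)}$ by the hypothesis on $\wcheck{η}$. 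Second, the test Observation~\ref{obs:4-17}, applied to the factorization $\wcheck{X} \xrightarrow{α} \what{X} \xrightarrow{a} X$ (here $α$ is surjective, so the image hypothesis $U ⊆ \img α$ holds automatically), shows that a meromorphic tensor on $\what{X}$ is an adapted reflexive tensor for $(X,D_X,a)$ \emph{if and only if} its $α$-pull-back is adapted for $(X,D_X,a◦α)$.

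Combining these, I would argue as follows. Over $\what{X}⁺$ the image of $d_{\cC}\what{φ}^{\,+}$ is adapted for $(X,D_X,a)$, because its $α$-pull-back agrees with $\wcheck{η}◦κ$, which is adapted; this identifies $η|_{\what{X}⁺} = d_{\cC}\what{φ}^{\,+}$ as a morphism into $Ω^{[p]}_{(X,D_X,a)}$. To extend across a codimension-one point $\what{ξ} ∉ \what{X}⁺$, choose a preimage $\wcheck{ξ}$ under the finite surjection $α$. Regularity of $\wcheck{η}◦κ$ at $\wcheck{ξ}$, together with the injectivity of the finite extension of discrete valuation rings $\sO_{\what{X},\what{ξ}} ↪ \sO_{\wcheck{X},\wcheck{ξ}}$ (valid since $\what{X}$ is normal), forces $d_{\cC}\what{φ}^{\,+}$ to be regular at $\what{ξ}$, while Observation~\ref{obs:4-17} guarantees that the extended germ still lands in the adapted subsheaf. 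Hence $η$ extends across every codimension-one point, and so by reflexivity to a global morphism on $\what{X}$ that satisfies $η|_{\what{X}⁺} = d_{\cC}\what{φ}^{\,+}$ — exactly the factorization demanded by Definition~\ref{def:7-6}. As $p ∈ ℕ⁺$ was arbitrary, $\what{φ}$ admits pull-back of adapted reflexive differentials.

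I expect the descent step to be the main obstacle: making precise that both regularity and adaptedness of the image descend along the finite, possibly highly ramified cover $α$, where restricting to an unramified locus is \emph{not} available because the branch locus of $α$ is a divisor. This is exactly the situation that the test Observation~\ref{obs:4-17} is built to handle, and the compatibility of Fact~\ref{fact:5-12} is what allows the globally defined $\wcheck{η}$ to be fed into it. If one prefers a more structural organization of the descent, one may first replace $α$ by its Galois closure and use the invariant push-forward of Lemma~\ref{lem:4-20}, at the cost of verifying that enlarging the top cover preserves the hypothesis.
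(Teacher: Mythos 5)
Your argument is correct in outline, but it descends along $α$ by a genuinely different mechanism than the paper. The paper's proof is a short, purely global construction: it forms on $\wcheck{X}$ the composite
\[
  α^* \what{φ}^{\,*} Ω^{[•]}_{(Y,D_Y,b)}
  = \wcheck{φ}^* β^* Ω^{[•]}_{(Y,D_Y,b)}
  → \wcheck{φ}^* β^{[*]} Ω^{[•]}_{(Y,D_Y,b)}
  → \wcheck{φ}^* Ω^{[•]}_{(Y,D_Y,b◦β)}
  \xrightarrow{\wcheck{η}} Ω^{[•]}_{(X,D_X,a◦α)}
\]
--- essentially your $\wcheck{η}◦κ$ --- and then defines $\what{η}$ outright as the composition of the unit $\what{φ}^{\,*} Ω^{[•]}_{(Y,D_Y,b)} → α_* α^* \what{φ}^{\,*} Ω^{[•]}_{(Y,D_Y,b)}$, the push-forward of the map above, and the trace morphism of Consequence~\ref{cons:4-18}; agreement with $d_{\cC}\what{φ}^{\,+}$ over $\what{X}^+$ is then verified via Fact~\ref{fact:5-12}, just as you do. You replace the push-forward-and-trace step by a local extension-and-descent: reduce by reflexivity of the $\sHom$-sheaf (Observation~\ref{obs:7-12}) to codimension one, and use the test criterion Observation~\ref{obs:4-17} to transfer regularity and adaptedness of $\wcheck{η}◦κ$ down along $α$. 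Both descents are legitimate, and the trade-off is real: the trace argument is shorter and avoids all valuative bookkeeping, but it is specific to $1$-tensors --- by the remark following Consequence~\ref{cons:3-18}, the trace has no analogue for higher-order tensors --- whereas your route uses only Observation~\ref{obs:4-17} and would formally apply to $(n,p)$-tensors as well; it is in effect a local version of the Galois-closure/invariant-push-forward strategy (Lemma~\ref{lem:4-20}) that the paper deploys for tensors in Proposition~\ref{prop:13-1}, as your closing remark anticipates. Two points in your descent need firming up. First, before Observation~\ref{obs:4-17} can be invoked at a codimension-one point $\what{ξ} ∉ \what{X}^+$, you must know that the candidate $σ = d_{\cC}\what{φ}^{\,+}(τ)$ is \emph{meromorphic} across the divisor $Δ$ through $\what{ξ}$; the DVR-injectivity you cite is not by itself the statement needed. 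Rather, compare valuations of coefficient functions via $v_{\wcheck{Δ}}\bigl(f◦α\bigr) = (\text{ramification index})·v_{Δ}(f)$, using that $(\diff α)(σ) = \wcheck{η}\bigl(κ(α^*τ)\bigr)$ is everywhere regular, to bound the pole order of $σ$; only then does Observation~\ref{obs:4-17} apply verbatim. Second, since $\what{φ}^{\,*} Ω^{[p]}_{(Y,D_Y,b)}$ may contain torsion, the assignment $τ ↦ σ$ must be checked to be well defined and $\sO_{\what{X}}$-linear; this holds because the target is torsion-free and $\what{X}^+$ is dense, so the extension of each value is unique, but it deserves an explicit sentence rather than being absorbed into ``by reflexivity''.
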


We do not believe that the converse of Lemma~\ref{lem:9-5} holds in general.
Proposition~\ref{prop:9-3} and Corollary~\ref{cor:9-4} identify special
situations where a converse can be shown to hold.

\begin{proof}[Proof of Lemma~\ref{lem:9-5}]
  Assuming that $\wcheck{φ}$ admits pull-back of adapted reflexive
  differentials, we need to show that $\what{φ}$ admits pull-back of adapted
  reflexive differentials.  To spell things out: assuming we are given sheaf
  morphisms
  \[
    \wcheck{η} : \wcheck{φ}^* Ω^{[•]}_{(Y,D_Y,b◦ β)} → Ω^{[•]}_{(X,D_X,a◦α)}
  \]
  whose restrictions to $\wcheck{X}⁺$ agree with the pull-back morphisms
  $d_{\cC} \wcheck{φ}⁺$, we need to construct appropriate morphisms
  \[
    \what{η} : \what{φ}^{\,*} Ω^{[•]}_{(Y,D_Y,b)} → Ω^{[•]}_{(X,D_X,a)}
  \]
  whose restrictions to $\what{X}⁺$ agree with the pull-back morphisms $d_{\cC}
  \what{φ}^{\,+}$.  In analogy to Construction~\ref{cons:5-7}, we consider sheaf
  morphisms on $\wcheck{X}$,
  \begin{align}
    α^* \what{φ}^{\,*} Ω^{[•]}_{(Y,D_Y,b)} & = \wcheck{φ}^* β^* Ω^{[•]}_{(Y,D_Y,b)} && \txt{commutativity} \nonumber \\
    & → \wcheck{φ}^* β^{[*]} Ω^{[•]}_{(Y,D_Y,b)} && \text{natural} \label{eq:9-5-2} \\
    & → \wcheck{φ}^* Ω^{[•]}_{(Y,D_Y,b◦β)} && \text{Observation~\ref{obs:4-14}} \nonumber \\
    & → Ω^{[•]}_{(X,D_X,a◦α)} && \wcheck{η}, \nonumber \\
    \intertext{and take $\what{η}$ as the composition}
    \what{φ}^{\,*} Ω^{[•]}_{(Y,D_Y,b)} & → α_* α^* \what{φ}^{\,*} Ω^{[•]}_{(Y,D_Y,b)} && \text{natural} \\
    & → α_* Ω^{[•]}_{(X,D_X,a◦α)} && α_* \text{\eqref{eq:9-5-2}} \\
    & → Ω^{[•]}_{(X,D_X,a)} && \text{Consequence~\ref{cons:4-18}}
  \end{align}
  We leave it to the reader to apply Fact~\ref{fact:5-12} (``Functoriality'') in
  order to check that the restrictions of $\what{η}$ to $\what{X}⁺$ indeed agree
  with $d_{\cC} \what{φ}^{\,+}$.
\end{proof}

The following criterion is a direct consequence of Lemma~\ref{lem:9-5}.  In a
nutshell, it asserts that to check if a given morphism is a $\cC$-morphism, it
suffices to restrict attention to those diagrams of the form \eqref{eq:7-1-1}
where the $q$-morphisms $a$ and $b$ are adapted.

\begin{cor}[Elementary criterion for $\cC$-morphisms]\label{cor:9-6}%
  Given $\cC$-pairs $(X, D_X)$ and $(Y, D_Y)$ and a morphism $φ : X → Y$, assume
  that $φ(X°) ⊆ Y°$ and $\img φ ∩ Y^{\lu} ≠ ∅$.  Also, assume that every
  commutative diagram of the form
  \[
    \begin{tikzcd}[column sep=large]
      \what{X} \ar[r, "\what{φ}"] \ar[d, "a\text{, adapted}"'] & \what{Y} \ar[d, "b\text{, adapted}"] \\
      X \ar[r, "φ"'] & Y
    \end{tikzcd}
  \]
  admits pull-back of adapted reflexive differentials.  Then, $φ$ is a
  $\cC$-morphism between the $\cC$-pairs $(X, D_X)$ and $(Y, D_Y)$.
\end{cor}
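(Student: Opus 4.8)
The plan is to deduce the statement from Lemma~\ref{lem:9-5} by placing, over an arbitrary diagram of the form \eqref{eq:7-1-1}, a second storey of covers that makes the two vertical morphisms adapted. By Definition~\ref{def:8-1} I must show that \emph{every} commutative diagram \eqref{eq:7-1-1}, with $a$ and $b$ arbitrary $q$-morphisms, admits pull-back of adapted reflexive differentials; so I would fix one such diagram and aim to verify this for it. The first move is to reduce to a local question: since admitting pull-back is local on the top row (Observation~\ref{obs:7-11}), it suffices to treat the restricted morphisms $\what{φ}|_{\what{U}} : \what{U} → \what{V}$ for $\what{U} ⊆ \what{X}$ and $\what{V} ⊆ \what{Y}$ ranging over open covers with $\what{φ}(\what{U}) ⊆ \what{V}$. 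Shrinking, and using that $q$-morphisms are open, I would arrange that $U := a(\what{U})$ and $V := b(\what{V})$ are Stein open subsets meeting only finitely many boundary components, so that Lemma~\ref{lem:2-36} supplies adapted covers $c : U_c → U$ and $c_Y : V_c → V$.

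Next I would build the tower. Taking $β : \wcheck{V} → \what{V}$ to be the normalisation of a component of $\what{V} ⨯_V V_c$ dominating $\what{V}$ produces a cover whose composite $\wcheck{V} → Y$ factors through $c_Y$; by the composition property recorded in the Observation following Definition~\ref{def:2-26}, this composite is then adapted for $(Y, D_Y)$. Similarly, letting $\wcheck{U}$ be a dominating component of the normalised simultaneous fibre product of $\what{U}$ with $\wcheck{V}$ over $\what{V}$ (via $\what{φ}$) and with $U_c$ over $U$ (via $a$) yields a cover $α : \wcheck{U} → \what{U}$ together with a projection $\wcheck{φ} : \wcheck{U} → \wcheck{V}$; commutativity of the fibre product gives $\what{φ} ◦ α = β ◦ \wcheck{φ}$, while the projection to $U_c$ shows that $\wcheck{U} → X$ factors through $c$ and is therefore adapted for $(X, D_X)$. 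I would use throughout that adaptedness of a $q$-morphism is detected near its image, so that a morphism into $X$ (resp.\ $Y$) landing in $U$ (resp.\ $V$) is adapted for the global pair if and only if it is adapted for the restricted pair.

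With the tower assembled, the outer square with top $\wcheck{φ}$ and verticals $a◦α$ and $b◦β$ is a diagram of the shape \eqref{eq:7-1-1} in which both vertical $q$-morphisms are adapted, so the hypothesis of the corollary tells me that $\wcheck{φ}$ admits pull-back of adapted reflexive differentials. Feeding the three-storey diagram
\[
  \begin{tikzcd}[column sep=large]
    \wcheck{U} \ar[r, "\wcheck{φ}"] \ar[d, two heads, "α"'] & \wcheck{V} \ar[d, two heads, "β"] \\
    \what{U} \ar[r, "\what{φ}|_{\what{U}}"] \ar[d, "a|_{\what{U}}"'] & \what{V} \ar[d, "b|_{\what{V}}"] \\
    X \ar[r, "φ"'] & Y
  \end{tikzcd}
\]
into Lemma~\ref{lem:9-5} then yields that $\what{φ}|_{\what{U}}$ admits pull-back; letting $\what{U}, \what{V}$ range over the chosen covers and invoking Observation~\ref{obs:7-11} once more gives the claim for the fixed diagram, and hence that $φ$ is a $\cC$-morphism.

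I expect the only genuinely delicate point to be the non-existence of adapted covers in the large: globally there may be no adapted cover of $X$ or $Y$ at all, which is precisely why the argument must be routed through the local Observation~\ref{obs:7-11} and the local existence statement of Lemma~\ref{lem:2-36}, and why it is essential that the $q$-morphisms in \eqref{eq:7-1-1} are allowed to be non-surjective, so that the adapted $q$-morphisms $a◦α$ and $b◦β$ produced over the small open sets still constitute legitimate diagrams to which the hypothesis applies. The remaining fibre-product bookkeeping — choosing dominating components, checking that $α$ and $β$ are genuine covers of equal dimension, and the localisation of adaptedness — is routine, and I would not belabour it.
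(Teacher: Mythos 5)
Your argument is correct and takes essentially the same route as the paper's proof: both localize (the paper on the bottom row via Observation~\ref{obs:8-5}, you on the top row via Observation~\ref{obs:7-11}), invoke Lemma~\ref{lem:2-36} to obtain adapted covers locally, refine the given diagram by a normalized fibre-product tower so that the composed vertical $q$-morphisms become adapted, apply the hypothesis to the outer square, and descend with Lemma~\ref{lem:9-5}. The bookkeeping you spell out --- dominating components of the fibre products, adaptedness of composites via the observation following Definition~\ref{def:2-26}, and the essential point that $q$-morphisms in diagram~\eqref{eq:7-1-1} need not be surjective --- is precisely the ``elementary fibre product construction'' that the paper's proof leaves implicit.
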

\begin{proof}
  Observation~\ref{obs:8-5} (``being a $\cC$-morphism is local on $X$ and
  $Y$'') and Lemma~\ref{lem:2-36} (``strongly adapted covers exist locally'')
  allow assuming without loss of generality that $X$ and $Y$ admit strongly
  adapted covers.  As a consequence, we find that every $q$-morphism to $X$ and
  $Y$ can be refined to an adapted morphism via an elementary fibre product
  construction.  More precisely, every commutative diagram of form
  \eqref{eq:7-1-1} can be extended to a diagram of the form \eqref{eq:9-5-1},
  with the additional property that $α◦a$ and $β◦b$ are adapted for $(X, D_X)$
  and $(Y, D_Y)$, respectively.  Lemma~\ref{lem:9-5} asserts that to prove that
  $\what{φ}$ admits pull-back of adapted differentials, it suffices to show that
  $\wcheck{φ}$ admits pull-back of adapted differentials.  That, however, holds
  by assumption.
\end{proof}

\subsection{Proof of Proposition~\ref*{prop:9-3}}
\approvals{Erwan & yes \\ Stefan & yes}
\label{sec:9-2}

Given a diagram as in Setting~\ref{set:7-1},
\[
  \begin{tikzcd}
    \what{X}° \ar[d, "a°\text{, $q$-morphism}"'] \ar[r, "\what{φ}°"] & \what{Y}° \ar[d, "b°\text{, $q$-morphism}"] \\
    X \ar[r, "φ"'] & Y,
  \end{tikzcd}
\]
we need to show that $\what{φ}°$ admits pull-back of adapted reflexive
differentials.  With this in mind, consider common covers of $\what{Y}°$ and
$\what{Y}$, and $\what{X}°$ and $\what{X}$ respectively,
\begin{align*}
  \wcheck{Y} & := \text{normalisation of a component of } \what{Y}° ⨯_Y \what{Y} \\
  \wcheck{X} & := \text{normalisation of a component of } \bigl( \what{X}° ⨯_X \what{X} \bigr) ⨯_{\what{Y}°} \wcheck{Y}.
\end{align*}
The following diagram summarizes the situation,
\begin{equation}
  \begin{tikzcd}[row sep=1cm]
    \wcheck{X} \ar[d, two heads, "α°"'] \ar[r, equals] & \wcheck{X} \ar[d, "α"'] \ar[rrrr, "\wcheck{φ}"] &&&& \wcheck{Y} \ar[d, "β"] \ar[r, equals] & \wcheck{Y} \ar[d, two heads, "β°"] \\
    \what{X}° \ar[d, "a°"'] & \what{X} \ar[d, pos=0.6, two heads, "a"'] \ar[rrrr, "\what{φ}"] &&&& \what{Y} \ar[d, pos=0.6, two heads, "b"] & \what{Y}° \ar[d, "b°"] \ar[from=llllll, bend right=20, crossing over, "\what{φ}°"'] \\
    X \ar[r, equals] & X \ar[rrrr, "φ"'] &&&& Y \ar[r, equals] & Y.
  \end{tikzcd}
\end{equation}
As before, we use Lemma~\ref{lem:9-5} (``Test for pull-back of adapted reflexive
differentials'') and find that it suffices to show that $\wcheck{φ}$ admits
pull-back of adapted reflexive differentials.  In order to construct the
relevant morphisms
\[
  \wcheck{η}: \wcheck{φ}^{\,*} Ω^{[•]}_{(Y, D_Y, β◦ b)} → Ω^{[•]}_{(X, D_X, α◦ a)},
\]
consider the identifications
\begin{align}
  \wcheck{φ}^* Ω^{[•]}_{(Y,D_Y,b◦β)} & = \wcheck{φ}^* β^{[*]} Ω^{[•]}_{(Y,D_Y,b)} && \text{Observation~\ref{obs:4-15}, $b$ adapted} \nonumber \\
  & = \wcheck{φ}^* β^* Ω^{[•]}_{(Y,D_Y,b)} && \text{local freeness} \label{eq:9-6-2} \\
  & = α^* \what{φ}^{\,*} Ω^{[•]}_{(Y,D_Y,b)} && \text{commutativity.} \nonumber
  \intertext{In case $• = 1$, the last sheaf admits morphisms as follows,}
  α^* \what{φ}^{\,*} Ω^{[1]}_{(Y,D_Y,b)} & → α^* Ω^{[1]}_{(X,D_X,a)} && α^* η¹ \nonumber \\
  & → α^{[*]} Ω^{[1]}_{(X,D_X,a)} && \text{natural} \label{eq:9-6-3} \\
  & → Ω^{[1]}_{(X,D_X,a◦α)} && \text{Observation~\ref{obs:4-14}.} \nonumber
  \intertext{In $• = p$ is arbitrary, take $\wcheck{η}$ as the composed morphism}
  \wcheck{φ}^{\,*} Ω^{[p]}_{(Y, D_Y, β◦b)} & = α^* \what{φ}^{\,*} Ω^{[p]}_{(Y,D_Y,b)} && \text{\eqref{eq:9-6-2}} \nonumber \\
  & = Λ^p α^* \what{φ}^{\,*} Ω^{[p]}_{(Y,D_Y,b)} && \text{local freeness} \nonumber \\
  & → Λ^p Ω^{[1]}_{(X,D_X,a◦α)} && Λ^p \text{\eqref{eq:9-6-3}} \nonumber \\
  & → Ω^{[p]}_{(X,D_X,a◦α)} && \text{natural} \nonumber
\end{align}
As before, we leave it to the reader to verify that the morphisms $\wcheck{η}$
agree over $\wcheck{X}⁺$ with the canonical pull-back morphisms
$d_{\cC}\wcheck{φ}⁺$.  \qed

%
%
\svnid{$Id: 10-examples.tex 852 2024-07-15 08:20:56Z kebekus $}
\selectlanguage{british}

\section{Examples and counterexamples}
\subversionInfo
\label{sec:10}

\subsection{Resolution of the $A_1$-singularity}
\approvals{Erwan & yes \\ Stefan & yes}

To illustrate the use of Proposition~\ref{prop:9-3} (``Criterion for
$\cC$-morphisms''), we continue our discussion on the resolution of the
$A_1$-singularity.

\begin{example}[Resolution of the $A_1$-singularity]\label{ex:10-1}%
  One concrete example of Setting~\ref{set:9-1} is given in
  Diagram~\eqref{eq:8-8-1} of Example~\vref{ex:8-8}.  Continuing the notation
  of the example, denote the $φ$-exceptional locus by $E ⊊ X$ and observe that
  the two-to-one cover $a$ is adapted for the pair $\Bigl(X,
  \frac{1}{2}·E\Bigr)$.  A direct application of the definitions shows
  \[
    Ω^{[1]}_{\bigl(X, \frac{1}{2}·E, a\bigr)} = Ω¹_{\what{X}}
    \quad\text{and}\quad
    Ω^{[1]}_{(Y, 0, b)} = Ω¹_{\what{Y}},
  \]
  so that $Ω^{[1]}_{(Y, 0, b)}$ is locally free and that there exists a sheaf
  morphism
  \[
    \diff \what{φ}° \::\: \what{φ}^{\,*} Ω^{[1]}_{(Y, 0, b)} → Ω^{[1]}_{\bigl(X, \frac{1}{2}·E, a\bigr)}
  \]
  that agrees with the standard pull-back of Kähler differentials, namely
  pull-back of Kähler differentials itself.  Proposition~\ref{prop:9-3}
  therefore applies to say that $φ$ yields a morphism of $\cC$-pairs,
  \[
    φ : \Bigl(X, \textstyle{\frac{1}{2}}·E \Bigr) → \Bigl( \factor{𝔸²}{±1},\, 0\Bigr).
  \]
\end{example}

\begin{rem}[Kummer K3s]
  Example~\ref{ex:10-1} shows in particular that the contraction morphism $φ : X
  → Y := \factor{A}{±1}$ from a Kummer K3 surface to its associated torus
  quotient induces a $\cC$-morphism between $\Bigl(X, \frac{1}{2}·E \Bigr)$ and
  $(Y, 0)$, if $E ⊂ X$ is the $φ$-exceptional locus.  Again, this observation
  will be of critical importance when we construct the Albanese of a $\cC$-pair
  in a forthcoming paper.
\end{rem}

\subsection{Inclusion of boundary components}
\approvals{Erwan & yes \\ Stefan & yes}

$\cC$-morphisms may take their images inside the boundary divisors of the target
space.  The following example shows the simplest setting.

\begin{example}[Inclusion of boundary components]
  Consider a snc $\cC$-pair $(X, D)$ where the Weil $ℚ$-divisor $D$ is of the
  form
  \[
    D = \sum_i \frac{m_i-1}{m_i}·D_i, \quad\text{all }m_i ∈ ℕ^{≥ 2}.
  \]
  Pick one component $D_0$ and define
  \[
    D^c_0 := \sum_{i ≠ 0} \frac{m_i-1}{m_i}·D_i|_{D_0} ∈ ℚ\Div(D_0).
  \]
  The $\cC$-pair $(D_0, D^c_0)$ is then snc, and we claim that the inclusion $ι
  : D_0 → X$ is a $\cC$-morphism between the pairs $(D_0, D^c_0)$ and $(X,D)$.
  Since the claim is local, we may assume without loss of generality that $X =
  𝔹 ⊂ ℂ^n$ is the unit ball, that $\supp D ⊊ X$ is a union of hyperplanes, and
  that we are given a uniformization
  \[
    γ : \what{X} → X, \quad (x_1, x_2, …, x_n) ↦ \bigl(x^{a_1}_1, x^{a_2}_2, …, x^{a_n}_n\bigr),
  \]
  where $\what{X} := 𝔹$ is again the unit ball.  Consider the preimage
  $\what{D}_0 := γ^{-1}(D_0) ⊊ \what{X}$ with its reduced structure and observe
  that $γ|_{\what{D}_0} : \what{D}_0 → D_0$ is again a uniformization.  We
  obtain a diagram
  \[
    \begin{tikzcd}[column sep=2cm, row sep=1cm]
      \what{D}_0 \ar[r, hook, "ι\text{,inclusion}"] \ar[d, two heads, "γ|_{\what{D}_0}\text{, uniformization}"'] & \what{X} \ar[d, two heads, "γ\text{, uniformization}"] \\
      D_0 \ar[r, hook, "\text{inclusion}"'] & X.
    \end{tikzcd}
  \]
  Observing that
  \[
    Ω^{[•]}_{(D_0, D^c_0,γ|_{\what{D}_0})} = Ω^{•}_{\what{D}_0}
    \quad\text{and}\quad
    Ω^{[•]}_{(X, D, γ)} = Ω^{•}_{\what{X}}
  \]
  and that $d_{\cC} ι : ι^* Ω^{[•]}_{(X, D, γ)} →
  Ω^{•}_{\what{D}_0}$ is the restriction of Kähler differentials,
  Proposition~\ref{prop:9-3} yields the claim.
\end{example}

\subsection{Comparison of divisors}
\approvals{Erwan & yes \\ Stefan & yes}

We continue with a criterion for the identity morphism to be a $\cC$-morphism
under a change of divisors.  While perhaps trivial, the criterion is so useful
that it deserves to be mentioned and carefully proven.

\begin{prop}[Comparison of divisors]\label{prop:10-4}%
  Let $(X, D_{1,X})$ and $(X, D_{2,X})$ be two $\cC$-pairs on the same
  underlying space.  Then, the following statements are equivalent.
  \begin{enumerate}
    \item\label{il:10-4-1} The identity morphism $\Id_X$ is a $\cC$-morphism
    between $(X, D_{1,X})$ and $(X, D_{2,X})$.
    
    \item\label{il:10-4-2} We have $D_{1,X} ≥ D_{2,X}$.
  \end{enumerate}
\end{prop}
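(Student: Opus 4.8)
The plan is to reduce both implications to a single local comparison, namely the inclusion of adapted reflexive differential sheaves attached to the two divisors on one and the same cover,
\[
  Ω^{[•]}_{(X, D_{2,X}, γ)} ⊆ Ω^{[•]}_{(X, D_{1,X}, γ)}, \tag{$\star$}
\]
which I expect to hold for \emph{every} $q$-morphism $γ : \what{X} → X$ exactly when $D_{1,X} ≥ D_{2,X}$. Before using it I would dispose of the precondition in Definition~\ref{def:8-1}: writing $X°, Y°$ for the open parts, the requirement $\Id(X°) ⊆ Y°$ is equivalent to $\supp ⌊D_{2,X}⌋ ⊆ \supp ⌊D_{1,X}⌋$ (every $∞$-component of $D_{2,X}$ is an $∞$-component of $D_{1,X}$), while $\img \Id ∩ X^{\lu} ≠ ∅$ is automatic, the locally uniformizable locus being big. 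In particular, if some $∞$-component of $D_{2,X}$ does not appear in $⌊D_{1,X}⌋$, then $\Id_X$ is not even admissible as a $\cC$-morphism, which already settles that case of \ref{il:10-4-1}~$⇒$~\ref{il:10-4-2}. Hence I may assume throughout that $\supp ⌊D_{2,X}⌋ ⊆ \supp ⌊D_{1,X}⌋$.

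For \ref{il:10-4-2}~$⇒$~\ref{il:10-4-1} I would avoid common covers and instead build the pull-back directly for an arbitrary diagram \eqref{eq:7-1-1} with $φ = \Id$ and $a = b ◦ \what{φ}$. Applying Observation~\ref{obs:4-14} to the factorization $\what{X} \xrightarrow{\what{φ}} \what{Y} \xrightarrow{b} X$ with the divisor $D_{2,X}$ yields a natural map $\what{φ}^{[*]} Ω^{[•]}_{(X,D_{2,X},b)} → Ω^{[•]}_{(X,D_{2,X},a)}$, which I precompose with the canonical morphism $\what{φ}^* → \what{φ}^{[*]}$ of Warning~\ref{warning:4-16}. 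Composing the outcome with $(\star)$ for the cover $a$ produces the desired morphism $η : \what{φ}^* Ω^{[•]}_{(X,D_{2,X},b)} → Ω^{[•]}_{(X,D_{1,X},a)}$. Over the nc locus $\what{X}⁺$ all these maps are the canonical pull-back into logarithmic forms and $(\star)$ is a mere sheaf inclusion into the same ambient sheaf $Ω^•_{\what{X}⁺}(\log \what{D}_X⁺)$ (Observation~\ref{obs:4-8}), so the canonicity of $d_{\cC}$ (Facts~\ref{fact:5-11} and~\ref{fact:5-12}, Section~\ref{sec:5-5}) forces $η|_{\what{X}⁺} = d_{\cC}\what{φ}⁺$, as demanded by Definition~\ref{def:7-6}.

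For \ref{il:10-4-1}~$⇒$~\ref{il:10-4-2} I argue by contraposition in the remaining case: $\supp ⌊D_{2,X}⌋ ⊆ \supp ⌊D_{1,X}⌋$, yet some prime $H$ satisfies $m_1 := \mult_{\cC,H} D_{1,X} < m_2 := \mult_{\cC,H} D_{2,X} < ∞$. Working near the generic point of $H$ in an nc neighbourhood (legitimate by the local nature, Observation~\ref{obs:8-5}), I take the local cyclic cover $γ : w ↦ w^{m_2}$, which is strongly adapted for $(X, D_{2,X})$, and form the tautological diagram with $a = b = γ$ and $\what{φ} = \Id$. Here $d_{\cC}\Id⁺$ is the identity inclusion into logarithmic forms, so admitting pull-back of adapted reflexive differentials would force $(\star)$ for $γ$. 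But the local generator $dw ∈ Ω^{[1]}_{(X,D_{2,X},γ)}$ corresponds to $z^{-(m_2-1)/m_2}·dz$ and has pole order exceeding the bound of the $\sA$-factor of $Ω^{[1]}_{(X,D_{1,X},γ)}$ once $m_1 < m_2$; thus $dw \notin Ω^{[1]}_{(X,D_{1,X},γ)}$, no $η$ exists, and $\Id_X$ is not a $\cC$-morphism.

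The main obstacle is establishing $(\star)$ itself, uniformly in $n$ and $p$. I expect it to reduce, by reflexivity, to a prime-by-prime comparison of pole orders on $\what{X}⁺$ for the sheaves $\sA_{n,p} ∩ \sB_{n,p}$ of Definition~\ref{def:3-2}: the $\sB$-factor sees only $\supp D$ and is monotone because $\supp D_{2,X} ⊆ \supp D_{1,X}$, whereas for the $\sA$-factor one must check, over each prime $H$ with ramification order $e$, the inequality of round-downs $⌊n·e·\{D_{2,X}\}⌋$ against the pole allowance coming from $D_{1,X}$. The genuinely delicate point is a component that is fractional for $D_{2,X}$ but logarithmic for $D_{1,X}$, where I must verify that a logarithmic pole dominates the fractional bound; the careful bookkeeping of these round-downs is where the real work lies.
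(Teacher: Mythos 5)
Your proof is correct and shares its load-bearing element with the paper's: both implications are funnelled through the inclusion $(\star)$, and for \ref{il:10-4-1}~$\Rightarrow$~\ref{il:10-4-2} both you and the paper read the coefficient inequality off a tautological diagram whose top row is the identity. The genuine divergence is in \ref{il:10-4-2}~$\Rightarrow$~\ref{il:10-4-1}: the paper first makes $\what{\varphi}$ surjective and then reduces via Lemma~\ref{lem:9-5} to the case $\gamma_1 = \gamma_2$, $\what{\varphi} = \Id$, whereas you build $\eta$ directly as the composite of the natural map $\what{\varphi}^* \to \what{\varphi}^{[*]}$ of Warning~\ref{warning:4-16}, the functoriality inclusion of Observation~\ref{obs:4-14} for the factorization $a = b \circ \what{\varphi}$, and $(\star)$ for the cover $a$. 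This is arguably tidier, since it sidesteps the surjectivity massaging the paper needs before Lemma~\ref{lem:9-5} (whose hypotheses ask for covers) can be applied, at the cost of a canonicity appeal to identify $\eta|_{\what{X}^+}$ with $d_{\cC}\what{\varphi}^+$ --- the same leave-to-the-reader level as the paper's corresponding step. Your explicit disposal of the $\infty$-components through the admissibility condition $\Id(X^\circ) \subseteq Y^\circ$ is a point the paper leaves implicit, and your cyclic-cover contraposition with the generator $dw$ is a concrete instance of the paper's ``the definition of $\Omega^1_{(X,D_{\bullet,X},\gamma)}$ will then imply the inequality''. Finally, the ``delicate case'' you flag inside $(\star)$ is in fact harmless: along a component that is reduced in $D_{1,X}$, both factors $\sA_{1,p}$ and $\sB_{1,p}$ for $D_{1,X}$ permit full logarithmic poles, so any $D_{2,X}$-adapted section --- which has logarithmic poles by its $\sB$-factor, with pole order bounded by $\lfloor e\cdot\tfrac{m_2-1}{m_2}\rfloor \le e-1$ --- lies in the $D_{1,X}$-sheaf automatically; and note that only $n = 1$ is ever needed, since Definition~\ref{def:8-1} asks for pull-back of differentials, not tensors. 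The paper itself settles $(\star)$ with a single sentence, so your proposal is no less complete than the published proof.
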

\begin{proof}[Proof of Proposition~\ref{prop:10-4}, \ref{il:10-4-1} $⇒$ \ref{il:10-4-2}]
  If $x ∈ X$ is any point where $\bigl(X, D_{1,X}+D_{2,X} \bigr)$ is nc, there
  exists a $q$-morphism $γ : \what{X} → X$ where $\bigl(\what{X}, γ^*
  (D_{1,X}+D_{2,X})\bigr)$ is nc, where $γ$ is adapted for $(X, D_{1,X})$ and
  for $(X, D_{2,X})$, and contains $x$ in its image.  We obtain a diagram as
  follows,
  \[
    \begin{tikzcd}
      \what{X} \ar[r, "\Id_{\what{X}}"] \ar[d, "γ\text{, adapted $q$-morphism}"'] & \what{X} \ar[d, "γ\text{, adapted $q$-morphism}"] \\
      X \ar[r, "\Id_X"'] & X.
    \end{tikzcd}
  \]
  The morphism $\Id_{\what{X}}$ admits pull-back of adapted reflexive
  differentials by assumption.  In other words, there exists an inclusion of
  sheaves, $Ω¹_{(X, D_{2,X}, γ)} ⊆ Ω¹_{(X, D_{1,X}, γ)}$.  The definition of
  $Ω¹_{(X, D_{•,X}, γ)}$ will then imply that the inequality $D_{1,X} ≥ D_{2,X}$
  holds over the open set $\img(γ)$.  The claim follows since $x$ is an
  arbitrary point in a big open subset of $X$.
\end{proof}
\begin{proof}[Proof of Proposition~\ref{prop:10-4}, \ref{il:10-4-2} $⇒$ \ref{il:10-4-1}]
  Given a diagram as follows
  \[
    \begin{tikzcd}
      \what{X}_1 \ar[r, "\what{φ}"] \ar[d, "γ_1\text{, $q$-morphism}"'] & \what{X}_2 \ar[d, "γ_2\text{, $q$-morphism}"] \\
      X \ar[r, "\Id_X"'] & X,
    \end{tikzcd}
  \]
  we need to show that $\what{φ}$ admits pull-back of adapted reflexive
  differentials.  A few simplifications can be made without loss of generality.
  To begin, observe that $\what{φ}$ is $q$-morphism and hence open by
  Reminder~\vref{remi:2-19}.  Replacing $\what{X}_2$ with the image of
  $\what{φ}$, we may therefore assume without loss of generality that $\what{φ}$
  is surjective.  We can then invoke Lemma~\ref{lem:9-5}, replace $γ_2$ by
  $γ_2◦\what{φ}$ and assume without loss of generality that $γ_1$ and $γ_2$ are
  equal, and that $\what{φ}$ is the identity map on $\what{X}_1 = \what{X}_2$.
  
  With these simplifications in place, it is clear that $\what{φ}$ admits
  pull-back of adapted reflexive differentials if and only if we have inclusions
  \[
    Ω^{[p]}_{(X, D_{2,X}, γ)} ⊆ Ω^{[p]}_{(X, D_{1,X}, γ)} \quad \text{for every }p.
  \]
  The inequality $D_{1,X} ≥ D_{2,X}$ will however guarantee that.
\end{proof}

\subsection{$\cC$-Resolutions of singularities}
\approvals{Erwan & yes \\ Stefan & yes}

Regretfully, it follows almost immediately from the definition of $\cC$-morphism
that resolutions of singularities do not always exist.

\begin{defn}[$\cC$-Resolution of singularities]\label{def:10-5}%
  Let $(X, D)$ be a $\cC$-pair.  A \emph{$\cC$-resolution of
  singularities}\index{C-resolution of singularities@$\cC$-resolution of
  singularities} is a proper, bimeromorphic $\cC$-morphism $π : (\wtilde{X},
  \wtilde{D}) → (X, D)$ where $(\wtilde{X}, \wtilde{D})$ is snc and $π_*
  \wtilde{D} = D$.
\end{defn}

\begin{prop}[Necessary criterion for existence $\cC$-resolutions of singularities]\label{prop:10-6}%
  Let $X$ be a normal analytic variety.  Assume that $X$ is Gorenstein and that
  a $\cC$-resolution of singularities $π : (\wtilde{X}, \wtilde{D}) → (X, 0)$
  exists.  Then, $X$ is log canonical.  In particular, $X$ has Du Bois
  singularities.
\end{prop}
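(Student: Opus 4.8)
The plan is to extract a single discrepancy inequality from the $\cC$-morphism structure in top degree, and then to reduce the log canonicity of $(X,0)$ to an evident statement on the smooth space $\widetilde{X}$. First I would feed $\pi$ into Definition~\ref{def:8-1} via the trivial diagram $a = \Id_{\widetilde{X}}$, $b = \Id_X$, so that $\what{\phi} = \pi$. Taking $n = 1$ and $p = \dim X$, Definition~\ref{def:7-6} supplies a sheaf morphism
\[
  \eta : \pi^* \Omega^{[\dim X]}_{(X,0,\Id_X)} → \Omega^{[\dim X]}_{(\widetilde{X}, \widetilde{D}, \Id_{\widetilde{X}})}
\]
agreeing with the canonical pull-back over the big open locus. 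By Example~\ref{ex:4-6}, since $X$ is Gorenstein with $D = 0$ the source is $\Omega^{[\dim X]}_X = \omega_X$, an invertible sheaf, while (as $\widetilde{X}$ is smooth and top log forms are $\omega_{\widetilde{X}}(\lfloor \widetilde{D}\rfloor)$) the target is $\omega_{\widetilde{X}}\bigl(\lfloor \widetilde{D}\rfloor\bigr)$. Thus $\eta$ is a morphism of line bundles that is generically the identity under the canonical rational identification of $\pi^*\omega_X$ with $\omega_{\widetilde{X}}$.

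Next, writing $K_{\widetilde{X}} = \pi^* K_X + \sum_i a_i E_i$ for the relative canonical divisor — which is well defined since $\omega_X$ is invertible, even if no global $K_X$ exists — the identification $\pi^*\omega_X = \omega_{\widetilde{X}}\bigl(-\sum_i a_i E_i\bigr)$ realises $\eta$ as the inclusion of $\omega_{\widetilde{X}}(-\sum_i a_i E_i)$ into $\omega_{\widetilde{X}}(\lfloor \widetilde{D}\rfloor)$ inside the sheaf of rational top-forms. For $\eta$ to be an honest sheaf morphism this inclusion of twists must hold, i.e. $-\sum_i a_i E_i \leq \lfloor \widetilde{D}\rfloor$, which reads componentwise as
\[
  a_i \geq -\mult_{E_i}\bigl(\lfloor \widetilde{D}\rfloor\bigr) \in \{0, -1\}
\]
for every $\pi$-exceptional prime divisor $E_i$ (the components of $\widetilde{D}$ are themselves $\pi$-exceptional because $\pi_* \widetilde{D} = 0$). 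In particular $a_i \geq 0$ unless $E_i$ is a component of $\lfloor \widetilde{D}\rfloor$, in which case $a_i \geq -1$.

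Finally I would set $B := -\sum_i a_i E_i$, so that $\pi^* K_X = K_{\widetilde{X}} + B$ and hence, by the standard invariance of discrepancies under crepant birational morphisms (\cite[Lem.~2.30]{KM98}), the pairs $(X,0)$ and $(\widetilde{X}, B)$ have identical discrepancies along every divisor over $X$; it therefore suffices to prove $(\widetilde{X}, B)$ log canonical. Its positive part $B_+$ is supported inside $\supp \lfloor \widetilde{D}\rfloor$, since the previous step shows that $a_i < 0$ forces $E_i ⊆ \supp \lfloor \widetilde{D}\rfloor$, and all coefficients of $B_+$ are $\leq 1$. Because $(\widetilde{X}, \widetilde{D})$ is snc, $B_+$ is an snc divisor with coefficients $\leq 1$ on the smooth space $\widetilde{X}$, so $(\widetilde{X}, B_+)$ is log canonical; as $B \leq B_+$, monotonicity of discrepancies gives that $(\widetilde{X}, B)$, and therefore $(X, 0)$, is log canonical. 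The ``in particular'' clause then follows from the theorem of Kollár and Kovács that log canonical singularities are Du Bois.

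The step I expect to be the main obstacle is the passage from ``the sheaf morphism $\eta$ exists'' to the divisor inequality for the $a_i$: this hinges on identifying $\eta$ with the canonical inclusion of line bundles, so that its global behaviour is dictated by its generic restriction, and on reading the resulting inequality in the correct direction. Everything afterwards is routine discrepancy bookkeeping, the one point of care being that the exceptional locus of $\pi$ need not be snc — which is harmless here, since only components lying inside the snc divisor $\lfloor \widetilde{D}\rfloor$ can carry negative discrepancy.
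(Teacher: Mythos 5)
Your proof is correct and follows essentially the same route as the paper: feed $\pi$ into the trivial diagram with identity $q$-morphisms, identify the top-degree adapted pull-back as a map $\pi^*\omega_X \to \omega_{\wtilde{X}}\bigl(\lfloor\wtilde{D}\rfloor\bigr)$ of invertible sheaves that is generically the canonical one, and read off $a_i \geq -1$ for the exceptional divisors. The one place you go beyond the paper's text is the final step: where the paper compresses the conclusion into ``by definition, $\operatorname{discrep}(X,0)\geq -1$'', you correctly observe that $\pi$ need not be a log resolution (its exceptional locus need not be snc) and close the gap by the crepant transfer $\pi^*K_X = K_{\wtilde{X}}+B$ together with monotonicity against the snc pair $(\wtilde{X},B_+)$, which works precisely because any component with negative discrepancy must lie in the snc divisor $\lfloor\wtilde{D}\rfloor$ --- a worthwhile piece of added rigour, not a deviation in approach.
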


Once appropriate criteria are established, we generalize
Proposition~\ref{prop:10-6} in Corollary~\ref{cor:13-5} to locally
$ℚ$-Gorenstein $\cC$-pairs with non-trivial boundary.

\begin{proof}[Proof of Proposition~\ref{prop:10-6}]
  Let $E ∈ \Div(\wtilde{X})$ denote the $π$-exceptional divisor, with its
  reduced structure.  Consider the trivial diagram
  \[
    \begin{tikzcd}[column sep=large]
      \wtilde{X} \ar[r, "π"] \ar[d, "\Id_{\wtilde{X}}\text{, $q$-morphism}"'] & X \ar[d, "\Id_X\text{, $q$-morphism}"] \\
      \wtilde{X} \ar[r, "π"'] & X
    \end{tikzcd}
  \]
  Definition~\ref{def:8-1} guarantees that $φ$ admits pull-back of adapted
  reflexive differentials,
  \begin{align*}
    π^* ω_X & = π^* \Bigl( Ω^{[\dim X]}_{(X, 0, \Id_X)} \Bigr) && \text{Example~\ref{ex:4-6}} \\
    & → Ω^{[\dim X]}_{(\wtilde{X}, \wtilde{D}, \Id_{\wtilde{X}})} && \text{pull-back} \\
    & = ω_{\wtilde{X}}(\log ⌊\wtilde{D}⌋) ⊆ ω_{\wtilde{X}}(\log E) = ω_{\wtilde{X}}(E) && \text{Example~\ref{ex:4-6}}.
  \end{align*}
  By definition, \cite[Sec.~2.3]{KM98}, this means that
  $\operatorname{discrep}(X,0) ≥ -1$ so that $X$ is log canonical.
\end{proof}

\begin{example}[$\cC$-pair without $\cC$-resolution of singularities]
  For a concrete example of a variety that is Gorenstein but not log canonical,
  let $Y ⊊ ℙ²$ be any general type curve, and let $X ⊂ 𝔸³$ be the affine cone
  over $Y$ with normal bundle $𝒪_{ℙ²}(1)|_Y$, as discussed in
  \cite[App.~B]{KS18}.  The variety $X$ is then normal.  As a hypersurface in
  $𝔸³$, it is also Gorenstein.  However, we have seen in \cite[Prop.~B.3]{KS18}
  that $X$ is not log canonical.
\end{example}

For future reference, we note the following variant of
Proposition~\ref{prop:10-6}, which relates the existence of $\cC$-resolutions of
singularities to the notion of ``weakly rational'' singularities, as introduced
in \cite[Sect.~1.4 and Def.~A.1]{KS18}.

\begin{prop}[Necessary criterion for the existence of special $\cC$-resolutions of singularities]\label{prop:10-8}%
  Let $X$ be a $\cC$-pair with $⌊D⌋ = 0$.  Assume that there exists a
  $\cC$-resolution of singularities,
  \[
    π : (\wtilde{X}, \wtilde{D}) → (X, D),
  \]
  where $⌊\wtilde{D}⌋ = 0$.  Then, $X$ has weakly rational singularities in the
  sense of \cite[Def.~A.1]{KS18}.
\end{prop}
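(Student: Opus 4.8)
The plan is to follow the proof of Proposition~\ref{prop:10-6} closely, producing a pull-back of the dualizing sheaf from the $\cC$-morphism property in top degree and then converting it, via adjunction, into the isomorphism $π_* ω_{\wtilde{X}} ≅ ω_X$ that characterises weakly rational singularities in the sense of \cite[Def.~A.1]{KS18}.

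First I would record the trivial diagram
\[
  \begin{tikzcd}[column sep=large]
    \wtilde{X} \ar[r, "π"] \ar[d, "\Id_{\wtilde{X}}\text{, $q$-morphism}"'] & X \ar[d, "\Id_X\text{, $q$-morphism}"] \\
    \wtilde{X} \ar[r, "π"'] & X,
  \end{tikzcd}
\]
whose vertical morphisms are the identities. Since $π$ is a $\cC$-morphism, Definition~\ref{def:8-1} guarantees pull-back of adapted reflexive differentials; I apply this in degree $p = \dim X =: n$. Example~\ref{ex:4-6} identifies the two sheaves involved: the hypothesis $⌊D⌋ = 0$ gives $Ω^{[n]}_{(X, D, \Id_X)} = Ω^{[n]}_X = ω_X$, while $⌊\wtilde{D}⌋ = 0$ together with the smoothness of $\wtilde{X}$ gives $Ω^{[n]}_{(\wtilde{X}, \wtilde{D}, \Id_{\wtilde{X}})} = Ω^n_{\wtilde{X}} = ω_{\wtilde{X}}$. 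Pull-back therefore supplies a sheaf morphism
\[
  η : π^* ω_X → ω_{\wtilde{X}}
\]
agreeing with the classical pull-back of $n$-forms over the big open set where $π$ is isomorphic. In contrast to Proposition~\ref{prop:10-6}, the assumption $⌊\wtilde{D}⌋ = 0$ removes the logarithmic pole along the exceptional divisor, so no twist by $E$ appears on the right-hand side.

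Next I would transpose $η$ under the adjunction $\Hom_{\wtilde{X}}(π^* ω_X,\, ω_{\wtilde{X}}) = \Hom_X(ω_X,\, π_* ω_{\wtilde{X}})$ to obtain $η' : ω_X → π_* ω_{\wtilde{X}}$, and compare it with the natural trace map $t : π_* ω_{\wtilde{X}} → ω_X$, which realises $π_* ω_{\wtilde{X}}$ as a subsheaf of $ω_X$ and is isomorphic over $X_{\reg}$. Over $X_{\reg}$ both $η'$ and $t$ are the identity, so $t ◦ η' = \Id_{ω_X}$ there; because $ω_X$ is reflexive and $X_{\reg} ⊆ X$ is big, this equality of endomorphisms extends to all of $X$. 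Hence $t$ is a split epimorphism. Since $π_* ω_{\wtilde{X}}$ is torsion-free, the map $t$ is also injective — its kernel is supported on the codimension-two singular locus and must vanish — so $t$ is an isomorphism, yielding $π_* ω_{\wtilde{X}} ≅ ω_X$. This is exactly the defining property of weakly rational singularities, \cite[Def.~A.1]{KS18}.

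I expect the bookkeeping points to be routine: that $η$ restricts to the ordinary pull-back over $X_{\reg}$ is built into Definition~\ref{def:7-6} and Proposition~\ref{prop:5-15}, and the sheaf identifications are simply the $⌊D⌋ = 0$ specialisations of Example~\ref{ex:4-6}. The one step deserving care is the adjunction-and-trace argument: the observation that a single pull-back morphism $π^* ω_X → ω_{\wtilde{X}}$ forces the trace inclusion $π_* ω_{\wtilde{X}} ↪ ω_X$ to be surjective, and hence an isomorphism, is the conceptual heart of the proof and the only place where normality, reflexivity and torsion-freeness all enter together.
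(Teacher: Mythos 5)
Your proof is correct and takes essentially the same route as the paper: the paper likewise feeds the trivial diagram into the $\cC$-morphism property in top degree, identifies $\Omega^{[\dim X]}_{(X,D,\Id_X)} = \omega_X$ and $\Omega^{[\dim X]}_{(\wtilde{X},\wtilde{D},\Id_{\wtilde{X}})} = \omega_{\wtilde{X}}$ via Example~\ref{ex:4-6}, and concludes $\omega^{\operatorname{GR}}_X = \pi_*\omega_{\wtilde{X}} = \omega_X$, merely leaving implicit the adjunction-and-trace bookkeeping that you spell out. One cosmetic refinement: the identity $t \circ \eta' = \Id_{\omega_X}$ holds a priori only on the dense open set $X_{\reg} \setminus (\supp D \cup \{\text{locus where } \pi \text{ is not an isomorphism}\})$ rather than on all of $X_{\reg}$, which is still enough because $\sHom(\omega_X, \omega_X)$ is torsion-free.
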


\begin{rem}[Rational and weakly rational singularities]
  If the space $X$ of Proposition~\ref{prop:10-8} is Cohen-Macaulay, recall from
  \cite[Sect.~1.4 and references there]{KS18} that $X$ has weakly rational
  singularities if and only if it has rational singularities.
\end{rem}

\begin{proof}[Proof of Proposition~\ref{prop:10-8}]
  As in the proof of Proposition~\ref{prop:10-6}, we obtain a pull-back map for
  adapted reflexive differentials,
  \[
    π^* ω_X
    = π^* \Bigl( Ω^{[\dim X]}_{(X, D, \Id_X)} \Bigr)
    → Ω^{[\dim X]}_{(\wtilde{X}, \wtilde{D}, \Id_{\wtilde{X}})}
    = ω_{\wtilde{X}}.
  \]
  In the language of \cite[Sect.~1.4]{KS18}, this implies that the
  Grauert-Riemenschneider sheaf on $X$ equals its dualizing sheaf,
  \[
    ω^{\operatorname{GR}}_X \overset{\text{def.}}{=} π_* ω_{\wtilde{X}} = ω_X,
  \]
  which is reflexive.  By definition, this means that $X$ is weakly rational
  singularities.
\end{proof}

\begin{prop}[Sufficient criterion for existence of $\cC$-resolutions of singularities]\label{prop:10-10}%
  Let $(X,D)$ be a locally uniformizable $\cC$-pair.  Then, a $\cC$-resolution
  of singularities exists.
\end{prop}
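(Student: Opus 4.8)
The plan is to realize $\wtilde{X}$ as a strong log resolution of $(X,D)$, to equip it with the unique boundary $\wtilde{D}$ that records the branching of the \emph{local} uniformizations of $(X,D)$, and then to check that $π$ is a $\cC$-morphism by a purely local argument resting on Proposition~\ref{prop:9-3}. First I would choose a strong log resolution $π : \wtilde{X} → X$ of $(X,D)$, as in the footnote to Proposition~\ref{prop:4-5}. Since $(X,D)$ is nc over a big open set (Observation~\ref{obs:2-15}) and $π$ is isomorphic there, every $π$-exceptional prime divisor maps into the small non-nc locus of $(X,D)$, which lies in codimension $\geq 2$.

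Next I would define $\wtilde{D}$. Its horizontal part is the strict transform $π_*^{-1}D$, carrying the same standard coefficients, so that $π_*\wtilde{D} = D$ holds automatically. To each prime divisor $Δ \subseteq \wtilde{X}$ I attach a $\cC$-multiplicity as follows: over a uniformizable neighbourhood $U$ of the generic point of $π(Δ)$, pick a uniformization $u : X_u \twoheadrightarrow U$ (Definition~\ref{def:2-28}), take a component $\what{X}$ of the normalized fibre product $X_u \times_X \wtilde{X}$ with its finite map $a : \what{X} \twoheadrightarrow \wtilde{X}$, and set $\mult_{\cC,Δ}\wtilde{D}$ equal to the ramification order of $a$ along $Δ$ — with the convention that $\mult_{\cC,Δ}\wtilde D = ∞$ whenever $Δ$ dominates a component of $\lfloor D\rfloor$. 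By construction $a$ is then strongly adapted for $(\wtilde{X},\wtilde{D})$ with $\Branch(a) \subseteq \supp\wtilde{D}$, hence a uniformization over the locus where $\what{X}$ is nc, and $(\wtilde{X},\wtilde{D})$ is an snc $\cC$-pair in the sense of Definition~\ref{def:10-5}.

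The hard part will be showing that this multiplicity is \emph{well defined}, i.e.\ independent of the chosen local uniformization $u$, so that the local recipes glue to one global divisor $\wtilde{D}$. For the strict transform of $D_i$ the value is forced to be $m_i$ by strong adaptedness, and over $\lfloor D\rfloor$ it is $∞$ by fiat; the genuine content is the ramification order along a $π$-exceptional $Δ$, whose centre in $X$ sits in codimension $\geq 2$, where the covering behaviour is governed by the local orbifold structure rather than by $\supp D$. Here I would invoke the essential uniqueness of local uniformizations of a klt germ: any two are dominated by a common adapted cover and differ only by a cover that is étale in codimension one, so the ramification index attached to the divisorial valuation $\ord_Δ$ is an invariant of $(X,D)$ and $\ord_Δ$ alone. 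This canonicity of the local uniformizing group is the single point demanding care.

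Finally I would verify that $π$ is a $\cC$-morphism. By Observation~\ref{obs:8-5} the question is local on $X$, so I may assume $(X,D)$ uniformizable with uniformization $u$, and apply Proposition~\ref{prop:9-3} to the diagram with $b := u$ and $a : \what{X} \twoheadrightarrow \wtilde{X}$ the fibre-product cover above. Since $u$ uniformizes, $\Omega^{[1]}_{(X,D,u)} = \Omega^1_{X_u}(\log u^*\lfloor D\rfloor)$ is locally free. Over the big nc locus $\what{X}^{+}$ the canonical pull-back $d_{\cC}\what{φ}^{+}$ of Fact~\ref{fact:5-9} is just the pull-back of logarithmic Kähler differentials from $X_u$; by the commutativity $u\circ\what{φ} = π\circ a$ and the choice of $\wtilde{D}$, the reduced preimage $a^{-1}\pi^{-1}\lfloor D\rfloor$ equals $\supp a^*\lfloor\wtilde{D}\rfloor$, so this pull-back lands in $\Omega^{[1]}_{(\wtilde{X},\wtilde{D},a)}\big|_{\what{X}^{+}} = \Omega^1_{\what{X}^{+}}(\log a^*\lfloor\wtilde{D}\rfloor)$ (Example~\ref{ex:4-6}, Observation~\ref{obs:4-8}). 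As the source $\what{φ}^*\Omega^{[1]}_{(X,D,u)}$ is locally free, the target $\Omega^{[1]}_{(\wtilde{X},\wtilde{D},a)}$ is reflexive (Proposition~\ref{prop:4-5}), and $\what{X}^{+} \subseteq \what{X}$ is big, the factored map extends uniquely to $η^1$ on all of $\what{X}$. Proposition~\ref{prop:9-3} then shows that $π$ is a $\cC$-morphism, completing the construction of the $\cC$-resolution.
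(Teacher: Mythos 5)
Your construction is genuinely different from the paper's, and more ambitious: the paper simply sets $\wtilde{D} := \pi^{-1}_* D + E$, giving \emph{every} exceptional divisor the coefficient $1$ (i.e.\ $\cC$-multiplicity $\infty$). That blunt choice makes the verification almost trivial --- logarithmic poles along the whole exceptional locus are permitted, so after localizing to the uniformizable case (Observation~\ref{obs:8-5}), invoking Proposition~\ref{prop:9-3}, and arranging smooth supports so that $E$ and $\pi^{-1}_*D$ are disjoint, the pull-back of Fact~\ref{fact:5-9} lands in the adapted sheaf for free. You instead try to assign each exceptional divisor its natural \emph{finite} multiplicity, the ramification index of the cover induced from a local uniformization; this is exactly the refinement that Remark~\ref{rem:13-3}'s neighbour (the remark following Proposition~\ref{prop:10-10}) says is ``certainly possible'' but which the paper deliberately avoids, and it does reproduce the correct answer $\frac{1}{2}\!\cdot\!E$ in Example~\ref{ex:10-1}. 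Since the statement only asks for \emph{some} $\cC$-resolution, you are proving more than needed, at real cost.

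The cost shows up as two genuine gaps, both concentrated where $\lfloor D\rfloor \neq 0$. First, your well-definedness claim is only asserted: the sketch (``any two uniformizations differ by a cover \'etale in codimension one'') can indeed be made rigorous via Abhyankar's lemma on a common adapted cover together with purity of the branch locus on the smooth uniformizing spaces --- but only for exceptional divisors whose centre avoids $\supp\lfloor D\rfloor$. Along components of $\lfloor D\rfloor$, the definition of ``strongly adapted'' constrains only $u^*D_{\orb}$, so uniformizations may ramify to \emph{arbitrary} different orders there; consequently, for an exceptional divisor $\Delta$ with $\pi(\Delta) \subseteq \supp\lfloor D\rfloor$, the ramification index of $a$ along $\Delta$ genuinely depends on the chosen $u$, and your recipe is ill-defined. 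Second, even a well-chosen finite multiplicity would be \emph{wrong} for such $\Delta$: pull-backs of forms with logarithmic poles along $u^*\lfloor D\rfloor$ acquire nonzero residues along every divisor over $\lfloor D\rfloor$, including exceptional ones, and the Miyaoka description (Observation~\ref{obs:3-8}) shows that finite-multiplicity components of the adapted sheaf admit no residues --- so these divisors must receive multiplicity $\infty$. Your convention assigns $\infty$ only to divisors \emph{dominating} a component of $\lfloor D\rfloor$, which exceptional divisors never do; accordingly, your asserted identity $a^{-1}\pi^{-1}\lfloor D\rfloor = \supp a^*\lfloor\wtilde{D}\rfloor$ in the final verification is false as stated, and the factorization through $\Omega^{[1]}_{(\wtilde{X},\wtilde{D},a)}$ fails precisely over $\lfloor D\rfloor$. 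Both gaps close if you set $\mult_{\cC,\Delta}\wtilde{D} = \infty$ whenever $\pi(\Delta) \subseteq \supp\lfloor D\rfloor$ (mirroring the paper's choice along that locus) and write out the Abhyankar-plus-purity argument on the klt locus; alternatively, note that for the statement as posed the paper's choice $\wtilde{D} = \pi^{-1}_*D + E$ sidesteps the entire well-definedness question.
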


\begin{rem}[Proposition~\ref{prop:10-10} is not optimal]
  Proposition~\ref{prop:10-10} is far from optimal.  It is certainly possible to
  bound the coefficients of the resolution pair.
\end{rem}

\begin{proof}[Proof of Proposition~\ref{prop:10-10}]
  \CounterStep{}Let $π : Y → X$ be a strong log resolution of the pair $(X,D)$,
  with exceptional divisor $E ⊂ Y$.  Consider the divisor $D_Y := π^{-1}_* D + E
  ∈ \Div Y$, where $π^{-1}_* D$ denotes the strict transform.  We claim that $π$
  is a morphism between the $\cC$-pairs $\bigl(Y, D_Y\bigr)$ and $(X,D)$.
  Recalling from Observation~\ref{obs:8-5} that the claim is local on $X$, we
  assume without loss of generality that $X$ is uniformizable, so that a diagram
  of the following form exists,
  \begin{equation}\label{eq:10-12-1}
    \begin{tikzcd}[row sep=1cm, column sep=2cm]
      \what{Y} \ar[r, "\what{π}"] \ar[d, two heads, "γ_Y"'] & \what{X} \ar[d, two heads, "γ_X\text{, uniformization}"] \\
      Y \ar[r, "π\text{, resolution}"'] & X,
    \end{tikzcd}
  \end{equation}
  where $\what{Y}$ is obtained as the normalization of a suitable component in
  the fibre product $Y ⨯_X \what{X}$.  Further simplifications are possible:
  Observation~\ref{obs:8-5} allows assuming without loss of generality that $Y$
  and $\what{Y}$ are smooth, and that the divisors $D_Y$ and $γ^*_Y D_Y$ have
  smooth support.

  Since $γ_X$ uniformizes, we have seen in Example~\ref{ex:4-6} that
  $Ω^{[1]}_{(X,D,γ_X)}$ is locally free.  Proposition~\ref{prop:9-3} therefore
  applies to show that $π$ is a morphism of $\cC$-pairs if and only if
  Diagram~\eqref{eq:10-12-1} admits pull-back of adapted reflexive
  differentials.  That, however, follows almost immediately from our choice of a
  boundary divisor on $Y$ and from Fact~\vref{fact:5-9}.  To be precise, recall
  that Fact~\ref{fact:5-9} equips us with a pull-back map
  \begin{align}
    \what{π}^{\,*} Ω¹_{\what{X}} \bigl(\log γ^*_X ⌊D_X⌋ \bigr) & = \what{π}^{\,*} Ω^{[1]}_{(X,D_X,γ_X)} && γ_Y\text{ uniformizes} \nonumber \\
    & → Ω¹_{\what{Y}} \bigl(\log \what{π}^{\,*} γ^*_X ⌊D_X⌋ \bigr) && \text{pull-back }d_{\cC} φ \label{eq:10-12-2}\\
    & = Ω¹_{\what{Y}} \bigl(\log γ^*_Y π^* ⌊D_X⌋ \bigr) && \text{commutativity} \nonumber \\
    & ↪ Ω¹_{\what{Y}} \bigl(\log γ^*_Y ⌊D_Y⌋\bigr) && \text{choice of } D_Y.  \nonumber
  \end{align}
  To prove that Diagram~\eqref{eq:10-12-1} admits pull-back of adapted reflexive
  differentials, we need to show that the composed map \eqref{eq:10-12-2} takes
  its image in
  \begin{equation}\label{eq:10-12-3}
    Ω¹_{(Y,D_Y,γ_Y)} ⊆ Ω¹_{\what{Y}} \bigl(\log γ^*_Y ⌊D_Y⌋\bigr).
  \end{equation}
  This is clear over the complement of $E$, where $π$ and $\what{π}$ are
  isomorphic.  This is also clear over the complement of $π^{-1}_* D$, where
  $D_Y$ is reduced.  Recalling the assumption that $D_Y$ has smooth support,
  observe that $E$ and $π^{-1}_* D$ are disjoint, so that \eqref{eq:10-12-3}
  holds everywhere.
\end{proof}

%
%
\svnid{$Id: 11-functoriality.tex 852 2024-07-15 08:20:56Z kebekus $}
\selectlanguage{british}

\section{Functoriality}
\subversionInfo
\approvals{Erwan & yes \\ Stefan & yes}
\label{sec:11}

Observe that $\cC$-pairs form no category because they cannot be composed.  If
\begin{equation}\label{eq:11-0-1}
  (X, D_X) \xrightarrow{φ_1} (Y, D_Y) \xrightarrow{φ_2} (Z, D_Z)
\end{equation}
is a sequence of morphisms of $\cC$-pairs, the composition $φ_2◦φ_1$ need not be
a $\cC$-morphism between $(X, D_X)$ and $(Z, D_Z)$, for the simple reason that
the image of the composed morphism might be disjoint from the open set $Z^{\lu}
⊆ Z$ where $(Z, D_Z)$ is locally uniformizable.  The following proposition
asserts that this is the only obstacle for functoriality.  It implies in
particular that locally uniformizable $\cC$-pairs form a category.

\begin{prop}[Weak functoriality]\label{prop:11-1}%
  Given a sequence of morphisms between $\cC$-pairs as in \eqref{eq:11-0-1},
  assume that
  \[
    \img (φ_2◦φ_1) ∩ Z^{\lu} ≠ ∅.
  \]
  Then, $φ_2◦φ_1$ is a morphism between the $\cC$-pairs $(X, D_X)$ and $(Z,
    D_Z)$.
\end{prop}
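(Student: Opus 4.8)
The plan is to verify the two conditions of Definition~\ref{def:8-1} for the composite $\varphi := \varphi_2 \circ \varphi_1$. The condition $\img \varphi \cap Z^{\lu} \neq \emptyset$ is exactly the hypothesis, and the inclusion $\varphi(X°) \subseteq Z°$ follows immediately from the inclusions $\varphi_1(X°) \subseteq Y°$ and $\varphi_2(Y°) \subseteq Z°$, which hold because $\varphi_1$ and $\varphi_2$ are $\cC$-morphisms. It therefore remains to show that every commutative diagram of the form \eqref{eq:7-1-1}, with a morphism $\what\varphi : \what X \to \what Z$ over $\varphi$ and $q$-morphisms $a : \what X \to X$ and $c : \what Z \to Z$, admits pull-back of adapted reflexive differentials. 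The idea is to factor this diagram through a suitable cover $\what Y$ of $Y$ and then to chain the pull-back morphisms that $\varphi_1$ and $\varphi_2$ provide.

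First I would construct $\what Y$ as the normalization of the irreducible component of the fibre product $Y \times_Z \what Z$ that contains the image of $\what X$ under the morphism $(\varphi_1 \circ a,\, \what\varphi)$; the two components of this morphism agree after composition with $c$ and $\varphi_2$ by commutativity, so the map into the fibre product is well-defined, and since $\what X$ is normal it lifts to a morphism $\what\varphi_1 : \what X \to \what Y$. Writing $b : \what Y \to Y$ and $\what\varphi_2 : \what Y \to \what Z$ for the two projections, one has $b \circ \what\varphi_1 = \varphi_1 \circ a$ and $\what\varphi_2 \circ \what\varphi_1 = \what\varphi$. The one point requiring genuine care is that $b$ is a $q$-morphism, i.e.\ that the chosen component dominates $Y$. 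Here I would invoke the openness of the cover $c$ (Reminder~\ref{remi:2-19}): through a general point of the image of $\what X$, openness of $c$ exhibits a local branch of $Y \times_Z \what Z$ of dimension $\dim Y$ that surjects onto a neighbourhood in $Y$ and whose germ contains the image germ of $\what X$. This forces the ambient component to have dimension $\dim Y$, hence to dominate $Y$, so that $b$ is a cover.

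With the factorization in hand, both resulting squares are diagrams of the form \eqref{eq:7-1-1} for $\varphi_1 : (X,D_X) \to (Y,D_Y)$ and for $\varphi_2 : (Y,D_Y) \to (Z,D_Z)$ respectively. Since $\varphi_1$ and $\varphi_2$ are $\cC$-morphisms, both squares admit pull-back of adapted reflexive differentials, yielding sheaf morphisms $\eta_2 : \what\varphi_2^* \Omega^{[\bullet]}_{(Z,D_Z,c)} \to \Omega^{[\bullet]}_{(Y,D_Y,b)}$ and $\eta_1 : \what\varphi_1^* \Omega^{[\bullet]}_{(Y,D_Y,b)} \to \Omega^{[\bullet]}_{(X,D_X,a)}$. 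Because ordinary pull-back is functorial, $\what\varphi^* = \what\varphi_1^* \circ \what\varphi_2^*$, so the composite $\eta := \eta_1 \circ \what\varphi_1^*(\eta_2)$ is a candidate morphism $\what\varphi^* \Omega^{[\bullet]}_{(Z,D_Z,c)} \to \Omega^{[\bullet]}_{(X,D_X,a)}$ (note that Lemma~\ref{lem:9-5} is not needed here, since $\what X$ already maps to $\what Y$).

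To finish I would check that $\eta$ restricts on the big open set $\what X⁺$ to the canonical pull-back $d_{\cC}\what\varphi⁺$ of Section~\ref{sec:5}. Over $\what X⁺$ all spaces involved are nc and the morphisms $\eta_1$, $\eta_2$ restrict to the canonical maps $d_{\cC}\what\varphi_1⁺$ and $d_{\cC}\what\varphi_2⁺$; the chain rule for these pull-back maps — equivalently, their uniqueness as the collection determined by compatibility with the pull-back of Kähler tensors (Fact~\ref{fact:5-11}) and by functoriality (Fact~\ref{fact:5-12}) — identifies the restriction of the composite $\eta$ with $d_{\cC}\what\varphi⁺$, and the identity then extends from $\what X⁺$ to $\what X$ by reflexivity of the relevant $\sHom$-sheaf. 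I expect the two genuinely substantive steps to be the dominance claim for $b$ and this final identification: they are where the geometry (openness of $q$-morphisms) and the bookkeeping of canonical pull-back maps over the nc locus must be combined, whereas the remaining chaining is a formal application of the functoriality statements already established.
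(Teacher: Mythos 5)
There is a genuine gap, and it sits exactly at the step you flag as dispensable: the claim that ``since $\what X$ is normal it lifts to a morphism $\what\varphi_1 : \what X \to \what Y$''. Normality of the \emph{source} is not enough to lift a morphism through a normalization; the universal property applies only when the map meets the normal locus of the target generically, and in your setting the image of $\what X$ can lie entirely inside the non-normal locus of the fibre product. Concrete counterexample, entirely within the hypotheses of Proposition~\ref{prop:11-1}: take $Z = \what Z = \bC$ with $c(w) = w^2$ and $D_Z = \frac{1}{2}\cdot\{0\}$; take $Y = \bC^2$ with coordinates $(s,t)$, $D_Y = \frac{1}{2}\cdot\{t=0\}$ and $\varphi_2(s,t) = s^2 t$; take $X = \bC$ with $D_X = \frac{1}{2}\cdot\{0\}$ and $\varphi_1(u) = (0,u)$. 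All pairs are nc, and the numerical criterion together with Proposition~\ref{prop:14-9} shows that $\varphi_1$ and $\varphi_2$ are $\cC$-morphisms; the hypothesis $\img(\varphi_2\circ\varphi_1)\cap Z^{\lu}\neq\emptyset$ holds. Now take $\what X = X$, $a = \Id_X$; commutativity forces $\what\varphi \equiv 0$. The fibre product is the \emph{irreducible} hypersurface $W = \{w^2 = s^2t\}$, which is non-normal precisely along the line $\{s = w = 0\}$ --- and this line is exactly the image of $\what X$. The normalization is $\nu : \bC^2_{(s,v)} \to W$, $(s,v)\mapsto (s, v^2, sv)$, and a lift of $u \mapsto \bigl((0,u),0\bigr)$ would require a holomorphic function with $v(u)^2 = u$, which does not exist. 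So $\what\varphi_1$ need not exist and your factorization collapses; this is precisely the difficulty that the paper's proof circumvents by \emph{not} lifting $\what X$, but instead passing to a further cover $\wcheck X \to \what X$ (a component of a normalized fibre product --- in the example above, the double cover $v \mapsto v^2$ of $\what X$) and then invoking Lemma~\ref{lem:9-5} to descend the pull-back property from $\wcheck X$ back to $\what X$. The trace and invariant-push-forward machinery behind Lemma~\ref{lem:9-5} is what replaces the non-existent lift, so your parenthetical ``Lemma~\ref{lem:9-5} is not needed here'' is exactly where the argument breaks.

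The rest of your proposal is sound or repairable, and structurally close to the paper's proof, which also reduces to chaining the two pull-back morphisms and leaves the identification over the nc locus to functoriality (Facts~\ref{fact:5-11} and \ref{fact:5-12}). Your dominance claim for $b$ is true, though your justification needs a small adjustment: showing that \emph{some} local branch through a general image point has dimension $\dim Y$ does not yet control the component containing $\img\what X$, which may lie in an intersection of components. The clean argument: since $Z$ is normal (hence locally irreducible) and $c$ is quasi-finite and open, $c$ is locally finite \emph{onto} open neighbourhoods, so the germ of $Y\times_Z\what Z$ at \emph{every} point maps finitely and surjectively onto the corresponding germ of $Y$; evaluating at a general point of any chosen component, away from all other components, gives $\dim = \dim Y$ for every component. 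Once you route the argument through a cover $\wcheck X \to \what X$ and Lemma~\ref{lem:9-5}, your proof essentially becomes the paper's --- which additionally reduces via Corollary~\ref{cor:9-6} and a local adapted cover of $Y$ from Lemma~\ref{lem:2-36}, conveniences rather than necessities --- but the descent lemma itself cannot be discarded.
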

\begin{proof}
  We apply the elementary criterion for $\cC$-morphisms spelled out in
  Corollary~\ref{cor:9-6} above: assuming that we have a diagram
  \[
    \begin{tikzcd}[column sep=large]
      \what{X} \ar[r, "\what{φ}"] \ar[d, "a\text{, adapted}"'] & \what{Z} \ar[d, "c\text{, adapted}"] \\
      X \ar[r, "φ_2◦φ_1"'] & Z,
    \end{tikzcd}
  \]
  we need to show that $\what{φ}$ admits pull-back of adapted reflexive
  differentials.  The question is local on $X$.  We can therefore shrink all
  relevant spaces and assume that $Y$ is Stein.  Lemma~\ref{lem:2-36} will then
  yield an adapted cover $\overline{Y} \twoheadrightarrow Y$.  We can thus set
  \begin{align*}
    \wcheck{Y} & := \text{normalisation of a component of } \overline{Y} ⨯_Z \what{Z}, \\
    \wcheck{X} & := \text{normalisation of a component of } \wcheck{Y} ⨯_Y \what{X},
  \end{align*}
  and obtain a diagram as follows,
  \[
    \begin{tikzcd}[column sep=large]
      \wcheck{X} \ar[d, two heads, "α"'] \ar[r, "\wcheck{φ}_1"] & \wcheck{Y} \ar[dd, two heads, near start, "β"'] \ar[r, "\wcheck{φ}_2"] & \wcheck{Z} \ar[d, equals, "γ"] \\
      \what{X} \ar[d, "a"'] & & \what{Z} \ar[d, "c"] \ar[from=ll, crossing over, near start, "\what{φ}"'] \\
      X \ar[r, "φ_1"'] & Y \ar[r, "φ_2"'] & Z.
    \end{tikzcd}
  \]
  Lemma~\ref{lem:9-5} applies to this setting, so that we only need to show that
  $\wcheck{φ}_2◦\wcheck{φ}_1$ admits pull-back of adapted reflexive
  differentials.  But each morphism $\wcheck{φ}_{•}$ admits pull-back of adapted
  reflexive differentials individually.
\end{proof}

%
%
\svnid{$Id: 12-quotients.tex 852 2024-07-15 08:20:56Z kebekus $}
\selectlanguage{british}

\section{Existence of categorical quotients}
\subversionInfo
\approvals{Erwan & yes \\ Stefan & yes}
\label{sec:12}

$\cC$-pairs admit a natural notion of a categorical quotient under the action of
a finite group.  The following definitions are direct analogues of the classic
definitions for normal varieties and should not come as a surprise.

\begin{notation}[Group action on $\cC$-pair]
  \index{group action on $\cC$-pair} Let $(X, D_X)$ be a $\cC$-pair and let $G$
  be a finite group.  A \emph{$G$-action on $(X, D_X)$}\index{group action on
  $\cC$-pair} is a $G$-action on $X$ that stabilizes the divisor $D_X$.
\end{notation}

\begin{rem}
  The requirement that the action stabilizes $D_X$ is equivalent to the
  requirement that $g^* D_X = D_X$ for all $g ∈ G$.  We do not require that the
  $G$-action stabilizes the components of $D_X$ individually, nor that it fixes
  them pointwise.
\end{rem}

\begin{defn}[Categorical quotients of $\cC$-pairs]\label{def:12-3}%
  Let $G$ be a finite group that acts on a $\cC$-pair $(X, D_X)$.  A
  \emph{categorical quotient of $(X, D_X)$ by $G$}\index{quotient of
  $\cC$-pair}\index{categorical quotient of $\cC$-pair} is a surjective morphism
  of $\cC$-pairs,
  \[
    q : (X, D_X) \twoheadrightarrow (Q, D_Q),
  \]
  whose underlying morphism of varieties is constant on $G$-orbits and that
  satisfies the following universal property: if $φ : (X,D_X) → (Y,D_Y)$ is any
  morphism of $\cC$-pairs whose underlying morphism of varieties is constant on
  $G$-orbits, then $φ$ factorizes via $q$,
  \[
    \begin{tikzcd}[column sep=large]
      (X, D_X) \ar[r, two heads, "q"'] \ar[rr, bend left=15, "φ"] & (Q, D_Q) \ar[r, "∃!  ψ"'] & (Y,D_Y).
    \end{tikzcd}
  \]
\end{defn}

At the level of underlying spaces, the quotient is simply the categorical
quotient of a normal analytic space, \cite[Thm.~4]{MR0084174}.  The following
construction equips the quotient space with a suitable divisor.

\begin{construction}\label{cons:12-4}%
  Let $(X, D_X)$ be a $\cC$-pair and let $G$ be a finite group that acts on
  $(X,D_X)$.  Set $Q := X/G$ and take $q : X → Q$ as the quotient morphism.  For
  any prime Weil divisor $H ∈ \Div(Q)$, choose a component of the preimage $H' ⊂
  \supp q^ *H$ and set
  \[
    m_H := (\mult_{H'} q^* H) · (\mult_{\cC, H'} D_X).
  \]
  Observe that the number $m_H$ does not depend on the choice of $H'$ and set
  \[
    D_Q := \sum_{H ∈ \Div(Q) \text{ prime}} \frac{m_H - 1}{m_H}·H ∈
    ℚ\Div(Q).
  \]
  As before, we stick to the convention that
  \[
    ∞·(\text{positive, finite}) = ∞,
    \quad ∞ - (\text{finite}) = ∞,
    \quad\text{and}\quad ∞/∞ = 1.
  \]
\end{construction}

\begin{thm}[Existence of quotients]\label{thm:12-5}%
  In the setting of Construction~\ref{cons:12-4}, the morphism $q$ is a morphism
  of $\cC$-pairs, $q: (X, D_X) → (Q, D_Q)$, and this morphism is a categorical
  quotient.
\end{thm}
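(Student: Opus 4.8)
The plan is to derive both assertions from the single local identity $m_H=(\mult_{H'}q^*H)\cdot(\mult_{\cC,H'}D_X)$ that defines $D_Q$ in Construction~\ref{cons:12-4}. First I would record the two conditions of Definition~\ref{def:8-1}. Since a prime divisor $H\subseteq Q$ has $m_H=\infty$ precisely when some (hence, by $G$-invariance of $D_X$, every) component $H'$ over it satisfies $\mult_{\cC,H'}D_X=\infty$, one gets $q^{-1}(\supp\lfloor D_Q\rfloor)=\supp\lfloor D_X\rfloor$ outside a small set; this yields $q(X^{\circ})\subseteq Q^{\circ}$, and surjectivity of $q$ gives $\img q\cap Q^{\lu}\neq\emptyset$. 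That $q$ is constant on $G$-orbits holds by construction, and at the level of analytic varieties $Q=X/G$ is the categorical quotient, \cite[Thm.~4]{MR0084174}.

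To prove that $q$ is a $\cC$-morphism I would apply Corollary~\ref{cor:9-6} to reduce to a square with adapted covers $a\colon\what{X}\to X$ and $b\colon\what{Q}\to Q$ and induced $\what{q}\colon\what{X}\to\what{Q}$. Over the big open set $\what{X}^{+}$ of Notation~\ref{not:7-3}, Observation~\ref{obs:7-4} gives the canonical pull-back $d_{\cC}\what{q}^{+}$ into $\Omega^p_{\what{X}^{+}}(\log\what{D}_X^{+})$, and by Observation~\ref{obs:7-12} it is enough to check that its image lies in the subsheaf $\Omega^{[p]}_{(X,D_X,a)}$. This is a closed condition testable at the generic point of each prime divisor, so it reduces to the one-variable model: writing $t=s^e$ for the ramification of $q$ along $H'=D_i$ (so $e=\mult_{D_i}q^*H$ and $m_H=e\,m_i$), a form with $\cC$-pole order $\tfrac{m_H-1}{m_H}$ along $H$ pulls back to one with pole order $1-e/m_H=\tfrac{m_i-1}{m_i}$ along $D_i$, exactly the adapted order for $(X,D_X)$. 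The adapted-cover version is the same computation with the ramification of $a$ and $b$ bookkept, which closes because $m_H\mid\mult b^*H$ and $m_H=e\,m_i$.

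For the universal property, any $\cC$-morphism $\varphi\colon(X,D_X)\to(Y,D_Y)$ constant on $G$-orbits factors uniquely as $\varphi=\psi\circ q$ through a morphism of varieties $\psi\colon Q\to Y$ by \cite[Thm.~4]{MR0084174}; uniqueness at the level of varieties gives uniqueness among $\cC$-morphisms, so the content is that $\psi$ is a $\cC$-morphism. Its hypotheses follow from those of $\varphi$: every preimage of a point of $Q^{\circ}$ lies in $X^{\circ}$ (as $q(\supp\lfloor D_X\rfloor)\subseteq\supp\lfloor D_Q\rfloor$), whence $\psi(Q^{\circ})=\varphi(q^{-1}(Q^{\circ}))\subseteq Y^{\circ}$, and $\img\varphi\subseteq\img\psi$ gives $\img\psi\cap Y^{\lu}\neq\emptyset$. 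To prove $\psi$ is a $\cC$-morphism I would use Corollary~\ref{cor:9-6} with adapted covers $c\colon\what{Q}\to Q$ and $d\colon\what{Y}\to Y$, take a component $\what{X}$ of the normalization of $X\times_Q\what{Q}$ with stabilizer $G'\subseteq G$, and observe that the induced $a\colon\what{X}\to X$ is adapted for $(X,D_X)$ (a ramification count from the calibration, as above) while $\what{q}\colon\what{X}\to\what{Q}$ is Galois with group $G'$ and realizes $\what{Q}=\what{X}/G'$. Since $\varphi$ is a $\cC$-morphism, the square for $\varphi$ with covers $a$ and $d$ admits pull-back, giving a $G'$-equivariant map $\what{\eta}_\varphi\colon\what{q}^{*}\what{\psi}^{*}\Omega^{[\bullet]}_{(Y,D_Y,d)}\to\Omega^{[\bullet]}_{(X,D_X,a)}$; applying $G'$-invariant push-forward along $\what{q}$ and the projection formula $(\what{q}_*\what{q}^{*}\mathcal F)^{G'}\cong\mathcal F$ produces $\what{\eta}_\psi\colon\what{\psi}^{*}\Omega^{[\bullet]}_{(Y,D_Y,d)}\to(\what{q}_*\Omega^{[\bullet]}_{(X,D_X,a)})^{G'}$.

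The hard part, and the step I expect to be the main obstacle, is the descent identity $(\what{q}_*\Omega^{[\bullet]}_{(X,D_X,a)})^{G'}\cong\Omega^{[\bullet]}_{(Q,D_Q,c)}$, that is, that $G'$-invariant adapted reflexive differentials on $\what{X}$ are exactly the adapted reflexive differentials on $\what{Q}$. Both sides are reflexive by Proposition~\ref{prop:4-5} and the invariance argument of Lemma~\ref{lem:4-20}, so it suffices to match them at the generic point of each prime divisor of $\what{Q}$, where the same one-variable calibration---now read as a push-forward of $G'$-invariants---again makes the two pole orders coincide. Granting the identity, $\what{\eta}_\psi$ lands in $\Omega^{[\bullet]}_{(Q,D_Q,c)}$, and a routine compatibility check via Fact~\ref{fact:5-12} shows it restricts to the canonical $d_{\cC}\what{\psi}^{+}$ on $\what{X}^{+}$; Corollary~\ref{cor:9-6} then yields that $\psi$ is a $\cC$-morphism, so $q$ is a categorical quotient. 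I note that this descent identity also re-proves that $q$ is a $\cC$-morphism, through the counit $\what{q}^{*}\Omega^{[\bullet]}_{(Q,D_Q,c)}\to\Omega^{[\bullet]}_{(X,D_X,a)}$.
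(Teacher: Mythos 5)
Your proposal is correct in substance, but it takes a genuinely different route from the paper in the universal-property step, and it re-derives by hand what the paper isolates as a lemma. The paper's pivot is Lemma~\ref{lem:12-11}: for \emph{any} adapted $\gamma : \what{X} \to X$, the sheaves $\Omega^{[p]}_{(X,D_X,\gamma)}$ and $\Omega^{[p]}_{(Q,D_Q,q\circ\gamma)}$ are literally \emph{equal} on $\what{X}$ --- no invariants are ever taken. This makes Step~1 purely formal, $\Omega^{[\bullet]}_{(X,D_X,\gamma)} = \Omega^{[\bullet]}_{(Q,D_Q,\delta\circ\what{q})} = \what{q}^{\,[*]}\Omega^{[\bullet]}_{(Q,D_Q,\delta)}$ via functoriality in adapted morphisms, where you instead redo the divisorial pole-order calibration; your computation is the correct one (it is the content of the proof of Lemma~\ref{lem:12-11}, carried out there via Miyaoka's residue description after discarding codimension-two sets), and your generic-point reduction is legitimate because the target sheaf is reflexive and, since $q$ is \emph{finite}, the relevant set $\what{X}^{+}$ is big --- a point worth making explicit, as preimages of big sets under general morphisms need not be big. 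In Step~2 the paper sidesteps Galois descent entirely: it manufactures a single cover $\wcheck{X} = \wcheck{Q}$ from the normalized fibre product $\overline{X} \times_Q \what{Q}$ so that the comparison map $\wcheck{q}$ is the \emph{identity}, and then the pull-back supplied by the hypothesis on $\varphi$ is simply re-read as a pull-back over $Q$ via Lemma~\ref{lem:12-11}; you instead push forward along $\what{q}$ and take $G'$-invariants. Your ``hard'' descent identity $\bigl(\what{q}_* \Omega^{[\bullet]}_{(X,D_X,a)}\bigr)^{G'} \cong \Omega^{[\bullet]}_{(Q,D_Q,c)}$ is true, but it requires no new local analysis: it is the concatenation of Lemma~\ref{lem:12-11} (giving $\Omega^{[\bullet]}_{(X,D_X,a)} = \Omega^{[\bullet]}_{(Q,D_Q,c\circ\what{q})}$, using that $a$ is adapted, which you correctly verify) with the invariant push-forward Lemma~\ref{lem:4-20}, both already available. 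Two small cautions on your version: the projection-formula claim $(\what{q}_*\what{q}^{\,*}\mathcal{F})^{G'} \cong \mathcal{F}$ is an overstatement for coherent $\mathcal{F}$ that is not locally free (and $\what{\psi}^* \Omega^{[\bullet]}_{(Y,D_Y,d)}$ may well contain torsion), but harmless, since your construction only uses the unit $\mathcal{F} \to (\what{q}_*\what{q}^{\,*}\mathcal{F})^{G'}$, which always exists; and the $G'$-equivariance of $\what{\eta}_\varphi$ deserves a word --- it follows from the uniqueness of the pull-back morphism together with the linearizations of Observation~\ref{obs:4-19}. What each route buys: the paper's is shorter and avoids all equivariance bookkeeping; yours is more symmetric, makes the descent mechanism explicit, and, as you note, recovers Step~1 as the counit of the same adjunction.
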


The proof of Theorem~\ref{thm:12-5} is elementary, but the somewhat delicate
notion of $\cC$-morphism does require some attention and makes the proof a
little more technical than we would have preferred.  For the reader's
convenience, we defer the proof until Subsection~\ref{sec:12-4} and discuss a
few properties of the quotient construction first.

\begin{notation}
  The universal property implies immediately that categorical quotients are
  unique up to unique isomorphism.  We will therefore speak of ``the'' quotient
  and denote the quotient $\cC$-pair by the symbol $(X, D_X)/G$.
\end{notation}

\subsection{Functoriality}
\approvals{Erwan & yes \\ Stefan & yes}

The universal property in the definition of ``quotient'' will often be used in
the following form, which we formulate separately for the reader's convenience.

\begin{prop}[Functoriality]\label{prop:12-7}%
  Let $G$ be a finite group that acts on two $\cC$-pairs, $(X, D_X)$ and $(Y,
  D_Y)$.  Let $γ : (X, D_X) → (Y, D_Y)$ be a $G$-equivariant morphism of
  $\cC$-pairs.  Then, there exists an induced morphism between categorical
  quotients, and a commutative diagram of morphisms between $\cC$-pairs as
  follows,
  \[
    \begin{tikzcd}[row sep=large]
      (X, D_X) \arrow[r, "γ"] \arrow[d, two heads, "q_X\text{, quotient}"'] & (Y, D_Y) \arrow[d, two heads, "q_Y\text{, quotient}"] \\
      \factor{(X, D_X)}{G} \arrow[r, "γ^G"'] & \factor{(Y, D_Y)}{G}.
    \end{tikzcd}
  \]
\end{prop}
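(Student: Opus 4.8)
The plan is to obtain $\gamma^G$ as the unique factorization guaranteed by the universal property of the quotient $q_X$ in Definition~\ref{def:12-3}. Concretely, I would set
\[
  \varphi := q_Y ◦ \gamma : (X, D_X) → (Y,D_Y)/G
\]
and verify the two hypotheses of that universal property for $\varphi$: that its underlying map of varieties is constant on $G$-orbits, and that $\varphi$ is itself a $\cC$-morphism. Granting both, Definition~\ref{def:12-3} supplies a unique $\cC$-morphism $\gamma^G : (X,D_X)/G → (Y,D_Y)/G$ with $\gamma^G ◦ q_X = \varphi = q_Y ◦ \gamma$, which is precisely the asserted commutative square, and uniqueness of $\gamma^G$ comes for free from the universal property.

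Orbit-constancy is immediate: for $g ∈ G$ and $x ∈ X$, the $G$-equivariance of $\gamma$ together with the $G$-invariance of the quotient morphism $q_Y$ give
\[
  \varphi(g \cdot x) = q_Y\bigl(\gamma(g\cdot x)\bigr) = q_Y\bigl(g\cdot \gamma(x)\bigr) = q_Y\bigl(\gamma(x)\bigr) = \varphi(x).
\]

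The substantive point is that $\varphi$ is a $\cC$-morphism. Here I would invoke weak functoriality, Proposition~\ref{prop:11-1}, applied to the sequence $(X,D_X) \xrightarrow{\gamma} (Y,D_Y) \xrightarrow{q_Y} (Y,D_Y)/G$, noting that $q_Y$ is a $\cC$-morphism by Theorem~\ref{thm:12-5} and $\gamma$ is one by hypothesis. Proposition~\ref{prop:11-1} applies once the image condition $\img\varphi ∩ \bigl((Y,D_Y)/G\bigr)^{\lu} ≠ ∅$ is checked, and this is where the one genuine obstacle lies. I expect to reduce it to the statement that the quotient map carries the locally uniformizable locus of $(Y,D_Y)$ into that of the quotient, i.e. $q_Y(Y^{\lu}) ⊆ \bigl((Y,D_Y)/G\bigr)^{\lu}$. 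This should hold because $Y^{\lu}$ is $G$-invariant (local uniformizability is a local condition preserved by the action, as $g^* D_Y = D_Y$), so any orbit meeting $Y^{\lu}$ lies entirely in $Y^{\lu}$; a uniformization of $(Y, D_Y)$ over a $G$-invariant neighbourhood of such an orbit then descends—using the multiplicities prescribed in Construction~\ref{cons:12-4}—to a uniformization of $(Y,D_Y)/G$ near the image point. Combined with the condition $\img\gamma ∩ Y^{\lu} ≠ ∅$ built into $\gamma$ being a $\cC$-morphism, this yields $\img\varphi ∩ \bigl((Y,D_Y)/G\bigr)^{\lu} ≠ ∅$, so Proposition~\ref{prop:11-1} applies and $\varphi$ is a $\cC$-morphism.

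With orbit-constancy and the $\cC$-morphism property established, the universal property of $q_X$ produces $\gamma^G$ together with commutativity and uniqueness, which completes the proof. The routine step I would defer is the descent-of-uniformization claim $q_Y(Y^{\lu}) ⊆ \bigl((Y,D_Y)/G\bigr)^{\lu}$; its verification is a local multiplicity computation of exactly the type already carried out in Construction~\ref{cons:12-4}, and it is the point most likely to require care.
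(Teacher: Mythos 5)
Your proposal is correct and follows essentially the same route as the paper: the paper's proof likewise applies Proposition~\ref{prop:11-1} (weak functoriality) to $q_Y◦γ$ and then factors through $q_X$ via the universal property of Definition~\ref{def:12-3}. The single step you defer --- that the quotient of a (locally) uniformizable pair is again (locally) uniformizable, so that $\img(q_Y◦γ)$ meets the locally uniformizable locus of the target quotient --- is precisely the paper's Lemma~\ref{lem:12-8}, which it proves beforehand (via Lemma~\ref{lem:12-11}, by showing that composing a uniformization with the quotient map yields a uniformization of the quotient pair, exactly the multiplicity computation you anticipate) and simply cites at this point.
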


Proposition~\ref{prop:12-7} is an almost immediate consequence of the following
lemma, which we show in Section~\ref{sec:12-3} below.

\begin{lem}[Quotients of uniformizable pairs]\label{lem:12-8}%
  Quotients of uniformizable pairs are uniformizable.  Quotients of locally
  uniformizable pairs are locally uniformizable.
\end{lem}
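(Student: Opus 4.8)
The plan is to produce uniformizations of the quotient $(Q, D_Q)$ of Construction~\ref{cons:12-4} by \emph{composing} a uniformization of $(X, D_X)$ with the quotient map $q : X \to Q$. Concretely, I would show: if $u : X_u \twoheadrightarrow X$ is a uniformization of $(X, D_X)$ in the sense of Definition~\ref{def:2-28}, then $w := q \circ u : X_u \twoheadrightarrow Q$ is a uniformization of $(Q, D_Q)$; the locally uniformizable statement then follows by restricting to suitable invariant neighbourhoods. Since $X_u$ is smooth (hence normal) and finite morphisms of equal-dimensional normal varieties compose to covers, $w$ is a cover. Throughout, for a prime divisor $H$ on $Q$ with a component $H' \subseteq q^{-1}(H)$, I write $a_{H'} := \mult_{H'} q^* H$ and $n_{H'} := \mult_{\cC, H'} D_X$, so that Construction~\ref{cons:12-4} reads $m_H = a_{H'} \cdot n_{H'}$. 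Two bookkeeping identities dispatch the easy conditions. First, $m_H = \infty$ holds exactly when $H'$ is a component of $\lfloor D_X \rfloor$, and since $G$ permutes these components one gets $\supp q^* \lfloor D_Q \rfloor = \supp \lfloor D_X \rfloor$; hence $\supp w^* \lfloor D_Q \rfloor = \supp u^* \lfloor D_X \rfloor$ is nc, giving the normal-crossing condition. Second, $a_{H'} \geq 2$ or $n_{H'} \geq 2$ forces $m_H \geq 2$, so $\Branch(q) \cup q(\supp D_X) \subseteq \supp D_Q$; combined with $\Branch(u) \subseteq \supp D_X$ and the standard inclusion $\Branch(w) \subseteq \Branch(q) \cup q(\Branch(u))$, this yields the branch condition $\Branch(w) \subseteq \supp D_Q$.

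The step I expect to be the most delicate is verifying that $w$ is \emph{strongly adapted}, that is, that $w^*(D_Q)_{\orb}$ is reduced. The key computation is that along each component $H'$ with $n_{H'} < \infty$ the divisor $q^*(D_Q)_{\orb}$ has coefficient $a_{H'}/m_H = 1/n_{H'}$, whence
\[
  q^*(D_Q)_{\orb} = (D_X)_{\orb} + R,
\]
where $R$ is the reduced divisor supported on those $H'$ with $n_{H'} = 1$ and $a_{H'} \geq 2$, i.e. on the part of $\Ramification(q)$ disjoint from $\supp D_X$. Pulling back by $u$: the summand $u^*(D_X)_{\orb}$ is reduced because $u$ is strongly adapted, while $u^* R$ is reduced because $\Branch(u) \subseteq \supp D_X$ is disjoint from $\supp R$, so $u$ is unramified in codimension one over $\supp R$. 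As $u^*(D_X)_{\orb}$ and $u^* R$ have disjoint support, their sum $w^*(D_Q)_{\orb}$ is reduced. This completes the verification that $w$ is a uniformization, proving the uniformizable case.

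For the locally uniformizable case, fix $\bar x \in Q$ and a preimage $x \in q^{-1}(\bar x)$ with stabilizer $G_x \subseteq G$. Using the local structure of finite quotients (\cite[Thm.~4]{MR0084174}), I would choose a $G_x$-invariant open neighbourhood $U$ of $x$ for which $U/G_x \hookrightarrow Q$ is an open immersion onto a neighbourhood of $\bar x$; intersecting the $G_x$-translates of a uniformizable neighbourhood of $x$, one may further assume $U$ uniformizable, since uniformizability passes to open subsets (restrict the uniformization). The ramification indices and $\cC$-multiplicities entering Construction~\ref{cons:12-4} are unchanged on passing from $(X, D_X, G)$ to $(U, D_X \cap U, G_x)$, so $(U, D_X \cap U)/G_x$ agrees with $(Q, D_Q)$ over the neighbourhood of $\bar x$. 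The uniformizable case, applied to $(U, D_X \cap U)$ with its $G_x$-action, then exhibits a uniformizable neighbourhood of $\bar x$; as $\bar x$ is arbitrary, $(Q, D_Q)$ is locally uniformizable.
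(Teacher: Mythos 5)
Your proof is correct, but it takes a genuinely different route from the paper's. Both arguments produce the uniformization of $(Q,D_Q)$ as $w = q\circ u$; the difference lies in how the uniformization property is verified. The paper argues sheaf-theoretically: it characterizes uniformizations via Observation~\ref{obs:4-9} (a cover uniformizes if and only if the inclusion $\iota_{n,p}$ of adapted reflexive tensors into logarithmic tensors is an isomorphism) and then invokes Lemma~\ref{lem:12-11}, the identity $\Omega^{[p]}_{(X,D_X,\gamma)} = \Omega^{[p]}_{(Q,D_Q,q\circ\gamma)}$ for adapted $\gamma$, to obtain a two-sided chain of equivalences (``$\gamma$ uniformizes $(X,D_X)$ $\Leftrightarrow$ $q\circ\gamma$ uniformizes $(Q,D_Q)$''). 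You instead check Definition~\ref{def:2-28} directly at the level of divisors, and your central identity $q^*(D_Q)_{\orb} = (D_X)_{\orb} + R$ is precisely the divisorial heart of the paper's proof of Lemma~\ref{lem:12-11} --- in effect you inline that lemma and bypass the machinery of Section~\ref{sec:4}. What the paper's route buys is the stronger biconditional and a sheaf identity that is reused in the proof of Theorem~\ref{thm:12-5}; what your route buys is elementariness, and in one respect more precision: your $R$ (components of $\Ramification(q)$ with $\mult_{\cC,H'}D_X = 1$, i.e.\ not contained in $\supp D_X$) is the literally correct divisor, whereas the paper's proof of Lemma~\ref{lem:12-11} sets $R = (\supp\Ramification q)\setminus \supp D_{X,\orb}$, which, taken literally, would include ramification divisors lying inside $\lfloor D_X\rfloor$ --- there $m_H = \infty$, so the corresponding coefficient of $q^*(D_Q)_{\orb}$ is $0$, not $1$; your version avoids this edge case (think of a reflection fixing a component of $\lfloor D_X\rfloor$ pointwise). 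Your supporting computations are sound: both factors $a_{H'}$ and $n_{H'}$ are constant along $G$-orbits of components, so $\supp q^*\lfloor D_Q\rfloor = \supp\lfloor D_X\rfloor$ and the branch inclusion hold, and $u^*R$ is reduced exactly because $\Branch(u)\subseteq \supp D_X$ forbids ramification of $u$ over components of $R$. Finally, you supply the localization argument (slice neighbourhoods for the stabilizer $G_x$, invariance of the multiplicities entering Construction~\ref{cons:12-4} under restriction) for the locally uniformizable statement, which the paper dismisses with ``we consider the uniformizable case only''; that is a genuine completion rather than a repetition, the only points deserving a word being that one should pass to a connected component of $u^{-1}(U)$ (which still surjects onto connected $U$, finite maps being open and closed) and shrink $U$ so that $gU\cap U = \emptyset$ for $g\notin G_x$.
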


\begin{proof}[Proof of Proposition~\ref{prop:12-7}]
  Proposition~\ref{prop:11-1} (``Weak functoriality'') and Lemma~\ref{lem:12-8}
  imply that $q_Y◦γ$ is a morphism of $\cC$-pairs.  Since $q_Y◦γ$ is constant on
  the orbits of the $G$-action on $X$, it will factor via $q_X$.
\end{proof}

\subsection{Quotients as adapted covers}
\approvals{Erwan & yes \\ Stefan & yes}

The following two observations are frequently useful.  The elementary proofs are
left to the reader.

\begin{obs}[Adapted covers vs.~quotients]\label{obs:12-9}%
  Let $(X, D_X)$ be a $\cC$-pair and let $γ : \what{X} \twoheadrightarrow X$ be
  an adapted cover that is Galois with group $G$.  Write
  \[
    D_{\what{X}} := \bigl( γ^* ⌊D_X⌋ \bigr)_{\red} %
    \quad\text{and}\quad %
    (X, D'_X) := \factor{(\what{X}, D_{\what{X}})}{G}.
  \]
  Then, $D'_X ≥ D_X$.  Proposition~\vref{prop:10-4} allows formulating this
   inequality by saying that the identity induces a morphism of $\cC$-pairs,
  \[
    \Id_X : (X, D'_X) → (X, D_X).  \eqno \qed
  \]
\end{obs}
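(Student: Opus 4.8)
The plan is to compute the boundary divisor $D'_X$ directly from Construction~\ref{cons:12-4} and compare its coefficients with those of $D_X$ one prime divisor at a time. Since $\gamma$ is the quotient map for the $G$-action on $\what X$, the construction takes, for each prime divisor $H ∈ \Div(X)$, any component $H'$ of $\gamma^* H$ and sets $m_H := (\mult_{H'}\gamma^* H)·(\mult_{\cC,H'} D_{\what X})$, the coefficient of $H$ in $D'_X$ being $\tfrac{m_H-1}{m_H}$. The key simplification is that $D_{\what X} = (\gamma^*⌊D_X⌋)_{\red}$ is \emph{reduced}, so every component of $D_{\what X}$ carries $\cC$-multiplicity $∞$, whereas every prime divisor not meeting $\supp D_{\what X}$ carries $\cC$-multiplicity $1$. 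Thus $\mult_{\cC,H'} D_{\what X}$ equals $∞$ or $1$ according to whether or not $H' ⊆ \supp(\gamma^*⌊D_X⌋)$.

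With this in hand I would split into two cases for a component $H = D_i$ of $D_X$. If $m_i = ∞$, so $H$ is a component of $⌊D_X⌋$, then every component $H'$ of $\gamma^* H$ lies in $\supp(\gamma^*⌊D_X⌋) = \supp D_{\what X}$, hence $\mult_{\cC,H'} D_{\what X} = ∞$ and $m_H = (\mult_{H'}\gamma^* H)·∞ = ∞$ by the conventions of Construction~\ref{cons:12-4}. The coefficient of $H$ in $D'_X$ is therefore $\tfrac{∞-1}{∞}=1$, exactly matching its coefficient in $D_X$.

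The substance of the argument lies in the finite case $m_i < ∞$, i.e.\ $H = D_i$ a component of $\{D_X\}$. Here $H'$ lies over $D_i$ but not over $\supp⌊D_X⌋$, so $\mult_{\cC,H'} D_{\what X}=1$ and $m_H = \mult_{H'}\gamma^* D_i$. Since $\gamma$ is adapted, $\gamma^* D_{\orb}$ is integral (Definition~\ref{def:2-26}); because a codimension-one component $H'$ of $\what X$ can lie over at most one of the distinct prime divisors $D_j$, the coefficient of $H'$ in $\gamma^* D_{\orb}$ is precisely $\tfrac{1}{m_i}\mult_{H'}\gamma^* D_i$, and its integrality forces $m_i \mid \mult_{H'}\gamma^* D_i$. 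As $H'$ is genuinely a component of $\gamma^* D_i$ we have $\mult_{H'}\gamma^* D_i ≥ 1$, whence $m_H = \mult_{H'}\gamma^* D_i ≥ m_i$. Since $m ↦ \tfrac{m-1}{m}=1-\tfrac1m$ is monotone increasing, the coefficient $\tfrac{m_H-1}{m_H}$ of $H$ in $D'_X$ is at least $\tfrac{m_i-1}{m_i}$, its coefficient in $D_X$.

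Finally, for a prime divisor $H$ that is not a component of $D_X$ the coefficient in $D_X$ is zero while that in $D'_X$ is $\tfrac{m_H-1}{m_H}≥ 0$, so the inequality holds trivially there. Combining the three cases gives $D'_X ≥ D_X$ coefficient by coefficient. I expect the only delicate point to be the careful handling of the $∞$-conventions in the reduced case together with the observation that each $H'$ lies over a single $D_j$, which is what lets the divisibility $m_i \mid \mult_{H'}\gamma^* D_i$ be read off cleanly from adaptedness; everything else is routine bookkeeping.
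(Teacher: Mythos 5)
Your computation is correct and is precisely the elementary coefficient-by-coefficient argument the paper intends: Observation~\ref{obs:12-9} is stated with its proof left to the reader, and your case analysis (reduced boundary forces $\cC$-multiplicity $∞$ or $1$ upstairs, the $∞$-conventions of Construction~\ref{cons:12-4} handle components of $⌊D_X⌋$, and adaptedness gives $m_i \mid \mult_{H'}γ^* D_i$ hence $m_H ≥ m_i$ via the fact that a prime divisor $H'$ lies over exactly one $D_j$) supplies exactly the missing bookkeeping. The reduction of the $\cC$-morphism statement to the inequality $D'_X ≥ D_X$ via Proposition~\ref{prop:10-4} is already part of the observation itself, so nothing further is needed.
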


\begin{obs}[Quotients vs.~adapted covers]\label{obs:12-10}%
  Let $G$ be a finite group that acts on a log pair $(X,D_X)$ and let $γ : X
  \twoheadrightarrow X/G$ be the quotient morphism.  Then, $γ$ is a strongly
  adapted cover for the quotient $\cC$-pair $(Q, D_Q) := (X, D_X)/G$.  The
  $\cC$-cotangent sheaf equals $Ω^{[1]}_{(Q, D_Q, γ)} = Ω^{[1]}_X(\log D_X)$.
  \qed
\end{obs}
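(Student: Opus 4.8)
The plan is to exploit that $\gamma : X \twoheadrightarrow Q = X/G$ is a Galois quotient, so that $G$ acts transitively on the prime divisors of $X$ lying over any fixed prime divisor $H \subset Q$. In particular all components $H_j$ of $\gamma^{-1}(H)$ share a common ramification index $e_H$, and $\gamma^* H = e_H \sum_j H_j$. First I would record that, since $(X, D_X)$ is a log pair, $D_X$ is reduced, so for a prime divisor $H' \subset X$ one has $\mult_{\cC, H'} D_X = \infty$ when $H' \subseteq \supp D_X$ and $\mult_{\cC, H'} D_X = 1$ otherwise. Feeding this into the formula $m_H = (\mult_{H'} \gamma^* H) \cdot (\mult_{\cC, H'} D_X)$ of Construction~\ref{cons:12-4} splits the prime divisors $H \subset Q$ into two types: those whose preimage lies in $\supp D_X$, for which $m_H = \infty$; and those whose preimage avoids $\supp D_X$, for which $m_H = \mult_{H'} \gamma^* H = e_H$ is finite. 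Since $G$ stabilises $D_X$ and permutes fibre components transitively, this dichotomy and the value $m_H$ are independent of the chosen component $H'$, as already noted in the construction.

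Next I would check that $\gamma$ is strongly adapted, i.e. that $\gamma^* (D_Q)_{\orb}$ is reduced. By definition $(D_Q)_{\orb} = \sum_{H : m_H < \infty} \frac{1}{m_H} H$, the sum running over the second type. For such $H$, the identity $\gamma^* H = e_H \sum_j H_j$ with $m_H = e_H$ gives $\frac{1}{m_H} \gamma^* H = \sum_j H_j$, so that
\[
  \gamma^* (D_Q)_{\orb} = \sum_{H : m_H < \infty} \sum_j H_j
\]
is a sum of distinct prime divisors with unit coefficients, hence reduced. Since $\gamma$ is finite and surjective between normal varieties of equal dimension, it is a cover (Definition~\ref{def:2-21}); the computation shows it is strongly adapted.

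Finally, to identify the $\cC$-cotangent sheaf I would apply the strongly-adapted case of Example~\ref{ex:4-6}, which additionally requires $\Branch(\gamma) \subseteq \supp D_Q$. This is immediate from the dichotomy: a prime $H$ in the codimension-one branch locus satisfies $\mult_{H'} \gamma^* H > 1$, so it is either of the first type (with $m_H = \infty$) or of the second type with $m_H = e_H > 1$; in both cases its coefficient $\frac{m_H - 1}{m_H}$ in $D_Q$ is positive, whence $H \subseteq \supp D_Q$. Example~\ref{ex:4-6} then gives $\Omega^{[1]}_{(Q, D_Q, \gamma)} = \Omega^{[1]}_X(\log \gamma^* \lfloor D_Q \rfloor)$. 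Since $\lfloor D_Q \rfloor$ is the reduced sum of the first-type primes, and every component of $D_X$ maps onto such a prime while every first-type prime pulls back set-theoretically into $\supp D_X$, one has $\supp \gamma^* \lfloor D_Q \rfloor = \supp D_X$; as logarithmic poles depend only on the support, this identifies the target with $\Omega^{[1]}_X(\log D_X)$. I expect the only delicate point to be the codimension-one bookkeeping — justifying the common ramification index $e_H$ and the well-definedness of the two types — which is precisely where the Galois hypothesis and the $G$-invariance of $D_X$ are used.
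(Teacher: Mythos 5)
Your argument is correct, and there is in fact no in-paper proof to compare against: Observation~\ref{obs:12-10} is one of the two statements whose ``elementary proofs are left to the reader''. Your route is the natural one --- use Galois transitivity and $G$-invariance of $D_X$ to split the primes $H \subset Q$ into those with $m_H = \infty$ (preimage components inside $\supp D_X$) and those with $m_H = e_H$ (the common ramification index), check reducedness of $\gamma^* D_{Q,\orb}$ componentwise, verify $\Branch(\gamma) \subseteq \supp D_Q$, and invoke the strongly adapted case of Example~\ref{ex:4-6} together with $\supp \gamma^* \lfloor D_Q \rfloor = \supp D_X$. Worth noting: the sheaf identity also follows in one line from the paper's own machinery. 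Since $(X,D_X)$ is a log pair, $D_{X,\orb} = 0$, so $\Id_X$ is adapted for $(X,D_X)$, and Lemma~\ref{lem:12-11} applied with $\gamma = \Id_X$ gives $\Omega^{[p]}_{(Q,D_Q,q)} = \Omega^{[p]}_{(X,D_X,\Id_X)} = \Omega^{[p]}_X(\log D_X)$ for every $p$, the last equality by the case $\gamma = \Id$ of Example~\ref{ex:4-6}; this sidesteps the branch-divisor verification entirely, though your direct check of the hypotheses of Example~\ref{ex:4-6} has the virtue of being self-contained (and strong adaptedness must be checked by hand in either approach, exactly as you do).

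One small bookkeeping slip: as you set it up, the ``second type'' includes the unramified primes with $m_H = 1$, which are not components of $D_Q$ at all, so $D_{Q,\orb}$ must run only over the primes with $2 \le m_H < \infty$; read literally, your displayed formula for $\gamma^* (D_Q)_{\orb}$ sums over all second-type primes and would be a locally infinite sum. Since $\tfrac{1}{m_H} \cdot \gamma^* H = \sum_j H_j$ is reduced for \emph{every} finite $m_H$, including $m_H = 1$, this does not affect the conclusion, but the indexing should be corrected.
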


\subsection{Adapted covers, uniformizations and quotients.  Proof of Lemma~\ref*{lem:12-8}}
\label{sec:12-3}
\approvals{Erwan & yes \\ Stefan & yes}

The following lemma is key to the proofs of Theorem~\ref{thm:12-5},
Proposition~\ref{prop:12-7}, and Lemma~\ref{lem:12-8}.  It might also be of
independent interest.

\begin{lem}[Adapted covers and quotients]\label{lem:12-11}%
  In the setting of Construction~\ref{cons:12-4}, let $γ : \what{X} → X$ be a
  $q$-morphism.  If $γ$ is adapted for the pair $(X, D_X)$, then $q◦γ : \what{X}
  → Q$ is adapted for the pair $(Q, D_Q)$ and
  \[
    Ω^{[p]}_{(X,D_X,γ)} = Ω^{[p]}_{(Q, D_Q, q◦γ)} \quad\text{for every number }p.
  \]
\end{lem}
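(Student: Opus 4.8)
The plan is to extract both conclusions from the single numerical identity $m_H=(\mult_{H'}q^*H)\cdot(\mult_{\cC,H'}D_X)$ built into Construction~\ref{cons:12-4}, handling adaptedness first since it is purely divisor-theoretic. Abbreviating $e_{H'}:=\mult_{H'}q^*H$ and $m_{H'}:=\mult_{\cC,H'}D_X$, so that $m_{q(H')}=e_{H'}\cdot m_{H'}$, I would pull back $(D_Q)_{\orb}=\sum_{2\le m_H<\infty}\tfrac1{m_H}\,H$ along $q$ and collect terms component by component, obtaining $q^*(D_Q)_{\orb}=\sum_{H'}\tfrac{e_{H'}}{m_{q(H')}}\,H'=\sum_{H'}\tfrac1{m_{H'}}\,H'$, where $H'$ runs over the prime divisors of $X$ lying over $\supp(D_Q)_{\orb}$. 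By the identity these $H'$ are exactly the finite-multiplicity components of $D_X$, which reassemble into $(D_X)_{\orb}$, together with the reduced ramification divisor $R$ of $q$ lying outside $\supp D_X$, along which $m_{H'}=1$. Thus $q^*(D_Q)_{\orb}=(D_X)_{\orb}+R$, and applying $\gamma^*$ gives an integral divisor: the first summand because $\gamma$ is adapted for $(X,D_X)$, the second because $R$ is integral and $\gamma^*$ preserves integral Weil divisors (Reminder~\ref{remi:2-20}). Hence $q\circ\gamma$ is adapted for $(Q,D_Q)$.

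For the equality of sheaves I would reduce to a local normal-crossing computation. Both $\Omega^{[p]}_{(X,D_X,\gamma)}$ and $\Omega^{[p]}_{(Q,D_Q,q\circ\gamma)}$ are reflexive by Proposition~\ref{prop:4-5} and, through Observation~\ref{obs:4-8}, sit inside the common sheaf $\sM_{\widehat X}\otimes\Omega^{[p]}_{\widehat X}$ of meromorphic $p$-forms on $\widehat X$. By Definition~\ref{def:4-3} each is the pushforward of its bundle of adapted tensors from Section~\ref{sec:3} off its own big nc locus, so restricting both to the intersection $U$ of the two nc loci — still big in $\widehat X$ — and using that a reflexive sheaf is recovered as the pushforward of its restriction to any big open set, it suffices to prove the two bundles agree over $U$ as subsheaves of the meromorphic $p$-forms.

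Over $U$ everything is nc, and since both $\gamma$ and $q\circ\gamma$ are adapted, the fractional parts $\gamma^*\{D_X\}$ and $(q\circ\gamma)^*\{D_Q\}$ are integral (Observation~\ref{obs:3-14}); the round-downs in Definition~\ref{def:3-2} are therefore trivial, and the two bundles are the intersections $\sA_{1,p}\cap\sB_{1,p}$. The comparison then reduces to the one-variable computation along each boundary component: if $H\subseteq\supp D_Q$ has local equation $w$ and $H'$ lies over it with $w=u^{e_{H'}}$ for a coordinate $u$ cutting out $H'$, then $q^*\big(w^{-\frac{m_H-1}{m_H}}\,dw\big)=e_{H'}\,u^{\frac{e_{H'}}{m_H}-1}\,du=e_{H'}\,u^{-\frac{m_{H'}-1}{m_{H'}}}\,du$ by the identity, while $q^*(w^{-1}\,dw)=e_{H'}\,u^{-1}\,du$ in the case $m_H=\infty$. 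So $Q$-forms with fractional (resp.\ logarithmic) pole of the order prescribed by $D_Q$ pull back under $q$ to precisely the $X$-forms with the order prescribed by $D_X$, a ramification component $H'\not\subseteq\supp D_X$ (where $m_{H'}=1$) contributing an honest holomorphic form; pulling back once more along $\gamma$ then matches the two descriptions, hence the two bundles, over $U$.

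I expect the main obstacle to be the bookkeeping in this final step rather than anything conceptual. The essential input is the scalar relation $m_H=e_{H'}m_{H'}$, but one must notice that the ambient integral log sheaves $\gamma^*\Omega^p_X(\log\lfloor D_X\rfloor)$ and $(q\circ\gamma)^*\Omega^p_Q(\log\lfloor D_Q\rfloor)$ do \emph{not} coincide inside the meromorphic forms — they differ exactly along $\gamma^*R$ — and to check that intersecting with the respective $\sB$-factors (which enforce the pole orders) nevertheless cuts out the same adapted subsheaf there, the forbidden log poles along $\gamma^*R$ being killed by the pole-order condition. The same monomial arithmetic shows the round-down divisors $\lfloor\gamma^*\{D_X\}\rfloor$ and $\lfloor(q\circ\gamma)^*\{D_Q\}\rfloor$ are compatible, and the $G$-invariance that makes $m_H$ well defined guarantees the local models glue consistently.
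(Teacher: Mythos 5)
Your proof is correct, and its second half takes a genuinely different route from the paper's. Both arguments share the same skeleton: the numerical identity $m_H = (\mult_{H'} q^* H)·(\mult_{\cC, H'} D_X)$, its consequence $q^* D_{Q,\orb} = D_{X,\orb} + R$ (your version of $R$ --- ramification lying outside all of $\supp D_X$ --- is in fact the precise one: the paper's $R := (\supp \Ramification q) ∖ \supp D_{X,\orb}$ would pick up ramification components inside $⌊D_X⌋$, which map to infinite-multiplicity components of $D_Q$ and do not contribute to $q^* D_{Q,\orb}$; this is harmless only after the codimension-two reduction), and the reduction of the sheaf equality to a big open set where everything is nc, using reflexivity of both sides. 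From that point on, the paper argues coordinate-free: it invokes Miyaoka's residue description (Observation~\ref{obs:3-8}), writes both $\cC$-cotangent bundles as kernels of residue morphisms, uses the identity $q^* Ω¹_Q(\log D_Q) = Ω¹_X(\log D_X + R)$ together with the disjointness of $\supp D_{X,\orb}$ and $\supp R$, and recovers $Ω¹_X(\log D_X)$ as the kernel of the residue along $R$; it treats only $p = 1$ and deduces the higher $p$ by local freeness. You instead match generators of the $𝒜 ∩ ℬ$ description of Definition~\ref{def:3-2} directly in local coordinates, for all $p$ at once. Your route buys transparency --- the computation $q^*\bigl(w^{-\frac{m_H-1}{m_H}}·dw\bigr) = e·u^{-\frac{m_{H'}-1}{m_{H'}}}·du$ makes visible exactly why the lemma holds, including the degenerate case $m_{H'} = 1$ along $R$ --- at the price of the monomial bookkeeping you acknowledge; the paper's route is shorter but hides the mechanism inside the residue formalism.

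One cosmetic slip: in your final paragraph you attribute the killing of the forbidden log poles along $γ^* R$ to the $ℬ$-factors, ``which enforce the pole orders''. It is the other way around: $ℬ^Q = Ω^p_{\what{X}}\bigl(\log (q◦γ)^* D_Q\bigr)$ \emph{allows} log poles along $γ^* R$, since $γ^* R ⊆ \supp (q◦γ)^* D_Q$. What forbids them is the $𝒜$-factor, namely $𝒪_{\what{X}}\bigl(⌊(q◦γ)^* \{D_Q\}⌋\bigr) ⊗ (q◦γ)^* Ω^p_Q\bigl(\log ⌊D_Q⌋\bigr)$, whose sections along $γ^* R$ are twists of pull-backs of forms that are regular on $Q$ there and hence acquire vanishing along the ramification that exactly cancels the twist --- which is precisely what your generator computation establishes. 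So the argument you actually run is right; only the label is wrong.
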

\begin{proof}
  The assertion that $q◦γ$ is adapted follows directly from the choices made in
  Construction~\ref{cons:12-4}.

  Two reflexive sheaves $\what{X}$ agree if they agree on a big open set.
  Removing codimension-two subsets from all relevant varieties, we can therefore
  assume without loss of generality that all spaces are smooth and that all
  divisors have smooth support.  We are therefore in the setting of
  Definition~\ref{def:3-2} (``Bundle of adapted tensors in the nc case'') where
  the relevant $\cC$-cotangent sheaves are given as
  \begin{align}
    \label{eq:12-11-1} Ω¹_{(X,D_X,γ)} & = \ker \Bigl( γ^* Ω¹_X(\log D_X) \xrightarrow{γ^*\text{(residue morphism)}} 𝒪_{γ^* D_{X,\orb}} \Bigr).  \\
    \label{eq:12-11-2} Ω¹_{(Q,D_Q,q◦ γ)} & = \ker \Bigl( γ^*q^* Ω¹_Q(\log D_Q) \xrightarrow{γ^*q^*\text{(residue morphism)}} 𝒪_{γ^*q^* D_{Q,\orb}} \Bigr).
  \end{align}
  It suffices to consider the case $p=1$ and to show equality of these sheaves
  only; equality for all other values of $p$ will follow by local freeness.

  The construction of the divisor $D_Q$ has two immediate consequences.  To
  formulate them properly, let $R$ be the reduced divisor on $X$ obtained as the
  union of those components of the ramification divisor that are not contained
  in the finite part of $D_X$,
  \[
    R := (\supp \Ramification q) ∖ \supp D_{X,\orb} \quad ∈ \Div(X).
  \]
  With this notation in place, it follows from construction that
  \[
    \Branch q ⊆ \supp D_Q \quad\text{and}\quad q^* D_{Q,\orb} = D_{X,\orb} +
    R.
  \]
  The inclusion implies in particular that $q^* Ω¹_Q(\log D_Q) = Ω¹_X(\log
  D_X+R)$, and \eqref{eq:12-11-2} simplifies to
  \begin{align*}
    Ω¹_{(Q,D_Q,q◦γ)} & = \ker \Bigl( γ^* Ω¹_X(\log D_X + R) \xrightarrow{γ^*\text{(residue morphism)}} 𝒪_{γ^* (D_{X,\orb}+R)} \Bigr) \\
                     & = \ker \Bigl( γ^* Ω¹_X(\log D_X + R) \xrightarrow{γ^*\text{(residue morphism)}} 𝒪_{γ^* D_{X,\orb}} ⊕ 𝒪_{γ^* R} \Bigr);
  \end{align*}
  for the last equality, we use that $\supp D_{X,\orb}$ and $\supp R$ are
  disjoint by our smoothness assumption.  Recalling the standard fact that
  $Ω¹_X(\log D_X)$ is described as the kernel of the following residue morphism,
  \[
    Ω¹_X(\log D_X) = \ker \Bigl( Ω¹_X(\log D_X + R) \xrightarrow{\text{(residue morphism)}} 𝒪_{γ^* R} \Bigr)
  \]
  the claim now follows.
\end{proof}

\begin{proof}[Proof of Lemma~\ref*{lem:12-8}]
  We consider the uniformizable case only.  Let $(X, D_X)$ be a $\cC$-pair, let
  $G$ be a finite group that acts on $(X,D_X)$ and let $(Q,D_Q)$ be the quotient
  of Construction~\ref{cons:12-4}.  Finally, consider a cover $γ: \what{X} → X$
  where $\bigl(\what{X}, (γ^*⌊D_X⌋)_{\reg}\bigr)$ is nc.  The following
  statements are then equivalent.
  \begin{align*}
    γ \text{ uniformizes } (X,D_X) & ⇔ Ω^{[•]}_{(X,D_X,γ)} = Ω^{•}_{\what{X}}(\log γ^*⌊D_X⌋) && \text{Observation~\ref{obs:4-9}}\\
    & ⇔ Ω^{[•]}_{(Q,D_Q,q◦γ)} = Ω^{•}_{\what{X}}(\log γ^*⌊D_X⌋) && \text{Lemma~\ref{lem:12-11}} \\
    & ⇔ Ω^{[•]}_{(Q,D_Q,q◦γ)} = Ω^{•}_{\what{X}} \bigl(\log (q◦γ)^* ⌊D_Q⌋\bigr) && \text{Construction~\ref{cons:12-4}} \\
    & ⇔ q◦γ \text{ uniformizes } (Q, D_Q).  && \text{Observation~\ref{obs:4-9}}
  \end{align*}
  The claim thus follows.
\end{proof}

\subsection{Existence of quotients, proof of Theorem~\ref*{thm:12-5}}
\label{sec:12-4}
\approvals{Erwan & yes \\ Stefan & yes}

Maintain setting and assumptions of Theorem~\ref{thm:12-5} and
Construction~\ref{cons:12-4}.  The theorem asserts that $q$ is a morphism of
$\cC$-pairs and that the universal property holds.  Even though the proofs of
these two statements are similar, we prefer to present the arguments separately,
in two separated steps.

\subsection*{Step 1: The quotient morphism is a morphism of $\cC$-pairs}
\approvals{Erwan & yes \\ Stefan & yes}

We work with Corollary~\ref{cor:9-6} (``Elementary criterion for
$\cC$-morphisms'') and assume that a commutative diagram of the following form
is given,
\[
  \begin{tikzcd}[column sep=3cm]
    \what{X} \ar[r, "\what{q}"] \ar[d, "γ\text{, adapted}"'] & \what{Q} \ar[d, "δ\text{, adapted}"] \\
    X \ar[r, two heads, "q\text{, quotient morphism}"'] & Q.
  \end{tikzcd}
\]
In order to show that the quotient morphism $q$ is a morphism of $\cC$-pairs, we
need to show that $\what{q}$ admits pull-back of adapted reflexive
differentials.  That, however, follows now almost directly,
\begin{align*}
  Ω^{[•]}_{(X,D_X,γ)} & = Ω^{[•]}_{(Q, D_Q, q◦γ)} && \text{Lemma~\ref{lem:12-11}} \\
  & = Ω^{[•]}_{(Q, D_Q, δ◦\what{q})} && \text{commutativity} \\
  & = \what{q}^{\:[*]}Ω^{[•]}_{(Q, D_Q, δ)} && \text{Observation~\ref{obs:4-14}}.
\end{align*}

\subsection*{Step 2: Universal property}
\approvals{Erwan & yes \\ Stefan & yes}

Let $φ : (X,D_X) → (Y,D_Y)$ be any morphism of $\cC$-pairs whose underlying
morphism of varieties is constant on $G$-orbits.  Then, the universal property
of the classic categorical quotients asserts that $φ$ factorizes via $q$ as a
morphism of analytic varieties,
\[
  \begin{tikzcd}[column sep=large]
    X \ar[r, two heads, "q"'] \ar[rr, bend left=15, "φ"] & Q \ar[r, "∃!  ψ"'] & Y,
  \end{tikzcd}
\]
and we need to show that $ψ$ induces a morphism between $\cC$-pairs $(Q,D_Q)$
and $(Y,D_Y)$.  As before, we work with Corollary~\ref{cor:9-6} and assume that
a diagram of the following form is given,
\[
  \begin{tikzcd}[column sep=3cm]
    & \what{Q} \ar[r, "\what{ψ}"] \ar[d, "δ\text{, adapted}"'] & \what{Y} \ar[d, "ε\text{, adapted}"] \\
    X \ar[r, two heads, "q\text{, quotient morphism}"'] & Q \ar[r, "ψ"'] & Y.
  \end{tikzcd}
\]
We need to show that $\what{ψ}$ admits pull-back of adapted reflexive
differentials.  As before, we note that this question is local on $Q$.  We may
therefore shrink $Q$, employ Lemma~\ref{lem:2-36} (``Strongly adapted covers
exist locally'') and assume without loss of generality that there exists an
adapted cover $\overline{X} \twoheadrightarrow X$.  Let $\wcheck{X} =
\wcheck{Q}$ be a suitable component of the normalized fibre product $\bigl(
\overline{X} ⨯_Q \what{Q} \bigr)^{\operatorname{norm}}$.  We obtain an extended
diagram as follows,
\[
  \begin{tikzcd}[column sep=3cm, row sep=large]
    \wcheck{X} \ar[dd, "γ\text{, adapted}"'] \ar[r, "\wcheck{q} \: = \: \Id", equal] & \wcheck{Q} \ar[d, two heads, "δ'\text{, finite}"'] \ar[r, "\wcheck{ψ}"] & \wcheck{Y} \ar[d, equal] \\
    & \what{Q} \ar[r, "\what{ψ}"] \ar[d, "δ\text{, adapted}"'] & \what{Y} \ar[d, "ε\text{, adapted}"] \\
    X \ar[r, two heads, "q\text{, quotient morphism}"'] & Q \ar[r, "ψ"'] & Y.
  \end{tikzcd}
\]
Recall Lemma~\ref{lem:9-5} (``Test for pull-back of adapted reflexive
differentials'').  To show that $\what{ψ}$ admits pull-back of adapted reflexive
differentials, it suffices to show that $\wcheck{ψ}$ admits pull-back of adapted
reflexive differentials: there exist sheaf morphisms
\[
  η : \wcheck{ψ}^* \Bigl( Ω^{[•]}_{(Y, D_Y, c)} \Bigr) → Ω^{[•]}_{(Q, D_Q, δ◦δ')}
\]
that generically agree with the standard pull-back $d_{\cC} \wcheck{ψ}⁺$ of
adapted reflexive differentials.  But we know by assumption that
$\wcheck{ψ}◦\wcheck{q}$ admits pull-back of adapted reflexive differentials:
There exist sheaf morphisms
\begin{align*}
  \wcheck{ψ}^* \Bigl( Ω^{[•]}_{(Y, D_Y, c)} \Bigr) & = (\wcheck{ψ}◦ q)^* \Bigl( Ω^{[•]}_{(Y, D_Y, c)} \Bigr) && \wcheck{q} = \Id \\
  & → Ω^{[•]}_{(X, D_X, γ)} && \wcheck{ψ}◦\wcheck{q}\text{ admits pull-back} \\
  & = Ω^{[•]}_{(Q, D_Q, q◦γ)} && \text{Lemma~\ref{lem:12-11}} \\
  & = Ω^{[•]}_{(Q, D_Q, δ◦δ')} && \text{Commutativity}
\end{align*}
The claim thus follows.  \qed

%
%
\svnid{$Id: 13-pullBackAndMMP.tex 911 2024-09-29 18:09:56Z kebekus $}
\selectlanguage{british}

\section{Pull-back and the Minimal Model Program}
\subversionInfo
\label{sec:13}

\subsection{Pull-back of adapted reflexive tensors}
\approvals{Erwan & yes \\ Stefan & yes}

In Section~\ref{sec:8}, we defined morphisms of $\cC$-pairs are morphisms of
varieties where every diagram of form \eqref{eq:7-1-1} admits pull-back of
adapted reflexive differentials.  This section discusses criteria to guarantee
that diagrams also admit pull-back of adapted reflexive \emph{tensors}.  The main
result, formulated in Proposition~\ref{prop:13-1} below, relates $\cC$-morphisms
to notions of minimal model theory, and gives severe restrictions for the
existence of resolutions of singularities in the context of $\cC$-pairs; we
discuss these issues in Section~\ref{sec:13-2} right after formulating the main
result.

\begin{prop}[Criterion for pull-back of adapted reflexive tensors]\label{prop:13-1}%
  Given a morphism $φ : (X, D_X) → (Y, D_Y)$ of $\cC$-pairs, let $p ∈ ℕ⁺$ be any
  number.  Assume that there exists an open covering $Y = ∪ Y_i$ and adapted
  covers $γ_i : \what{Y}_i \twoheadrightarrow Y_i$ where the sheaves
  $Ω^{[p]}_{(Y, D_Y, γ_i)}$ of adapted reflexive differentials are locally free.
  Then, every diagram of form \eqref{eq:7-1-1} admits pull-back of adapted
  reflexive $(n,p)$-tensors, for every $n ∈ ℕ⁺$.
\end{prop}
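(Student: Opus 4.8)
The plan is to reduce to the local situation provided by the hypothesis and then apply the single-value criterion Proposition~\ref{prop:9-3}, suitably adapted to higher tensors. First I would recall from Observation~\ref{obs:8-5} that the property ``admits pull-back of adapted reflexive $(n,p)$-tensors'' is local on both source and target (this is the content of Observation~\ref{obs:7-11}). Covering $Y$ by the open sets $Y_i$ of the hypothesis and pulling back through $\what{\varphi}$, it therefore suffices to treat a single diagram of form \eqref{eq:7-1-1} where $b$ may be refined so that it factors through the given adapted cover $\gamma_i$ on which $\Sym^{[p]}_{\cC} \Omega^{[p]}_{(Y, D_Y, \gamma_i)}$ is locally free. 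Concretely, I would invoke Lemma~\ref{lem:9-5} (``Test for pull-back of adapted reflexive differentials'', whose argument works verbatim for tensors) to replace the cover $b$ by a common refinement dominating $\gamma_i$; this lets me assume that $\Omega^{[p]}_{(Y, D_Y, b)}$, and hence $\Sym^{[n]}_{\cC} \Omega^{[p]}_{(Y, D_Y, b)}$, is locally free.

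Once local freeness on the target is in place, I would mimic the construction in the proof of Proposition~\ref{prop:9-3} given in Section~\ref{sec:9-2}. The canonical pull-back morphism $d_{\cC} \what{\varphi}^{\,+}$ of \eqref{eq:7-4-1} is defined over the big open set $\what{X}^+$ and lands in $\Sym^n \Omega^p_{\what{X}^+}(\log \what{D}_X^+)$. By Observation~\ref{obs:4-8} the target of the desired extension, $\Sym^{[n]}_{\cC} \Omega^{[p]}_{(X, D_X, a)}$, is a reflexive subsheaf of this logarithmic tensor sheaf. Because $\Sym^{[n]}_{\cC} \Omega^{[p]}_{(Y, D_Y, b)}$ is locally free, Warning~\ref{warning:4-16}\ref{il:4-16-2} guarantees that $\what{\varphi}^* = \what{\varphi}^{[*]}$ on the source, so the pull-back behaves well and the image of $d_{\cC} \what{\varphi}^{\,+}$ can be tracked through the factorization \eqref{eq:9-6-2}--\eqref{eq:9-6-3}. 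The key point is that, by Fact~\ref{fact:5-11} and Fact~\ref{fact:5-14}, the pull-back $d_{\cC}$ is compatible with symmetric products and with the pull-back of Kähler tensors, so the image of a symmetric tensor factors through $\Sym^n$ of the image of the $1$-tensor pull-back, and the latter lands in $\Omega^{[p]}_{(X, D_X, a)}$ exactly because $p$ is the prescribed value for which local freeness was assumed.

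I would then extend the resulting morphism from $\what{X}^+$ to all of $\what{X}$: since $\Sym^{[n]}_{\cC} \Omega^{[p]}_{(X, D_X, a)}$ is reflexive and $\what{X}^+ \subseteq \what{X}$ is big (Notation~\ref{not:4-2}), a section of the reflexive Hom-sheaf over the big open subset extends uniquely, by the same argument used in the proof of Observation~\ref{obs:4-8} via \cite[Cor.~3.15]{BS76} and \cite{MR0212214}. Finally, functoriality (Fact~\ref{fact:5-12}) confirms that the extended morphism restricts to $d_{\cC}\what{\varphi}^{\,+}$ on $\what{X}^+$, which is precisely the factorization demanded in Definition~\ref{def:7-6}. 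The main obstacle I anticipate is the bookkeeping in the reduction to the locally free case: one must check that refining $b$ through $\gamma_i$ via fibre products does not disturb the conditions \eqref{eq:7-1-2} of Setting~\ref{set:7-1}, and that the compatibility of $d_{\cC}$ with symmetric products (Fact~\ref{fact:5-14}) genuinely forces the image of the $(n,p)$-tensor pull-back into the adapted subsheaf rather than merely into the ambient logarithmic tensors — this is where local freeness of $\Omega^{[p]}$ for the single value $p$, rather than for $p=1$ alone, is essential, as the Warning after Definition~\ref{def:7-6} makes clear.
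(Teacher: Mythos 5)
Your overall skeleton matches the paper's: refine the given diagram through the adapted cover on which $Ω^{[p]}_{(Y,D_Y,γ_i)}$ is locally free, produce the tensor-level pull-back there by applying $\Sym^n$ to the differential-level map $η$ (which exists because $φ$ is a $\cC$-morphism), and descend back to the original diagram. The middle step is fine: local freeness of $Ω^{[p]}_{(Y,D_Y,b◦\what{b})}$ follows from Observation~\ref{obs:4-15} plus the hypothesis, so $\Sym^{[n]}_{\cC} = \Sym^{[n]} = \Sym^n$ on the target and $\Sym^n η$ does the job. The gap is in the descent: you claim Lemma~\ref{lem:9-5} ``works verbatim for tensors''. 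It does not. The final step of the proof of Lemma~\ref{lem:9-5} is the trace morphism $α_* Ω^{[•]}_{(X,D_X,a◦α)} → Ω^{[•]}_{(X,D_X,a)}$ of Consequence~\ref{cons:4-18}, and the paper warns explicitly, immediately after Consequence~\ref{cons:3-18}, that the trace has \emph{no analogue for higher-order tensors}: for the uniformization $z ↦ z²$ of $\bigl(𝔸¹, \frac{1}{2}·\{0\}\bigr)$, the Galois-invariant two-tensor $dz·dz$ is an adapted $2$-tensor upstairs but induces no section of $\Sym² Ω¹_{𝔸¹}$ downstairs. There is no well-defined way to average a symmetric $n$-tensor over the sheets of a cover, so after constructing the pull-back on the refined diagram you have no mechanism to return to the original one --- and that is exactly the point where the extra hypothesis of Proposition~\ref{prop:13-1} has to earn its keep.

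The paper circumvents this by making the refinement \emph{Galois}: it takes $\wcheck{Y}$ to be the Galois closure of a component of the normalized fibre product $\what{Y}_0 ⨯_Y \what{Y}$, and $\wcheck{X}$ a component of $\wcheck{Y} ⨯_{\what{Y}} \what{X}$, so that $\what{a}$ and $\what{b}$ are Galois covers. By Observation~\ref{obs:4-19}, the entire chain of morphisms built on the refined level consists of morphisms of Galois-linearized sheaves; the descent then happens not via a trace but via the $G$-invariant push-forward identification of Lemma~\ref{lem:4-20},
\[
  \Sym^{[n]}_{\cC} Ω^{[p]}_{(X,D_X,a)} = \Bigl( \what{a}_* \Sym^{[n]}_{\cC} Ω^{[p]}_{(X,D_X,a◦\what{a})} \Bigr)^{G}.
\]
Since the source $\what{φ}^{\,*} \Sym^{[n]}_{\cC} Ω^{[p]}_{(Y,D_Y,b)}$ is itself realized as a sheaf of invariant sections (again Lemma~\ref{lem:4-20}), its image under the equivariant map consists of invariant sections, which descend \emph{exactly} --- no averaging is needed. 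If you replace your appeal to Lemma~\ref{lem:9-5} by this Galois-closure-plus-invariant-push-forward mechanism, the remaining ingredients of your outline (locality, local freeness of the target, $\Sym^n η$, and the check against $d_{\cC}\what{φ}⁺$ over the big open set $\what{X}⁺$) go through as you describe.
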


We will prove Proposition~\ref{prop:13-1} in Section~\vref{sec:13-3}.

\subsection{Relation to the Minimal Model Program}
\approvals{Erwan & yes \\ Stefan & yes}
\label{sec:13-2}

We highlight one case where the assumptions of Proposition~\ref{prop:13-1} are
known to hold.

\begin{prop}[Pull-back for morphisms to $ℚ$-Gorenstein pairs]\label{prop:13-2}%
  Let $φ : (X, D_X) → (Y, D_Y)$ be a morphism of $\cC$-pairs where $(Y,D_Y)$ is
  locally $ℚ$-Gorenstein, where $\dim X = \dim Y$, and where $φ$ is generically
  finite.  If $m ∈ ℕ⁺$ is any number, then there exists a pull-back map
  \begin{equation}\label{eq:13-2-1}
    φ^* \Bigl( ω_Y^{⊗m} ⊗ 𝒪_Y\bigl(⌊m·D_Y⌋\bigr) \Bigr)^{\vee\vee}
    \xrightarrow{\text{pull-back}}
    \Bigl( ω_X^{⊗m} ⊗ 𝒪_X\bigl(⌊m·D_X⌋\bigr) \Bigr)^{\vee\vee}
  \end{equation}
  whose restriction to
  \[
    \bigl(Y_{\reg} ∖ \supp D_Y \bigr) ∩ φ^{-1}(Y_{\reg} ∖ \supp D_Y)
  \]
  agrees with the standard pull-back map of Kähler differentials and their
  symmetric powers.
\end{prop}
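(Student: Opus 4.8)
The plan is to deduce the statement from Proposition~\ref{prop:13-1} applied with $p = \dim Y$. First I would rewrite both sheaves appearing in \eqref{eq:13-2-1} as top-degree adapted reflexive tensors for the trivial cover $\Id$. Indeed, Item~\ref{il:4-6-1} of Example~\ref{ex:4-6}, applied with $\gamma = \Id_Y$ and $n = m$, identifies
\[
  \Bigl( \omega_Y^{\otimes m} \otimes \sO_Y\bigl(\lfloor m\cdot D_Y\rfloor\bigr)\Bigr)^{\vee\vee} = \Sym^{[m]}_{\cC}\Omega^{[\dim Y]}_{(Y, D_Y, \Id_Y)},
\]
and likewise on $X$. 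Because $\phi$ is generically finite with $\dim X = \dim Y$, the number $p := \dim Y$ equals $\dim X$, so the target
\[
  \Bigl( \omega_X^{\otimes m} \otimes \sO_X\bigl(\lfloor m\cdot D_X\rfloor\bigr)\Bigr)^{\vee\vee} = \Sym^{[m]}_{\cC}\Omega^{[\dim X]}_{(X, D_X, \Id_X)}
\]
is again a top-degree sheaf. The map \eqref{eq:13-2-1} is then precisely the pull-back of adapted reflexive $(m, \dim Y)$-tensors for the trivial diagram of form \eqref{eq:7-1-1} with $a = \Id_X$ and $b = \Id_Y$; such pull-backs are produced by Proposition~\ref{prop:13-1} once its hypotheses are verified.

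Second — and this is the technical heart — I would verify the hypothesis of Proposition~\ref{prop:13-1} for $p = \dim Y$: near every point of $Y$ there must exist an adapted cover $\gamma$ with $\Omega^{[\dim Y]}_{(Y, D_Y, \gamma)}$ locally free. Working locally, I shrink $Y$ to a Stein open set on which a canonical divisor $K_Y$ exists, on which $m(K_Y + D_Y)$ is Cartier (local $\bQ$-Gorensteinness), and hence, after further shrinking, on which $m(K_Y + D_Y) \sim 0$. By the Remark following Example~\ref{ex:4-6} one has $\Omega^{[\dim Y]}_{(Y, D_Y, \gamma)} = \sO_{\what{Y}}\bigl(\lfloor \gamma^*(K_Y + D_Y)\rfloor\bigr)$ for any adapted $\gamma$, so I only need $\gamma^*(K_Y + D_Y)$ to be Cartier. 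I construct $\gamma$ as a component of the normalization of the fibre product of two covers: an adapted cover $\gamma_{\orb}$ supplied by Lemma~\ref{lem:2-36}, which makes $\gamma_{\orb}^* D_{Y,\orb}$ integral, and the index-one cyclic cover associated to the relation $m(K_Y + D_Y) \sim 0$, on which $K_Y + D_Y$ becomes Cartier. The resulting $\gamma$ is adapted because it factors through $\gamma_{\orb}$ (composition property of adapted morphisms following Definition~\ref{def:2-26}), while $\gamma^*(K_Y + D_Y)$ is Cartier because $\gamma$ factors through the index-one cover; hence the round-down is vacuous and $\Omega^{[\dim Y]}_{(Y, D_Y, \gamma)}$ is invertible.

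Finally I would assemble the conclusion. Proposition~\ref{prop:13-1}, now applicable with $p = \dim Y$, guarantees that every diagram of form \eqref{eq:7-1-1} — in particular the trivial one above — admits pull-back of adapted reflexive $(m, \dim Y)$-tensors, which under the identifications of the first paragraph is exactly the sought morphism \eqref{eq:13-2-1}. The asserted compatibility on $(Y_{\reg}\setminus\supp D_Y)\cap\phi^{-1}(Y_{\reg}\setminus\supp D_Y)$ is built into Definition~\ref{def:7-6}, where the pull-back restricts to the canonical morphism $d_{\cC}\what{\phi}^{\,+}$; over the locus where the boundary is empty and the spaces are smooth, this reduces to the ordinary pull-back of Kähler tensors and their symmetric powers by Fact~\ref{fact:5-11} and Proposition~\ref{prop:5-15}. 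The main obstacle is the cover construction of the second paragraph, namely arranging a single adapted cover that simultaneously resolves the $\bQ$-Gorenstein index of $K_Y + D_Y$; once the relevant rank-one sheaf is invertible, the remainder is bookkeeping via Example~\ref{ex:4-6} and the functoriality recorded in Observation~\ref{obs:4-15}.
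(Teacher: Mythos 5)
Your proposal is correct and follows essentially the same route as the paper: localize to Stein opens where a canonical divisor exists, use Lemma~\ref{lem:2-36} together with an index-one cyclic cover to produce an adapted cover along which $K_Y+D_Y$ pulls back to a Cartier divisor, conclude local freeness of the top-degree sheaf, and apply Proposition~\ref{prop:13-1} to the trivial diagram with the identifications of Example~\ref{ex:4-6}. The only (harmless) deviation is that you take a normalized fibre product of the adapted cover with a cyclic cover extracted directly on $Y$, whereas the paper first passes to the strongly adapted cover $γ_i$ and then builds Reid's index-one cover of the integral, $ℚ$-Cartier divisor $γ_i^*(K_{Y_i}+D_Y)$ upstairs --- both orderings yield an adapted cover with invertible $Ω^{[\dim Y]}_{(Y,D_Y,·)}$.
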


\begin{rem}\label{rem:13-3}%
  Proposition~\ref{prop:13-2} does not assume that $(X,D_X)$ is locally
  $ℚ$-Gorenstein.  If canonical divisors exist, then \eqref{eq:13-2-1} can be
  written in the compact form
  \[
    φ^* 𝒪_Y\bigl(m·K_Y+⌊m·D_Y⌋\bigr)
    \xrightarrow{\text{pull-back}} 𝒪_X\bigl(m·K_X+⌊m·D_X⌋\bigr),
  \]
  which might be more familiar to the algebraic geometer.
\end{rem}

\begin{proof}[Proof of Proposition~\ref{prop:13-2}]
  \CounterStep{}Cover $Y$ by Stein open sets, $Y = ∪_i Y_i$, so that canonical
  divisors $K_{Y_i}$ exist on each of the $Y_i$.  Recall from
  Lemma~\ref{lem:2-36} that the Stein spaces $Y_i$ admit strongly adapted covers
  $γ_i : \what{Y}_i \twoheadrightarrow Y_i$.  The pull-back divisors $γ^*_i
  (K_{Y_i}+D_Y)$ are then locally $ℚ$-Cartier Weil divisors on the $\what{Y}_i$.
  Shrinking the $Y_i$ if necessary, we may assume without loss of generality
  that $γ^*_i (K_{Y_i}+D_Y)$ are $ℚ$-Cartier and that suitable multiples are
  Cartier and linearly trivial,
  \begin{equation}\label{eq:13-4-1}
    𝒪_{\what{Y}_i}\bigl( m_i·γ^*_i(K_{Y_i}+D_Y) \bigr) ≅ 𝒪_{\what{Y}_i}, \quad \text{for suitable } m_i ∈ ℕ⁺.
  \end{equation}
  Following \cite[Cor.~1.9]{Reid79} or \cite[Sect.~3.5--3.7]{Reid87}, the
  isomorphisms~\eqref{eq:13-4-1} can be used to construct index-one-covers, that
  is, cyclic covers $β_i : \wcheck{Y}_i \twoheadrightarrow \what{Y}_i$ where
  $(γ_i◦ β_i)^* (K_{Y_i}+D_Y)$ is Cartier.  Set $α_i := γ_i◦β_i$, $n := \dim X$
  and observe that
  \[
    Ω^{[n]}_{(Y,D_Y,α_i)} ≅ 𝒪_{\wcheck{Y}_i}\bigl( α^*_i (K_{Y_i}+D_Y) \bigr)
  \]
  is locally free.  Apply Proposition~\ref{prop:13-1} to the trivial diagram
  \[
    \begin{tikzcd}[column sep=large]
      X \ar[r, "φ"] \ar[d, "\Id_X\text{, $q$-morphism}"'] & Y \ar[d, "\Id_Y\text{, $q$-morphism}"] \\
      X \ar[r, "φ"'] & Y
    \end{tikzcd}
  \]
  and recall from Example~\ref{ex:4-6} that domain and range of associated the
  pull-back map
  \[
    η : \what{φ}^* \Bigl( \Sym^{[m]}_{\cC} Ω^{[n]}_{(Y, D_Y, \Id_Y)} \Bigr) → \Sym^{[m]}_{\cC} Ω^{[d]}_{(X, D_X, \Id_X)}
  \]
  are identified as
  \begin{align*}
    \Sym^{[m]}_{\cC} Ω^{[n]}_{(Y, D_Y, \Id_Y)} & = \bigl(ω_Y^{⊗m}⊗𝒪_Y(⌊m·D_Y⌋)\bigr)^{\vee\vee} \\
    \Sym^{[m]}_{\cC} Ω^{[n]}_{(X, D_X, \Id_X)} & = \bigl(ω_X^{⊗m}⊗𝒪_X(⌊m·D_X⌋)\bigr)^{\vee\vee}
  \end{align*}
  The proof is thus finished.
\end{proof}

Proposition~\ref{prop:13-2} relates the notion of a $\cC$-morphism to the notion
of discrepancies used in birational geometry, cf.~\cite[Chapt.~I.1]{Reid87} or
\cite[Def.~2.22]{KM98}.  Instead of going into details, we note only one
immediate consequence, which refines Proposition~\vref{prop:10-6}.

\begin{cor}[$\cC$-morphisms and canonical singularities, compare Proposition~\ref{prop:10-6}]\label{cor:13-5}%
  Let $(X,D)$ be a $\cC$-pair that is locally $ℚ$-Gorenstein.  If a
  $\cC$-resolution of singularities exists, then $(X,D)$ is log canonical.  In
  particular, $X$ has Du~Bois singularities.
\end{cor}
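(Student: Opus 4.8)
The plan is to derive the corollary from Proposition~\ref{prop:13-2} in the same way that Proposition~\ref{prop:10-6} was derived from the pull-back of top differentials; the presence of a boundary is precisely what forces us to invoke the pull-back of adapted reflexive \emph{tensors} of Section~\ref{sec:13} rather than of differentials alone. Let $π : (\wtilde{X}, \wtilde{D}) → (X, D)$ be the given $\cC$-resolution, so that $π$ is a proper, bimeromorphic $\cC$-morphism, $(\wtilde{X}, \wtilde{D})$ is snc, and $π_* \wtilde{D} = D$. Being bimeromorphic, $π$ is generically finite with $\dim \wtilde{X} = \dim X$, and $(X,D)$ is locally $ℚ$-Gorenstein by assumption, so Proposition~\ref{prop:13-2} applies. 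Since log canonicity is local on $X$, I would first shrink $X$ to a Stein open subset, on which a canonical divisor $K_X$ exists; there the finitely many multiplicities occurring in $D$ and in $\wtilde{D}$ are bounded, so I may fix a positive integer $m$ divisible by all of them, making $⌊m·D⌋ = m·D$, $⌊m·\wtilde{D}⌋ = m·\wtilde{D}$, and $m·(K_X+D)$ Cartier.

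For this $m$, Remark~\ref{rem:13-3} presents the map of Proposition~\ref{prop:13-2} as a morphism of invertible sheaves on the smooth variety $\wtilde{X}$,
\[
  π^* 𝒪_X\bigl(m·(K_X+D)\bigr) → 𝒪_{\wtilde{X}}\bigl(m·(K_{\wtilde{X}}+\wtilde{D})\bigr),
\]
which over the dense open locus where $π$ is an isomorphism and both boundaries are trivial is the pull-back of an $m$-fold pluricanonical form, hence an isomorphism. In particular the map is nonzero, so it is injective with cokernel supported on an effective divisor; equivalently $m·(K_{\wtilde{X}}+\wtilde{D}) = m·π^*(K_X+D) + F$ for some effective $F$, giving
\[
  K_{\wtilde{X}} + \wtilde{D} ≥ π^*(K_X+D).
\]
Writing $\wtilde{D} = π^{-1}_* D + \sum_i c_i·E_i$ with $E_i$ the $π$-exceptional prime divisors, and recalling that the discrepancy of $(X,D)$ along the strict transform of a component of $D$ equals the negative of its coefficient, the strict-transform terms in $K_{\wtilde{X}} = π^*(K_X+D) + \sum_E a(E;X,D)·E$ cancel against those of $\wtilde{D}$, and the displayed inequality reduces to $a(E_i;X,D) + c_i ≥ 0$ for every exceptional $E_i$.

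Because $(\wtilde{X}, \wtilde{D})$ is a $\cC$-pair, each coefficient satisfies $c_i ≤ 1$, whence $a(E_i;X,D) ≥ -c_i ≥ -1$; combined with the discrepancies $-\tfrac{m_j-1}{m_j} ≥ -1$ along the strict transforms, this shows that every discrepancy of $(X,D)$ is at least $-1$, i.e.\ that $(X,D)$ is log canonical in the sense of \cite[Sect.~2.3]{KM98}. The Du~Bois assertion is then the standard fact that the underlying space of a log canonical pair has Du~Bois singularities. I expect the only real subtlety to be the transition from the pull-back morphism to the divisor inequality: one must use that the map of Proposition~\ref{prop:13-2} is nonzero --- which is exactly its recorded compatibility with the pull-back of Kähler tensors over the dense open locus --- and, crucially, that the snc $\cC$-pair structure of the resolution bounds the exceptional coefficients $c_i$ by $1$. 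It is this last bound, rather than any property of an arbitrary boundary, that yields log canonicity.
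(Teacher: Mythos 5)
Your proposal is correct and follows essentially the same route as the paper's proof: localize so that $(X,D)$ is $ℚ$-Gorenstein with a canonical divisor, choose $m$ with $m·D$ integral and $m·(K_X+D)$ Cartier, invoke Proposition~\ref{prop:13-2} via Remark~\ref{rem:13-3} to get the pull-back map $π^* 𝒪_X\bigl(m·(K_X+D)\bigr) → 𝒪_{\wtilde{X}}\bigl(m·(K_{\wtilde{X}}+\wtilde{D})\bigr)$, and conclude $\operatorname{discrep}(X,D) ≥ -1$. The only difference is presentational: you spell out the nonvanishing/injectivity argument and the discrepancy bookkeeping (including the bound $c_i ≤ 1$ from the $\cC$-pair structure) that the paper compresses into the inclusion $m·\wtilde{D} ≤ m·π^{-1}_* D + m·E$ and the reference back to Proposition~\ref{prop:10-6}.
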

\begin{proof}
  Since the question is local on $X$, we may assume without loss of generality
  that $(X,D)$ is $ℚ$-Gorenstein and that a canonical divisor exists.  Let $π :
  (\wtilde{X}, \wtilde{D}) → (X,D)$ be a $\cC$-resolution of singularities, with
  $π$-exceptional divisor $E ∈ \Div(\wtilde{X})$.  Let $m ∈ ℕ⁺$ be a number such
  that $m· D$ is integral and $m·(K_X+D)$ is Cartier.  Following
  Remark~\ref{rem:13-3}, we obtain a pull-back map
  \begin{align*}
    π^* 𝒪_X\bigl(m·K_X+m·D\bigr) & → 𝒪_{\wtilde{X}}\bigl(m·K_{\wtilde{X}}+ m·\wtilde{D}\bigr) && \text{pull-back} \\
    & ⊆ 𝒪_{\wtilde{X}}\bigl(m·K_{\wtilde{X}}+ m·π^{-1}_* D + m·E\bigr),
  \end{align*}
  where $π^{-1}_* D$ denotes the strict transform.  As in the proof of
  Proposition~\ref{prop:10-6}, this means that $\operatorname{discrep}(X,D) ≥
  -1$, so that $(X,D)$ is log canonical as claimed.
\end{proof}

\begin{rem}[Converse of Corollary~\ref{cor:13-5}?]
  For the time being, we are unsure if a converse of Corollary~\ref{cor:13-5} holds
  and refer the reader to Section~\ref{sec:15-4} for questions and a more
  detailed discussion.
\end{rem}

\subsection{Proof of Proposition~\ref*{prop:13-1}}
\approvals{Erwan & yes \\ Stefan & yes}
\label{sec:13-3}

\CounterStep{}We prove Proposition~\ref{prop:13-1} under the simplifying
assumption that there exists one single adapted cover $γ_0 : \what{Y}_0
\twoheadrightarrow Y$ where $Ω^{[p]}_{(Y,D_Y,γ_0)}$ is locally free.  The proof
in the general case is conceptually identical, but notationally more involved.
We assume that a number $n ∈ ℕ⁺$ and a diagram of the form \eqref{eq:7-1-1} are given,
\begin{equation}\label{eq:13-7-1}
  \begin{tikzcd}[column sep=large]
    \what{X} \ar[r, "\what{φ}"] \ar[d, "a\text{, $q$-morphism}"'] & \what{Y} \ar[d, "b\text{, $q$-morphism}"] \\
    X \ar[r, "φ"'] & Y.
  \end{tikzcd}
\end{equation}
Set
\begin{align*}
  \wcheck{Y} & := \text{Galois closure of a connected component of the normalized fibre product } \what{Y}_0 ⨯_Y \what{Y} \\
  \wcheck{X} & := \text{connected component of the normalized fibre product } \wcheck{Y} ⨯_{\what{Y}} \what{X}
\end{align*}
We obtain an extension of Diagram~\eqref{eq:13-7-1}, as follows,
\[
  \begin{tikzcd}[column sep=large, row sep=1cm]
    \wcheck{X} \ar[r, "\wcheck{φ}"] \ar[d, two heads, "\what{a}\text{, Galois cover}"'] & \wcheck{Y} \ar[d, two heads, "\what{b}\text{, Galois cover}"] \ar[rr, equals] && \wcheck{Y} \ar[d, "\what{γ}_0\text{, $q$-morphism}"] \\
    \what{X} \ar[r, "\what{φ}"] \ar[d, "a\text{, $q$-morphism}"'] & \what{Y} \ar[d, "b\text{, $q$-morphism}"] && \what{Y}_0 \ar[d, two heads, "γ_0\text{, adapted cover}"] \\
    X \ar[r, "φ"'] & Y \ar[rr, equals] && Y.
  \end{tikzcd}
\]
The assumption that $φ$ is a morphism of $\cC$-pairs equips us with a pull-back
morphism of adapted reflexive differentials,
\[
  η : \what{φ}^{\,*} Ω^{[p]}_{(Y,D_Y,b)} → Ω^{[p]}_{(X,D_X,a)}.
\]
Using the assumption that $Ω^{[p]}_{(Y,D_Y,γ_0)}$ is locally free, we find that
\[
  Ω^{[p]}_{(Y, D_Y, b◦ \what{b})} = Ω^{[p]}_{(Y,D_Y, γ_0◦\what{γ}_0)} = \what{γ}^{\,[*]}_0 Ω^{[p]}_{(Y,D_Y, γ_0)} = \what{γ}^{\,*}_0 Ω^{[p]}_{(Y,D_Y, γ_0)}
\]
is likewise locally free.  The morphism $η$ therefore induces a pull-back
morphism for the sheaves of reflexive adapted $(n,p)$-tensors on $\wcheck{Y}$,
for every $n ∈ ℕ$:
\begin{align}
  \wcheck{φ}^* \Sym^{[n]}_{\cC} Ω^{[p]}_{(Y,D_Y,b◦\what{b})} & = \wcheck{φ}^* \Sym^{[n]} Ω^{[p]}_{(Y,D_Y,b◦\what{b})} && b◦\what{b}\text{ adapted, Obs.~\ref{obs:4-12}} \nonumber \\
  & = \wcheck{φ}^* \Sym^n Ω^{[p]}_{(Y,D_Y,b◦\what{b})} && \text{locally free} \nonumber \\
  & = \Sym^n \wcheck{φ}^* Ω^{[p]}_{(Y,D_Y,b◦\what{b})} && \text{natural} \label{eq:13-7-2} \\
  & → \Sym^n Ω^{[p]}_{(X,D_X,a◦\what{a})} && \Sym^n η \nonumber \\
  & → \Sym^{[n]} Ω^{[p]}_{(X,D_X,a◦\what{a})} && \text{natural} \nonumber \\
  & → \Sym^{[n]}_{\cC} Ω^{[p]}_{(X,D_X,a◦\what{a})} && \text{Observation~\ref{obs:4-8}}.  \nonumber
  \intertext{This in turn yields a morphism between push-forward sheaves,}
  \what{φ}^{\,*} \what{b}_* \Sym^{[n]}_{\cC} Ω^{[p]}_{(Y,D_Y,b◦\what{b})} & → \what{a}_* \wcheck{φ}^* \Sym^{[n]}_{\cC} Ω^{[p]}_{(Y,D_Y,b◦\what{b})} && \text{natural} \nonumber \\
  & → \what{a}_* \Sym^{[N]}_{\cC} Ω^{[p]}_{(X,D_X,a◦\what{a})}.  && \what{a}_*\text{\eqref{eq:13-7-2}} \label{eq:13-7-3}
  \intertext{Recall from Observation~\ref{obs:4-19} that all morphisms in \eqref{eq:13-7-2}
  are morphisms of Galois-linearized sheaves.  The resulting map
  \eqref{eq:13-7-3} will therefore map Galois-invariant sections to
  Galois-invariant sections.  We obtain the desired pull-back map as
  follows,}
  \what{φ}^{\,*} \Sym^{[n]}_{\cC} Ω^{[p]}_{(Y,D_Y,b)} & = \what{φ}^{\,*} \left(\what{b}_* \Sym^{[n]}_{\cC} Ω^{[p]}_{(Y,D_Y,b◦\what{b})}\right)^{\Gal(\what{b})} && \text{Lemma~\ref{lem:4-20}} \nonumber \\
  & → \left( \what{a}_* \Sym^{[n]} Ω^{[p]}_{(X,D_X,a◦\what{a})} \right)^{\Gal(\what{a})} && \text{\eqref{eq:13-7-3}}^{\Gal} \label{eq:13-7-4} \\
  & = \Sym^{[n]}_{\cC} Ω^{[p]}_{(X,D_X,a)} && \text{Lemma~\ref{lem:4-20}.} \nonumber
\end{align}
We leave it to the reader to check that the restriction of \eqref{eq:13-7-4} to
$\what{X}⁺$ agrees with the pull-back morphism $d_{\cC} \what{φ}⁺$, so that
Diagram~\ref{eq:13-7-1} really does admit pull-back of adapted reflexive
$(n,p)$-tensors, in the sense of Definition~\ref{def:7-6}.  \qed

%
%
\svnid{$Id: 14-literature.tex 911 2024-09-29 18:09:56Z kebekus $}
\selectlanguage{british}

\section{Relation to the literature}
\subversionInfo
\approvals{Erwan & yes \\ Stefan & yes}
\label{sec:14}

Morphisms between $\cC$-pairs have already been discussed in the literature, but
typically only in special settings and for $\cC$-pairs that satisfy additional
assumptions.  While all these notions overlap with Definition~\ref{def:8-1},
there is often no implication, unless we are in the simplest setting of
$\cC$-pairs with snc boundary.  For completeness and future reference, this
section compares Definition~\ref{def:8-1} with three of the more prominent
definitions used in the literature.

\subsection{Orbifold morphisms in the sense of Campana}
\approvals{Erwan & yes \\ Stefan & yes}

Campana uses the following definition in several papers.

\begin{defn}[\protect{Orbifold morphism in the sense of Campana, \cite[Déf.~2.3]{MR2831280}}]\label{def:14-1}%
  \index{orbifold morphism!in the sense of Campana}Let $(X, D_X)$ and $(Y, D_Y)$
  be two $\cC$-pairs, where $Y$ is $ℚ$-factorial.  A morphism $φ : X → Y$ is
  called \emph{orbifold morphism} if $\img φ ⊄ \supp D_Y$ and if every pair of
  prime Weil divisors, $Δ_X ∈ \Div(X)$ and $Δ_Y ∈ \Div(Y)$ with $\img φ ⊄ \supp
  Δ_Y$ and $Δ_X ⊂ φ^{-1}(Δ_Y)$ satisfies the inequality
  \begin{equation}\label{eq:14-1-1}
    \bigl( \mult_{Δ_X} φ^* Δ_Y\bigr)·\bigl(\mult_{\cC, Δ_X} D_X\bigr) ≥
    \mult_{\cC, Δ_Y} D_Y.
  \end{equation}
\end{defn}

\begin{rem}
  The $ℚ$-factoriality assumption in Definition~\ref{def:14-1} guarantees that a
  meaningful pull-back $φ^* Δ_Y ∈ ℚ\Div(X)$ exists.
\end{rem}

Definition~\ref{def:14-1} has the drawback that it is not local in the analytic
topology.  The following examples illustrate some problems.

\begin{figure}
  \begin{tikzpicture}
    \node (tb) at (0,0) {\includegraphics[width=5cm]{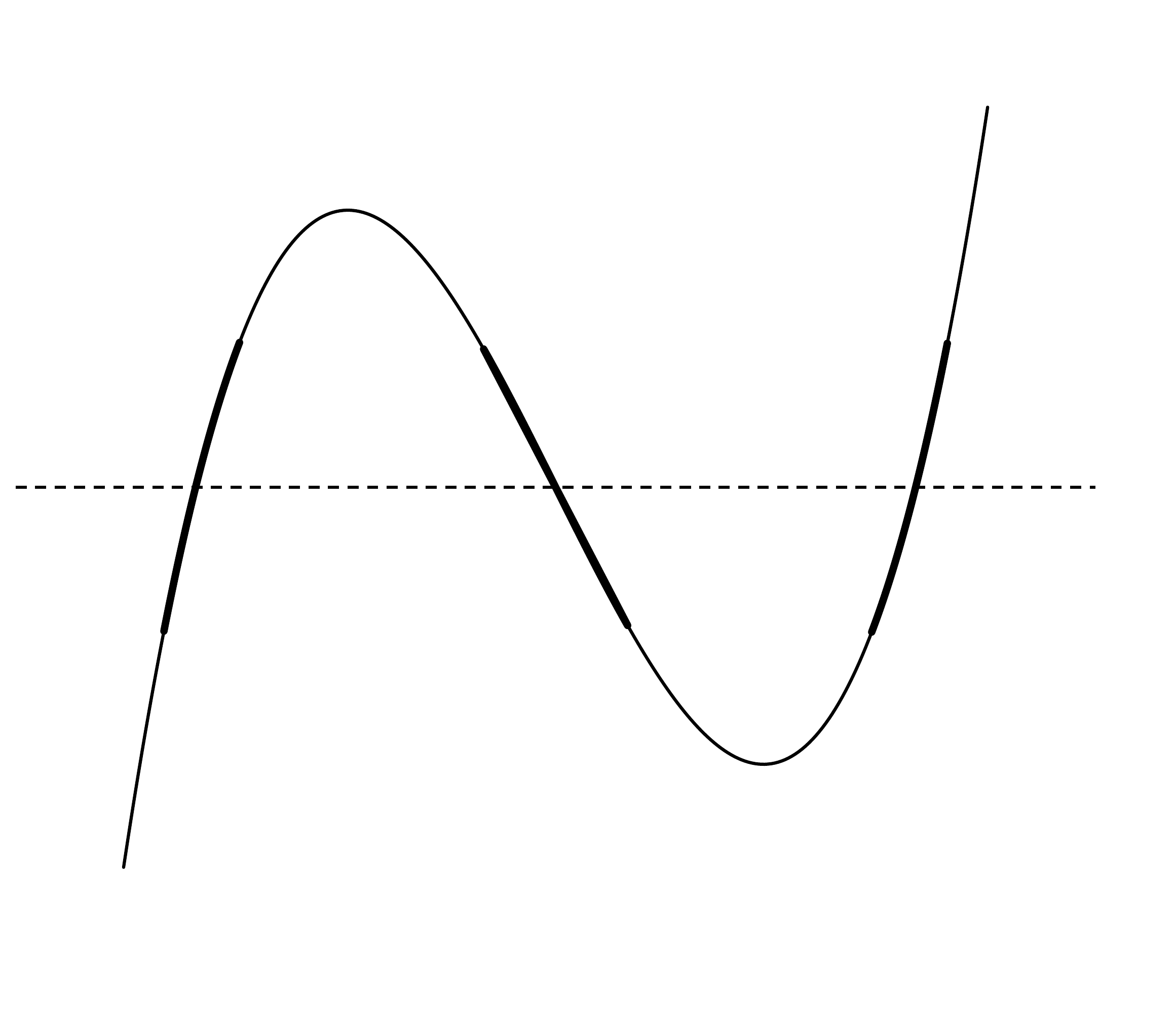}};
    \node (t) at (6,0) {\includegraphics[width=5cm]{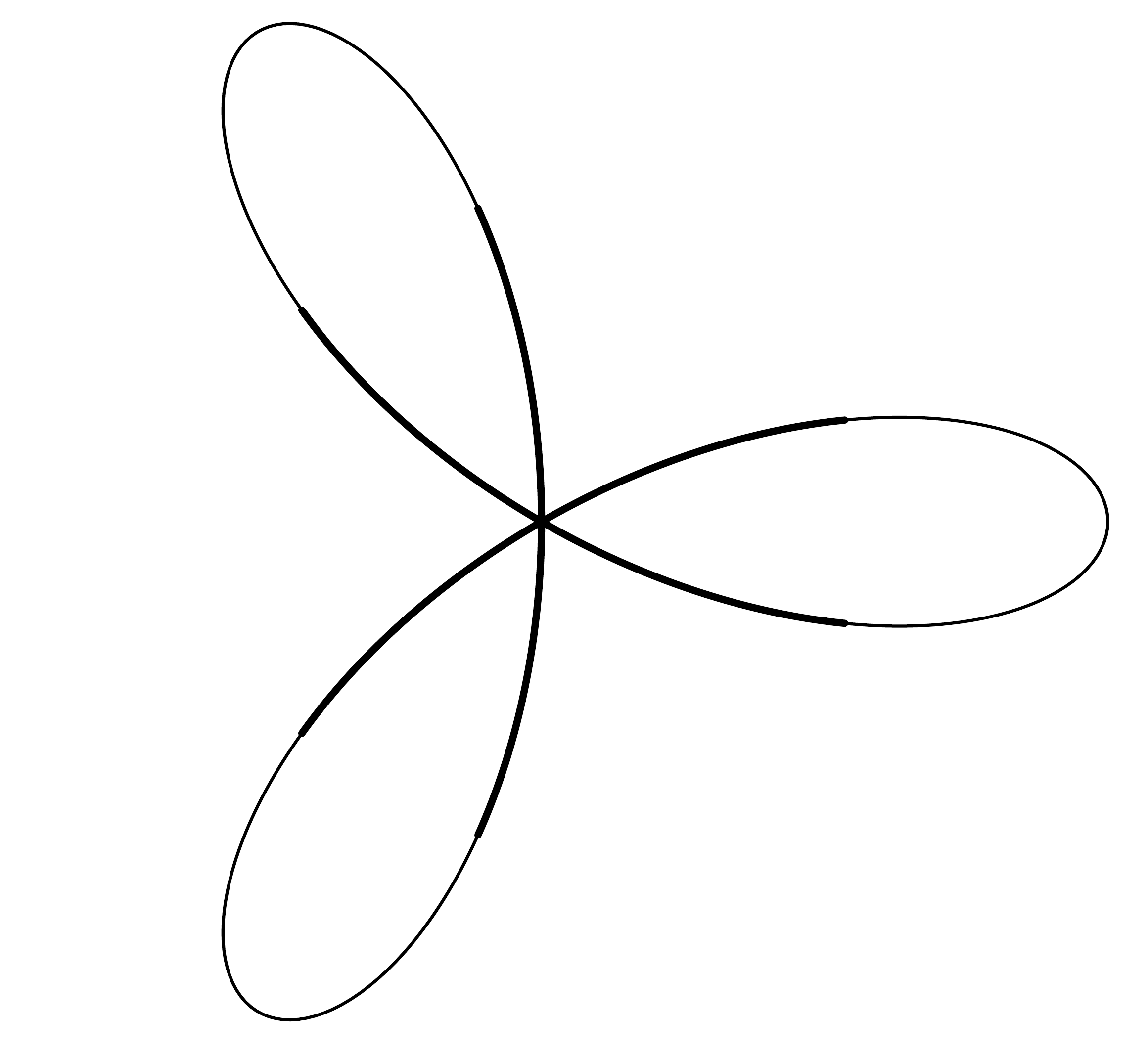}};
    \node at (-2.6, 0.1) {$E$};
    \node at (-1.5,-1.5) {$π^{-1}_* P$};
    \node at (-1.4,-0.5) {\tiny $π^{-1}_* P_1$};
    \node at (-0.2,-0.5) {\tiny $π^{-1}_* P_2$};
    \node at ( 1.9, 0.6) {\tiny $π^{-1}_* P_3$};

    \node at ( 8.5, 0.0) {$P$};
    \node at ( 7.0,-0.55) {\tiny $P_3$};
    \node at ( 7.0, 0.65) {\tiny $P_2$};
    \node at ( 5.9, 1.10) {\tiny $P_1$};
    \draw[->] (tb) -- (4.5,0) node[pos=.5,above] {$π$};
  \end{tikzpicture}

  \caption{Blow-up of the \emph{\foreignlanguage{french}{Paquerette de Mélibée}}}
  \label{fig:14-1}
\end{figure}

\begin{example}[Restriction of orbifold morphism to open set is no orbifold morphism]
  Let $π : X → Y$ be the blow-up of the affine plane $Y = ℂ²$ at the origin,
  with exceptional divisor $E ⊆ X$.  Consider the $\cC$-divisors
  \[
    D_Y := \frac{2}{3}·P ∈ \Div(Y)
    \quad\text{and}\quad
    D_X := \frac{2}{3}·π^{-1}_*P,
  \]
  where
  \[
    P := \bigl\{ (x,y) \::\: (x²+y²)² = x³-3·xy² \bigr\} ∈ \Div(Y)
  \]
  is the \emph{\foreignlanguage{french}{Paquerette de Mélibée}} shown in
  Figure~\ref{fig:14-1}.  An elementary computation shows that $π$ is an
  orbifold morphism.

  The situation changes if we consider a sufficiently small neighbourhood $U$ of
  the origin in $Y$.  There, $P$ decomposes into three components, $P = P_1 +
  P_2 + P_3$, and an elementary computation shows that \eqref{eq:14-1-1} is
  violated when we choose $Δ_X = E$ and $Δ_Y = P_1$.  It follows that the
  restricted morphism is not an orbifold morphism in the sense of
  Definition~\ref{def:14-1}.
\end{example}

\begin{example}[Orbifold morphisms cannot be glued]\label{ex:14-4}%
  Let $Y$ be a normal analytic variety that is locally $ℚ$-factorial but not
  $ℚ$-factorial.  Then, $\Id_Y : Y → Y$ is not an orbifold endomorphism of the
  $\cC$-pair $(Y,0)$.  Still, $Y$ can be covered by open sets $U ⊆ Y$ such that
  $\Id_Y|_U$ is an orbifold morphism.
\end{example}

\begin{rem}[Existence of divisors is not local]
  We fear that complications similar to those of Example~\ref{ex:14-4} arise in
  settings where Inequality~\eqref{eq:14-1-1} is void because there are no global
  divisors in $Y$ that can be used for $Δ_Y$.  This could happen if $Y$ is
  compact and of algebraic dimension zero.
\end{rem}

The authors of this paper feel that non-locality restricts the usefulness of
Definition~\ref{def:14-1} in practise.  The following alternative notion avoids
these problems.

\begin{defn}[Local orbifold morphism]\label{def:14-6}%
  Let $(X, D_X)$ and $(Y, D_Y)$ be two $\cC$-pairs, where $Y$ is locally
  $ℚ$-factorial.  A morphism $φ : X → Y$ is called \emph{local orbifold
  morphism}\index{local orbifold morphism!in the sense of Campana} if $\img φ ⊄
  \supp D_Y$ and if for every pair of sufficiently small open sets $Y⁺ ⊆ Y$ and
  $X⁺ ⊆ φ^{-1}(Y⁺)$, the restricted morphism $φ|_{X⁺} : X⁺ → Y⁺$ is an orbifold
  morphism in the sense of Definition~\ref{def:14-1}.
\end{defn}

Like the notion of a $\cC$-morphism, local orbifold morphisms are local in
nature and stable under removing small subsets from the source variety.  The
following analogue of Observation~\vref{obs:8-5} is not hard to
show\footnote{But watch out: Irreducibility is not a local property in the
analytic topology.}; its proof is left to the reader.

\begin{obs}[Local nature and removing small subsets, compare with Observation~\ref{obs:8-5}]\label{obs:14-7}%
  Given $\cC$-pairs $(X, D_X)$ and $(Y, D_Y)$ and a morphism $φ : X → Y$ such
  that $\img φ ⊄ \supp D_Y$, the following conditions are equivalent.
  \begin{itemize}
    \item The morphism $φ$ is a local orbifold morphism.

    \item There exist open coverings $(U_i)_{i ∈ I}$ and $(V_j)_{j ∈ J}$ of $X$
    and $Y$, respectively, such that every restricted morphism
    \[
      φ|_{U_i ∩ φ^{-1}(V_j)} : U_i ∩ φ^{-1}(V_j) → V_j
    \]
    is a local orbifold morphism between the restricted $\cC$-pairs
    \[
      \bigl(U_i ∩ φ^{-1}(V_j),\, D_X ∩ U_i ∩ φ^{-1}(V_j) \bigr)
      \quad\text{and}\quad
      (V_j, D_Y ∩ V_j).
    \]

    \item There exists a big open subset $U ⊂ X$ such that the restriction
    $φ|_U$ is a local orbifold morphism $(U, D_X ∩ U) → (Y, D_Y)$.  \qed
  \end{itemize}
\end{obs}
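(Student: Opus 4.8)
The plan is to exploit that Inequality~\eqref{eq:14-1-1}, which defines an orbifold morphism in Definition~\ref{def:14-1}, is a purely numerical condition attached to codimension-one points. Indeed, the three quantities $\mult_{Δ_X} φ^* Δ_Y$, $\mult_{\cC, Δ_X} D_X$ and $\mult_{\cC, Δ_Y} D_Y$ are all generic multiplicities: each is computed at the generic point of the prime divisor $Δ_X$ and is therefore already determined on an arbitrarily small open neighbourhood of a general point of $Δ_X$. First I would record the reformulation that $φ$ is a local orbifold morphism precisely when $\img φ ⊄ \supp D_Y$ and, for every prime divisor $Δ_Y ⊂ Y$ with $\img φ ⊄ \supp Δ_Y$ and every prime divisor $Δ_X ⊂ φ^{-1}(Δ_Y)$, Inequality~\eqref{eq:14-1-1} holds. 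Since $X$ is irreducible and $\supp D_Y$ is a proper analytic subset of $Y$, the hypothesis $\img φ ⊄ \supp D_Y$ is equivalent to $φ^{-1}(\supp D_Y) ⊊ X$ being nowhere dense; this is the fact I would use repeatedly to transport the image hypothesis to open pieces.

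One preliminary point deserves attention, precisely the one flagged in the footnote: restricting to an open set can destroy irreducibility. I would therefore agree once and for all to cover $X$ and $Y$ by \emph{connected} open sets, and to read every restriction on its connected components. As $X$ and $Y$ are normal, connected open subsets are irreducible and hence carry genuine $\cC$-pair structures. Moreover, each connected component of a nonempty open set is itself open, hence full-dimensional, and so cannot be contained in the nowhere-dense set $φ^{-1}(\supp D_Y)$; this keeps the image hypothesis alive on every piece.

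With these conventions, two of the three implications are immediate: the one-element coverings $U = X$ and $V = Y$ show that the first condition implies the second, and the big open set $U = X$ (whose complement $∅$ is vacuously small) shows that it implies the third. The content lies in the two converses, which both rest on the observation above. To deduce the first condition from the second, given arbitrarily small connected open sets $Y⁺ ⊆ Y$ and $X⁺ ⊆ φ^{-1}(Y⁺)$, I would shrink $X⁺$ about an arbitrary point so that $X⁺ ⊆ U_i ∩ φ^{-1}(V_j)$ and $Y⁺ ⊆ V_j$ for suitable indices; the assumption that $φ|_{U_i ∩ φ^{-1}(V_j)}$ is a local orbifold morphism then yields the orbifold condition on $X⁺ → Y⁺$, while nowhere-density of $φ^{-1}(\supp D_Y)$ secures $\img (φ|_{X⁺}) ⊄ \supp D_Y$. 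To deduce the first condition from the third, I would use that a big open $U ⊆ X$ has complement of codimension $≥ 2$, so that the generic point of every prime divisor $Δ_X ⊂ X$ lies in $U$; hence the generic multiplicities entering \eqref{eq:14-1-1}, together with the image hypothesis, may be read off $φ|_U$, where they hold by assumption.

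The hard part will not be any single estimate but exactly the bookkeeping the footnote warns about: correctly matching a prime divisor $Δ_X$ on $X$ with the prime divisors it induces on a connected open piece (there may be several, or none), and verifying that the three multiplicities in \eqref{eq:14-1-1} are genuinely unchanged upon restriction to a connected open set that meets the generic point of $Δ_X$. Once one is disciplined about passing to connected components and about the fact that \eqref{eq:14-1-1} only ever probes generic points, the remaining argument is a direct transcription of the proof of Observation~\ref{obs:8-5}.
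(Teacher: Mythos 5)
The paper states Observation~\ref{obs:14-7} without proof (it is explicitly left to the reader), so your attempt can only be judged on its own merits; and there it contains one genuine error. The ``reformulation'' you record at the outset --- that $\varphi$ is a local orbifold morphism precisely when \eqref{eq:14-1-1} holds for every pair of \emph{global} prime divisors $\Delta_Y \subset Y$ and $\Delta_X \subset \varphi^{-1}(\Delta_Y)$ --- is false: it is exactly Campana's Definition~\ref{def:14-1}, and the entire point of Definition~\ref{def:14-6} is that the two notions differ. The blow-up of the \emph{Paquerette de Mélibée} (the example pictured in Figure~\ref{fig:14-1}) satisfies \eqref{eq:14-1-1} for all global pairs, since $\mult_E \pi^* P = 3$ at the triple point; yet over a small neighbourhood of the origin the branch $P_1$ gives $\mult_E \pi^* P_1 = 1 < 3 = \mult_{\cC,P_1} D_Y$, so $\pi$ is \emph{not} a local orbifold morphism although your criterion declares it one. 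The root cause is that, contrary to the claim in your closing paragraph, the three multiplicities in \eqref{eq:14-1-1} are \emph{not} all unchanged under restriction: the two $\cC$-multiplicities are, but $\mult_{\Delta_X}\varphi^*\Delta_Y$ is the \emph{sum} of the contributions of the local branches of $\Delta_Y$ through the image of the generic point of $\Delta_X$, so it can strictly drop when $\Delta_Y$ becomes locally reducible there; the local condition is therefore strictly stronger than the global one. (A global reformulation is also hopeless for the reason noted in the remark following Example~\ref{ex:14-4}: on a compact $Y$ of algebraic dimension zero there may be no global divisors to quantify over at all.) Your deduction of the first condition from the third inherits this error: checking \eqref{eq:14-1-1} only for prime divisors of $X$ and $Y$ whose generic points lie in $U$ does not verify Definition~\ref{def:14-6}, which probes prime divisors of arbitrarily small open sets, and these need not extend to $X$ or $Y$.

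The good news is that the rest of your argument is sound and the gap is reparable without changing strategy. Record instead the correct reformulation, quantifying over prime divisors of arbitrarily small opens: $\varphi$ is a local orbifold morphism if and only if $\img \varphi \not\subset \supp D_Y$ and, for all sufficiently small $Y^+ \subseteq Y$ (small enough to be $ℚ$-factorial) and $X^+ \subseteq \varphi^{-1}(Y^+)$ and all pairs of prime divisors $\Delta_{X^+} \subset X^+$, $\Delta_{Y^+} \subset Y^+$ as in Definition~\ref{def:14-1}, the inequality holds. Your conventions (connected open subsets of a normal, hence locally irreducible, space are irreducible; nowhere-density of $\varphi^{-1}(\supp D_Y)$ keeps the image hypothesis alive on every piece) are correct, and your deduction of the first condition from the second goes through verbatim, since it never uses the reformulation. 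For the third condition, argue as you do but one level down: given $\Delta_{X^+} \subset X^+$, the set $X^+ \setminus U$ has codimension at least two in $X^+$, so a general point $x$ of $\Delta_{X^+}$ lies in $U$, and there $\Delta_{X^+}$ has a single branch; shrink to neighbourhoods $x \in X^{++} \subseteq U \cap X^+$ and $\varphi(x) \in Y^{++} \subseteq Y^+$ where the hypothesis on $\varphi|_U$ applies, note that the $\cC$-multiplicities are preserved under this restriction by the identity principle for irreducible analytic sets, and use that $\mult_{\Delta_{X^+}}\varphi^*\Delta_{Y^+}$ is at least the contribution of any single branch of $\Delta_{Y^+}$ at $\varphi(x)$, so that the branchwise inequality supplied by the hypothesis implies \eqref{eq:14-1-1} for the pair $\bigl(\Delta_{X^+},\Delta_{Y^+}\bigr)$.
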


The notions ``$\cC$-morphism'' and ``local orbifold morphism'' are related but
not identical.  The following propositions compare the notions.  Together, they
imply that $\cC$-morphisms and local orbifold morphisms agree whenever the
target pair is nc.

\begin{prop}[$\cC$-morphism to locally $ℚ$-factorial target]\label{prop:14-8}%
  Let $(X, D_X)$ and $(Y, D_Y)$ be two $\cC$-pairs where $Y$ is locally
  $ℚ$-factorial.  Then, every $\cC$-morphism $φ : X → Y$ with $\img φ \not ⊂
  \supp D_Y$ is a local orbifold morphism.
\end{prop}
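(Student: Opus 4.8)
The plan is to reduce the assertion to the single numerical inequality \eqref{eq:14-1-1} and to verify that inequality by a transverse computation at a general point of $Δ_X$. Fix prime divisors $Δ_X ⊂ φ^{-1}(Δ_Y)$ with $\img φ ⊄ \supp Δ_Y$, and abbreviate $m_X := \mult_{\cC, Δ_X} D_X$, $m_Y := \mult_{\cC, Δ_Y} D_Y$ and $e := \mult_{Δ_X} φ^* Δ_Y ≥ 1$. I would first dispose of the degenerate cases: if $m_Y ≤ 1$ the inequality is trivial, and if $m_X = ∞$ it holds by the conventions on $∞$. If $m_Y = ∞$, then $Δ_Y ⊆ \supp ⌊D_Y⌋$; since every $\cC$-morphism satisfies $φ(X°) ⊆ Y°$ (Definition~\ref{def:8-1}), the component $Δ_X$ must lie in $\supp ⌊D_X⌋$, forcing $m_X = ∞$, and \eqref{eq:14-1-1} again holds. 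Hence I may assume $2 ≤ m_Y < ∞$ and $m_X < ∞$.

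Next I would localise. Being a local orbifold morphism is local and stable under removing small sets (Observation~\ref{obs:14-7}), and the analogous property holds for $\cC$-morphisms (Observation~\ref{obs:8-5}); moreover \eqref{eq:14-1-1} depends only on the generic behaviour of $φ$ along $Δ_X$. I would therefore pass to a neighbourhood of a general point $p ∈ Δ_X$ and, after shrinking $Y$ (using that $Y$ is locally $ℚ$-factorial, so $φ^*Δ_Y$ is defined) and removing small subsets, assume that $X$ and $Y$ are smooth, that $D_X+Δ_X$ and $D_Y+Δ_Y$ have snc support, that $Δ_X = \{s_1=0\}$ is the only component of $φ^{-1}(Δ_Y)$ through $p$, and that $Δ_Y = \{t_1=0\}$ with $φ^* t_1 = v· s_1^{\,e}$ for a unit $v$. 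By Observation~\ref{obs:8-5} the restriction of $φ$ is still a $\cC$-morphism.

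Then I would feed the $\cC$-morphism property one well-chosen diagram. Using Lemma~\ref{lem:2-36}, choose a strongly adapted cover $b : \what{Y} → Y$ for $(Y,D_Y)$ that along $Δ_Y$ is the cyclic cover $t_1 = w^{m_Y}$, and let $\what{X}$ be the normalisation of a component of $X ⨯_Y \what{Y}$ lying over $Δ_X$, with projection $a : \what{X} → X$. This is a diagram of the form \eqref{eq:7-1-1}, so Definition~\ref{def:8-1} yields a sheaf morphism
\[
  η : \what{φ}^{\,*} Ω^{[1]}_{(Y,D_Y,b)} → Ω^{[1]}_{(X,D_X,a)}
\]
extending the canonical pull-back. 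By Example~\ref{ex:4-6}, $Ω^{[1]}_{(Y,D_Y,b)} = Ω¹_{\what{Y}}(\log b^*⌊D_Y⌋)$ near $Δ_Y$, and since $m_Y < ∞$ this contains the honest differential $dw$, which corresponds to the fractional form $t_1^{-(m_Y-1)/m_Y}·dt_1$. Compatibility with the pull-back of Kähler differentials (Observation~\ref{obs:7-10}) forces $η(\what{φ}^{\,*} dw)$ to equal the Kähler differential $d(\what{φ}^{\,*} w)$; thus $d(\what{φ}^{\,*} w)$ is a section of $Ω^{[1]}_{(X,D_X,a)}$.

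Finally comes the pole-order bookkeeping, which I expect to be the main obstacle. A standard normalisation computation shows that on a branch of $\what{X}$ the relation $w^{m_Y} = v· s_1^{\,e}$ is parametrised by $s_1 = σ^{m'}$, $w = σ^{e'}$, where $m' := m_Y/\gcd(m_Y,e)$ and $e' := e/\gcd(m_Y,e)$; hence $a^* Δ_X = m'\{σ=0\}$ and $d(\what{φ}^{\,*} w) = e'·σ^{e'-1}·dσ$ up to a unit. Expanding Definition~\ref{def:3-2} in this $σ$-direction (the computation underlying Example~\ref{ex:3-6}) gives $⌊a^*\{D_X\}⌋ = (m'-⌈m'/m_X⌉)\{σ=0\}$, so that $Ω^{[1]}_{(X,D_X,a)}$ is generated by $σ^{⌈m'/m_X⌉-1}·dσ$. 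Membership of $d(\what{φ}^{\,*} w)$ therefore forces $e'-1 ≥ ⌈m'/m_X⌉-1$, whence $e'·m_X ≥ m'$; multiplying by $\gcd(m_Y,e)$ yields $e·m_X ≥ m_Y$, which is exactly \eqref{eq:14-1-1}. Since $p$ was a general point of an arbitrary $Δ_X$, this establishes that $φ$ is a local orbifold morphism in the sense of Definition~\ref{def:14-6}.
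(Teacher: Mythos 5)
Your covering construction and the transverse $σ$-computation are sound where they apply, and they do mirror the paper's strategy (cyclic cover of $Y$ branched along $Δ_Y$, pull the resulting adapted differential back through the $\cC$-morphism, read off the multiplicity inequality from the numerical criterion of Example~\ref{ex:3-6}). But your localization step contains a genuine gap that removes exactly the hard case. Observation~\ref{obs:14-7} only allows removing small subsets from the \emph{source} $X$; removing a small subset $Z ⊂ Y$ forces you to remove $φ^{-1}(Z)$ from $X$, and $φ^{-1}(Z)$ may contain all of $Δ_X$. Since $Δ_X ⊆ φ^{-1}(Δ_Y)$ may be contracted by $φ$, the image $φ(Δ_X)$ can lie entirely inside $Y_{\sing}$, or inside the non-snc locus of $(Y, D_Y + Δ_Y)$; a general point $p ∈ Δ_X$ then does \emph{not} map to a point where $Y$ is smooth and $Δ_Y = \{t_1 = 0\}$ is principal. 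Your argument therefore treats only the case where $φ(Δ_X)$ generically meets the snc locus of the target --- precisely where the statement is easy --- and silently skips the case that the $ℚ$-factoriality hypothesis exists for (your normalization computation also tacitly assumes index $r=1$ for $Δ_Y$). The paper's proof never smooths $Y$: local $ℚ$-factoriality produces $f$ with $\operatorname{div} f = r·Δ_Y$ near the possibly singular point $y = φ(x)$, the cover $z^{r·m_Y} = f$ is formed on the singular $Y$ itself, and the criterion of Example~\ref{ex:3-6} is applied only at generic points of divisors on the covers, which automatically avoid all bad loci.

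The same misconception invalidates your first reduction: the claim that the inequality is trivial for $m_Y ≤ 1$ is false. When the generic point of $Δ_X$ maps into $Y_{\sing}$, the number $e = \mult_{Δ_X} φ^* Δ_Y$ is merely a positive rational and can be $< 1$, so \eqref{eq:14-1-1} with $m_Y = 1$ reads $e·m_X ≥ 1$ and is a genuine condition --- in fact it is the discrepancy-type condition at the heart of the matter. Concretely, for the minimal resolution $φ$ of the $A_1$-singularity (Examples~\ref{ex:8-8} and \ref{ex:10-1}) and $Δ_Y$ the image of a line through the singular point, one has $φ^* Δ_Y = φ^{-1}_* Δ_Y + \frac{1}{2}·E$; taking $Δ_X = E$, the case $m_Y = 1$ forces $m_X ≥ 2$, which is exactly why $\bigl(X, \frac{1}{2}·E\bigr) → (Y,0)$ is a $\cC$-morphism while $(X,0) → (Y,0)$ is not. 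In your smooth localized model $e$ is automatically a positive integer, which is why the problem became invisible. By contrast, your treatment of the cases $m_X = ∞$ and $m_Y = ∞$ via $φ(X°) ⊆ Y°$ is correct, and even covers a case the paper's proof sets aside by assuming $⌊D_X⌋ = ⌊D_Y⌋ = 0$.
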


\begin{prop}[Local orbifold morphism to nc target]\label{prop:14-9}%
  Let $(X, D_X)$ and $(Y, D_Y)$ be two $\cC$-pairs.  If the pair $(Y, D_Y)$ is
  nc, then every local orbifold morphism $φ : X → Y$ is a $\cC$-morphism.
\end{prop}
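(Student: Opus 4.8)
The plan is to deduce the statement from the criterion of Proposition~\ref{prop:9-3} by reducing everything, via locality, to a single pole‑order inequality at generic points of divisors, which will turn out to be exactly the defining inequality \eqref{eq:14-1-1} of an orbifold morphism. First I would invoke Observation~\ref{obs:8-5} and Observation~\ref{obs:14-7}: being a \(\cC\)-morphism and being a local orbifold morphism are both local on \(X\) and \(Y\) and stable under removing small subsets. So I may shrink \(Y\) to a small polydisc on which \((Y,D_Y)\) is uniformizable (recall nc pairs are locally uniformizable and that a smooth \(Y\) is in particular \(\mathbb{Q}\)-factorial, so on this polydisc \(\varphi\) is an honest orbifold morphism in the sense of Definition~\ref{def:14-1}), and I may discard small subsets of \(X\) so that only generic points of divisors matter. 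I then fix a uniformization \(b\colon\what{Y}\to Y\) and take \(\what{X}\) to be a component of the normalisation of \(X\times_Y\what{Y}\), with induced cover \(a\colon\what{X}\to X\) and lift \(\what{\varphi}\colon\what{X}\to\what{Y}\). Here \(Y^{\lu}=Y\), so \(\img\varphi\cap Y^{\lu}\neq\emptyset\); and \(\varphi(X°)\subseteq Y°\) follows from \eqref{eq:14-1-1}: for a reduced component \(\Delta_Y\subseteq\lfloor D_Y\rfloor\) the inequality forces \(\mult_{\cC,\Delta_X}D_X=\infty\) for every prime \(\Delta_X\subset\varphi^{-1}(\Delta_Y)\), and since \(\Delta_Y\) is locally principal the set \(\varphi^{-1}(\Delta_Y)\) is of pure codimension one, whence \(\varphi^{-1}(\supp\lfloor D_Y\rfloor)\subseteq\supp\lfloor D_X\rfloor\). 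This places me in Setting~\ref{set:9-1}.

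Since \(b\) uniformizes, Example~\ref{ex:4-6}\ref{il:4-6-5} gives \(\Omega^{[1]}_{(Y,D_Y,b)}=\Omega^1_{\what{Y}}(\log b^*\lfloor D_Y\rfloor)\), which is locally free, so by Proposition~\ref{prop:9-3} it suffices to produce \(\eta^1\colon\what{\varphi}^*\Omega^{[1]}_{(Y,D_Y,b)}\to\Omega^{[1]}_{(X,D_X,a)}\) restricting to \(d_{\cC}\what{\varphi}^+\) over \(\what{X}^+\). Because \(\Omega^{[1]}_{(X,D_X,a)}\) is reflexive (Proposition~\ref{prop:4-5}) and \(\what{X}^+\subseteq\what{X}\) is big, the \(\sHom\)-sheaf is reflexive and \(\eta^1\) will extend automatically once I know that the canonical map \(d_{\cC}\what{\varphi}^+\) factors through \(\Omega^{[1]}_{(X,D_X,a)}|_{\what{X}^+}\subseteq\Omega^1_{\what{X}^+}(\log\what{D}_X^+)\). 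As \(b\) uniformizes, Fact~\ref{fact:5-11} identifies \(d_{\cC}\what{\varphi}^+\) with the ordinary pull‑back of logarithmic Kähler differentials, so I must check that \(\what{\varphi}^*\) of each adapted generator on \(\what{Y}\) lies in \(\Omega^1_{(X^+,D_X^+,a^+)}=\mathcal{A}_{1,1}\cap\mathcal{B}_{1,1}\) of Definition~\ref{def:3-2}. Membership in \(\mathcal{B}_{1,1}=\Omega^1_{\what{X}}(\log a^*D_X)\) is automatic since \(\supp\what{D}_X^+\subseteq\supp a^*D_X\), and membership in \(\mathcal{A}_{1,1}\) is a pole‑order bound along each prime divisor, hence testable at generic points \(\eta_{\Delta_{\what{X}}}\), where all spaces are smooth (\(Y\) is smooth, \(\eta_{\Delta_X}\in X_{\reg}\), and \(\what{X}\) is regular in codimension one).

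For the local computation I fix \(\Delta_{\what{X}}\), set \(\Delta_X:=a(\Delta_{\what{X}})\), \(e:=\mult_{\Delta_{\what{X}}}a^*\Delta_X\), \(m:=\mult_{\cC,\Delta_X}D_X\), and choose local equations \(x\) of \(\Delta_X\) and \(u\) of \(\Delta_{\what{X}}\) with \(a^*x=u^e\cdot(\text{unit})\). For a finite‑multiplicity coordinate \(w\) on \(\what{Y}\) with \(b^*y=w^n\), \(n=\mult_{\cC,\Delta_Y}D_Y\), I write \(\what{f}:=\what{\varphi}^*w\), so that \(\what{f}^{\,n}=a^*\varphi^*y\) and \(\what{\varphi}^*dw=d\what{f}=\tfrac1n\,\what{f}^{\,1-n}\cdot a^*\bigl(d\varphi^*y\bigr)\). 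With \(c:=\mult_{\Delta_X}\varphi^*\Delta_Y\) one computes \(v_{\Delta_{\what{X}}}(\what{f})=ec/n=:k\); expanding \(a^*(d\varphi^*y)\) in the frame \(a^*dx_i\) shows that only the coefficient of \(a^*dx\) can be polar, of valuation \(k-e\), the transverse ones having valuation \(\geq k\geq0\). So \(\what{\varphi}^*dw\in\mathcal{A}_{1,1}=\mathcal{O}_{\what{X}}(\lfloor a^*\{D_X\}\rfloor)\otimes a^*\Omega^1_X(\log\lfloor D_X\rfloor)\) amounts to \(k-e\geq-\lfloor e\tfrac{m-1}{m}\rfloor\), i.e.\ \(k\geq\lceil e/m\rceil\); and this is precisely \eqref{eq:14-1-1}, which reads \(cm\geq n\) and hence \(k=ec/n\geq e/m\), the integrality of \(k\) upgrading this to \(k\geq\lceil e/m\rceil\). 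When \(c=0\), \(\what{f}\) is a unit and \(d\what{f}\in a^*\Omega^1_X\) with no pole; the reduced components are handled identically, where \(\what{\varphi}^*\tfrac{dw}{w}=a^*\varphi^*\tfrac{dy}{y}\) is holomorphic if \(c=0\) and logarithmic along \(\Delta_X\) if \(c\geq1\)—the latter being admissible because \eqref{eq:14-1-1} then forces \(m=\infty\), i.e.\ \(\Delta_X\subseteq\lfloor D_X\rfloor\). In every case \(\what{\varphi}^*(\text{generator})\in\mathcal{A}_{1,1}\), the factorisation exists, and \(\eta^1\) extends across \(\what{X}^+\), so Proposition~\ref{prop:9-3} applies. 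The main obstacle is precisely this last paragraph: keeping the pull‑back purely valuation‑theoretic so that it depends only on the four numbers \(e,n,c,m\), and verifying that the single inequality \eqref{eq:14-1-1} delivers the ceiling bound \(k\geq\lceil e/m\rceil\) uniformly, including the degenerate cases \(c=0\) and \(n=\infty\) (the latter also being what secures \(\varphi(X°)\subseteq Y°\) at the outset).
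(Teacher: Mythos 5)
Your proposal is correct and takes essentially the same route as the paper's proof: reduce by locality (Observations~\ref{obs:8-5} and \ref{obs:14-7}) to a local uniformization $b$ of the nc target, build one diagram via a fibre-product cover of $X$, invoke Proposition~\ref{prop:9-3} using local freeness of $Ω^{[1]}_{(Y,D_Y,b)}$, and check by direct computation that the defining inequality \eqref{eq:14-1-1} is exactly the pole-order bound needed for the pull-back of adapted $1$-forms. The only real difference is implementational: the paper computes in explicit global coordinates under the simplifying assumption $\lfloor D_X\rfloor = \lfloor D_Y\rfloor = 0$, whereas your valuative computation at generic points of divisors (plus the reflexive-$\sHom$ extension across the big set $\what{X}⁺$) also handles the log components and the precondition $φ(X°) ⊆ Y°$, i.e.\ precisely the cases the paper declares ``notationally more involved'' and omits.
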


Before proving Propositions~\ref{prop:14-8} and \ref{prop:14-9} below, we show
by way of an elementary example that local orbifold morphisms need not be
$\cC$-morphisms in general, even if the target is uniformizable.

\begin{example}[Local orbifold morphisms need not be $\cC$-morphisms]\label{ex:14-10}%
  Consider the space $Y = ℂ²$ and let $D_Y = \frac{2}{3}·D_1 + \frac{2}{3}·D_2 +
  \frac{1}{2}·D_3$ be the union of three lines passing through a common point $y
  ∈ Y$.  Recall from Example~\vref{ex:2-29} that $(Y,D_Y)$ is uniformizable.
  
  Let $φ : X → Y$ be the blow-up of $y ∈ Y$ and set $D_X := φ^{-1}_* D_Y +
  \frac{2}{3}·\operatorname{Exc}π$.  An elementary computation that we leave to
  the reader shows that $φ$ is an orbifold morphism for the pairs $(X,D_X)$ and
  $(Y,D_Y)$.  On the other hand, observe that
  \[
    K_X = φ^*K_Y+\operatorname{Exc}π \quad\text{and}\quad K_X+D_X = φ^*(K_Y+D_Y) - \frac{1}{6}·\operatorname{Exc}π.
  \]
  The shift in sign implies that the canonical pull-back map $(dφ)^{⊗6} : φ^*
  ω_Y^{⊗6} → ω_X^{⊗6}$ does \emph{not} extend to a pull-back map
  \[
    φ^* ω_Y^{⊗6}\bigl(6·D_Y\bigr) \xrightarrow{\text{pull-back}} ω_X^{⊗6}\bigl(6·D_X\bigr)
  \]
  Proposition~\ref{prop:13-2} therefore implies that $φ$ is \emph{not} a
  $\cC$-morphism.
\end{example}

\begin{proof}[Proof of Proposition~\ref*{prop:14-8}]
  \CounterStep{}We prove Proposition~\ref{prop:14-8} only under the simplifying
  assumption that $⌊D_X⌋ = 0$ and $⌊D_Y⌋ = 0$.  The proof of the general case is
  conceptually identical but requires additional case-by-case handling.

  \subsubsection*{Step 0: Setup and simplification}

  Let $φ : X → Y$ be a $\cC$-morphism between two $\cC$-pairs $(X, D_X)$ and
  $(Y, D_Y)$, where $Y$ is locally $ℚ$-factorial.  Given a pair of open sets $Y⁺
  ⊆ Y$ and $X⁺ ⊆ φ^{-1}(Y⁺)$, where $Y⁺$ is $ℚ$-factorial, and prime divisors
  $Δ_Y ⊂ Y⁺$ and $Δ_X ⊆ φ^{-1}(Δ_Y) ∩ X⁺$, we need to verify
  Inequality~\eqref{eq:14-1-1} from above,
  \begin{equation}\label{eq:14-11-1}
    \bigl( \mult_{Δ_X} φ^* Δ_Y\bigr)·\bigl(\mult_{\cC, Δ_X} D_X\bigr) ≥
    \mult_{\cC, Δ_Y} D_Y.
  \end{equation}
  To this end, choose a general point $x ∈ \supp Δ_X$ and set $y := φ(x)$.  For
  brevity of notation, set
  \[
    m_X := \mult_{\cC,Δ_X} D_X
    \quad\text{and}\quad
    m_Y := \mult_{\cC,Δ_Y} D_Y.
  \]
  Replacing $X$ and $Y$ by suitably small neighbourhoods of $x$ and $y$, we may
  assume without loss of generality that the following holds in addition.
  \begin{enumerate}
    \item\label{il:14-11-2} Using the assumption that $Y$ is locally
    $ℚ$-factorial, we assume that there exists a number $r ∈ ℕ⁺$ and a
    holomorphic function $f ∈ H⁰\bigl( Y⁺,\, 𝒪_Y \bigr)$ such that
    $\operatorname{div} f = r·Δ_Y$.

    \item\label{il:14-11-3} Using the choice that $x$ is general in $\supp Δ_X$,
    we assume that the divisor $φ^{-1}(Δ_Y)$ has only one component, so that
    $Δ_X = \supp φ^{-1}(Δ_Y)$.
  \end{enumerate}

  \subsubsection*{Step 1: Construct a cover of $Y$}

  To begin the proof in earnest, choose a component
  \[
    \what{Y} ⊆ \text{normalization of } \bigl\{ (y,z) ∈ Y⨯ℂ \::\: z^{r·m_Y} = f(y) \bigr\} \\
  \]
  and consider the associated cover $γ_Y : \what{Y} \twoheadrightarrow Y$.  The
  following properties hold by construction.
  \begin{enumerate}
    \item\label{il:14-11-4} The morphism $γ_Y$ is étale outside $Δ_Y$ and
    $\Branch (γ_Y) = Δ_Y$.

    \item All components of the divisor $γ^*_Y (Δ_Y)$ have multiplicity $m_Y$.

    \item\label{il:14-11-6} The ramification divisor of $γ_Y$ equals
    $\Ramification (γ_Y) = \frac{1}{m_Y}·γ^*_Y (Δ_Y)$.

    \item\label{il:14-11-7} There exists a function $\what{f} ∈ H⁰\bigl(
    \what{Y},\, 𝒪_{\what{Y}} \bigr)$ such that $\what{f}^{r·m_Y} = f◦γ_Y$.
  \end{enumerate}
  Using Items~\ref{il:14-11-6} and \ref{il:14-11-7}, recall from
  Example~\vref{ex:3-6} that the Kähler differential of $d\what{f} ∈ H⁰\bigl(
  \what{Y},\, Ω¹_{\what{Y}} \bigr)$ induces an adapted reflexive differential on
  $\what{Y}$, say
  \[
    σ_Y ∈ H⁰\Bigl( \what{Y},\, Ω^{[1]}_{(Y,D_Y,γ_Y)} \Bigr)
    ⊆ H⁰\Bigl( \what{Y},\, Ω^{[1]}_{\what{Y}} \Bigr).
  \]

  \subsubsection*{Step 2: Construct a cover of $X$}

  Secondly, choose a component $\what{X}$ in the normalization of $X ⨯_Y
  \what{Y}$.  We obtain a commutative diagram
  \begin{equation}\label{eq:14-11-8}
    \begin{tikzcd}[column sep=2cm, row sep=1cm]
      \what{X} \ar[r, "\what{φ}"] \ar[d, two heads, "γ_X"'] & \what{Y} \ar[d, two heads, "γ_Y"] \\
      X \ar[r, "φ"'] & Y.
    \end{tikzcd}
  \end{equation}
  Item~\ref{il:14-11-4} implies that the morphism $γ_X$ is étale outside
  $φ^{-1}(Δ_Y) \overset{\text{\ref{il:14-11-3}}}{=} Δ_X$.  Given that $Δ_Y$ is
  the zero-set of the function $f$, this can be formulated as follows.
  \begin{enumerate}
    \item\label{il:14-11-9} The ramification divisor of $γ_X$ is contained in
    the zero-locus of the function $\what{f}◦\what{φ}$.
  \end{enumerate}

  \subsubsection*{Step 3: An adapted differential on $\what{X}$}

  Since $\what{φ}$ admits pull-back of adapted reflexive differentials, recall
  from Observation~\ref{obs:7-10} (``Compatibility with pull-back of Kähler
  differentials'') that the Kähler differential $d\bigl(\what{f}◦\what{φ}\bigr)
  ∈ H⁰\bigl( \what{X},\, Ω¹_{\what{X}} \bigr)$ induces an adapted reflexive
  differential on $\what{X}$, say
  \[
    σ_X ∈ H⁰\Bigl( \what{X},\, Ω^{[1]}_{(X,D_X,γ_X)} \Bigr)
    ⊆ H⁰\Bigl( \what{X},\, Ω^{[1]}_{\what{X}} \Bigr).
  \]
  This poses conditions on the vanishing orders of the function
  $\what{f}◦\what{φ}$ along prime divisors $Δ_{\what{X}} ⊆ γ^*_X Δ_X$.  To make
  this statement precise, choose one $Δ_{\what{X}}$, and recall that
  Item~\ref{il:14-11-9} together with Example~\vref{ex:3-6} implies that
  \begin{equation}\label{eq:14-11-10}
    \mult_{Δ_{\what{X}}} \operatorname{div} \bigl(\what{f} ◦ \what{φ} \bigr)
    ≥ \frac{\mult_{Δ_{\what{X}}} γ^*_X Δ_X}{\mult_{\cC, Δ_X} D_X}.
  \end{equation}
  But the left side of \eqref{eq:14-11-10} can be computed
  \begin{align*}
    \mult_{Δ_{\what{X}}} \operatorname{div} \bigl(\what{f} ◦ \what{φ} \bigr) & = \frac{\mult_{Δ_{\what{X}}} \operatorname{div} \bigl(f ◦ γ_Y ◦ \what{φ} \bigr)}{m_X·r} && \text{by \ref{il:14-11-7}} \\
    & = \frac{\mult_{Δ_{\what{X}}} \operatorname{div} \bigl(f ◦ φ ◦ γ_X \bigr)}{m_Y·r} && \text{commutativity of \eqref{eq:14-11-8}} \\
    & = \frac{\mult_{Δ_{\what{X}}} γ^*_X φ^* Δ_Y}{m_Y} && \text{by \ref{il:14-11-2}} \\
    & = \frac{\bigl(\mult_{Δ_{\what{X}}} γ^*_X Δ_X\bigr)·\bigl(\mult_{Δ_X} φ^* Δ_Y\bigr)}{m_Y} && \text{functoriality}
  \end{align*}
  Inserting this into \eqref{eq:14-11-10}, we obtain Inequality~\eqref{eq:14-11-1}, as
  required to end the proof of Proposition~\ref{prop:14-8}.
\end{proof}

\begin{proof}[Proof of Proposition~\ref*{prop:14-9}]
  \CounterStep{}Again we prove Proposition~\ref{prop:14-9} under the simplifying
  assumption that $⌊D_X⌋ = 0$ and $⌊D_Y⌋ = 0$.  The proof of the general case is
  conceptually identical but notationally more involved.
  
  Using Observation~\ref{obs:14-7} to remove a suitable small subset from $X$
  and consider a suitable open cover of $X$ and $Y$, it suffices to prove
  Proposition~\ref{prop:14-9} under the simplifying assumption that there exist
  local coordinates $x_• ∈ 𝒪_X(X)$ and $y_• ∈ 𝒪_Y(Y)$ such that the following
  holds.
  \begin{enumerate}
    \item\label{il:14-12-1} The varieties $X$ and $Y$ are simply connected
      subsets of $ℂ^•$.

    \item\label{il:14-12-2} There exist numbers $n_i ∈ ℕ$ such that $D_Y = \sum
      \textstyle{\frac{n_i-1}{n_i}}·\{y_i = 0\}$.

    \item\label{il:14-12-3} There exists a number $n ∈ ℕ$ such that $D_X =
      \textstyle{\frac{n-1}{n}}·\{x_1 = 0\}$.

    \item\label{il:14-12-4} We have $\supp φ^*D_Y ⊂ \{x_1 = 0\}$.
  \end{enumerate}
  Item~\ref{il:14-12-4} allows writing the morphism $φ$ in coordinates as
  \[
    φ: (x_1,\: x_2,\: x_3,\: …) ↦ \bigl(…,\: \underbrace{x_1^{a_i}·f_i(x_1,\: x_2,\: …)}_{i\text{.th position}},\: … \bigr), \quad\text{where all } f_{•} ∈ 𝒪^*_X(X).
  \]
  Assumption~\ref{il:14-12-1} allows choosing roots $g_• := \sqrt[n_•]{f_•} ∈
  𝒪^*_X(X)$.  Setting $N := n·\prod n_•$, we can then find smooth varieties
  $\what{X}$, $\what{Y}$ with coordinates $\what{x}_• ∈ 𝒪_{\what{X}}(\what{X})$
  and $\what{y}_• ∈ 𝒪_{\what{Y}}(\what{Y})$ and a commutative diagram,
  \[
    \begin{tikzcd}[column sep=3.5cm, row sep=large]
      \what{X} \ar[r, "\what{φ}"] \ar[d, two heads, "a\text{, adapted cover}"'] & \what{Y} \ar[d, two heads, "b\text{, strongly adapted cover}"] \\
      X \ar[r, "φ"'] & Y,
    \end{tikzcd}
  \]
  where
  \begin{align*}
    a : \bigl(\what{x}_1,\: \what{x}_2,\: \what{x}_3,\: …\bigr) & ↦ \bigl(\what{x}_1^{\,N}\,\: \what{x}_2,\: \what{x}_3,\: …\bigr) \\
    b : \bigl(\what{y}_1,\: \what{y}_2,\: \what{y}_3,\: …\bigr) & ↦ \bigl(\what{y}_1^{\,n_1},\: \what{y}_2^{\,n_2},\: \what{y}_3^{\,n_3},\: …\bigr) \\
    \what{φ} : (\what{x}_1,\: \what{x}_2,\: \what{x}_3,\: …) & ↦ \Bigl(…,\: \underbrace{\what{x}_i^{\frac{N·a_i}{n_i}}·g_i\bigl(\what{x}_1^{\,N},\: \what{x}_2,\: \what{x}_3,\: …\bigr)}_{i\text{.th position}},\: …\Bigr),
  \end{align*}
  so that
  \[
    Ω^{[1]}_{(X,D_X,a)} = \bigl\langle \what{x}_1^{\frac{N}{n}-1}·d\what{x}_1,\: d\what{x}_2,\: …\bigr\rangle ⊆ Ω¹_{\what{X}}
    \quad\text{and}\quad
    Ω^{[1]}_{(Y,D_Y,b)} = Ω¹_{\what{Y}}.
  \]
  With this description of the orbifold cotangent bundles, the following
  statements are clearly equivalent.
  \begin{equation}\label{eq:14-12-5}
    \begin{aligned}
      & \hphantom{⇔ \forall} \text{$\what{φ}$ admits pull-back of adapted reflexive 1-differentials} \\
      & ⇔ \forall i: d \Bigl(\what{x}_i^{\frac{N·a_i}{n_i}}·g_i\bigl(\what{x}_1^{\,N},\: \what{x}_2,\: \what{x}_3,\: …\bigr)\Bigr) ∈ Ω^{[1]}_{(X,D_X,a)}\bigl(\what{X}\bigr) \\
      & ⇔ \forall i: \frac{N·a_i}{n_i}-1 ≥ \frac{N}{n}-1 \\
      & ⇔ \forall i: a_i·n ≥ n_i \\
      & ⇔ \forall i: \bigl(\mult_{\{x_1=0\}} φ^* \{y_i=0\}\bigr)·\bigl(\mult_{\cC,\{x_1=0\}} D_X\bigr) ≥ \mult_{\cC,\{y_i=0\}} D_Y \\
      & ⇔ \text{$φ$ is an orbifold morphism}
    \end{aligned}
  \end{equation}
  To finish the proof of Proposition~\ref{prop:14-9}, recall that $Ω^{[1]}_{(Y,
  D_Y, b)} = Ω¹_{\what{Y}}$ is locally free.  Proposition~\ref{prop:9-3}
  therefore applies: to show that $φ$ is a $\cC$-morphism, it suffices to show
  $\what{φ}$ admits pull-back of adapted reflexive 1-differentials.  That,
  however, follows from the equivalences \eqref{eq:14-12-5}.
\end{proof}

\subsection{Orbifold morphisms in the sense of Lu}
\approvals{Erwan & yes \\ Stefan & yes}

Morphisms of smooth $\cC$-pairs have been considered by Lu in the influential
preprint \cite{Lu02}.  At first glance, Lu's definition \cite[Def.~3.7]{Lu02}
differs conceptually from Definition~\ref{def:8-1}: Given snc $\cC$-pairs
$(X,D_X)$, $(Y,D_Y)$ and a morphism $φ : X → Y$, Lu's definition of ``orbifold
morphism''\index{orbifold morphism!in the sense of Lu} considers open sets $U ⊆
Y$, commutative diagrams of the form
\[
  \begin{tikzcd}[column sep=2cm]
    φ^{-1}(U) \ar[r, "φ|_{φ^{-1}(U)}"] \ar[rd, bend right=5, "a\text{, holomorphic}"'] & U \ar[d, dashed, "b\text{, meromorphic}"] \\
    & Z
  \end{tikzcd}
\]
and compares the saturation of $a^* ω_Z$ with the $φ$-pull back of the
saturation of $b^* ω_Z$.  However, later in his paper Lu characterizes
``orbifold morphisms'' in terms that are close to the ideas pursued here.  We
reformulate his characterization in the language of the present paper.

\begin{prop}[\protect{Characterization of orbifold morphisms in the sense of Lu, \cite[Prop.~4.4]{Lu02}}]\label{prop:14-13}%
  Let $(X, D_X)$ and $(Y, D_Y)$ be snc $\cC$-pairs where $X$ and $Y$ are
  projective.  If $φ: X → Y$ is any morphism with $\supp φ^{-1} D_Y ⊆ \supp
  D_X$, then there exists a commutative diagram of the following form,
  \begin{equation*}
    \begin{tikzcd}[column sep=large, row sep=1cm]
      \what{X} \ar[r, "\what{φ}"] \ar[d, two heads, "a\text{, adapted cover}"'] & \what{Y} \ar[d, two heads, "b\text{, adapted cover}"] \\
      X \ar[r, "φ"'] & Y,
    \end{tikzcd}
  \end{equation*}
  where $\what{X}$ and $\what{Y}$ are smooth.  Then, $φ$ is an orbifold morphism
  if and only if the pull-back morphism
  \[
    dφ: \what{φ}^* \Bigl( Ω¹_{(Y, D_Y, b)} \Bigr) → Ω¹_X (\log a^{-1} ⌊D_X⌋)
  \]
  factors through the inclusion $Ω¹_{(X, D_X, a)} ↪ Ω¹_X (\log a^{-1} ⌊D_X⌋)$.
  \qed
\end{prop}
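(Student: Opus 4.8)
The plan is to reduce the asserted equivalence to an explicit computation in local coordinates, running parallel to the proof of Proposition~\ref{prop:14-9}, and then to reconcile the resulting criterion with Lu's definition. First I would produce the diagram. Since $(Y,D_Y)$ is snc and $Y$ is projective, a strongly adapted cover $b : \what{Y} \twoheadrightarrow Y$ with $\what{Y}$ smooth and $\Branch(b) ⊆ \supp D_Y$ exists by the Kawamata covering construction (compare Lemma~\ref{lem:2-36} and the projective setting treated in \cite{Laz04-I}). Taking a component of the normalized fibre product $X ⨯_Y \what{Y}$ and passing to a strong log resolution yields a smooth variety $\what{X}$, a lift $\what{φ}$, and a cover $a : \what{X} \twoheadrightarrow X$. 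The hypothesis $\supp φ^{-1} D_Y ⊆ \supp D_X$ ensures that the codimension-one ramification of $b$ pulls back into $\supp a^* ⌊D_X⌋$, so that $a$ may be arranged to be adapted for $(X,D_X)$ with the full configuration nc.

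Second I would make both adapted cotangent sheaves explicit. Because $b$ is strongly adapted with $\Branch(b) ⊆ \supp D_Y$ and $\what{Y}$ is smooth, Example~\ref{ex:4-6} identifies $Ω¹_{(Y,D_Y,b)}$ with the locally free sheaf $Ω¹_{\what{Y}}(\log b^*⌊D_Y⌋)$, and the analogous description holds on $\what{X}$. In suitable local coordinates the morphism $\what{φ}$ acquires the monomial-times-unit shape used in the proof of Proposition~\ref{prop:14-9}, and there $dφ$ is nothing but the pull-back of Kähler differentials. The requirement that $dφ$ factor through the inclusion $Ω¹_{(X,D_X,a)} ↪ Ω¹_X(\log a^{-1}⌊D_X⌋)$ then translates, componentwise, into the pointwise inequalities on vanishing orders recorded in the chain of equivalences \eqref{eq:14-12-5}. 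These are exactly the numerical inequalities \eqref{eq:14-1-1} defining an orbifold morphism in the sense of Campana (Definition~\ref{def:14-1}).

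Third I would close the argument by matching this numerical criterion with Lu's notion. For snc pairs, Lu's orbifold morphisms are characterized in \cite[Prop.~4.4]{Lu02}, and the saturated pull-back sheaves appearing there correspond, under the dictionary between the two formalisms, to $Ω¹_{(Y,D_Y,b)}$; once this identification is in place, the factorization condition is precisely Lu's condition, and the equivalence follows from the coordinate computation of the second step. The hard part will be this dictionary: Lu phrases ``orbifold morphism'' through meromorphic maps from open subsets of $Y$ to auxiliary varieties $Z$, comparing the saturation of the pulled-back canonical sheaf on the source with the $φ$-pull-back of its counterpart on $Y$, and one must verify carefully that this comparison of saturations is equivalent to the factorization of $dφ$ through the adapted cotangent sheaf. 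Once that translation is secured, no further global input is required and the proof is complete.
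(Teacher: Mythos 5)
The paper does not actually prove this proposition: the \qed{} placed directly after the statement signals that it is quoted from Lu, and the ``proof'' on record is the citation to \cite[Prop.~4.4]{Lu02} together with a reformulation of Lu's statement in the language of adapted differentials. Your plan, by contrast, attempts a genuine proof, and its first two steps are sound and reprove known material: the Kawamata-type construction of the diagram is fine, the identification $Ω¹_{(Y,D_Y,b)} = Ω¹_{\what{Y}}(\log b^*⌊D_Y⌋)$ via Example~\ref{ex:4-6} is legitimate (you may choose $b$ strongly adapted with branch divisor in $\supp D_Y$, since the proposition only asserts existence of one diagram), and the coordinate computation \eqref{eq:14-12-5} indeed shows that the factorization of $dφ$ through $Ω¹_{(X,D_X,a)}$ is equivalent to Campana's numerical inequalities \eqref{eq:14-1-1} --- though note that in the paper this computation is carried out only under the simplifying assumptions $⌊D_X⌋ = ⌊D_Y⌋ = 0$, so you would owe the reader the ``notationally more involved'' general snc case.

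The genuine gap is your third step, which is precisely where the content of the proposition lives and which you yourself flag as ``the hard part'' without carrying it out. What you have established after Step~2 is the equivalence \emph{factorization $⇔$ orbifold morphism in the sense of Campana}; this is in substance the combination of Propositions~\ref{prop:14-8} and \ref{prop:14-9} for snc targets, not Proposition~\ref{prop:14-13}. The proposition concerns Lu's notion, whose definition \cite[Def.~3.7]{Lu02} quantifies over open sets $U ⊆ Y$ and meromorphic maps $b : U \dashrightarrow Z$ to auxiliary varieties, comparing the saturation of $a^* ω_Z$ with the $φ$-pull-back of the saturation of $b^* ω_Z$ --- a condition phrased in terms of rank-one pieces (pulled-back canonical sheaves of targets of fibrations), not in terms of the full rank-$(\dim Y)$ cotangent sheaf $Ω¹_{(Y,D_Y,b)}$. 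Deducing the factorization criterion from this family of saturation conditions (and conversely) requires producing enough test fibrations to detect the adapted cotangent sheaf componentwise; this is exactly the non-trivial argument of Lu's Prop.~4.4, and asserting that ``a dictionary'' will secure it is assuming the conclusion. Without that step your argument proves a different (albeit related) statement, and it is not a priori known --- indeed, the paper derives it only \emph{from} Proposition~\ref{prop:14-13} --- that Lu's notion and Campana's coincide for snc pairs.
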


Using that all spaces and pairs in Lu's setting are smooth, the criterion for
$\cC$-morphisms spelled out in Proposition~\vref{prop:9-3} shows that orbifold
morphisms in the sense of Lu are morphisms of $\cC$-pairs indeed.

\subsection{$B$-birational morphisms in the sense of Fujino}
\approvals{Erwan & yes \\ Stefan & yes}

For his proof of the abundance theorem for semilog canonical threefolds, Fujino
introduced ``$B$-birational morphisms'' of pairs.  Given the potential for
confusion between the various notions of ``morphisms of pairs'', we briefly
recall the definition even though Fujino's notion is essentially unrelated to
the $\cC$-morphisms discussed here.

\begin{defn}[\protect{$B$-birational morphisms in the sense of Fujino, \cite[Def.~1.5]{MR1756108}}]\label{def:14-14}%
  Let $(X, D_X)$ and $(Y, D_Y)$ be two $ℚ$-Gorenstein pairs where $X$ and $Y$
  are projective.  A birational map $φ: X \dasharrow Y$ is called
  ``$B$-birational''\index{B-birational map@$B$-birational map} if there exists
  a common resolution of singularities
  \[
    \begin{tikzcd}
      Z \ar[d, "α\text{, resolution}"'] \ar[r, equal] & Z \ar[d, "β\text{, resolution}"] \\
      X \arrow[r, dashed, "f"'] & Y
    \end{tikzcd}
  \]
  such that the following equality of $ℚ$-divisors on $Z$,
  \[
    α^*\bigl(α_*K_Z+ D_X\bigr) = β^*\bigl(β_*K_Z+D_Y\bigr),
  \]
  holds for one (equivalently: every) canonical divisor $K_Z ∈ \Div(Z)$.
\end{defn}

It is easily seen that $B$-birational maps are hardly ever morphisms of
$\cC$-pairs.

\begin{example}[$B$-birational morphism, not a $\cC$-morphism]\label{ex:14-15}%
  Consider the space $Y = ℙ²$ and let $D_Y$ be a line passing through a point $y
  ∈ Y$.  Let $φ : X → Y$ be the blow-up of $y ∈ Y$ and set $D_X := φ^{-1}_*
  D_Y$.  Choose $Z := X$, $α := \Id_X$, and $β := φ$.  Fixing one canonical
  divisor $K_Y ∈ \Div(Y)$, an elementary computation shows that
  \[
    α^*(α_* K_Z + D_X) = K_X + D_X = φ^* (K_Y + D_Y) = β^*\bigl(β_* K_Z + D_Y\bigr).
  \]
  It follows that $φ$ is a $B$-birational morphism.  However, $φ$ is not a
  morphism of $\cC$-pairs.
\end{example}

%
%
\svnid{$Id: 15-openQuestions.tex 849 2024-07-15 07:39:06Z kebekus $}
\selectlanguage{british}

\section{Problems and open questions}
\subversionInfo
\approvals{Erwan & yes \\ Stefan & yes}
\label{sec:15}

As pointed out in the introduction, this paper is the first in a series
\cite{orbialb2, orbialb3} that develops the theory of $\cC$-pairs with a view
towards hyperbolicity questions, entire curves and rational points.  Still, we
see many other interesting directions of research and feel that $\cC$-pairs and
their adapted reflexive tensors are far from understood.  We close with a few of
questions and problems that we cannot answer at present.

\subsection{Adapted reflexive tensors}
\approvals{Erwan & yes \\ Stefan & yes}
\label{sec:15-1}

We feel that the pull-back results presented in Section~\ref{sec:5} might not be
optimal.  A ``bigger picture'' is still missing.

\begin{question}[Pull-back for $\cC$-pairs with mild singularities]\label{q:15-1}%
  Consider a setup analogous to Setting~\ref{set:5-2}: Let $(X, D_X)$ be a
  $\cC$-pair, let $(Y, D_Y)$ be a nc log pair, and consider a sequence of
  morphisms
  \[
    \begin{tikzcd}[column sep=2.8cm]
      Y \arrow[r, two heads, "φ\text{, resolution of sings.}"] & \what{X} \arrow[r, "γ\text{, $q$-morphism}"] & X,
    \end{tikzcd}
  \]
  where $\supp φ^* γ^* ⌊D_X⌋ ⊆ \supp D_Y$.  We ask for conditions to guarantee
  that a natural pull-back morphisms for adapted reflexive differentials,
  \[
    d_{\cC} φ : φ^* Ω^{[p]}_{(X,D_X,γ)} → Ω^p_Y(\log D_Y)
  \]
  exists for some or all values of $p$, with universal properties similar to
  those discussed in Section~\ref{sec:5-5}?
  \begin{enumerate}
    \item\label{il:15-1-1} Do pull-back maps exist if $(X,D_X)$ is klt?

    \item\label{il:15-1-2} What can we say if $(X,D_X)$ is log canonical?
  
    \item In analogy to the results obtained in \cite{KS18}, is there a natural
    class of pairs (``pairs with $\cC$-rational singularities'') that behave
    optimally with respect to pull-back?
  \end{enumerate}
\end{question}

\begin{rem}[Partial results]
  In case where $\what{X} = X$ and $γ = \Id_X$, the papers \cite{GKKP11,
  MR3084424, KS18} answer Questions~\ref{il:15-1-1} and \ref{il:15-1-2} in the
  positive.  These results are however insufficient to establish a meaningful
  theory for $\cC$-pairs.
\end{rem}

\begin{question}[Pull-back for forms of small degree]
  Maintaining the setup of Question~\ref{q:15-1}, we expect that adapted
  reflexive $p$-forms become easier to pull-back, the smaller the value of $p$.
  \begin{enumerate}
    \item Are there results for particularly small values of $p$ that can be
    seen as $\cC$-analogues of the earlier results \cite{SS85, Flenner88}?

    \item Are there results of the form ``the pull-back behaviour of adapted
    reflexive $p$-forms follows the extension behaviour $(p+1)$-forms'' that
    could be seen as analogues of \cite[Thm.~1.4]{KS18}?
  \end{enumerate}
\end{question}

\begin{rem}[Partial results]
  In the special case that $p =1$, Pedro Núñez has shown in his Ph.D.~thesis
  \cite{Nunez23} that a natural morphism as in \ref{il:15-1-1} exists.  It is
  however unclear at present, if it satisfies enough universal properties to be
  useful in real-world applications.
\end{rem}

\subsection{Invariants of $\cC$-pairs}
\approvals{Erwan & yes \\ Stefan & yes}
\label{sec:15-2}

As pointed out in Section~\ref{sec:6-1}, the irregularity is of fundamental
importance when we discuss $\cC$-analogues of the Albanese in the follow-up
paper \cite{orbialb2}.  Still, there are aspects that we do not fully
understand.

\begin{question}[Irregularities]\label{q:15-5}
  Let $(X,D)$ be a $\cC$-pair where $X$ is compact Kähler.  How the
  irregularities $q(X,D,γ)$ depend on the choice of the cover $γ: \what{X}
  \twoheadrightarrow X$?  Can we say anything about the relation between
  $q(X,D,•)$ and the local geometry of the covering spaces?  If $X$ has rational
  singularities, is it possible that $q(X,D,γ) = 0$ for all covers $γ : \what{X}
  \twoheadrightarrow X$ where $\what{X}$ has rational singularities, and becomes
  large only for covers that are more singular?
\end{question}

Concerning Question~\ref{q:15-5}, recall from \cite[Prop.~5.13]{KM98} that
rational singularities cannot cover non-rational ones!

\begin{question}[Bogomolov-Sommese vanishing on covers of $X$]\label{q:15-6}
  We do not expect that Proposition~\ref{prop:6-15} is optimal.  Are there
  better results for pairs with mild (but worse than uniformizable)
  singularities?
\end{question}

Note that any answer to Question~\ref{q:15-1} will also answer \ref{q:15-6}.

\begin{problem}[Special pairs, topology and arithemtics]
  Establish arithmetic, topological and geometric properties of mildly singular
  $\cC$-pairs that are special.
\end{problem}

\subsection{Morphisms of $\cC$-pairs}
\approvals{Erwan & yes \\ Stefan & yes}
\label{sec:15-3}

We have seen in Section~\ref{sec:13-2} that $\cC$-resolutions of singularities
will typically not exist.  Still, we wonder if they do exist for mildly singular
pairs.

\begin{question}[$\cC$-resolution of singularities, existence]
  Is there a $\cC$-resolution for pairs with mild singularities better than
  Proposition~\ref{prop:10-10}?  Is there a converse to
  Corollary~\ref{cor:13-5}?
\end{question}

\begin{question}[$\cC$-resolution of singularities, properties]\label{q:15-9}%
  If $(X,D)$ is a $\cC$ and if a $\cC$-resolution $(\wtilde{X}, \wtilde{D}) →
  (X,D)$ exists, then what is the precise relation between the coefficients of
  $\wtilde{D}$ and the discrepancies of $(X,D)$?  Can the coefficients be
  expressed in terms of the local fundamental group?
\end{question}

\subsection{Birational geometry of $\cC$-pairs}
\approvals{Erwan & yes \\ Stefan & yes}
\label{sec:15-4}

Campana has studied meromorphic maps, meromorphic fibrations and bimeromorphic
maps of nc $\cC$-pairs extensively.  We feel that this part of the theory should
be extended to singular pairs, in order to tie it up with minimal model theory.

\begin{problem}[$\cC$-bimeromorphic maps]\label{p:15-10}
  Develop a theory of $\cC$-meromorphic and $\cC$-bimeromorphic maps.  For
  mildly singular pairs, prove that relevant invariants are bimeromorphically
  invariant.  Following Campana, characterize special pairs in terms of
  bimeromorphic fibrations to $\cC$-pairs of general type.
\end{problem}

\begin{problem}[Special pairs]
  In line with Problem~\ref{p:15-10}, follow Campana and characterize mildly
  singular special $\cC$-pairs in terms of $\cC$-bimeromorphic fibrations to
  $\cC$-pairs of general type.
\end{problem}

\begin{problem}[Core map]
  In line with Problem~\ref{p:15-10}, follow Campana and establish a core map for
  mildly singular special $\cC$-pairs.  Study its properties.
\end{problem}

\subsection{$\cC$-pairs in other settings}
\approvals{Erwan & yes \\ Stefan & yes}

$\cC$-pairs have been applied successfully in algebraic and arithmetic settings.
Still, we feel that a systematic treatment of $\cC$-morphisms is lacking in many
contexts.

\begin{problem}[$\cC$-pairs in the algebraic setting]
  Develop a viable theory of $\cC$-pairs for algebraic varieties over
  algebraically closed (or perhaps: perfect) fields of arbitrary characteristic.
\end{problem}

\begin{problem}[$\cC$-pairs in the arithmetic setting]
  Develop a viable theory of $\cC$-pairs for algebraic varieties over global
  fields, perhaps following ideas of \cite{KPS22}.
\end{problem}


\appendix
\addcontentsline{toc}{part}{Appendix}
\printindex
\clearpage

\listoffigures
\listoftables

\end{document}